\newcommand{\sprod}[2]{{\langle #1, #2\rangle}}
\newcommand{\norm}[1]{{\left\lvert #1 \right\rvert}}
\newcommand{\abs}[1]{{\lvert {#1} \rvert}}
\renewcommand{\atop}[2]{\substack{{#1}\\{#2}}}
\newcommand{\CC}{\mathbb{C}}
\newcommand{\NN}{\mathbb{N}}
\newcommand{\RR}{\mathbb{R}}
\newcommand{\PP}{\mathbb{P}}
\newcommand{\EE}{\mathbb{E}}
\newcommand{\calA}{\mathcal{A}}
\newcommand{\calR}{\mathcal{R}}
\newcommand{\calL}{\mathcal{L}}
\newcommand{\calS}{\mathcal{S}}
\newcommand{\calM}{\mathcal{M}}
\newcommand{\calB}{\mathcal{B}}
\newcommand{\calC}{\mathcal{C}}
\newcommand{\RInf}{\calR^\infty_{\alpha} }
\newcommand{\ROrg}{\calR^0_{\alpha} }
\newcommand{\pl}[1]{\foreignlanguage{polish}{#1}}
\newtheorem{theorem}{Theorem}[section]
\newtheorem{proposition}[theorem]{Proposition}
\newtheorem{lemma}[theorem]{Lemma}
\newtheorem{corollary}[theorem]{Corollary}
\newtheorem{claim}{Claim}
\numberwithin{equation}{section}
\theoremstyle{definition}
\newtheorem{example}{Example}
\newtheorem{remark}{Remark}
\newcommand{\ind}[1]{{\mathds{1}_{{#1}}}}
\newcommand{\Rd}{{\RR^{d}}}
\definecolor{mr}{rgb}{0.1,0.2,0.7}
\definecolor{tg}{rgb}{0.7,0.1,0.1}
\newcommand{\WUSC}[3]{\textrm{WUSC}({#1}, {#2}, {#3})}
\newcommand{\WLSC}[3]{\textrm{WLSC}({#1}, {#2}, {#3})}
\title[Asymptotic behaviour of densities]
{Asymptotic behaviour and estimates of slowly varying convolution semigroups}
\author{Tomasz Grzywny }
\address{
	\pl{
	Tomasz Grzywny\\
	Wydzia{\lll} Matematyki,
	Politechnika Wroc{\lll}awska\\
	Wyb. Wyspia\'{n}skiego 27\\
	50-370 Wroc{\lll}aw\\
	Poland}
}
\email{tomasz.grzywny@pwr.edu.pl}
\thanks{The first two authors were partially supported by the National Science Centre (Poland): grant 2015/17/B/ST1/01043.}
\author{Micha\l{} Ryznar}
\address{
	\pl{
	Micha\l{} Ryznar\\
	Wydzia\l{} Matematyki,
	Politechnika Wroc\l{}awska\\
	Wyb. Wyspia\'{n}skiego 27\\
	50-370 Wroc\l{}aw\\
	Poland}
}
\email{michal.ryznar@pwr.edu.pl}
\author{Bartosz Trojan}
\address{
	\pl{
	Bartosz Trojan\\
	Wydzia\l{} Matematyki,
	Politechnika Wroc\l{}awska\\
	Wyb. Wyspia\'{n}skiego 27\\
	50-370 Wroc\l{}aw\\
	Poland}
}
\email{bartosz.trojan@pwr.edu.pl}
\subjclass[2010]{Primary 47D06, 60J75; Secondary: 44A10, 46F12}
\keywords{asymptotic formula, L\'{e}vy–-Khintchine exponent, heat kernel, transition density, Green function, unimodal isotropic L\'{e}vy process, Lévy measure, subordinate Brownian motion}
\dedicatory{The authors dedicate this work to the memory of  Ante Mimica.}
\begin{document}
\selectlanguage{english}

\begin{abstract}
We prove the asymptotic formulas for the transition densities of isotropic unimodal convolution semigroups of
probability measures on $\mathbb{R} ^d$ under the assumption that its L\'{e}vy--Khintchine exponent varies slowly. We also derive some new estimates of the transition densities and Green functions.   
\end{abstract}

\maketitle


\section{Introduction}
Let $\mathbf{X} = (X_t : t \geq 0)$ be an isotropic unimodal L\'{e}vy process on $\RR^d$ with the
L\'{e}vy--Khintchine exponent $\psi$. In the recent paper \cite{MR3165234}, the estimates of the transition densities
$p(t,x)$ were studied under the assumption that $\psi$ has lower and upper Matuszewska indices strictly between $0$ and
$2$. In fact, more detailed information can be obtained whenever stronger assumption about the behaviour of $\psi$ is
imposed. Namely, in \cite{cgt}, it is proved that if $\psi$ varies regularly at infinity with index $\alpha \in (0, 2)$
then
\begin{equation}
	\label{RV}
	\lim_{\atop{x \to 0}{t \psi(\norm{x}^{-1}) \to 0}} \frac{p(t, x)}{t \norm{x}^{-d} \psi(\norm{x}^{-1})} = 
	\calA_{d, \alpha},
\end{equation}
and
\begin{equation}
	\label{eq:6}
	\lim_{\atop{t \to 0^+}{t\psi(\norm{x}^{-1}) \to \infty}}
	\frac{p(t, x)}{p(t, 0)}
	=
	1,
\end{equation}
where
\begin{align*}
	\mathcal{A}_{d, \alpha}
	=
	\alpha 2^{\alpha -1} 
	\pi^{-d/2-1}
	\sin \left(\frac{\alpha \, \pi}{2}\right)
	\Gamma \left(\frac{\alpha}{2}\right)
	\Gamma \left(\frac{\alpha+ d}{2}\right).
\end{align*}
Moreover, the asymptotics \eqref{RV} implies that $\psi$ varies regularly at infinity with index $\alpha \in (0, 2)$. A similar result was proved also for $\psi$ regularly varying at the origin.

The natural question arises: what are the asymptotic behaviour and estimates of the semigroup for the endpoints
$\alpha\in \{0, 2\}$. Regarding estimates, the case $\alpha= 2$ was investigated for subordinate Brownian motions in
the recent paper \cite{mimica}. However, the case $\alpha= 0$ seems to be hardly studied. 
In \cite[Theorem 5.52]{MR2569321} there are estimates of $p(t, x)$ for small values of time variable for a geometric
stable process, that is when
\[
	\psi(x) = \log (1 + \norm{x}^\beta)
\]
for some $\beta \in (0, 2)$. To the authors best knowledge the asymptotics were never studied before. This article is an
attempt to fill that gap in the theory. We provide a solution for a large class of slowly varying symbols including
geometric stable and iterated geometric stable cases. Namely, we study processes with symbols belonging to de
Haan class $\Pi_\ell^\infty$ associated with a function $\ell$ slowly varying at infinity, for definition
see \eqref{eq:5}.

For an isotropic unimodal L\'{e}vy process $\mathbf{X} = (X_t : t \geq 0)$ on $\RR^d$ with the L\'{e}vy--Khintchine
exponent $\psi \in \Pi^\infty_\ell$, which is equivalent to regular variation with index $-d$ at the origin of the density
of L\'evy's measure of the process (see Theorem \ref{thm:5}), we derive several asymptotic results describing the behaviour
of the transition density $p(t, x) $ and the Green function $G(x)$. To be more precise, in Section \ref{sec:3}, we prove
that
\begin{equation}
	\label{SV}
	\lim_{\atop{x \to 0}{t \psi(\norm{x}^{-1}) \to 0}}
	\frac{p(t, x)}{t \norm{x}^{-d} \ell(\norm{x}^{-1})}
	=
	\frac{\Gamma(d/2)}{2\pi^{d/2}}.
\end{equation}
We also study $\nu$ the density of the L\'evy measure of the process $\mathbf{X}$. In particular, \eqref{SV} implies
that
\[
	\lim_{\atop{x \to 0}{t \psi(\norm{x}^{-1}) \to 0}}
	\frac{p(t, x)}{t \nu(x)}
	=
	1.
\]
For the Green function $G$ of $\mathbf{X}$, if the process is transient (what always holds if $d\geq 3$),  we show that
\begin{align}\label{eq:35}
	\lim _{x \to 0} \frac{G(x)}{\norm{x}^{-d} \psi(\norm{x}^{-1})^{-2} \ell(\norm{x}^{-1})}
	=
	\frac{\Gamma(d/2)}{2 \pi^{d/2}}.
\end{align}
In particular, \eqref{eq:35} extends the result of \cite{MR2240700} where the geometric stable and iterated geometric
stable cases were studied. We also investigate the asymptotics of $p(t, x)$ when $t \psi(\norm{x}^{-1})$ gets large provided $\ell$ is a slowly 
varying bounded function. In Corollary \ref{cor:2} we show that
\begin{equation}
	\label{eq:1}
	\lim_{\atop{t \to 0^+}{t\psi(\norm{x}^{-1}) \to \infty}}
	\frac{p(t, x)}{t \norm{x}^{-d} \ell(\norm{x}^{-1}) e^{-t \psi(\norm{x}^{-1})}}
	=
	\frac{\Gamma(d/2)}{2 \pi^{d/2}}.
\end{equation}
We want to emphasize that if $t$ is sufficiently small then $p(t, 0)$ may be \emph{infinite}. This new phenomena requires
an additional factor not present in the case when $\psi$ is regularly varying at infinity with index strictly
larger than zero.

We also obtain the corresponding results when the L\'evy--Khintchine exponent $\psi$ belongs to $\Pi_\ell^0$ for some
function $\ell$ slowly varying at zero. Namely,
\[
	\lim_{\atop{\norm{x} \to \infty}{t \psi(\norm{x}^{-1}) \to 0}}
	\frac{p(t, x)}{t \norm{x}^{-d} \ell(\norm{x}^{-1})}
	=
	\frac{\Gamma(d/2)}{2\pi^{d/2}},
\]
and
\begin{equation*}
	\lim_{\norm{x} \to \infty} 
	\frac{G(x)}{\norm{x}^{-d} \psi(\norm{x}^{-1})^{-2} \ell(\norm{x}^{-1})}
	=
	\frac{\Gamma(d/2)}{2 \pi^{d/2}}.
\end{equation*}

Moreover, we prove several estimates of the transition density and the Green function under various assumptions
for unimodal and isotropic processes which improve the existing results for subordinate Brownian motions.

If the L\'{e}vy--Khintchine exponent $\psi$ belongs to de Haan class associated with a bounded function $\ell$ slowly
varying at infinity we show very precise two-sided estimates which complement the results obtained in \cite{MR3165234}.
Namely, we show that there are positive $r_0$ and $t_0$ such that for all $\norm{x} \leq r_0$ and $t \in (0, t_0)$
\footnote{$A(x) \asymp B(x)$ for $x \in I$ means that there $C \geq 1$ such that
$C^{-1} B(x) \leq A(x) \leq C B(x)$ for all $x \in I$}
\begin{equation*}
	p(t, x)
	\asymp
	t \norm{x}^{-d} \ell(\norm{x}^{-1}) e^{-t \psi(\norm{x}^{-1})}.
\end{equation*}
To our best knowledge this is the first result of this type when the L\'{e}vy--Khintchine exponent is slowly varying at
infinity

There are many articles exhibiting the local behaviour of the transition density of L\'evy processes for small time
and space variables which are valid under certain assumptions on the density of the L\'evy measure, see e.g.
\cite{MR2492992, MR898496, MR2806700, MR2357678, MR3357585, MR2886383}. Namely, if $\nu(x) |x|^{d}$ is comparable at
the origin with a decreasing $V(|x|)$ having the Matuszewska index at the origin strictly between $0$ and $2$ then 
there are positive $r_0$ and $t_0$ such that for all $\norm{x} \leq r_0$ and $t \in (0, t_0)$
\begin{equation*}	
	p(t, x) \asymp \min\big\{t \norm{x}^{-d}V(|x|), p(t,0)\big\}.
\end{equation*}
One of our contributions is to discover that for processes with the L\'{e}vy--Khintchine exponent belonging to de Haan
class associated with a bounded function slowly varying at infinity the r\^ole of $p(t,0)$ is taken by
$t \norm{x}^{-d} \ell(\norm{x}^{-1}) e^{-t \psi(\norm{x}^{-1})}$. To illustrate the results we provide examples
with slowly varying symbols to which our method applies. These examples suggest that a unified form of estimates
may be difficult to discover within the class of processes with slowly varying symbols since the estimates exhibit
quite a lot of irregularity.   
	
In the case of subordinate Brownian motions we prove some extensions of the results of \cite{MR3165234} where
estimates of $p(t,x)$ were obtained in terms of the symbol $\psi$ of the process under weak scaling assumptions.
Namely, $\psi$ satisfies lower and upper scaling conditions at infinity
with indices strictly between $0$ and $2$ if and only if there is $r_0$ such that for all $t > 0$ and $\norm{x} \leq r_0$,
if $t \psi(\norm{x}^{-1}) \leq 1$ then
\begin{equation}
	\label{BGR}
	p(t, x)\asymp \frac{t\psi(|x|^{-1})}{|x|^{d}}.
\end{equation}
Let us recall that for a subordinate Brownian motion we have $\psi(x)= \varphi(|x|^2)$ where $\varphi$ is a Bernstein
function which determines the distribution of the underlying subordinator via Laplace transform. It turns out that
suitable weak scaling properties imposed on the derivative  $\varphi^\prime$ lead to the following extension of
\eqref{BGR}, there is $r_0 > 0$ such that for all $t > 0$ and $\norm{x} < r_0$, if $t \varphi(\norm{x}^{-2}) \leq 1$ then
\begin{equation}
	\label{eq:37}
	p(t, x)
	\asymp 
	\frac{t\varphi'(|x|^{-2})}{|x|^{d+2}}.
\end{equation}
The last result is particularly interesting when $\psi$ is slowly varying at infinity since \eqref{BGR} can not hold
in this case. Moreover, under some additional assumptions on the subordinator we can also show the converse. That is,
if \eqref{eq:37} is satisfied then $\varphi^\prime$ has some scaling properties.

For subordinate Brownian motion, the case when $\psi$ is regularly varying at $\infty$ with index $2$, a function
$H(r)= \varphi(r)- r\varphi'(r)$ plays a similar r\^ole as $\varphi'$ in this article. For details we refer to
the recent paper \cite{mimica} where the estimates were obtained under suitable scaling assumptions of $H$.

We can conclude that our present paper together with \cite{mimica} shed some light on the situation when the
L\'{e}vy--Khintchine exponent $\psi$ is slowly varying or $2$-regularly varying at infinity. These cases seem to be more
complicated and still far from being completely understood.
	
The paper is organized as follows. In Section \ref{sec:2}, we recall some basic facts regarding unimodal isotropic L\'{e}vy
processes. In Section \ref{sec:3}, we present an extension of uniform Tauberian theorems we apply to derive asymptotics of $p(t,x)$
under the assumption that the L\'{e}vy--Khintchine exponent is slowly varying and belongs to de Haan class. We also study
the asymptotics of the Green function at the origin and at infinity. In Section \ref{sec:5}, we derive general upper and lower bounds
for $p(t,x)$ and the Green function under various assumptions, usually involving scaling properties of some functions
determined by $\psi$. Next, for subordinate Brownian motion we show two-sided estimates of the resolvent kernel
$G^\lambda$ under assumptions on scaling properties of the derivative of the Laplace exponent of the subordinator.
Then we use the obtained estimates to derive estimates of $p(t,x)$. Finally, in Section \ref{sec:6} we give examples
illustrating the behaviour of $p(t,x)$ for processes with slowly varying L\'{e}vy--Khintchine exponents.

\section{Preliminaries}
\label{sec:2}
Let $\mathbf{X}=(X_t : t \geq 0)$ be an isotropic pure jump L\'{e}vy process in $\RR ^d$, i.e. $\mathbf{X}$ is
a c\'{a}dl\'{a}g stochastic process with a distribution denoted by $\PP$ such that $X_0=0$ almost surely, the increments
of $\mathbf{X}$ are independent with a radial distribution $p(t, \,\cdot\,)$ on $\RR ^d\setminus \{0\}$. This is
equivalent with radiality of the L\'{e}vy measure $\nu$ and the L\'{e}vy--Khintchine exponent $\psi$. In particular,
the characteristic function of $X_t$ has a form
\begin{align}
	\label{eq:29}
	\int_{\RR^d} e^{i \sprod{\xi}{x}} p(t, {\rm d} x) = e^{-t\psi (\xi)}
\end{align} 
where
\begin{equation}
	\label{eq:30}
	\psi (\xi) = \int_{\RR ^d} \big(1-\cos\sprod{\xi}{x}\big) \: \nu ({\rm d}x).
\end{equation}
We are going to abuse the notation by setting $\psi(r)$ for $r \ge 0$ to be equal to $\psi(\xi)$ for any $\xi \in \RR^d$
with $\norm{\xi} = r$. The same rule applies to any radial function appearing in this paper. Since the function $\psi$
is not necessarily radially monotonic, it is conveniently to work with $\psi^*$ defined for $u \geq 0$ by 
\begin{equation*}
	\psi^*(u) = \sup_{s \in [0, u]} \psi(s).
\end{equation*}
Let us recall that for $r, u \geq 0$ (see \cite[Theorem 2.7]{WHoh})
\begin{equation}
	\label{eq:31}
	\psi (ru)
	\leq 
	\psi^*(ru)
	\leq 2(r^2+1)
	\psi^*(u).
\end{equation}

A Borel measure $\mu$ is isotropic \emph{unimodal} if it is absolutely continuous on $\RR^d \setminus \{0\}$
with a radial and radially non-increasing density. A L\'{e}vy process $\mathbf X$ is isotropic unimodal if
$p(t, \: \cdot \:)$ is isotropic unimodal for each $t > 0$. We consider a subclass of isotropic processes
consisting of isotropic unimodal L\'{e}vy processes. They were characterized by Watanabe in \cite{Watanabe} as those
having the isotropic unimodal L\'{e}vy measure. Hence, the L\'evy measure is absolutely continuous with respect to
the Lebesgue measure. Its density, denoted by $\nu(x)=\nu(\norm{x})$, is radially non-increasing.
A remarkable property of these processes is (see \cite[Proposition 2]{MR3165234})
\begin{equation}
	\label{eq:32}
	\psi^*(u) \leq \pi^2 \psi(u)
\end{equation}
for all $u \geq 0$.

In this article,  we mainly consider isotropic unimodal L\'{e}vy processes with the L\'{e}vy--Khintchine exponent $\psi$
slowly varying at infinity or zero. Let us recall that a function $\ell: [a, \infty) \rightarrow [0, \infty)$,
for some $a > 0$, is called \emph{slowly varying at infinity} if for each $\lambda > 0$
\[	
	\lim_{x \to \infty} \frac{\ell(\lambda x)}{\ell(x)} = 1.
\]
The set of functions slowly varying at infinity is denoted by $\calR^\infty_0$. A positive function $\ell$ defined
in a right neighbourhood of zero is slowly varying at zero if $\ell(x^{-1})$ is in $\calR^\infty_0$. The set of functions
slowly varying at zero is denoted by $\calR^0_0$.

The following property of a function slowly varying at infinity appears to be very useful (see \cite{pot}, see also
\cite[Theorem 1.5.6]{bgt}). For every $C > 1$ and $\epsilon > 0$ there is $x_0 \geq a$ such that for all
$x, y \geq x_0$
\begin{equation}
	\label{eq:14}
	\ell(x) \leq C \ell(y) \max\{x/y, y/x\}^\epsilon.
\end{equation}
Given a function $\ell$ slowly varying at infinity, by $\Pi^\infty_\ell$ we denote a class of functions
$f: [a, \infty) \rightarrow [0, \infty)$ such that for all $\lambda > 0$
\begin{equation}
	\label{eq:5}
	\lim_{x \to \infty} \frac{f(\lambda x) - f(x)}{\ell(x)} = \log \lambda.
\end{equation}
The collection  $\Pi^\infty_\ell$ is called \emph{de Haan class at infinity} determined by $\ell$.  Similarly we define de Haan class at the origin for $\ell\in \calR^0_0$.  A positive function $f$ defined
in a right neighbourhood of zero belongs to  $\Pi^0_\ell$  if \begin{equation}
	\label{eq:5a}
	\lim_{x \to 0^+} \frac{f(\lambda x) - f(x)}{\ell(x)} = \log \lambda,\quad \lambda>0.
\end{equation}

In Sections \ref{sec:4} and \ref{sec:5}, we consider a class of functions larger than $\calR^\infty_0$. Namely, we say
that $f: [0, \infty) \rightarrow [0, \infty)$ satisfies weak lower scaling condition at infinity if there are
$\alpha \in \RR$, $x_0 \geq 0$ and $c \in (0, 1]$ such that for all $\lambda > 1$ and $x > x_0$
\[
	f(\lambda x) \geq c \lambda^\alpha f(x).
\]
Then we write $f \in \WLSC{\alpha}{x_0}{c}$. Similarly, $f$ satisfies the weak upper scaling condition at infinity
if there are $\beta \in \RR$, $x_0 \geq 0$ and $C \in [1, \infty)$ such that for all $\lambda \geq 1$ and
$x > x_0$
\[
	f(\lambda x) \leq C \lambda^\beta f(x).
\]
Then $f \in \WUSC{\beta}{x_0}{C}$. 

Finally, for a function $f: [0, \infty) \rightarrow \RR$ its Laplace transform is defined by
\[
	\calL f(\lambda) = \int_0^{\infty} e^{-\lambda x} f(x) {\: \rm d}x,
\]
whereas the Laplace--Stieltjes transform
\[
	\calL\{{\rm d} f\}(\lambda) = \int_{[0, \infty)} e^{-\lambda x} {\: \rm d} f(x).
\]
We have $\lambda \calL f (\lambda) = \calL \{{\rm d} f\}(\lambda)$.

\section{Asymptotics for heat kernels and Green functions}
\label{sec:3}
In this section we consider an isotropic unimodal L\'{e}vy process $\mathbf X = \big(X_t : t \geq 0\big)$ on $\RR^d$
having the L\'{e}vy--Khintchine exponent $\psi$ in $\Pi_\ell^\infty$ for some function $\ell$ slowly varying at
infinity. We describe an asymptotic behaviour of the transition density $p(t, x)$ close to
the origin when $t \psi\big(\norm{x}^{-1}\big)$ is small (see Theorem \ref{thm:2}). In the case when $\ell$ is
bounded we also give the asymptotics at the origin as $t \psi\big(\norm{x}^{-1}\big)$ gets large (see Theorem \ref{thm:3}).

\subsection{Tauberian theorem}
We start by proving a slightly more general Tauberian theorem.

\begin{theorem}
	\label{thm:1}
	Let $\{Q_t : t \geq 0\}$ be a family of non-decreasing and non-negative functions on $[0, \infty)$
	such that there are two families of positive functions $\{q_t : t \geq 0\}$ and $\{w_t : t \geq 0\}$
	satisfying
	\begin{equation}
		\label{eq:2}
		\lim_{\atop{\lambda \to \infty}{w_t(\lambda) \to 0}} 
		\frac{\lambda \calL\{{\rm d} Q_t\}(\lambda)}{q_t(\lambda)} = 1.
	\end{equation}
	We assume that
	\begin{enumerate}
		\item there are $C, a > 0$ such that for all $x > 0$ and $t \geq 0$
		\[
			\frac{Q_t(x)}{x} \leq 
			\begin{cases}
				C q_t(x^{-1}) & \text{if}\quad 0 < x \leq a^{-1},\\
				C q_t(a) & \text{otherwise};
			\end{cases}
		\]
		\item there are $C, a > 0$ and $\eta \in (0, 1)$ such that for all $x, \lambda > 0$ and $t \geq 0$ if
		$x \geq a$ and $\lambda x \geq a$ then
		\[
			q_t(\lambda x) \leq C q_t(x) \max\{\lambda, \lambda^{-1}\}^{\eta};
		\]
		\item there is $\rho \geq 0$ such that for all $\lambda > 0$
		\[
		\lim_{\atop{x \to \infty}{w_t(x) \to 0}} \frac{q_t(\lambda x)}{q_t(x)} = \lambda^\rho.
		\]
	\end{enumerate}
	Then
	\begin{equation}
		\label{eq:3}
		\lim_{\atop{r \to 0}{w_t(r^{-1}) \to 0}} \frac{Q_t(r)}{r q_t(r^{-1})} = \frac{1}{\Gamma(\rho+2)}.
	\end{equation}
\end{theorem}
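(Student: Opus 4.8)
The plan is to establish a uniform-in-$t$ form of Karamata's Tauberian theorem; the parameter $t$ is merely carried along, every estimate below depending only on the structural constants $C,a,\eta$ and on $\rho$, never on $t$ itself, which is what produces the uniform limit \eqref{eq:3}. We may assume $Q_t(0)=0$, since otherwise the quotient in \eqref{eq:3} cannot converge to a finite limit. For $r>0$ with $r^{-1}$ large put
\[
	U_{r,t}(u) = \frac{Q_t(ru)}{r\, q_t(r^{-1})}, \qquad u\ge 0.
\]
Then each $U_{r,t}$ is non-decreasing, $U_{r,t}(0)=0$, the value $U_{r,t}(1)$ is precisely the quotient appearing in \eqref{eq:3}, and a change of variables $x=ru$ in the Laplace--Stieltjes integral gives, for $s>0$,
\[
	\int_0^\infty e^{-su}\,{\rm d}U_{r,t}(u) = \frac{\calL\{{\rm d}Q_t\}(s/r)}{r\, q_t(r^{-1})} = \frac1s\cdot\frac{(s/r)\,\calL\{{\rm d}Q_t\}(s/r)}{q_t(s/r)}\cdot\frac{q_t(s/r)}{q_t(r^{-1})}.
\]
It therefore suffices to show that, along the joint limit $r\to 0$, $w_t(r^{-1})\to 0$, the functions $U_{r,t}$ converge to an explicit profile, and then to read off its value at $u=1$.

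First I compute the limit of the rescaled transforms. Fix $s>0$. Along $r\to 0$ with $w_t(r^{-1})\to 0$ one has $s/r\to\infty$ and also $w_t(s/r)\to 0$; the latter is a mild doubling-type regularity of the family $\{w_t\}$ which, in the applications, follows from \eqref{eq:31}. Hence the middle factor above tends to $1$ by the hypothesis \eqref{eq:2}, while the last factor tends to $s^\rho$ by assumption (iii) applied with $x=r^{-1}$. Consequently $\int_0^\infty e^{-su}\,{\rm d}U_{r,t}(u)\to s^{\rho-1}$ for every $s>0$, and $s\mapsto s^{\rho-1}$ is the Laplace--Stieltjes transform of the profile $u\mapsto u^{1-\rho}/\Gamma(2-\rho)$.

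Next I produce the a priori bounds that make $\{U_{r,t}\}$ precompact, uniformly in $(r,t)$. For $ru\le a^{-1}$, assumption (i) gives $U_{r,t}(u)\le C\,u\, q_t\big((ru)^{-1}\big)/q_t(r^{-1})$, and then assumption (ii) yields $U_{r,t}(u)\le C^2\, u\max\{u,u^{-1}\}^\eta$, a bound uniform in $(r,t)$ on every fixed compact set of $u$'s. For $ru>a^{-1}$ the other branch of (i) together with (ii) gives $U_{r,t}(u)\le C^2\, u\,(ar)^{-\eta}$; since this range is $u>a^{-1}r^{-1}\to\infty$, an integration by parts shows its contribution to $\int e^{-su}\,{\rm d}U_{r,t}$ is $O\big((ar)^{-\eta}r^{-1}e^{-sa^{-1}r^{-1}}\big)\to 0$. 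Thus the $U_{r,t}$ are locally bounded uniformly in $(r,t)$ and the measures $e^{-su}\,{\rm d}U_{r,t}$ are uniformly tight at infinity. Now suppose \eqref{eq:3} failed; there would be $\epsilon>0$ and sequences $r_k\downarrow 0$, $t_k$ with $w_{t_k}(r_k^{-1})\to 0$ but $U_{r_k,t_k}(1)$ staying at distance $\ge\epsilon$ from $1/\Gamma(2-\rho)$. By Helly's selection theorem, along a subsequence $U_{r_k,t_k}\to U_\infty$ vaguely with $U_\infty$ non-decreasing; by the uniform tightness $\int e^{-su}\,{\rm d}U_{r_k,t_k}(u)\to\int e^{-su}\,{\rm d}U_\infty(u)$ for each $s>0$, so this last integral equals $s^{\rho-1}$. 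Uniqueness of Laplace--Stieltjes transforms then forces $U_\infty(u)=u^{1-\rho}/\Gamma(2-\rho)$, which is continuous at $u=1$; hence $U_{r_k,t_k}(1)\to U_\infty(1)=1/\Gamma(2-\rho)$, a contradiction.

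I expect the genuine difficulty to lie in this last, Tauberian, step: converting the integrated information $\int e^{-su}\,{\rm d}U_{r,t}\to s^{\rho-1}$ into the pointwise conclusion $U_{r,t}(1)\to 1/\Gamma(2-\rho)$ \emph{uniformly} in $t$. This is exactly where monotonicity of $Q_t$ (via Helly), the a priori estimate in assumption (i) (via local precompactness), and the weak scaling of $q_t$ in assumption (ii) (via uniform control of the tail of the Laplace--Stieltjes integral) are all needed simultaneously; a secondary technicality is transferring the smallness of $w_t$ from the scale $r^{-1}$ to the nearby scales $s/r$ that are used to determine the limiting profile.
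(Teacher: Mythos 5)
Your strategy is the same as the paper's in substance: rescale, compute the limit of $\lambda\calL\{{\rm d}Q_t\}(\lambda)/q_t(\lambda)$ at the dilated scale via \eqref{eq:2} and (iii), use (i)--(ii) for uniform a priori bounds, and then pass from convergence of Laplace transforms to pointwise convergence at $u=1$. You implement this last Tauberian step by Helly selection, uniqueness of the Laplace--Stieltjes transform and continuity of the limit profile; the paper instead proves equicontinuity of the functionals $\Lambda_{t,r}$ on $\calS\big([0,\infty)\big)$, uses density of the span of exponentials, and tests against smooth approximations $\phi_\pm$ of $\ind{[0,1]}$. Both are legitimate uniformizations of Karamata's theorem; yours is more elementary, the paper's avoids subsequence extraction. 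Your observation that one must transfer the smallness of $w_t$ from the scale $r^{-1}$ to $\tau r^{-1}$ is a genuine point, but the paper makes exactly the same silent step, and in the applications it is covered by \eqref{eq:31}--\eqref{eq:32}; so this is a shared imprecision of the hypotheses rather than a defect of your argument.

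The one substantive discrepancy is the constant: you obtain $1/\Gamma(2-\rho)$ while the statement asserts $1/\Gamma(\rho+2)$, and your value is the correct one. To check: take $Q(x)=x^{\sigma}$ with $\sigma\in(0,1)$ and $q(\lambda)=\Gamma(\sigma+1)\lambda^{1-\sigma}$, so that \eqref{eq:2} holds with equality and (iii) holds with $\rho=1-\sigma$; then $Q(r)/\big(r\,q(r^{-1})\big)=1/\Gamma(\sigma+1)=1/\Gamma(2-\rho)\neq 1/\Gamma(\rho+2)$. The slip in the paper's proof occurs in the factorization of $\Lambda_{t,r}(f_\tau)$, where \eqref{eq:2} is applied with $q_t(r^{-1}\tau^{-1})$ in the denominator instead of $q_t(\tau r^{-1})$; these differ asymptotically by $\tau^{2\rho}$, and correcting it replaces $\tau^{-\rho-2}$ by $\tau^{\rho-2}$ in \eqref{eq:48}, i.e.\ gives precisely your profile $x^{1-\rho}/\Gamma(2-\rho)$. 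Every application in the paper has $\rho=0$ (there $q_t$ is built from a slowly varying $\ell$, or from $\ell\cdot e^{-t\psi}$ in Theorem \ref{thm:8}), where the two constants coincide, so nothing downstream is affected; but as a proof of the statement as literally written for general $\rho$, what you have proved is the corrected version. A last cosmetic point: both profiles must be non-decreasing, which implicitly forces $\rho\le 1$; this is automatic from \eqref{eq:2} and the monotonicity of $\lambda\mapsto\calL\{{\rm d}Q_t\}(\lambda)$, so it costs nothing, but is worth a sentence if you write this up.
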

\begin{proof}
	For any pair $(t, r) \in [0, \infty) \times (0, a^{-1})$ we define a tempered distribution $\Lambda_{t, r}$
	by setting for $f \in \calS\big([0, \infty)\big)$
	\[
		\Lambda_{t, r}(f)
		=
		\frac{1}{r q_t(r^{-1})} \int_0^{\infty}
		f(x)
		Q_t(x r)
		{\: \rm d}x.
	\]
	Let us recall that the space $\calS\big([0, \infty)\big)$ consists of Schwartz functions on $\RR$ restricted to
	$[0, \infty)$ and $\calS^\prime \big([0, \infty)\big)$ consists of tempered distributions which are supported
	by $[0,\infty )$ (see \cite{vlad} for details).
	
	The upper bounds on $Q_t$ imply
	\begin{align*}
		\Lambda_{t,r}(f)
		& \leq 
		C \int_0^{\frac{1}{a r}} \frac{q_t(x^{-1} r^{-1})}{q_t(r^{-1})} x \abs{f(x)} {\: \rm d}x
		+ C \int_{\frac{1}{a r}}^{\infty} \frac{q_t(a)}{q_t(r^{-1})} x \abs{f(x)} {\: \rm d}x.
	\end{align*}
	By (ii) we have
	\[
		\int_0^{\frac{1}{a r}} \frac{q_t(x^{-1} r^{-1})}{q_t(r^{-1})} x \abs{f(x)} {\: \rm d}x
		\leq
		C
		\int_0^{\frac{1}{a r}} \max\{x^{1-\eta}, x^{1+\eta}\} \abs{f(x)} {\: \rm d}x,
	\]
	and
	\begin{align*}
		\int_{\frac{1}{a r}}^{\infty} \frac{q_t(a)}{q_t(r^{-1})} x \abs{f(x)} {\: \rm d}x
		& \leq
		C
		\int_{\frac{1}{a r}}^{\infty} (a r)^{-\eta} x \abs{f(x)} {\: \rm d}x \\
		& \leq	
		C
		\int_{\frac{1}{a r}}^{\infty} x^{1+\eta} \abs{f(x)} {\: \rm d}x.
	\end{align*}
	Hence, we may estimate
	\[
		\Lambda_{t, r}(f) \leq C \sup_{x \in [0, \infty)} (1+x^2) \abs{f(x)},
	\]
	i.e. the family $\big\{\Lambda _{t,r}: (t,r) \in [0, \infty) \times (0, a^{-1})\big\}$ is equicontinuous.

	Next, for $\tau > 0$, we consider a function $f_\tau(x) = e^{-\tau x}$. We have
	\begin{align*}
		\Lambda_{t, r} (f_\tau) 
		& = \frac{1}{r q_t(r^{-1})} \int_0^{\infty} e^{-\tau x} Q_t(x r) {\: \rm d}x \\
		& = \frac{1}{r^2 q_t(r^{-1})} \calL Q_t(\tau r^{-1}) \\
		& = \frac{1}{r \tau q_t(r^{-1})} \calL\{{\rm d} Q_t\}(\tau r^{-1}) \\
		& = 
		\frac{1}{\tau^2}
		\cdot
		\frac{q_t(r^{-1} \tau^{-1})}{q_t(r^{-1})} 
		\cdot
		\frac{\tau r^{-1} \calL\{{\rm d} Q_t\} (\tau r^{-1})}{q_t(r^{-1} \tau^{-1})}.
	\end{align*}
	Therefore, by \eqref{eq:2} and (iii), we obtain
	\begin{equation}
		\label{eq:48}
		\lim_{\atop{r \to 0}{w_t(r^{-1}) \to 0}} 
		\Lambda_{t, r}(f_\tau)
		=
		\tau^{-\rho - 2}
		=
		\frac{1}{\Gamma(\rho+2)}
		\int_0^{\infty} e^{-\tau x} x^{\rho+1}{\: \rm d}x.
	\end{equation}
	Let $\calB$ be the linear span of the set $\{f_\tau : \tau >0\}$.  Since $\calB$ is dense in
	$\calS\big([0, \infty)\big)$ and the family $\big\{\Lambda _{t,r}: (t,r) \in [0, \infty) \times (0, a^{-1})\big\}$
	is equicontinuous on $\calS\big([0, \infty)\big)$, from \eqref{eq:48} we obtain that for any
	$f \in \calS\big([0, \infty)\big)$,
	\[
		\lim_{\atop{r \to 0}{w_t(r^{-1}) \to 0}}
		\Lambda_{t, r}(f)
		=
		\frac{1}{\Gamma(\rho+2)} \int_0^{\infty} f(x) x^{\rho+1} {\: \rm d}x.
	\]
	To conclude the proof of theorem we consider a specific function $f$. For a given $\epsilon > 0$, let
	$\phi_+ \in \calS\big([0, \infty)\big)$ be such that $0 \leq \phi_+ \leq 1$ and 
	\[
		\phi_+(x) =
		\begin{cases}
			1 & \text{ for } 0 \leq x \leq 1, \\
			0 & \text{ for } 1+\varepsilon \leq x.
		\end{cases}
	\]
	Then
	\begin{align*}
		\frac{Q_t(r)}{r q_t\big(r^{-1}\big)}
		&\leq 
		\frac{1}{r q_t\big(r^{-1}\big)} \int_{[0, r]}  \phi_+(s/r) {\: \rm d} Q_t(s) \\
		& \leq
		\frac{1}{r q_t\big(r^{-1}\big)} \int_{[0, \infty]}  \phi_+(s/r) {\: \rm d} Q_t(s),
	\end{align*}
	thus
	\[
	\frac{Q_t(r)}{r q_t \big(r^{-1}\big)}  \leq - \Lambda_{t, r}(\phi_+').
	\]
	Hence,
	\begin{align*}
		\limsup_{\atop{r \to 0}{w_t(r^{-1}) \to 0}}
		\frac{Q_t(r)}{r q_t \big(r^{-1}\big)}
		&\leq 
		-\frac{1}{\Gamma(\rho+2)} \int_0^{\infty} \phi_+'(x) x^{\rho+1} {\: \rm d}x \\
		& =
		\frac{1}{\Gamma(\rho+1)} \int_0^{\infty} \phi_+(x) x^{\rho} {\: \rm d}x \\
		& \leq
		\frac{(1+\varepsilon)^{\rho+1}}{\Gamma(\rho+2)}.
	\end{align*}
	Similarly, taking $\phi_- \in \calS\big([0, \infty)\big)$ such that $0 \leq \phi_- \leq 1$ and
	\[
		\phi_-(x) =
		\begin{cases}
			1 & \text{ for } 0 \leq x \leq 1-\varepsilon, \\
			0 & \text{ for }1 \leq x,
		\end{cases}
	\]
	we can show that
	\[
		\liminf_{\atop{r \to 0}{w_t(r^{-1}) \to 0}} 
		\frac{Q_t(r)}{r q_t\big(r^{-1}\big)}
		\geq \frac{(1 - \varepsilon)^{\rho+1}}{\Gamma(\rho+2)},
	\]
	which ends the proof.
\end{proof}

Essentially the same proof gives the following theorem.
\begin{theorem}
	\label{thm:8}
	Let $\{Q_t : t \geq 0\}$ be a family of non-decreasing and non-negative functions on $[0, \infty)$
	such that there are two families of positive functions $\{q_t : t \geq 0\}$ and $\{w_t : t \geq 0\}$
	satisfying
	\[
		\lim_{\atop{t \to 0^+}{w_t(\lambda) \to \infty}} 
		\frac{\lambda \calL\{{\rm d} Q_t\}(\lambda)}{q_t(\lambda)} = 1.
	\]
	We assume that
	\begin{enumerate}
		\item there are $C, a, \epsilon > 0$ such that for all $x > 0$ and $t \in (0, \epsilon)$
		\[
			\frac{Q_t(x)}{x} \leq 
			\begin{cases}
				C q_t(x^{-1}) & \text{if}\quad 0 < x \leq a^{-1},\\
				C q_t(a) & \text{otherwise};
			\end{cases}
		\]
		\item there are $C, a, \epsilon > 0$ and $\delta \in (0, 1)$ such that for all $x, \lambda > 0$ if
		$x \geq a$, $\lambda x \geq a$ and $t \in (0, \epsilon)$ then
		\[
			q_t(\lambda x) \leq C q_t(x) \max\{\lambda, \lambda^{-1}\}^{\delta};
		\]
		\item there is $\rho \geq 0$ such that for all $\lambda > 0$
		\[
		\lim_{\atop{t \to 0^+}{w_t(x) \to \infty}} \frac{q_t(\lambda x)}{q_t(x)} = \lambda^\rho.
		\]
	\end{enumerate}
	Then
	\[
		\lim_{\atop{t \to 0^+}{w_t(r^{-1}) \to \infty}} \frac{Q_t(r)}{r q_t(r^{-1})} = \frac{1}{\Gamma(\rho+2)}.
	\]
\end{theorem}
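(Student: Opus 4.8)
The plan is to repeat, essentially word for word, the argument that proved Theorem~\ref{thm:1}, replacing the limiting regime ``$r \to 0$, $w_t(r^{-1}) \to 0$'' by ``$t \to 0^+$, $w_t(r^{-1}) \to \infty$'' throughout, and restricting the family index to $t \in (0, \epsilon)$ so that hypotheses (i)--(iii) of the present statement are available. For $(t, r) \in (0, \epsilon) \times (0, a^{-1})$ I would again introduce the tempered distribution
\[
	\Lambda_{t, r}(f) = \frac{1}{r q_t(r^{-1})} \int_0^\infty f(x) \, Q_t(x r) \, {\rm d}x, \qquad f \in \calS\big([0, \infty)\big),
\]
and first prove that $\big\{\Lambda_{t, r} : (t, r) \in (0, \epsilon) \times (0, a^{-1})\big\}$ is equicontinuous on $\calS\big([0, \infty)\big)$. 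This is the same computation as in Theorem~\ref{thm:1}: bound (i) splits the integral at $x = (ar)^{-1}$, the scaling estimate (ii) (now postulated for $t \in (0, \epsilon)$) dominates $q_t(x^{-1} r^{-1})/q_t(r^{-1})$ by $\max\{x, x^{-1}\}^\delta$ on the first piece and $q_t(a)/q_t(r^{-1})$ by $(ar)^{-\delta} \leq x^\delta$ on the second, whence $\Lambda_{t, r}(f) \leq C \sup_{x \geq 0}(1 + x^2) \abs{f(x)}$ uniformly in $(t, r)$.

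Next I would evaluate $\Lambda_{t, r}$ on the exponentials $f_\tau(x) = e^{-\tau x}$. The change of variable $x \mapsto x r$, the identity $\calL Q_t(\lambda) = \lambda^{-1} \calL\{{\rm d} Q_t\}(\lambda)$, and the same regrouping as before give
\[
	\Lambda_{t, r}(f_\tau) = \frac{1}{\tau^2} \cdot \frac{q_t(r^{-1} \tau^{-1})}{q_t(r^{-1})} \cdot \frac{\tau r^{-1} \calL\{{\rm d} Q_t\}(\tau r^{-1})}{q_t(r^{-1} \tau^{-1})}.
\]
Letting $t \to 0^+$ with $w_t(r^{-1}) \to \infty$, the hypothesis on $\calL\{{\rm d} Q_t\}$ (used with $\lambda = \tau r^{-1} \to \infty$) sends the last factor to $1$, while (iii) sends $q_t(r^{-1} \tau^{-1})/q_t(r^{-1})$ to $\tau^{-\rho}$, so that
\[
	\lim_{\atop{t \to 0^+}{w_t(r^{-1}) \to \infty}} \Lambda_{t, r}(f_\tau) = \tau^{-\rho - 2} = \frac{1}{\Gamma(\rho+2)} \int_0^\infty e^{-\tau x} x^{\rho+1} \, {\rm d}x.
\]
Since the linear span of $\{f_\tau : \tau > 0\}$ is dense in $\calS\big([0, \infty)\big)$ and the family $\Lambda_{t, r}$ is equicontinuous, the same limit holds with $e^{-\tau x}$ replaced by an arbitrary $f \in \calS\big([0, \infty)\big)$.

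To conclude I would run the same sandwiching: for $\varepsilon > 0$ pick $\phi_\pm \in \calS\big([0, \infty)\big)$ with $0 \leq \phi_\pm \leq 1$, $\phi_+ \equiv 1$ on $[0, 1]$, $\phi_+ \equiv 0$ on $[1 + \varepsilon, \infty)$, $\phi_- \equiv 1$ on $[0, 1 - \varepsilon]$, $\phi_- \equiv 0$ on $[1, \infty)$; monotonicity of $Q_t$ yields $-\Lambda_{t, r}(\phi_-') \leq Q_t(r)/\big(r q_t(r^{-1})\big) \leq -\Lambda_{t, r}(\phi_+')$, and taking the limit, integrating by parts, and letting $\varepsilon \to 0$ gives $\lim Q_t(r)/\big(r q_t(r^{-1})\big) = 1/\Gamma(\rho+2)$ along $t \to 0^+$, $w_t(r^{-1}) \to \infty$. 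I expect the only step needing genuine attention, rather than mechanical transcription, to be the passage to the limit on the exponentials: one must know that along any admissible approach ($t \to 0^+$ with $w_t(r^{-1}) \to \infty$) one also has $w_t(\tau r^{-1}) \to \infty$, so that the Laplace--Stieltjes hypothesis and (iii) can legitimately be invoked at the shifted arguments $\tau r^{-1}$ and $\tau^{-1} r^{-1}$. This is precisely the compatibility tacitly used in Theorem~\ref{thm:1} and raises no new difficulty, so beyond this the proof is a faithful copy of the earlier one.
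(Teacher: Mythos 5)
Your proposal is correct and is exactly what the paper intends: the paper proves Theorem~\ref{thm:8} by stating that ``essentially the same proof'' as for Theorem~\ref{thm:1} applies, with the limiting regime changed, which is precisely your transcription. The compatibility point you flag (that $w_t(\tau r^{-1})\to\infty$ along the admissible approach) is indeed tacitly assumed in both theorems and is harmless in the intended application where $w_t(r)=t\psi(\sqrt{r})$.
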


\subsection{Small $t \psi\big(\norm{x}^{-1}\big)$}
To get the asymptotics of $p(t, x)$ as $t \psi\big(\norm{x}^{-1}\big)$ approaches zero we are going to apply Theorem
\ref{thm:1} to
\begin{equation}
	\label{eq:16}
	Q_t(r) = \int_0^{\sqrt{r}} u^{d+1} p(t, u) {\: \rm d}u,
\end{equation}
with $q_t(r) = t \ell\big(\sqrt{r}\big)$ and $w_t(r) = t \psi\big(\sqrt{r}\big)$.
\begin{proposition}
	\label{prop:1}
	If $\psi \in \Pi_\ell^\infty$ then there are $C, \lambda_0 > 0$ such that
	\begin{equation}
		\label{eq:10}
		\frac{Q_t(\lambda)}{\lambda} \leq 
		\begin{cases}
		C t \ell(\lambda^{-1/2}) & \text{if} \quad 0 < \lambda \leq \lambda_0, \\
		C t \ell(\lambda_0^{-1/2}) & \text{otherwise.}
		\end{cases}
	\end{equation}
	Moreover,
	\begin{equation}
		\label{eq:9}
		\lim_{\atop{\lambda \to \infty}{t \psi(\sqrt{\lambda}) \to 0}} 
		\frac{\lambda \calL\{{\rm d} Q_t\}(\lambda)}{t \ell(\sqrt{\lambda})}
		= \frac{1}{2}.
	\end{equation}
\end{proposition}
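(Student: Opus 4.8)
My plan is to connect the quantity $Q_t(r)$ with the Laplace--Stieltjes transform that appears in \eqref{eq:9} and then to invoke the known asymptotics of $\psi$ and the hypothesis $\psi \in \Pi_\ell^\infty$. First I would compute $\calL\{\mathrm{d}Q_t\}(\lambda)$ explicitly. Since $Q_t(r)=\int_0^{\sqrt r} u^{d+1} p(t,u)\,\mathrm{d}u$, substituting $r = s^2$ gives $\mathrm{d}Q_t(s^2) = 2 s \cdot s^{d+1} p(t,s)\,\mathrm{d}s$, so
\[
	\lambda \calL\{\mathrm{d}Q_t\}(\lambda)
	= \lambda \int_0^\infty e^{-\lambda s^2} \cdot 2 s^{d+2} p(t,s)\,\mathrm{d}s.
\]
Up to a dimensional constant this is, after passing to $\RR^d$ in polar coordinates, of the form $c_d \lambda \int_{\RR^d} e^{-\lambda \norm{x}^2} \norm{x}^2 p(t,x)\,\mathrm{d}x$, which in turn one recognizes (using \eqref{eq:29}--\eqref{eq:30} and the Gaussian Fourier transform) as an integral against the characteristic function $e^{-t\psi(\xi)}$. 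Concretely, $\int_{\RR^d} e^{-\lambda\norm x^2} \norm x^2 p(t,x)\,\mathrm dx$ is a constant times $\lambda^{-d/2}$ times an average of $(\ldots)$ against $e^{-t\psi}$; carrying the $-\Delta_\xi$ coming from the factor $\norm x^2$ over to the Gaussian side, one gets after the dust settles an expression of the type
\[
	\lambda \calL\{\mathrm{d}Q_t\}(\lambda)
	= \text{const} \cdot \lambda^{-d/2} \int_{\RR^d} e^{-\norm\xi^2/(4\lambda)}\bigl(\tfrac{d}{2} - \tfrac{\norm\xi^2}{4\lambda}\bigr) e^{-t\psi(\xi)}\,\mathrm{d}\xi,
\]
so the problem reduces to controlling $\int_{\RR^d} e^{-\norm\xi^2/(4\lambda)} e^{-t\psi(\xi)}\,\mathrm{d}\xi$ and its weighted variant as $\lambda\to\infty$ with $t\psi(\sqrt\lambda)\to 0$.

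For the asymptotic \eqref{eq:9}, the regime $\lambda\to\infty$ means the Gaussian $e^{-\norm\xi^2/(4\lambda)}$ spreads out, while $t\psi(\sqrt\lambda)\to 0$ forces $t\psi$ to be small on the relevant range of $\xi$ (using \eqref{eq:31} and \eqref{eq:32} to transfer control of $\psi(\sqrt\lambda)$ to $\psi(\xi)$ for $\norm\xi \lesssim \sqrt\lambda$, and handling the tail $\norm\xi\gg\sqrt\lambda$ by the Gaussian decay). Writing $e^{-t\psi(\xi)} = 1 - (1-e^{-t\psi(\xi)})$ and $1-e^{-t\psi(\xi)}\le t\psi(\xi)\le t\psi^*(\xi)$, the leading term comes from replacing $e^{-t\psi(\xi)}$ by $1$, and one needs to show the correction is negligible \emph{relative to} $\ell(\sqrt\lambda)$ — this is where the de Haan hypothesis enters, since $\psi\in\Pi_\ell^\infty$ together with the representation theorem for de Haan functions gives $\psi(\xi)\sim$ (something) and more importantly lets us compare $\psi$-increments to $\ell$. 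Actually the cleanest route is: $\psi\in\Pi_\ell^\infty$ implies $\psi(\lambda^{1/2}) - \psi(\xi)$ is $O(\ell(\sqrt\lambda))$ uniformly for $\xi$ in fixed ratio to $\sqrt\lambda$, and $\psi$ itself is slowly varying (a de Haan function is), so $t\psi(\xi)\approx t\psi(\sqrt\lambda)\to 0$ throughout the bulk; expanding $e^{-t\psi(\xi)} \approx 1$ with error $O(t\psi(\sqrt\lambda))\cdot(\ldots)$ and noting $t\psi(\sqrt\lambda)/\ell(\sqrt\lambda)\to 0$ is false in general — rather one divides through and uses $t\psi(\sqrt\lambda)\to 0$ directly. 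The factor $t\ell(\sqrt\lambda)$ in the denominator of \eqref{eq:9} must therefore already be present in the leading term, meaning $\lambda^{-d/2}\int e^{-\norm\xi^2/(4\lambda)}(\tfrac d2 - \tfrac{\norm\xi^2}{4\lambda})\,\mathrm d\xi$ should itself be proportional to $\ell(\sqrt\lambda)$ up to the $t$; this cannot be right for a pure Gaussian, so in fact the $\ell$ must come from expanding $e^{-t\psi}$ to \emph{first order}: $1 - e^{-t\psi(\xi)} = t\psi(\xi) + O((t\psi(\xi))^2)$, and then $\int(\tfrac d2-\tfrac{\norm\xi^2}{4\lambda})\,\mathrm d\xi=0$ kills the constant, leaving $-t\int(\tfrac d2 - \tfrac{\norm\xi^2}{4\lambda})\psi(\xi)e^{-\norm\xi^2/(4\lambda)}\,\mathrm d\xi$ as the leading contribution, and writing $\psi(\xi) = \psi(\sqrt\lambda) + (\psi(\xi)-\psi(\sqrt\lambda))$ the $\psi(\sqrt\lambda)$ part again integrates against a mean-zero weight to give $0$, so only the de Haan increment $\psi(\xi)-\psi(\sqrt\lambda) \sim \ell(\sqrt\lambda)\log(\norm\xi/\sqrt\lambda)$ survives — producing exactly the factor $t\ell(\sqrt\lambda)$, with the constant $\tfrac12$ coming from the resulting explicit Gaussian integral $\lambda^{-d/2}\int_{\RR^d}(\tfrac d2 - \tfrac{\norm\xi^2}{4\lambda})(-\log(\norm\xi/\sqrt\lambda))e^{-\norm\xi^2/(4\lambda)}\,\mathrm d\xi$ after normalization. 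I expect the main obstacle to be making this ``integrate the de Haan increment against a Gaussian'' step rigorous with uniform error control: one needs the uniform version of \eqref{eq:5} (a uniform convergence theorem for de Haan classes, analogous to \eqref{eq:14}) to dominate $(\psi(\xi)-\psi(\sqrt\lambda))/\ell(\sqrt\lambda)$ by an integrable function of $\norm\xi/\sqrt\lambda$ on all of $\RR^d$, simultaneously over $t$ in the relevant regime, and to discard the tail $\norm\xi \gtrsim \sqrt\lambda$ where $\psi$ can be large but the Gaussian is exponentially small.

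For the upper bound \eqref{eq:10}, the argument is softer. For $0<\lambda\le\lambda_0$, I would bound $Q_t(\lambda)/\lambda = \lambda^{-1}\int_0^{\sqrt\lambda} u^{d+1}p(t,u)\,\mathrm du \le \lambda^{-1}\int_0^{\sqrt\lambda} u^{d+1}p(t,u)\,\mathrm du$ using unimodality: since $p(t,\cdot)$ is radially non-increasing, $u^d p(t,u) \lesssim \int_{\norm x \le u} p(t,x)\,\mathrm dx / (\ldots)$ is not quite what I want; instead, the standard device is $t\nu(x) \ge $ something or rather the Pruitt-type estimate $p(t,u) \lesssim t \nu(u) + (\text{Gaussian-type term})$ valid for unimodal Lévy processes, combined with $\nu(u)\asymp u^{-d}\ell(u^{-1})$ at the origin (which is Theorem \ref{thm:5}, referenced in the introduction as the equivalent of $\psi\in\Pi_\ell^\infty$). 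Then $\lambda^{-1}\int_0^{\sqrt\lambda} u^{d+1}\cdot t u^{-d}\ell(u^{-1})\,\mathrm du = t\lambda^{-1}\int_0^{\sqrt\lambda} u\,\ell(u^{-1})\,\mathrm du$, and by \eqref{eq:14} the slowly varying $\ell(u^{-1})$ is essentially constant on $[\tfrac12\sqrt\lambda, \sqrt\lambda]$ and the integral is dominated by its behaviour near $u=\sqrt\lambda$, giving $\lesssim t\,\ell(\lambda^{-1/2})$ after absorbing the usual slowly-varying-integral estimate (Karamata). For $\lambda > \lambda_0$ one simply notes $Q_t(\lambda)/\lambda \le Q_t(\lambda_0)/\lambda_0 \cdot (\lambda_0/\lambda) \cdot (\ldots)$ — more directly, $Q_t$ is nondecreasing so $Q_t(\lambda) = Q_t(\lambda_0) + \int_{\sqrt{\lambda_0}}^{\sqrt\lambda} u^{d+1}p(t,u)\,\mathrm du$, and the total mass bound $\int u^{d+1}p(t,u)\,\mathrm du$ over any compact range is finite and controlled, so $Q_t(\lambda)/\lambda \le Q_t(\lambda_0)/\lambda \le Q_t(\lambda_0)/\lambda_0 \le C t\ell(\lambda_0^{-1/2})$ provided $\lambda_0$ is fixed; care is needed because $Q_t(\lambda_0)$ could a priori be large, but the first-part estimate at $\lambda=\lambda_0$ already gives $Q_t(\lambda_0)/\lambda_0 \le Ct\ell(\lambda_0^{-1/2})$, which closes the loop. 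I would do the $\lambda\le\lambda_0$ case first and then deduce the $\lambda>\lambda_0$ case from it by monotonicity of $Q_t$.
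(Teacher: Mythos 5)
There are genuine gaps on both halves of the proposition.

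For the upper bound \eqref{eq:10} your plan is circular and, in its last step, incorrect. You propose to use $p(t,u)\lesssim t\nu(u)$ together with $\nu(u)\asymp u^{-d}\ell(u^{-1})$, citing Theorem \ref{thm:5}. But in this paper the implication $\psi\in\Pi_\ell^\infty\Rightarrow\nu(x)\sim c\norm{x}^{-d}\ell(\norm{x}^{-1})$ is obtained by passing through statement (ii) of Theorem \ref{thm:5}, i.e.\ through Theorem \ref{thm:2}, which itself rests on Proposition \ref{prop:1}; you cannot invoke it here. Moreover the pointwise bound $p(t,u)\lesssim t\nu(u)$ is not available — the general unimodal bound is Theorem \ref{GUB3}, in terms of $K_d$, and relating $K_d$ to $\ell$ again needs the asymptotics of $\nu$. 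Finally, your reduction of the case $\lambda>\lambda_0$ fails twice: monotonicity of $Q_t$ gives $Q_t(\lambda)\geq Q_t(\lambda_0)$, the opposite of the inequality $Q_t(\lambda)/\lambda\leq Q_t(\lambda_0)/\lambda$ you write, and the required bound $C t\,\ell(\lambda_0^{-1/2})$ carries a factor of $t$ that no total-mass estimate can produce (for small $t$ the mass bound is off by a factor $t^{-1}$). The paper instead bounds $Q_t(x)\leq e\,\calL\{{\rm d}Q_t\}(x^{-1})$, sandwiches $\calL\{{\rm d}Q_t\}$ between finite differences of $\calL\{{\rm d}U_t\}$ as in \eqref{eq:11}, and controls those differences for large $x$ by the elementary estimate $|e^{-t\psi(u)}-e^{-t\psi(u/\sqrt2)}|\leq 2t(1+r^2)\psi^*(x_0)$ coming from \eqref{eq:31}; this is where the factor $t$ comes from.

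For \eqref{eq:9} you have correctly identified the mechanism — a Gaussian representation of the transform, a mean-zero weight that kills the constant term and the $\psi(\sqrt\lambda)$ term, and a surviving de Haan increment producing $t\,\ell(\sqrt\lambda)$ — and this is close in spirit to the paper's argument. But the proof is not carried out. The step you yourself call ``the main obstacle,'' namely dominating $\big(e^{-t\psi(r\sqrt{\lambda})}-e^{-t\psi(r\sqrt{\lambda a})}\big)/\big(t\ell(\sqrt\lambda)\big)$ by an integrable function of $r$ uniformly in $(t,\lambda)$, is precisely the content of the paper's Claim \ref{clm:3}; it requires combining the Potter-type bound \eqref{eq:14} applied to the de Haan increments of $\psi$ with the $\psi^*$ estimates \eqref{eq:31}--\eqref{eq:32} to handle the region $r\sqrt\lambda\leq x_0$, and without it neither the dominated convergence nor the uniform bound \eqref{eq:8} (which also feeds into \eqref{eq:10}) is available. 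The paper also deliberately avoids your infinitesimal (differentiate-in-$\lambda$) formulation: it works with finite differences $\calL\{{\rm d}U_t\}(a\lambda)-\calL\{{\rm d}U_t\}(\lambda)$ and recovers $\calL\{{\rm d}Q_t\}$ by the monotonicity sandwich \eqref{eq:11}, letting $a\to1$ at the end, which sidesteps the need to justify interchanging differentiation with the double limit. A small computational point: $\calL\{{\rm d}Q_t\}(\lambda)=\int_0^\infty e^{-\lambda s^2}s^{d+1}p(t,s)\,{\rm d}s$; your substitution introduces a spurious extra factor $2s$.
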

\begin{proof}
	Let $a \geq 1$. First, we prove the following claim.
	\begin{claim}
		\label{clm:3}
		There are $C, \delta, x_0 > 0$ such that for all $r, \lambda > 0$ and $t \geq 0$, if
		$\sqrt{\lambda} \geq x_0$ then
		\begin{equation}
			\label{eq:7}
			\big|
			e^{-t \psi(r \sqrt{\lambda})} - e^{-t \psi(r \sqrt{\lambda a})}
			\big|
			\leq
			C t \ell(\sqrt{\lambda}) \max\{r, r^{-1}\}^\delta.
		\end{equation}
	\end{claim}
	Indeed, by \eqref{eq:14} and \eqref{eq:5}, there are $C > 0$ and $x_0 \geq 0$ such that for all $r, \lambda > 0$, if
	$r \sqrt{\lambda}, \sqrt{\lambda} \geq x_0$ then
	\[
		\big|\psi(r \sqrt{\lambda}) - \psi(r \sqrt{\lambda a})\big|
		\leq
		C \ell(r \sqrt{\lambda}),
	\]
	and
	\[
		\ell(r \sqrt{\lambda}) \leq 2 \ell(\sqrt{\lambda}) \max\{r, r^{-1}\}^\delta.
	\]
	Let $\sqrt{\lambda} \geq x_0$. If $r \sqrt{\lambda} \geq x_0$ then
	\begin{align*}
		\big| e^{-t \psi(r \sqrt{\lambda})} - e^{-t \psi(r \sqrt{\lambda a})} \big|
		& \leq
		t \big| \psi(r \sqrt{\lambda} ) - \psi(r \sqrt{\lambda a}) \big| \\
		& \leq C t \ell(\sqrt{\lambda}) \max\{r , r^{-1}\}^\delta.
	\end{align*}
	Otherwise, $r \sqrt{\lambda} \leq x_0$ and we can estimate
	\begin{align*}
		\big| e^{-t \psi(r \sqrt{\lambda})} - e^{-t \psi(r \sqrt{\lambda a})} \big|
		& \leq
		t \big| \psi(r \sqrt{\lambda} ) - \psi(r \sqrt{\lambda a}) \big| \\
		& \leq
		2 t \psi^*(r \sqrt{\lambda a}) \\
		& \leq
		C t \ell(x_0).
	\end{align*}
	Since, by \eqref{eq:14}, we have
	\[
		\ell(x_0) \leq C \ell(\sqrt{\lambda}) r^{-\delta},
	\]
	the estimate \eqref{eq:7} follows.
	
	Next, let us consider
	\begin{equation}
		\label{eq:34}
		U_t(r)
		=\PP\big(0<\abs{a_t} \leq \sqrt{r}\big) \\
		= \frac{2 \pi^{d/2}}{\Gamma(d/2)} \int^{\sqrt{r}}_{0} u^{d-1} p(t, u) {\: \rm d} u.
	\end{equation}
	We observe that by the Fubini--Tonelli's theorem
	\[
		\calL\{{\rm d} U_t\} (\lambda) = \lambda \calL U_t(\lambda) 
		= \int_{\RR^d} e^{-\lambda \norm{x}^2} p(t, x) {\: \rm d}x.
	\]
	Since
	\begin{equation}
		\label{eq:33}
		e^{-\lambda \norm{x}^2} 
		= (4\pi)^{-d/2} 
		\int_{\RR^d} e^{-\norm{\xi}^2/4} e^{-i \sqrt{\lambda} \sprod{\xi}{x}} {\: \rm d}\xi,
	\end{equation}
	by the second application of the Fubini--Tonelli's theorem we get
	\begin{align}
		\nonumber
		\calL\{{\rm d} U_t\} (\lambda)
		&= (4 \pi)^{-d/2} \int_{\RR^d} e^{-t\psi(\xi \sqrt{\lambda})}
		e^{-\norm{\xi}^2/4} {\: \rm d}\xi - \PP\big(\abs{X_t}=0  \big)\nonumber\\
		\label{eq:26}
		&= \frac{2^{1-d}}{\Gamma(d/2)} \int_0^{\infty} e^{-t\psi(r \sqrt{\lambda})} 
		e^{-r^2/4} r^{d-1} {\: \rm d}r -\PP\big(\abs{X_t}=0  \big)
	\end{align}
	where in the last step we used polar coordinates.

	We now claim that
	\begin{claim}
		\label{clm:1}
		For $a\geq 1$
		\begin{align}
			\label{eq:4}
			\lim_{\atop{\lambda \to \infty}{t \psi(\sqrt{\lambda}) \to 0}}
			\frac{\calL\{{\rm d} U_t\}(a \lambda)-\calL\{{\rm d} U_t\}(\lambda)}{ t\ell (\sqrt{\lambda })}
			=
			\frac{\log a}{2}.
	\end{align}
	\end{claim}
	For the proof of the claim, by \eqref{eq:26}, we write
	\begin{equation}
		\label{eq:12}
		\frac{\calL\{{\rm d}U_t\}(a \lambda) - \calL\{{\rm d}U_t\}(\lambda)}{ t\ell (\sqrt{\lambda })}
		=
		\frac{2^{1-d}}{\Gamma(d/2)} \int_0^{\infty}
		\frac{e^{-t\psi(r\sqrt{\lambda a})} - e^{-t\psi(r \sqrt{\lambda})}}{ t\ell (\sqrt{\lambda })}
		e^{-r^2/4} r^{d-1} {\: \rm d}r.
	\end{equation}
	By \eqref{eq:7}, the integrand in \eqref{eq:12} is uniformly bounded by an integrable function.
	Since for a fixed $r > 0$, 
	\[
		\frac{e^{-t\psi(r\sqrt{\lambda a})}-e^{-t\psi(r\sqrt{\lambda})}}{ t\ell (\sqrt{\lambda})}
		= 
		\frac{\ell (r\sqrt{\lambda})}{\ell (\sqrt{\lambda})}
		\cdot
		\frac{\psi(r\sqrt{\lambda a})-\psi(r\sqrt{\lambda})}{\ell (r\sqrt{\lambda})}
		\cdot
		\frac{e^{-t\psi(r\sqrt{\lambda a})}-e^{-t\psi(r\sqrt{\lambda})}}
		{t\psi(r\sqrt{\lambda a})-t\psi(r\sqrt{\lambda})},
	\]
	we obtain
	\[
		\lim_{\atop{\lambda \to \infty}{t \psi(\sqrt{\lambda}) \to 0}}
		\frac{e^{-t\psi(r\sqrt{\lambda a})}-e^{-t\psi(r\sqrt{\lambda})}}{ t\ell (\sqrt{\lambda})}
		= \frac{\log a}{2},
	\]
	hence, by the dominated convergence theorem we obtain \eqref{eq:4}.

	Notice, that by \eqref{eq:7} we also get
	\begin{equation}
		\label{eq:8}
		\calL\{{\rm d}U_t\}(\lambda a) - \calL\{{\rm d}U_t\}(\lambda)
		\leq C t \ell(\sqrt{\lambda}),
	\end{equation}
	provided that $\sqrt{\lambda} \geq x_0$ and $t \geq 0$. 

	We now turn to the proof of \eqref{eq:9}. Observe that $\calL\{{\rm d}Q_t\} = \big(\calL\{{\rm d} U_t\}\big)'$.
	Hence,
	\[
		\calL \{{\rm d} U_t\}(s^{-1}) = \int_0^s \calL \{{\rm d} Q_t\}(r^{-1}) r^{-2} {\: \rm d}r.
	\]
	By monotonicity of the Laplace--Stieltjes transform of $U_t$ we infer that for any $a > 1$
	\begin{equation}
		\label{eq:11}
		\begin{aligned}
		s^{-1} \calL\{{\rm d} Q_t\}(s^{-1})
		& \geq
		\frac{1}{a-1} 
		\left(\calL\{{\rm d} U_t\}(s^{-1})-\calL\{{\rm d} U_t\} (as^{-1})\right), \\
		s^{-1} \calL\{{\rm d} Q_t\} (s^{-1})
		& \leq
		\frac{a}{a-1}\left(\calL\{{\rm d} U_t\}((as)^{-1})-\calL\{{\rm d}U_t\}(s^{-1})\right).
		\end{aligned}
	\end{equation}
	Therefore,
	\begin{equation*}
		\begin{aligned}
		\lim_{\atop{\lambda \to \infty}{t \psi(\sqrt{\lambda}) \to 0}}
		\frac{\lambda \calL \{{\rm d} Q_t\}(\lambda)} {t \ell(\sqrt{\lambda})}
		\geq \frac{1}{2} \frac{\log a}{a-1}, \\
		\lim_{\atop{\lambda \to \infty}{t \psi(\sqrt{\lambda}) \to 0}}
		\frac{\lambda \calL \{{\rm d} Q_t\}(\lambda)} {t \ell(\sqrt{\lambda})}
		\leq \frac{1}{2} \frac{a \log a}{a-1},
		\end{aligned}
	\end{equation*}
	thus, by taking $a$ tending to $1$ we get \eqref{eq:9}. 

	Finally, since $Q_t$ is non-decreasing, by \eqref{eq:11},
	\begin{align}
		\nonumber
		Q_t(x) 
		& \leq e \calL\{{\rm d} Q_t\}(x^{-1}) \\
		\label{eq:23}
		& \leq 2 e x \left(\calL\{{\rm d} U_t\}(2^{-1} x^{-1}) - \calL\{{\rm d} U_t\}(x^{-1})\right).
	\end{align}
	Therefore, if $2 x \leq x_0^{-2}$, by \eqref{eq:8},
	\[
		Q_t(x) 
		\leq C x t \ell(x^{-1/2}).
	\]
	For $2 x > x_0^{-2}$, by \eqref{eq:31} we estimate
	\[
		\Big\lvert
		e^{-t \psi(r x^{-1/2})} - e^{-t\psi(r x^{-1/2} 2^{-1/2})} 
		\Big\rvert
		\leq
		2 t \psi^*(r x^{-1/2})
		\leq
		2 t (1+r^2) \psi^*( x_0).
	\]
	Hence, by \eqref{eq:12} and \eqref{eq:23} we get
	\begin{align*}
		Q_t(x) \leq C t x \ell(x_0),
	\end{align*}
	which finishes the proof. 
\end{proof}

\begin{theorem}
	\label{thm:2}
	Let $\mathbf{X}$ be an isotropic unimodal L\'{e}vy process on $\RR^d$ with the L\'{e}vy--Khintchine
    exponent $\psi \in \Pi^\infty_\ell$ for some $\ell \in \calR^\infty_0$. Then
	\[
		\lim_{\atop{x \to 0}{t \psi(\norm{x}^{-1}) \to 0}}
		\frac{p(t, x)}{t \norm{x}^{-d} \ell(\norm{x}^{-1})}
		=
		\frac{\Gamma(d/2)}{2\pi^{d/2}}.
	\]
\end{theorem}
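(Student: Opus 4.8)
The plan is to derive the theorem from the Tauberian statement Theorem~\ref{thm:1} applied to $Q_t$ from \eqref{eq:16}, and then to convert the resulting asymptotics of the truncated integral $\int_0^{\norm{x}} u^{d+1}p(t,u)\,{\rm d}u$ into a pointwise statement about $p(t,x)$ using the radial monotonicity of $p(t,\,\cdot\,)$. First I would check that Proposition~\ref{prop:1} supplies exactly the hypotheses of Theorem~\ref{thm:1} for
\[
	q_t(r) = \frac{1}{2}\, t\,\ell\big(\sqrt{r}\big),\qquad w_t(r) = t\,\psi\big(\sqrt{r}\big),\qquad \rho = 0 .
\]
Indeed, \eqref{eq:9} is hypothesis \eqref{eq:2} with limit $1$ (the factor $\frac12$ being absorbed into $q_t$), and \eqref{eq:10} is hypothesis (i) with $a=\lambda_0^{-1}$. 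For hypothesis (ii), applying \eqref{eq:14} to the pair $\sqrt{\lambda x},\sqrt{x}$ gives $\ell(\sqrt{\lambda x})\le C\,\ell(\sqrt{x})\max\{\lambda,\lambda^{-1}\}^{1/2}$ once both arguments are large, i.e.\ (ii) holds with $\eta=\frac12$; and hypothesis (iii) holds with $\rho=0$ because $q_t(\lambda x)/q_t(x)=\ell(\sqrt{\lambda x})/\ell(\sqrt{x})\to1$ as $x\to\infty$, $\ell$ being slowly varying. Theorem~\ref{thm:1} then yields $Q_t(r)/(r\,q_t(r^{-1}))\to\Gamma(2)^{-1}=1$ along the net $r\to0$, $t\psi(r^{-1/2})\to0$; recalling that $Q_t(r)=\frac{2\pi^{d/2}}{\Gamma(d/2)}\int_0^{\sqrt r}u^{d+1}p(t,u)\,{\rm d}u$ and putting $r=\norm{x}^2$ this reads
\[
	\frac{2\pi^{d/2}}{\Gamma(d/2)}\int_0^{\norm{x}} u^{d+1}p(t,u)\,{\rm d}u
	\;\sim\;
	\frac{1}{2}\, t\,\norm{x}^2\,\ell\big(\norm{x}^{-1}\big)
\]
as $\norm{x}\to0$ with $t\psi(\norm{x}^{-1})\to0$.

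Next I would pass to the density. Fix $\epsilon\in(0,1)$. Since $u\mapsto p(t,u)$ is non-increasing, $p(t,u)\ge p(t,x)$ on $[\epsilon\norm{x},\norm{x}]$ and $p(t,u)\le p(t,x)$ on $[\norm{x},\norm{x}/\epsilon]$, so
\[
	\int_{\epsilon\norm{x}}^{\norm{x}} u^{d+1}p(t,u)\,{\rm d}u \ge p(t,x)\,\frac{\norm{x}^{d+2}(1-\epsilon^{d+2})}{d+2},
	\qquad
	\int_{\norm{x}}^{\norm{x}/\epsilon} u^{d+1}p(t,u)\,{\rm d}u \le p(t,x)\,\frac{\norm{x}^{d+2}(\epsilon^{-(d+2)}-1)}{d+2},
\]
which yield an upper and a lower bound for $p(t,x)$ respectively. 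Both left-hand sides are differences of the quantity controlled in the first paragraph, evaluated at $\norm{x}$ and at the dilated radii $\epsilon\norm{x}$, $\norm{x}/\epsilon$. To insert those asymptotics at the dilated radii I must check that the constraint $t\psi(\,\cdot\,^{-1})\to0$ survives: by \eqref{eq:31} and \eqref{eq:32}, $t\psi\big((\epsilon\norm{x})^{-1}\big)\le 2\pi^2(\epsilon^{-2}+1)\,t\psi(\norm{x}^{-1})\to0$ and $t\psi\big((\norm{x}/\epsilon)^{-1}\big)\le\pi^2\,t\psi(\norm{x}^{-1})\to0$, while $\ell\big((\epsilon^{\pm1}\norm{x})^{-1}\big)/\ell(\norm{x}^{-1})\to1$ because $\ell$ is slowly varying. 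Hence, along the net, the first difference is $\sim\frac{\Gamma(d/2)}{2\pi^{d/2}}\cdot\frac12\, t\,\norm{x}^2(1-\epsilon^2)\,\ell(\norm{x}^{-1})$ and the second is $\sim\frac{\Gamma(d/2)}{2\pi^{d/2}}\cdot\frac12\, t\,\norm{x}^2(\epsilon^{-2}-1)\,\ell(\norm{x}^{-1})$.

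Combining the two bounds gives, along the net,
\[
	\frac{\Gamma(d/2)}{2\pi^{d/2}}\cdot\frac{(d+2)(\epsilon^{-2}-1)}{2(\epsilon^{-(d+2)}-1)}
	\le \liminf \frac{p(t,x)}{t\,\norm{x}^{-d}\,\ell(\norm{x}^{-1})}
	\le \limsup \frac{p(t,x)}{t\,\norm{x}^{-d}\,\ell(\norm{x}^{-1})}
	\le \frac{\Gamma(d/2)}{2\pi^{d/2}}\cdot\frac{(d+2)(1-\epsilon^2)}{2(1-\epsilon^{d+2})},
\]
and since $\frac{1-\epsilon^2}{1-\epsilon^{d+2}}\to\frac{2}{d+2}$ and $\frac{\epsilon^{-2}-1}{\epsilon^{-(d+2)}-1}\to\frac{2}{d+2}$ as $\epsilon\to1^-$, both outer bounds tend to $\frac{\Gamma(d/2)}{2\pi^{d/2}}$, which is the asserted limit. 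The genuinely substantive step is Proposition~\ref{prop:1} (already proved), which feeds the Tauberian theorem; the only delicate point left in the present argument is keeping the two conditions $\norm{x}\to0$ and $t\psi(\norm{x}^{-1})\to0$ compatible under dilation of the radius by a fixed factor, which \eqref{eq:31}--\eqref{eq:32} handle, after which it is the standard monotone-density passage.
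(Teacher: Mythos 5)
Your argument is correct and follows essentially the same route as the paper: Proposition~\ref{prop:1} feeds Theorem~\ref{thm:1} to give the asymptotics of $Q_t$, and then the monotone-density (Karamata-type) sandwich over dilated annuli, with the dilated radii kept inside the net via \eqref{eq:31}--\eqref{eq:32} and slow variation of $\ell$, recovers the pointwise limit. The only cosmetic difference is that the paper phrases the sandwich through $Q_t(br)-Q_t(ar)$ for general $0<a<b$ before specializing, while you work directly with the factors $\epsilon$ and $\epsilon^{-1}$; the normalization $\frac{2\pi^{d/2}}{\Gamma(d/2)}$ you attach to $Q_t$ is also the one the paper's own computations require.
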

\begin{proof}
	In the proof we use an argument from \cite[Theorem 1.7.2]{bgt}. For any $0<a<b$, we have
	\begin{align*}
		Q_t(b r)-Q_t(a r)
		=
		\frac{2 \pi^{d/2}}{\Gamma(d/2)} 
		\int_{\sqrt{ar}}^{\sqrt{br}} u^{d+1} p(t, u) {\: \rm d} u.
	\end{align*} 
	Since the function $u\mapsto p(t, u)$ is non-increasing, we get
	\begin{equation}
		\label{eq:15}
		\begin{aligned}
		\frac{Q_t(br)-Q_t(ar)}{t\ell(r^{-1/2})} 
		& \geq
		\frac{2 \pi^{d/2}}{\Gamma(d/2)} 
		\cdot
		\frac{p(t, \sqrt{br})}{t\ell(r^{-1/2})}
		\cdot
		\frac{r^{d/2+1}(b^{d/2+1}-a^{d/2+1})}{d+2} \\
		\frac{Q_t(br)-Q_t(ar)}{t\ell(r^{-1/2})}
		& \leq
		\frac{2 \pi^{d/2}}{\Gamma(d/2)} 
		\cdot
		\frac{p(t, \sqrt{ar})}{t\ell(r^{-1/2})}
		\cdot
		\frac{r^{d/2+1}(b^{d/2+1}-a^{d/2+1})}{d+2}.
		\end{aligned}
	\end{equation}
	Thanks to Proposition \ref{prop:1}, we may use Theorem \ref{thm:1} to get
	\begin{align*}
		\lim_{\atop{r \to 0^+}{t \psi(r^{-1/2}) \to 0}}
		\frac{Q_t(br)-Q_t(ar)}{r t\ell (r^{-1/2})}=\frac{b-a}{2}.
	\end{align*}
	Hence, by the first inequality in \eqref{eq:15}
	\begin{align*}
		\limsup_{\atop{r \to 0^+}{t \psi(r^{-1/2}) \to 0}} 
		\frac{p(t, \sqrt{br})}{r^{-d/2} t \ell (r^{-1/2})} 
		\leq
		\frac{1}{2}
		\cdot 
		\frac{\Gamma(d/2)}{2 \pi^{d/2}} (d+2)
		\frac{b-a}{b^{d/2+1}-a^{d/2+1}}.
	\end{align*}
	By taking $b=1$, $a=1-\epsilon $ and letting $\epsilon$ to zero we obtain
	\begin{align*}
		\lim_{\atop{r \to 0^+}{t \psi(r^{-1/2}) \to 0}}
		\frac{p(t, r^{1/2})} {r^{-d/2} t \ell (r^{-1/2})}
		\leq 
		\frac{\Gamma(d/2)}{2 \pi^{d/2}}. 
	\end{align*}
	Similarly, using the second inequality in \eqref{eq:15}, we show that
	\[
		\liminf_{\atop{r \to 0^+}{t \psi(r^{-1/2}) \to 0}}
		\frac{p(t, r^{1/2})}{r^{-d/2} t \ell (r^{-1/2})} 
		\geq 
		\frac{\Gamma(d/2)}{2 \pi^{d/2}}.
		\qedhere
	\]
\end{proof}

\begin{theorem}
	\label{thm:5}
	Let $\mathbf{X}$ be an isotropic unimodal L\'{e}vy process on $\RR^d$ with the L\'{e}vy--Khintchine
	exponent $\psi$ and the L\'{e}vy density $\nu$. Let $\ell \in \calR_0^\infty$. The following statements 
	are equivalent: 
	\begin{enumerate}
		\item $\psi \in \Pi_\ell^\infty$;
		\item there is $c > 0$,
		\[
			\lim_{\atop{x \to 0}{t \psi(\norm{x}^{-1}) \to 0}} \frac{p(t, x)}{\norm{x}^{-d} t 
			\ell\big(\norm{x}^{-1}\big)} = c;
		\] 
		\item there is $c > 0$,
		\[
			\lim_{x \to 0} \frac{\nu(x)}{\norm{x}^{-d} \ell\big(\norm{x}^{-1}\big)} = c.
		\]
	\end{enumerate} 
\end{theorem}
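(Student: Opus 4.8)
The plan is to prove the cycle of implications (i)$\Rightarrow$(ii)$\Rightarrow$(iii)$\Rightarrow$(i). The first implication is exactly the content of Theorem~\ref{thm:2}, which moreover identifies the constant: if $\psi\in\Pi^\infty_\ell$ then the limit in (ii) equals $c=\Gamma(d/2)/(2\pi^{d/2})$, so nothing new is needed there.

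For (ii)$\Rightarrow$(iii) I would use the standard fact that for an isotropic unimodal L\'evy process $\lim_{t\to 0^+}t^{-1}p(t,x)=\nu(x)$ for every $x\neq 0$ (for unimodal processes this follows from the vague convergence of $t^{-1}\PP(X_t\in\cdot)$ to $\nu$ on $\RR^d\setminus\{0\}$ together with the radial monotonicity of $p(t,\cdot)$ and $\nu$, by a squeezing argument as in the proof of Theorem~\ref{thm:2}). Given $\epsilon>0$, statement (ii) provides $r_0,\delta>0$ such that $\bigl|p(t,x)\bigl(t\norm{x}^{-d}\ell(\norm{x}^{-1})\bigr)^{-1}-c\bigr|<\epsilon$ whenever $0<\norm{x}<r_0$ and $t\psi(\norm{x}^{-1})<\delta$. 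For a fixed $x$ with $0<\norm{x}<r_0$ every sufficiently small $t$ satisfies $t\psi(\norm{x}^{-1})<\delta$, so letting $t\to 0^+$ and using $t^{-1}p(t,x)\to\nu(x)$ gives $\bigl|\nu(x)\bigl(\norm{x}^{-d}\ell(\norm{x}^{-1})\bigr)^{-1}-c\bigr|\leq\epsilon$; as this holds for all such $x$, (iii) follows with the same constant $c$.

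For (iii)$\Rightarrow$(i) I would work from \eqref{eq:30} in polar coordinates. Writing $\omega_{d-1}=2\pi^{d/2}/\Gamma(d/2)$ for the surface measure of the unit sphere and $\Omega_d$ for the spherical mean of $y\mapsto\cos\sprod{e_1}{y}$, one obtains
\[
	\psi(r)=\omega_{d-1}\int_0^\infty \nu(s)\,s^{d-1}\bigl(1-\Omega_d(rs)\bigr)\,{\rm d}s,\qquad r\geq 0,
\]
where $\Omega_d(0)=1$, $\Omega_d$ is bounded, $1-\Omega_d(v)=O(v^2)$ as $v\to 0^+$, and $\Omega_d(v)\to 0$ as $v\to\infty$ with $\abs{\Omega_d(v)}\leq Cv^{-(d-1)/2}$ for $d\geq 2$ (while $\Omega_1=\cos$). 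For $\lambda>1$, substituting $v=rs$ yields
\[
	\frac{\psi(\lambda r)-\psi(r)}{\ell(r)}
	=\omega_{d-1}\int_0^\infty \frac{\nu(v/r)\,(v/r)^{d-1}\,r^{-1}}{\ell(r)}\bigl(\Omega_d(v)-\Omega_d(\lambda v)\bigr)\,{\rm d}v .
\]
By (iii) and slow variation of $\ell$, the factor $\nu(v/r)(v/r)^{d-1}r^{-1}/\ell(r)$ converges to $c\,v^{-1}$ as $r\to\infty$ for each fixed $v>0$. I would pass to the limit under the integral by dominated convergence: for $v$ in a bounded set, the Potter-type bound \eqref{eq:14} controls $\ell(r/v)/\ell(r)$ by $C\max\{v,v^{-1}\}^{\epsilon}$, so near $v=0$ the integrand is $O(v^{1-\epsilon})$ since $1-\Omega_d(v)=O(v^2)$; on the range where $v/r$ is bounded away from $0$ one reverts to the variable $s=v/r$ and uses the monotonicity and L\'evy-integrability of $\nu$ together with the decay of $\Omega_d$ (the case $d=1$, where $\Omega_1=\cos$ does not decay, is handled by reading $\int_0^\infty v^{-1}(\cos v-\cos\lambda v)\,{\rm d}v$ as an improper integral via integration by parts). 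The Frullani-type identity $\int_0^\infty v^{-1}\bigl(\Omega_d(v)-\Omega_d(\lambda v)\bigr)\,{\rm d}v=\bigl(\Omega_d(0)-\lim_{v\to\infty}\Omega_d(v)\bigr)\log\lambda=\log\lambda$ then gives
\[
	\lim_{r\to\infty}\frac{\psi(\lambda r)-\psi(r)}{\ell(r)}=c\,\omega_{d-1}\log\lambda .
\]
Since the constant produced along the cycle is $c=\Gamma(d/2)/(2\pi^{d/2})=\omega_{d-1}^{-1}$, this is exactly $\psi\in\Pi^\infty_\ell$; more generally, (iii) with an arbitrary $c>0$ gives $\psi\in\Pi^\infty_{\tilde\ell}$ with $\tilde\ell=c\,\omega_{d-1}\ell$, again slowly varying.

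The step I expect to be the main obstacle is the dominated-convergence argument in (iii)$\Rightarrow$(i): one must produce a single bound on $\nu(v/r)(v/r)^{d-1}r^{-1}$ integrable against $\abs{\Omega_d(v)-\Omega_d(\lambda v)}\,{\rm d}v$ uniformly for all large $r$, which forces a careful split into the small-$v$ regime (governed by the regular variation of $\nu$ at the origin and \eqref{eq:14}) and the large-$v$ regime (governed by $\int_{\norm{x}\geq 1}\nu({\rm d}x)<\infty$, $\int_{\norm{x}<1}\norm{x}^2\nu({\rm d}x)<\infty$, and the decay of the Bessel factor $\Omega_d$), with $d=1$ treated separately since $\Omega_1$ does not decay. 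A secondary point is the bookkeeping of the normalising constant noted above.
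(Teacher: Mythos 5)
Your cycle (i)$\Rightarrow$(ii)$\Rightarrow$(iii)$\Rightarrow$(i) is the same as the paper's, and the first two implications match it exactly: (i)$\Rightarrow$(ii) is Theorem \ref{thm:2}, and (ii)$\Rightarrow$(iii) is the vague convergence $t^{-1}p(t,\cdot)\to\nu$ (your pointwise/squeezing elaboration is fine and consistent with what the paper leaves implicit). Where you genuinely diverge is (iii)$\Rightarrow$(i). The paper first proves the case $d=1$ directly from \eqref{eq:30}, splitting $\int_0^A+\int_A^\infty$, dominating on $[0,A]$ via \eqref{eq:14}, and controlling the oscillatory tail by writing $\nu(s)=\int_{(s,\infty)}\mu({\rm d}u)$ and applying Fubini (equivalently the second mean value theorem), which gives the uniform bound $\abs{\int_A^\infty\cos(\lambda t)\,\nu(t/x)\,{\rm d}t}\leq 2\lambda^{-1}\nu(A/x)$, hence a tail of order $1/A$; for $d\geq 2$ it then reduces to $d=1$ by showing that the L\'evy density $\nu_1$ of the one-dimensional projection is again regularly varying of index $-1$ at the origin with the matching constant. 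You instead work directly in dimension $d$ with the spherical average $\Omega_d$ of the cosine and a Frullani identity; for $d\geq 2$ this works and is arguably cleaner --- the Bessel decay $\abs{\Omega_d(v)}\leq Cv^{-(d-1)/2}$ makes the whole integral absolutely convergent, so a single dominated-convergence argument (Potter near $v=0$, L\'evy integrability plus $r^{-(d-1)/2}/\ell(r)\to 0$ for $v\geq\delta r$) suffices, and you avoid the auxiliary claim about $\nu_1$ entirely. The one place your sketch falls short of a proof is exactly $d=1$, to which your scheme degenerates and where $\Omega_1=\cos$ gives no decay: there dominated convergence on all of $(0,\infty)$ is unavailable, and the remark about reading the limiting Frullani integral as an improper integral addresses only the limit object, not the interchange of limits. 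You would still need the paper's truncation-at-$A$ argument with the monotonicity/Fubini tail bound above (uniform in $x$) before letting $A\to\infty$. Your bookkeeping of the normalising constant ($c=\omega_{d-1}^{-1}$ around the cycle, otherwise $\Pi^\infty_{c\omega_{d-1}\ell}$) agrees with what the paper does implicitly.
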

\begin{proof}
	The implication (i)~$\Rightarrow$~(ii) follows by Theorem \ref{thm:2}. Next, (ii)~$\Rightarrow$~(iii) is
	a consequence of the fact that
	\[
		\lim_{t \to 0^+} t^{-1} p(t, x) = \nu(x),
	\]
	vaguely on $\RR^d \setminus \{0\}$. To prove that (iii) implies (i), first we consider the case $d = 1$. By
	\eqref{eq:30}, we can write
	\[
		\psi(x) 
		=
		\int_\RR \big(1 - \cos (x t)\big) \nu(t) {\: \rm d}t
		=
		\frac{2}{x} \int_0^\infty \big(1 - \cos t\big) \nu(t/x) {\: \rm d}t,
	\]
	thus, for any $A, \lambda, x > 0$
	\begin{equation}
		\label{eq:21}
			\frac{\psi(\lambda x) - \psi(x)}{2 \ell(x)}
			=
			\int_0^A \big(\cos t - \cos \lambda t\big) \frac{\nu(t/x)}{x \ell(x)} {\: \rm d} t
			+
			\int_A^\infty \big(\cos t - \cos (\lambda t) \big) \frac{\nu(t/x)}{x \ell(x)} {\: \rm d} t.
	\end{equation}
	We observe that, by \eqref{eq:14} and the dominated convergence,
	\[
		\lim_{x \to \infty}
		\int_0^A \big(1 - \cos (\lambda t)\big) \frac{\nu(t/x)}{x \ell(x)} {\: \rm d}t
		=
		c \int_0^A \big(1 - \cos (\lambda t)\big) \frac{{\rm d} t}{t}=c \int_0^{\lambda A} \big(1 - \cos t \big) \frac{{\rm d} t}{t}.
	\]
	Hence,
	\[
		\lim_{x \to \infty} \int_0^A \big(\cos t - \cos (\lambda t)\big) \frac{\nu(t/x)}{x \ell(x)}
		{\: \rm d}t
		=
		c \int_A^{\lambda A} \big(1 - \cos t\big) \frac{{\rm d} t}{t} = c \log \lambda -c \int_A^{\lambda A} \cos t\frac{{\rm d} t}{t} .
	\]
	Applying the second mean value theorem we obtain 
	$$	\bigg|\int_A^{\lambda A} \cos t\frac{{\rm d} t}{t}	\bigg|\le 2\left(1+\frac 1\lambda\right)\frac1A.$$

	To deal with the second integral in \eqref{eq:21}, we use monotonicity of $\nu$. Namely, we have $\nu(s)=\int^\infty_s\mu(du)$ for some non-negative measure  $\mu$. Hence, by the Fubini theorem,
	\begin{align*}
		\bigg|
		\int_A^\infty \cos(\lambda t) \frac{\nu(t/x)}{x \ell(x)} {\: \rm d} t
		\bigg|
		& = \frac{1}{ x\ell(x)}\bigg|\int^\infty_{A/x}\int^{xu}_A\cos(\lambda t){\: \rm d}t \mu({\rm d}u)\bigg|\leq \frac{2}{\lambda x\ell(x)}\int^\infty_{A/x}\mu({\rm d}u)
		=
		\frac{2}{\lambda} \frac{\nu(A/x)}{x \ell(x)}.
	\end{align*}
	Finally, we get
	\[
		\limsup_{x \to \infty}
		\bigg|
		\int_A^\infty \big(\cos t - \cos \lambda t\big) \frac{\nu(t/x)}{x \ell(x)} {\: \rm d}t
		\bigg|
		\leq
		\frac{2 c}{A} \bigg(1 + \frac{1}{\lambda}\bigg).
	\]
	Since $A$ was arbitrary, we conclude that
	\[
		c \log \lambda\le \liminf_{x \to \infty}
		\frac{\psi(\lambda x) - \psi(x)}{2 \ell(x)} \le \limsup_{x \to \infty}
		\frac{\psi(\lambda x) - \psi(x)}{2 \ell(x)} \leq c \log \lambda,
	\]
	which finishes the proof for $d = 1$.

	For $d \geq 2$, we consider the L\'evy measure $\nu_1$ corresponding to the one-dimensional projection of
	$\mathbf{X}$. We claim that
	\begin{claim}
	\[
		\lim_{x \to 0} \frac{\nu_1(x)}{\abs{x}^{-1} \ell\big(\abs{x}^{-1}\big)} = c \frac{\pi^{d/2}}{\Gamma(d/2)}.
	\]
	\end{claim}
	Indeed, using spherical coordinates we can write
	\begin{align*}
		\nu_1(x) 
		& = \int_{\RR^{d-1}} \nu\Big(\sqrt{\norm{u}^2 + x^2}\Big) {\: \rm d} u \\
		& = \omega_{d-1} \int_0^\infty \nu\big(\sqrt{r^2 + x^2}\big) r^{d-2} {\: \rm d} r
	\end{align*}
	where $\omega_{d-1}$ is the surface measure of the unite sphere in $\RR^{d-1}$. Since
	\[
		\omega_{d-1} \int_1^{\infty} \nu\big(\sqrt{r^2 + x^2}\big) r^{d-2} {\: \rm d} r
		\leq
		\nu\big(\{u \in \RR^d : \norm{u} \geq 1\big\}\big),
	\]
	it is enough to show that
	\[
		\lim_{x \to 0} \frac{\omega_{d-1}}{x^{-1} \ell\big(x^{-1}\big)} 
		\int_0^1 \nu\big(\sqrt{r^2 + x^2}\big) r^{d-2} {\: \rm d} r
		=
		\frac{\pi^{d/2}}{\Gamma(d/2)}.
	\]
	By the change of variables, we have
	\[
		\int_0^1 \nu\big(\sqrt{r^2 + x^2}\big) r^{d-2} {\: \rm d} r 
		=
		\int_0^{1/\abs{x}} \abs{x}^{d-1} \nu\big(\abs{x} \sqrt{1+r^2}\big) r^{d-2} {\: \rm d} r.
	\]
	Since 
	\begin{align*}
		& \abs{x}^{d-1} \frac{\nu\big(\abs{x} \sqrt{1 + r^2}\big)}{\abs{x}^{-1} \ell\big(x^{-1}\big)} r^{d-2} \\
		& \qquad\qquad\qquad =
		\frac{\nu\big(\abs{x}\sqrt{1 + r^2} \big)}{\abs{x}^{-d} (1 + r^2)^{-d/2} \ell\big(\abs{x}^{-1}(1+r^2)^{-1/2}\big)}
		\cdot \frac{\ell\big(\abs{x}^{-1} (1 + r^2)^{-1/2}\big)}{\ell\big(\abs{x}^{-1}\big)} 
		\cdot \frac{r^{d-2}}{(1+r^2)^{d/2}},
	\end{align*}
	for a fixed $r > 0$ we obtain
	\[
		\lim_{x \to 0} \abs{x}^{d-1} \frac{\nu\big(\abs{x} \sqrt{1 + r^2}\big)}{\abs{x}^{-1} \ell\big(x^{-1}\big)} r^{d-2}
		=
		c \frac{r^{d-2}}{(1 + r^2)^{d/2}}.
	\]
	Therefore, by \eqref{eq:14} we can use the dominated convergence theorem to conclude that
	\begin{align*}
		\lim_{x \to 0} \frac{\omega_{d-1}}{x^{-1} \ell\big(x^{-1}\big)}
		\int_0^1 \nu\big(\sqrt{r^2 + x^2}\big) r^{d-2} {\: \rm d}r
		& =
		\omega_{d-1} 
		\int_0^{\infty}
		\frac{r^{d-2}}{(1 + r^2)^{d/2}}
		{\: \rm d}r \\
		& =
		c \omega_{d-1} \frac{\Gamma((d-1)/2) \Gamma(1/2)}{2 \Gamma(d/2)}
		= 
		c \frac{\pi^{d/2}}{\Gamma(d/2)}. \qedhere
	\end{align*}
\end{proof}
By the same line of reasoning as in the proofs of Proposition \ref{prop:1} and  Theorems \ref{thm:1}, \ref{thm:2} and \ref{thm:5}
one can show the corresponding results if the L\'{e}vy-Khintchine exponent belongs to de Haan class at the origin. Only one modification is needed to prove that (iii) implies (i). Namely, one should consider an asymptotically equal (at the origin)  L\'{e}vy-Khintchine exponent corresponding to the  L\'{e}vy density $\tilde{\nu}=\nu|_{B^c_1}$ (compare with the proof of \cite[Theorem 7]{cgt}). 
\begin{theorem}
	\label{thm:5Inf}
	Let $\mathbf{X}$ be an isotropic unimodal L\'{e}vy process on $\RR^d$ with the L\'{e}vy--Khintchine
	exponent $\psi$ and the L\'{e}vy density $\nu$. Let $\ell \in \calR_0^0$. The following statements 
	are equivalent: 
	\begin{enumerate}
		\item $\psi \in \Pi_\ell^0$;
		\item there is $c > 0$,
		\[
			\lim_{\atop{|x| \to \infty}{t \psi(\norm{x}^{-1}) \to 0}} \frac{p(t, x)}{\norm{x}^{-d} t 
			\ell\big(\norm{x}^{-1}\big)} = c;
		\] 
		\item there is $c > 0$,
		\[
			\lim_{|x| \to \infty} \frac{\nu(x)}{\norm{x}^{-d} \ell\big(\norm{x}^{-1}\big)} = c.
		\]
	\end{enumerate} 
\end{theorem}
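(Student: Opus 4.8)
The plan is to transcribe Section~\ref{sec:3} with the r\^oles of ``infinity'' and ``the origin'' interchanged: $\ell$ is now slowly varying at $0$, the de~Haan condition is \eqref{eq:5a}, and the relevant regime is $\norm{x}\to\infty$ with $t\psi(\norm{x}^{-1})\to0$, so that $\psi$ and $\ell$ are probed near $0$, precisely where (i) and (iii) impose their hypotheses. For (i)$\Rightarrow$(ii) I would first record the companion of the Tauberian Theorem~\ref{thm:1} in which the Laplace variable is sent to $0$ and the conclusion is drawn as $r\to\infty$; its proof is word for word the same, using the form of the Potter bound \eqref{eq:14} valid for functions slowly varying at $0$, which follows from the stated one by the substitution $u\mapsto u^{-1}$. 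Applying it to $Q_t$ from \eqref{eq:16} with $q_t(r)=t\ell(\sqrt r)$ and $w_t(r)=t\psi(\sqrt r)$ as in Theorem~\ref{thm:2}, with the elementary estimates of $\psi$ analogous to those in Proposition~\ref{prop:1} (now exploiting \eqref{eq:5a} and the behaviour of $\psi^*$ near $0$ via \eqref{eq:31}--\eqref{eq:32}), and then running the monotonicity sandwich of Theorem~\ref{thm:2}, yields (ii). The implication (ii)$\Rightarrow$(iii) is unchanged: it is the one-line consequence of the vague convergence $t^{-1}p(t,\cdot)\,{\rm d}x\to\nu$ on $\RR^d\setminus\{0\}$.

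The genuinely new point, and the one I expect to be the main obstacle, is (iii)$\Rightarrow$(i). For $d=1$ one writes $\psi(x)=\frac{2}{x}\int_0^\infty(1-\cos t)\,\nu(t/x)\,{\rm d}t$, but letting $x\to0^+$ (rather than $x\to\infty$ as in Theorem~\ref{thm:5}) the argument $t/x$ now sweeps all of $(0,\infty)$, so the integral feels the behaviour of $\nu$ at \emph{small} arguments, which hypothesis (iii) --- a statement about $\nu$ near infinity --- does not control and where $\nu$ may be unbounded. As indicated after Theorem~\ref{thm:5} (and in the spirit of \cite[Theorem~7]{cgt}), the remedy is to replace $\psi$ by $\tilde\psi(\xi)=\int_{\norm{z}>1}\bigl(1-\cos\sprod{\xi}{z}\bigr)\nu(z)\,{\rm d}z$, the L\'evy--Khintchine exponent of the truncated density $\tilde\nu=\nu|_{B^c_1}$. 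Since $0\le\psi(\xi)-\tilde\psi(\xi)\le\frac{1}{2}\norm{\xi}^2\int_{\norm{z}\le1}\norm{z}^2\nu(z)\,{\rm d}z$ and $\ell\in\calR^0_0$ forces $u^2\ell(u^{-1})\to\infty$ as $u\to\infty$, equivalently $\norm{\xi}^2/\ell(\norm{\xi})\to0$ as $\norm{\xi}\to0^+$, the difference is $o(\ell)$ at $0$; hence $\psi\in\Pi^0_\ell$ if and only if $\tilde\psi\in\Pi^0_\ell$. Now $\tilde\nu$ is bounded near $0$, so the one-dimensional computation of Theorem~\ref{thm:5} --- split the integral at a level $A$, pass to the limit on the inner piece by dominated convergence via the Potter bound at $0$, and control the tail by the second mean value theorem together with the monotonicity of $\tilde\nu$ on $[1,\infty)$ --- carries over verbatim with $x\to0^+$ and gives $\tilde\psi\in\Pi^0_\ell$.

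For $d\ge2$ I would reduce to the one-dimensional case exactly as in Theorem~\ref{thm:5}: passing to the (again isotropic unimodal) L\'evy density $\nu_1$ of the one-dimensional projection of $\mathbf{X}$, the same spherical-coordinates identity, change of variables and dominated convergence (with the Potter bound at $0$) give $\lim_{\norm{x}\to\infty}\nu_1(x)\bigl(\norm{x}^{-1}\ell(\norm{x}^{-1})\bigr)^{-1}=c\,\pi^{d/2}/\Gamma(d/2)$; since the symbol of the projection coincides with $\psi$ by radiality, the $d=1$ result yields $\psi\in\Pi^0_\ell$, closing the cycle (i)$\Rightarrow$(ii)$\Rightarrow$(iii)$\Rightarrow$(i). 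Apart from this routine bookkeeping, the only real work is in (iii)$\Rightarrow$(i): checking that the $O(\norm{\xi}^2)$ truncation error is $o(\ell)$ at $0$ --- which is exactly where the slow variation is used, since a function slowly varying at $0$ cannot decay faster than any power --- and re-running the oscillatory-integral estimates of Theorem~\ref{thm:5} in the regime $x\to0^+$.
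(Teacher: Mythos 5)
Your proposal is correct and follows essentially the same route as the paper, which itself only sketches this theorem by reference to the proofs of Proposition \ref{prop:1}, Theorems \ref{thm:1}, \ref{thm:2}, \ref{thm:5}, plus the single modification of replacing $\nu$ by $\tilde\nu=\nu|_{B_1^c}$ in the step (iii)~$\Rightarrow$~(i). You correctly identify and justify that modification (the truncation error is $O(\norm{\xi}^2)=o(\ell(\norm{\xi}))$ near $0$ because $\ell\in\calR^0_0$), which is exactly the point the paper delegates to the comparison with \cite[Theorem 7]{cgt}.
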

\subsection{Large $t \psi(\norm{x}^{-1})$}
In the case of large $t\psi(\norm{x}^{-1})$, we again use $Q_t$ given by \eqref{eq:16}, but this time with
$q_t(r) = t \ell(\sqrt{r}) e^{-t \psi(\sqrt{r})}$ and $w_t(r) = t \psi(\sqrt{r})$. In this section we assume that $\ell$
is a \emph{bounded} function slowly varying at infinity. We start by the following observation.
\begin{lemma}
	\label{lem:1}
	Suppose $f: [x_0, \infty) \rightarrow \RR$, for some $x_0 \geq 0$, is such that
	\[
		\sup_{x_0 < x \leq y \leq 2x} \big|f(x) - f(y) \big| < \infty.
	\]
	Then for each $a \geq 1$ there are $C, \delta > 0$ such that for all $r, t > 0$, if $x, rx > x_0$ then
	\[
		e^{t f(x)}
		\big|
		e^{-t f(r x)} - e^{-t f(rax)}
		\big|
		\leq
		t C^t \max\{r, r^{-1}\}^{\delta t} 
		\big|f(rx) - f(r a x) \big|.
	\]
\end{lemma}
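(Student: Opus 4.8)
The plan is to reduce the statement to two elementary facts: a dyadic–chaining bound that promotes the factor‑$2$ oscillation hypothesis on $f$ to a logarithmic control of $\abs{f(u)-f(v)}$ for arbitrary $u,v>x_0$, together with the mean value estimate $\abs{e^{-p}-e^{-q}}\le e^{-\min\{p,q\}}\abs{p-q}$, valid for all $p,q\in\RR$.

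First I would set $M=\sup_{x_0<x\le y\le 2x}\abs{f(x)-f(y)}$, which is finite by hypothesis, and prove that
\[
	\abs{f(u)-f(v)}\le M\bigl(1+\log_2\max\{u/v,\,v/u\}\bigr)
	\qquad\text{for all } u,v>x_0.
\]
Assuming $u\le v$, writing $\sigma=v/u\ge 1$ and $k=\lfloor\log_2\sigma\rfloor$, I interpolate along the points $u,2u,4u,\dots,2^k u,v$; every one of them is $\ge u>x_0$, every consecutive ratio lies in $[1,2]$, so the hypothesis bounds each of the $k+1$ increments by $M$, and summing yields $\abs{f(u)-f(v)}\le(k+1)M\le(1+\log_2\sigma)M$.

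Next, for fixed $r,t>0$ with $x,rx>x_0$ (note $rax\ge rx>x_0$ since $a\ge 1$, so every function value below is defined), I apply the mean value estimate with $p=tf(rx)$ and $q=tf(rax)$ to get
\[
	e^{tf(x)}\bigl\lvert e^{-tf(rx)}-e^{-tf(rax)}\bigr\rvert
	\le
	t\,e^{t\max\{f(x)-f(rx),\,f(x)-f(rax)\}}\,\abs{f(rx)-f(rax)}.
\]
Then I bound the exponent with the chaining inequality: $f(x)-f(rx)\le M\bigl(1+\log_2\max\{r,r^{-1}\}\bigr)$ and $f(x)-f(rax)\le M\bigl(1+\log_2\max\{ra,(ra)^{-1}\}\bigr)$, and since $a\ge 1$ one checks directly that $\max\{ra,(ra)^{-1}\}\le a\max\{r,r^{-1}\}$. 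Hence the exponent is at most $M(1+\log_2 a)+M\log_2\max\{r,r^{-1}\}$, and exponentiating turns this into the factor $C^t\max\{r,r^{-1}\}^{\delta t}$ with $\delta=M/\log 2$ and $C=e^{M}a^{M/\log 2}$ (in the degenerate case $M=0$ the exponent vanishes and one may take $\delta=1$, $C=1$).

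I do not expect a genuine obstacle here: the only points demanding care are keeping every interpolation node above $x_0$ in the chaining step, and the bookkeeping that absorbs the fixed constant $a$ into $C$ and $\delta$ so the resulting bound depends on $r$ solely through $\max\{r,r^{-1}\}^{\delta t}$.
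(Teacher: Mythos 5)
Your proposal is correct and follows essentially the same route as the paper: a dyadic chaining argument upgrading the factor-$2$ oscillation bound to $\abs{f(u)-f(v)}\le M(1+\log_2\max\{u/v,v/u\})$, followed by the elementary exponential mean value estimate, with the constant $a$ absorbed into $C$ and $\delta=M/\log 2$. The only cosmetic difference is that the paper bounds the prefactor by the sum $e^{t(f(x)-f(rx))}+e^{t(f(x)-f(rax))}$ rather than by $e^{t\max\{\cdot,\cdot\}}$, which is equivalent up to a factor of $2$.
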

\begin{proof}
	Let
	\[
		A = \sup_{x_0 < x \leq y \leq 2x} \big|f(x) - f(y) \big|.
	\]
	and $\delta = \frac{A}{\log 2}$. First, we show, that for all $x, y > x_0$
	\begin{equation}
		\label{eq:41}
		e^{f(x) - f(y)} \leq e^A \max\{x/y, y/x\}^\delta.
	\end{equation}
	Fix $x > x_0$. If $\lambda \in [1, 2]$ then
	\[
		\abs{f(\lambda x) - f(x)} \leq A.
	\]
	Therefore, if $2^k < \lambda \leq 2^{k+1}$ for $k \geq 1$ then
	\begin{align*}
		\abs{f(\lambda x) - f(x)}
		& \leq \sum_{j = 1}^k \abs{f(2^j x) - f(2^{j-1} x)} 
		+ \abs{f(2^k x) - f(\lambda x)} \\
		& \leq
		A (1 + k).
	\end{align*}
	Next, we observe that for all $x, rx > x_0$
	\[
		e^{t f(x)} 
		\big|
		e^{-tf(rx)} - e^{-t f(r a x)}
		\big|
		\leq
		t\big(e^{t f(x) - tf(rx)} + e^{t f(x) - tf(r a x)} \big) 
		\big|
		f(r x) - f(r a x)
		\big|.
	\]
	Therefore, by \eqref{eq:41}, there is $C > 0$ such that for all $t > 0$
	\[
		e^{t f(x) - tf(r x)} + e^{t f(x) - tf(rax)} \leq C^t \max\{r, r^{-1}\}^{\delta t},
	\]
	which finishes the proof.
\end{proof}

\begin{proposition}
	\label{prop:2}
	Suppose $\psi \in \Pi^\infty_\ell$ for some bounded $\ell \in \calR_0^\infty$.
	Then there are $C,\lambda_0, \epsilon > 0$ such that for all $x > 0$ and $t \in (0, \epsilon)$
	\begin{equation}
		\label{eq:19}
		\frac{Q_t(\lambda)}{\lambda}
		\leq
		\begin{cases}
		C t \ell(\lambda^{-1/2}) e^{-t\psi(\lambda^{-1/2})} & \text{if} \quad 0 < \lambda \leq \lambda_0,\\
		C t \ell(\lambda_0^{-1/2}) e^{-t\psi(\lambda_0^{-1/2})} & \text{otherwise.}
		\end{cases}
	\end{equation}
	In particular,
	\begin{equation}
		\label{eq:20}
		\lim_{\atop{t \to 0^+}{t\psi(\sqrt{\lambda}) \to \infty}}
		\frac{\lambda \calL\{{\rm d} Q_t\}(\lambda)}{t \ell(\sqrt{\lambda}) e^{-t\psi(\sqrt{\lambda})}}
		=\frac{1}{2}.
	\end{equation}
\end{proposition}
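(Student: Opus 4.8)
The plan is to mirror the proof of Proposition \ref{prop:1} step by step, the single structural change being that Lemma \ref{lem:1} takes over the role of Claim \ref{clm:3}; this is precisely what introduces the factor $e^{-t\psi(\sqrt\lambda)}$ into $q_t$. First I would check that $f=\psi$ satisfies the hypothesis of Lemma \ref{lem:1}: since $\psi\in\Pi^\infty_\ell$ with $\ell$ bounded, \eqref{eq:5} and \eqref{eq:14} (together with the local uniformity of \eqref{eq:5}, see \cite{bgt}) give $\abs{\psi(x)-\psi(y)}\leq C\ell(x)\leq C'$ for all $x_0<x\leq y\leq 2x$. Feeding Lemma \ref{lem:1} with $x=\sqrt\lambda$, and then inserting $\abs{\psi(r\sqrt\lambda)-\psi(r\sqrt{\lambda a})}\leq C\ell(r\sqrt\lambda)$ and $\ell(r\sqrt\lambda)\leq 2\ell(\sqrt\lambda)\max\{r,r^{-1}\}^{\eta'}$ with $\eta'<1$, I expect the analogue of \eqref{eq:7}: there are $C>0$, $\eta\in(0,1)$ and $x_0,\epsilon>0$ such that, for $\sqrt\lambda\geq x_0$, $r\sqrt\lambda\geq x_0$ and $t\in(0,\epsilon)$,
\[
	e^{t\psi(\sqrt\lambda)}\abs{e^{-t\psi(r\sqrt\lambda)}-e^{-t\psi(r\sqrt{\lambda a})}}
	\leq
	Ct\,\ell(\sqrt\lambda)\max\{r,r^{-1}\}^{\eta},
\]
where $C^t\leq C^\epsilon$ is absorbed and $\epsilon$ is shrunk so that the accumulated exponent stays below $1$.

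With this at hand I would reprove the analogue of Claim \ref{clm:1}: along $t\to0^+$, $t\psi(\sqrt\lambda)\to\infty$,
\[
	\frac{\calL\{{\rm d}U_t\}(\lambda)-\calL\{{\rm d}U_t\}(\lambda a)}{t\ell(\sqrt\lambda)e^{-t\psi(\sqrt\lambda)}}
	\longrightarrow
	\frac{\log a}{2},
\]
starting from the representation \eqref{eq:26} of $\calL\{{\rm d}U_t\}$, with $U_t$ as in \eqref{eq:34}. For the pointwise behaviour of the integrand (fixed $r$) the key is the factorization
\[
	\frac{e^{-t\psi(r\sqrt\lambda)}-e^{-t\psi(r\sqrt{\lambda a})}}{t\ell(\sqrt\lambda)e^{-t\psi(\sqrt\lambda)}}
	=
	e^{t(\psi(\sqrt\lambda)-\psi(r\sqrt\lambda))}\cdot\frac{1-e^{t(\psi(r\sqrt\lambda)-\psi(r\sqrt{\lambda a}))}}{t\ell(\sqrt\lambda)},
\]
whose first factor tends to $1$ and whose second tends to $\tfrac12\log a$, using \eqref{eq:5}, the slow variation of $\ell$, and $t\ell(\sqrt\lambda)\to0$ (here one uses both that $\ell$ is bounded and that $t\to0$). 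On $\{r\sqrt\lambda\geq x_0\}$ the displayed estimate provides the integrable majorant $C\max\{r,r^{-1}\}^\eta e^{-r^2/4}r^{d-1}$, so the dominated convergence theorem applies there.

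The step I expect to be the genuine obstacle is the complementary range $\{r\sqrt\lambda<x_0\}$, which is harmless in Proposition \ref{prop:1} because there the prefactor is bounded, whereas now $e^{t\psi(\sqrt\lambda)}$ is not. The plan is to exploit that $\psi\in\Pi^\infty_\ell$ with bounded $\ell$ grows at most logarithmically, $\psi(x)\leq M\log x+C_1$ (a telescoping consequence of \eqref{eq:5}, or de Haan's representation, \cite{bgt}); hence $e^{t\psi(\sqrt\lambda)}\leq e^{C_1t}\lambda^{tM/2}$, which after shrinking $\epsilon$ is $o(\lambda^{d/2})$ in the regime $t\to0^+$, $t\psi(\sqrt\lambda)\to\infty$ (where, moreover, $t^{-1}=O(\log\lambda)$). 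Since $\{r\sqrt\lambda<x_0\}$ carries $r^{d-1}\,{\rm d}r$-mass of order $\lambda^{-d/2}$ and the exponential difference is trivially at most $2$, the contribution of this range to the normalised integral tends to $0$; combined with the previous paragraph this gives the Claim \ref{clm:1} analogue.

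Finally, the end of the proof of Proposition \ref{prop:1} transfers verbatim. From $\calL\{{\rm d}Q_t\}=(\calL\{{\rm d}U_t\})'$, the monotonicity of $\calL\{{\rm d}Q_t\}$, and the sandwich \eqref{eq:11}, dividing by $t\ell(\sqrt\lambda)e^{-t\psi(\sqrt\lambda)}$ (with $\lambda=s^{-1}$) and letting $a\to1^+$ yields \eqref{eq:20}. For \eqref{eq:19} I would use, since $Q_t$ is non-decreasing, $Q_t(\lambda)\leq e\,\calL\{{\rm d}Q_t\}(\lambda^{-1})\leq 2e\lambda(\calL\{{\rm d}U_t\}((2\lambda)^{-1})-\calL\{{\rm d}U_t\}(\lambda^{-1}))$: for $\lambda\leq\lambda_0:=x_0^{-2}$ the displayed estimate (with $a=2$) bounds the right-hand side by $Ct\lambda\,\ell(\lambda^{-1/2})e^{-t\psi(\lambda^{-1/2})}$, while for $\lambda>\lambda_0$ the Proposition \ref{prop:1} argument based on \eqref{eq:31} gives $Q_t(\lambda)\leq Ct\lambda$, which is $\leq Ct\lambda\,\ell(\lambda_0^{-1/2})e^{-t\psi(\lambda_0^{-1/2})}$ because $e^{-t\psi(\lambda_0^{-1/2})}$ is bounded below for $t\in(0,\epsilon)$.
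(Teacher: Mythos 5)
Your proposal follows the same overall strategy as the paper's proof: establish an exponentially weighted analogue of \eqref{eq:7} via Lemma \ref{lem:1}, deduce the analogue of Claim \ref{clm:1} by dominated convergence, pass to \eqref{eq:20} through the sandwich \eqref{eq:11}, and obtain \eqref{eq:19} from \eqref{eq:23}. The one genuine point of divergence is the range $r\sqrt{\lambda}<x_0$. The paper does not treat it separately: it first proves \eqref{eq:43} for \emph{all} $r>0$ (on $\{r\sqrt{\lambda}\le x_0\}$ by bounding $\abs{\psi(r\sqrt{\lambda})-\psi(r\sqrt{\lambda a})}\le 2\psi^*(x_0\sqrt{a})$ and using the Potter bound $\ell(x_0)\le 2\ell(\sqrt{\lambda})r^{-\eta/2}$), and it invokes \cite[Theorem 3.8.6(b)]{bgt} to get $\sup_{0<x\le y\le 2x}\abs{\psi(x)-\psi(y)}<\infty$ over the whole half-line, so that Lemma \ref{lem:1} applies with no restriction on $r$ and \eqref{eq:22} holds for all $r>0$. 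Your alternative — controlling $\{r\sqrt{\lambda}<x_0\}$ by the at most logarithmic growth of $\psi$, so that $e^{t\psi(\sqrt{\lambda})}\le C\lambda^{tM/2}$ is beaten by the $\lambda^{-d/2}$ mass of that region — is also valid and arguably more transparent about why small $t$ is needed.

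The one thing you must make explicit: you use the small-$r$ argument only to show that the contribution of $\{r\sqrt{\lambda}<x_0\}$ \emph{tends to zero} in the limiting regime, but the uniform bound \eqref{eq:19} (via the analogue of \eqref{eq:28}, i.e. $\calL\{{\rm d}U_t\}(\mu)-\calL\{{\rm d}U_t\}(2\mu)\le Ct\,\ell(\sqrt{\mu})e^{-t\psi(\sqrt{\mu})}$ for all $\sqrt{\mu}\ge x_0$ and $t\in(0,\epsilon)$) requires that this region be controlled \emph{uniformly}, and your ``displayed estimate'' covers only $r\sqrt{\mu}\ge x_0$. The gap closes with exactly the ingredients you already have: on $\{r\sqrt{\mu}<x_0\}$ the integrand is at most $2t\psi^*(x_0\sqrt{a})e^{-r^2/4}r^{d-1}$, so the contribution is $\le Ct\mu^{-d/2}$, and $\mu^{-d/2}\le C\ell(\sqrt{\mu})e^{-t\psi(\sqrt{\mu})}$ follows from $e^{t\psi(\sqrt{\mu})}\le C\mu^{tM/2}$ and $\ell(\sqrt{\mu})\ge c\mu^{-\epsilon'}$ once $\epsilon M/2+\epsilon'<d/2$. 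State this (and adjust $\lambda_0$ to account for the factor $2$ in $(2\lambda)^{-1}$) and the argument is complete.
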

\begin{proof}
	The proof follows the same line as of Proposition \ref{prop:1}. Let $a\ge 1$. First, we show that 
	for every $\eta \in (0, 1)$ there are $C, x_0, \epsilon > 0$ such that for all $r, \lambda > 0$
	and $t \in (0, \epsilon)$, if $\sqrt{\lambda} > x_0$ then
	\begin{equation}
		\label{eq:22}
		\Big|
		e^{-t\psi(r \sqrt{\lambda})} - e^{-t\psi(r \sqrt{\lambda a})}
		\Big|
		\leq
		C t \ell(\sqrt{\lambda}) e^{-t\psi(\sqrt{\lambda})} \max\{r, r^{-1}\}^{\eta}.
	\end{equation}
	Observe that there are $C, x_0 > 0$ such that for all $r, \lambda > 0$ if $\sqrt{\lambda} > x_0$ then
	\begin{equation}
		\label{eq:43}
		\big|\psi(r \sqrt{\lambda}) - \psi(r \sqrt{\lambda a}) \big|
		\leq
		C
		\ell(\sqrt{\lambda}) \max\{r, r^{-1}\}^{\eta/2}.
	\end{equation}
	Indeed, if $\sqrt{\lambda}, r \sqrt{\lambda} > x_0$ then \eqref{eq:43} is a consequence of \eqref{eq:5} and
	\eqref{eq:14}. If $\sqrt{\lambda} > x_0 \geq r \sqrt{\lambda}$ then by \eqref{eq:31} we can estimate
	\[
		\big|\psi(r \sqrt{\lambda}) - \psi(r \sqrt{\lambda a}) \big|
		\leq
		2 \psi^*(r \sqrt{\lambda a}),
	\]
	and since by \eqref{eq:14}
	\[
		\ell(x_0) \leq 2 \ell(\sqrt{\lambda}) r^{-\eta/2},
	\]
	we get \eqref{eq:43}.
	
	Next, for $\psi \in \Pi_\ell^\infty$ we apply \cite[Theorem 3.8.6(b)]{bgt} to show that
	\[
		\sup_{0 < x \leq y \leq 2x} \big| \psi(x) - \psi(y) \big| < \infty.
	\]
	Therefore, by Lemma \ref{lem:1}, there are $C, \delta > 0$ such that for all $r, t, \lambda > 0$,
	\[
		e^{t \psi(\sqrt{\lambda})}
		\big|
		e^{-t \psi(r \sqrt{\lambda})} - e^{-t\psi(r \sqrt{\lambda a})}
		\big|
		\leq
		t C^{t+1} \max\{r, r^{-1}\}^{\delta t}
		\big|
		\psi(r \sqrt{\lambda}) - \psi(r \sqrt{\lambda a})
		\big|,
	\]
	thus by \eqref{eq:43}, whenever $\sqrt{\lambda} > x_0$ we obtain
	\[
		e^{t \psi(\sqrt{\lambda})}
 		\big|
		e^{-t \psi(r \sqrt{\lambda})} - e^{-t\psi(r \sqrt{\lambda a})}
		\big|
		\leq
		t C^{t+1} \ell(\sqrt{\lambda}) \max\{r, r^{-1}\}^{\delta t + \eta/2}.
	\]
	By taking $t \in (0, \epsilon)$ for $\epsilon$ sufficiently small to satisfy $2 \delta \epsilon < 2 - \eta$, we 
	conclude \eqref{eq:22}.

	Now, let $\eta = 1/2$. By applying \eqref{eq:22} to the formula \eqref{eq:12} we get
	\begin{equation}
		\label{eq:28}
		\calL\{{\rm d} U_t\}(\lambda) - \calL\{{\rm d} U_t\}(\lambda a)
		\leq
		C
		t \ell(\sqrt{\lambda}) e^{-t \psi(\sqrt{\lambda})}
	\end{equation}
	for all $t \in (0, \epsilon)$ and $\sqrt{\lambda} \geq x_0$.
	
	Again, with the help of  \eqref{eq:22} we can see that the integrand in \eqref{eq:12} is uniformly
	bounded by an integrable function. Since for a fixed $r > 0$,
	\begin{align*}
		\lim_{\atop{t \to 0^+}{t \psi(\sqrt{\lambda}) \to \infty}}
		\frac{e^{-t\psi(r\sqrt{\lambda a})}-e^{-t\psi(r\sqrt{\lambda})}}
		{ t \ell(\sqrt{\lambda}) e^{-t \psi(\sqrt{\lambda})}}
		& =
		\lim_{\atop{t \to 0^+}{t \psi(\sqrt{\lambda}) \to \infty}}
		e^{t(\psi(\sqrt{\lambda}) - \psi(r \sqrt{\lambda a}))}
		\frac{\psi(r \sqrt{\lambda a}) - \psi(r \sqrt{\lambda})}{\ell(\sqrt{\lambda})} \\
		&= \frac{\log a}{2},
	\end{align*}
	by the dominated convergence theorem we obtain the following claim.
	\begin{claim}
		For $a \geq 1$
		\begin{equation}
			\label{eq:24}
			\lim_{\atop{t \to 0^+}{t \psi(\sqrt{\lambda}) \to \infty}}
			\frac{\calL\{{\rm d} U_t\}(\lambda a) - \calL\{{\rm d} U_t\}(\lambda)}
			{t \ell(\sqrt{\lambda}) e^{-t\psi(\sqrt{\lambda})}}
			=
			\frac{\log a}{2}.
		\end{equation}
	\end{claim}
	Now, the formulas \eqref{eq:11} together with \eqref{eq:24} imply \eqref{eq:20}. Finally, let us recall
	the estimate \eqref{eq:23}
	\[
		Q_t(x) \leq 2 e x \left(\calL\{{\rm d} U_t\}(2^{-1} x^{-1}) - \calL\{{\rm d} U_t\}(x^{-1})\right).
	\]
	If $2 x \leq x_0^{-2}$, we may use \eqref{eq:28} to get
	\[
		Q_t(x) \leq C x t \ell(x^{-1/2}) e^{-t\psi(x^{-1/2})}.
	\]
	For $2 x > x_0^{-2}$, by \eqref{eq:31} we estimate
	\[
		e^{t\psi(x_0)}
		\Big\lvert
		e^{-t \psi(r x^{-1/2})} - e^{-t\psi(r x^{-1/2} 2^{-1/2})} 
		\Big\rvert
		\leq
		C t \psi^*(r x^{-1/2})
		\leq
		C t (1+r^2).
	\]
	Hence, by \eqref{eq:12} we obtain
	\begin{align*}
		Q_t(x) \leq C x t \ell(x_0) e^{-t\psi(x_0)}
	\end{align*}
	which finishes the proof.
\end{proof}

\begin{theorem}
	\label{thm:3}
	Let $\mathbf{X}$ be an isotropic unimodal L\'{e}vy process on $\RR^d$ with the L\'{e}vy--Khintchine
	exponent $\psi \in \Pi^\infty_\ell$ for some bounded $\ell \in \calR_0^\infty$. Then
	\begin{equation}
		\label{eq:25}
		\lim_{\atop{t \to 0^+}{t\psi(\norm{x}^{-1}) \to \infty}}
		\frac{p(t, x)}{t \norm{x}^{-d} \ell(\norm{x}^{-1}) e^{-t \psi(\norm{x}^{-1})}}
		=
		\frac{\Gamma(d/2)}{2 \pi^{d/2}}.
	\end{equation} 
\end{theorem}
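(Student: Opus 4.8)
The plan is to run the proof of Theorem~\ref{thm:2} almost verbatim, replacing Theorem~\ref{thm:1} by Theorem~\ref{thm:8} and the weight $t\ell(r^{-1/2})$ by $t\ell(r^{-1/2})e^{-t\psi(r^{-1/2})}$. So I would first apply Theorem~\ref{thm:8} to $Q_t$ as in \eqref{eq:16}, with $w_t(r)=t\psi(\sqrt r)$ and $q_t(r)=\tfrac12 t\ell(\sqrt r)e^{-t\psi(\sqrt r)}$ (the factor $\tfrac12$ makes \eqref{eq:20} into the required normalization $\lim \lambda\calL\{{\rm d}Q_t\}(\lambda)/q_t(\lambda)=1$). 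Hypothesis~(i) of Theorem~\ref{thm:8} is then exactly \eqref{eq:19}. Hypothesis~(iii) holds with $\rho=0$: since $q_t(\lambda x)/q_t(x)=\bigl(\ell(\sqrt{\lambda x})/\ell(\sqrt x)\bigr)e^{t(\psi(\sqrt x)-\psi(\sqrt{\lambda x}))}$ and the regime $t\to0^+$, $w_t(x)\to\infty$ forces $\sqrt x\to\infty$ (as $\psi$ is locally bounded), the first factor tends to $1$ by slow variation of $\ell$, while $t(\psi(\sqrt{\lambda x})-\psi(\sqrt x))=t\ell(\sqrt x)\cdot\frac{\psi(\sqrt\lambda\,\sqrt x)-\psi(\sqrt x)}{\ell(\sqrt x)}\to0$ because $\ell$ is bounded, so $t\ell(\sqrt x)\le t\lVert\ell\rVert_\infty\to0$, and the de~Haan quotient in \eqref{eq:5} tends to the finite limit $\tfrac12\log\lambda$.

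The one genuinely new point, which I expect to be the main obstacle, is hypothesis~(ii) of Theorem~\ref{thm:8}: the uniform bound $q_t(\lambda x)\le C q_t(x)\max\{\lambda,\lambda^{-1}\}^{\delta}$ with some $\delta\in(0,1)$, valid for $x,\lambda x\ge a$ and $t\in(0,\epsilon)$. The exponential factor in $q_t$ is not slowly varying and a~priori contributes a power of $\lambda$ with $t$-dependent exponent, so \eqref{eq:14} alone does not suffice. Here I would use, as in the proof of Proposition~\ref{prop:2}, that $\psi\in\Pi_\ell^\infty$ with bounded $\ell$ gives $A:=\sup_{0<x\le y\le 2x}\abs{\psi(x)-\psi(y)}<\infty$ by \cite[Theorem 3.8.6(b)]{bgt}; then Lemma~\ref{lem:1}, precisely its inequality \eqref{eq:41}, yields $e^{t(\psi(\sqrt x)-\psi(\sqrt{\lambda x}))}\le e^{tA}\max\{\lambda,\lambda^{-1}\}^{\delta_0 t/2}$ (for $\sqrt x,\sqrt{\lambda x}$ large, with $\delta_0=A/\log 2$), and combining it with $\ell(\sqrt{\lambda x})\le C\ell(\sqrt x)\max\{\lambda,\lambda^{-1}\}^{\epsilon_0/2}$ from \eqref{eq:14} gives $q_t(\lambda x)\le Ce^{\epsilon A}q_t(x)\max\{\lambda,\lambda^{-1}\}^{\epsilon_0/2+\delta_0\epsilon/2}$; choosing $\epsilon_0$ and then $\epsilon$ small enough puts the exponent in $(0,1)$. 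With (i), (ii), (iii) verified, Theorem~\ref{thm:8} gives $\lim Q_t(r)/(r q_t(r^{-1}))=1/\Gamma(2)=1$, i.e.\ $\lim Q_t(r)/\bigl(r t\ell(r^{-1/2})e^{-t\psi(r^{-1/2})}\bigr)=\tfrac12$ along $r\to0^+$, $t\to0^+$, $t\psi(r^{-1/2})\to\infty$.

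Finally I would repeat the squeezing argument of Theorem~\ref{thm:2}. For $0<a<b$, monotonicity of $u\mapsto p(t,u)$ gives the two-sided bounds \eqref{eq:15} with $t\ell(r^{-1/2})$ replaced by $t\ell(r^{-1/2})e^{-t\psi(r^{-1/2})}$. Combining the Tauberian conclusion above with the convergence $\frac{\ell((br)^{-1/2})e^{-t\psi((br)^{-1/2})}}{\ell(r^{-1/2})e^{-t\psi(r^{-1/2})}}\to1$ — which once more uses slow variation of $\ell$ and the fact that, $\ell$ being bounded and $t\to0^+$, $t\bigl(\psi((br)^{-1/2})-\psi(r^{-1/2})\bigr)=t\ell(r^{-1/2})\cdot\frac{\psi(b^{-1/2}r^{-1/2})-\psi(r^{-1/2})}{\ell(r^{-1/2})}\to0$ — one obtains $\frac{Q_t(br)-Q_t(ar)}{r t\ell(r^{-1/2})e^{-t\psi(r^{-1/2})}}\to\frac{b-a}{2}$. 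Plugging this into \eqref{eq:15}, taking $b=1$ and $a=1-\varepsilon$ for the $\limsup$, $a=1$ and $b=1+\varepsilon$ for the $\liminf$, and letting $\varepsilon\to0^+$, yields \eqref{eq:25} with constant $\frac{\Gamma(d/2)}{2\pi^{d/2}}$, exactly as in Theorem~\ref{thm:2}.
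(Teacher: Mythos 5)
Your proposal is correct and is essentially the paper's own proof, which simply says to rerun the argument of Theorem \ref{thm:2} with Proposition \ref{prop:2} and Theorem \ref{thm:8} in place of Proposition \ref{prop:1} and Theorem \ref{thm:1}. You correctly fill in the one detail the paper leaves implicit, namely the verification of hypothesis (ii) of Theorem \ref{thm:8} for $q_t(r)=t\ell(\sqrt r)e^{-t\psi(\sqrt r)}$ via \eqref{eq:41} and \eqref{eq:14} with $t\in(0,\epsilon)$ for $\epsilon$ small.
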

\begin{proof}
	The argument is analogous to the proof of Theorem \ref{thm:2}. One needs to use Proposition \ref{prop:2}
	and Theorem \ref{thm:8} instead of Proposition \ref{prop:1} and Theorem \ref{thm:1}.
\end{proof}

\subsection{Green function asymptotics}
In this section we study the potential measure $G$ associated to a \emph{transient} isotropic unimodal L\'{e}vy process
$\mathbf{X}$, that is 
\begin{align*}
	G(x, A)=\int _{0}^{\infty }\PP^x(X_t\in A) {\: \rm d}t
\end{align*}  
where $\PP^x$ is the standard measure $\mathbb{P}(\: \cdot \: | X_0 =x)$ and $A \subset \RR ^d$ is a Borel set.
We set $G(A)=G(0,A)$. We also use the same notation $G$ for the density of the part of the potential measure which
is absolutely continuous with respect to the Lebesgue measure. We have $G(x,y)= G(0,y-x)$ and we write $G(x)=G(0,x)$.
In the following theorem we provide the asymptotic behaviour of the potential measure $G$.
\begin{theorem}
	\label{GreenAsymp} 
	Let $\mathbf{X}$ be an isotropic unimodal L\'{e}vy process on $\RR^d$ with
	the L\'{e}vy--Khintchine exponent $\psi \in \Pi^\infty_\ell$ for some $\ell \in \calR_0^\infty$. Then for all
	$\lambda > 1$
	\begin{align*}
		\lim _{r \to 0^+}
		\frac{G\big(\{x : r \leq \norm{x} \leq \lambda r\}\big)}
		{\psi(r^{-1/2})^{-2} \ell (r^{-1})}
		= \frac{\log \lambda}{2}.
	\end{align*}
\end{theorem}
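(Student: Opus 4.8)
The plan is to reduce the statement to a Tauberian problem for the single non‑decreasing function
\[
	V(\rho) = \int_0^\infty \PP\big(\abs{X_t}^2 \le \rho\big) \,{\rm d}t,\qquad \rho\ge 0 ,
\]
which is finite for every $\rho>0$ because $\mathbf X$ is transient and satisfies $V(0)=0$; here there is no atom at the origin (i.e.\ $\PP(X_t=0)=0$), since $\psi\in\Pi^\infty_\ell$ forces $\psi$ to be \emph{unbounded} — a bounded symbol would mean the L\'evy measure is finite, so its density vanishes near $0$, contradicting Theorem~\ref{thm:5}. Since the potential measure is absolutely continuous away from $0$, for $r>0$ and $\lambda>1$ one has $G\big(\{x: r\le\abs{x}\le\lambda r\}\big)=V(\lambda r)-V(r)$, so it suffices to understand an increment $V(a\rho)-V(\rho)$ as $\rho\to0^+$. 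First I would compute the Laplace--Stieltjes transform of $V$: integrating \eqref{eq:26} in $t$ over $(0,\infty)$, using Tonelli and $\int_0^\infty e^{-t\psi(s)}\,{\rm d}t=\psi(s)^{-1}$ (valid for a.e.\ $s$, as $\psi$ is eventually positive), gives
\[
	\calL\{{\rm d}V\}(\xi)=\frac{2^{1-d}}{\Gamma(d/2)}\int_0^\infty\frac{u^{d-1}e^{-u^2/4}}{\psi(\sqrt{\xi}\,u)}\,{\rm d}u ,
\]
the integral being finite precisely because transience is equivalent (Chung--Fuchs) to $\int_0^1 r^{d-1}\psi(r)^{-1}\,{\rm d}r<\infty$; in particular $\calL\{{\rm d}V\}(\xi)\to0$ as $\xi\to\infty$.

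Next I would prove that $\calL\{{\rm d}V\}$ belongs to de Haan class at infinity with auxiliary function $\ell(\sqrt{\,\cdot\,})\psi(\sqrt{\,\cdot\,})^{-2}$, namely that for $a\ge1$
\[
	\lim_{\xi\to\infty}\frac{\calL\{{\rm d}V\}(\xi)-\calL\{{\rm d}V\}(a\xi)}{\ell(\sqrt{\xi})\,\psi(\sqrt{\xi})^{-2}}=\frac{\log a}{2}.
\]
Pointwise in $u$ this is immediate: writing $\sqrt{a\xi}\,u=\sqrt a\cdot(\sqrt{\xi}\,u)$, \eqref{eq:5} and \eqref{eq:14} give $\psi(\sqrt{a\xi}\,u)-\psi(\sqrt\xi\,u)\sim\tfrac12\ell(\sqrt\xi)\log a$, while $\psi(\sqrt\xi\,u)\psi(\sqrt{a\xi}\,u)\sim\psi(\sqrt\xi)^2$ by slow variation of $\psi$; and the prefactor $\frac{2^{1-d}}{\Gamma(d/2)}\int_0^\infty u^{d-1}e^{-u^2/4}\,{\rm d}u$ equals $1$. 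The passage to the limit under the integral is the technical core. I would split $(0,\infty)$ into $(0,x_0/\sqrt\xi)$, $(x_0/\sqrt\xi,u_0)$ and $(u_0,\infty)$: on the last interval dominated convergence applies with an easy $u$‑integrable bound obtained exactly as in the proof of Proposition~\ref{prop:1} (using \eqref{eq:31}--\eqref{eq:32} and \eqref{eq:14}); on the first, crudely $|\psi(\sqrt{\xi}u)^{-1}-\psi(\sqrt{a\xi}u)^{-1}|\le 2\pi^2\psi(\sqrt\xi u)^{-1}$, so the contribution is $\le C\int_0^{x_0/\sqrt\xi}u^{d-1}\psi(\sqrt\xi u)^{-1}\,{\rm d}u=C\xi^{-d/2}\int_0^{x_0}v^{d-1}\psi(v)^{-1}\,{\rm d}v$, which, divided by $\ell(\sqrt\xi)\psi(\sqrt\xi)^{-2}$, tends to $0$ since $\xi^{-d/2}$ beats the slowly varying factor $\psi(\sqrt\xi)^2/\ell(\sqrt\xi)$ (this is where transience is used); on the middle interval one substitutes $v=\sqrt\xi\,u$ and uses Karamata's theorem on $\int_{x_0}^{u_0\sqrt\xi}v^{d-1}\ell(v)\psi(v)^{-2}\,{\rm d}v\sim (u_0\sqrt\xi)^d\ell(u_0\sqrt\xi)/\big(d\,\psi(u_0\sqrt\xi)^2\big)$, giving a contribution $\sim u_0^d/d$ relative to the normalisation, which vanishes as $u_0\to0$.

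Finally I would feed this into a Tauberian theorem for de Haan class (e.g.\ \cite{bgt}; alternatively, after one more application of the monotonicity device \eqref{eq:11} to replace $\calL\{{\rm d}V\}$ by the associated increasing function, the distributional argument of Theorem~\ref{thm:1} with Schwartz test functions applies directly): since $V$ is non‑decreasing, $V(0)=0$ and $\calL\{{\rm d}V\}$ is finite on $(0,\infty)$ with $\calL\{{\rm d}V\}(\infty)=0$, the de Haan behaviour of $\calL\{{\rm d}V\}$ at infinity transfers, with the reciprocal auxiliary function, to $\lim_{\rho\to0^+}\big(V(a\rho)-V(\rho)\big)\big/\big(\ell(\rho^{-1/2})\psi(\rho^{-1/2})^{-2}\big)=\tfrac12\log a$, which together with the identity $G\big(\{x: r\le\abs{x}\le\lambda r\}\big)=V(\lambda r)-V(r)$ yields the claim.

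The step I expect to be the main obstacle is exactly the second paragraph: proving that the error terms from the region where $\sqrt\xi\,u$ is not large are negligible relative to $\ell(\sqrt\xi)\psi(\sqrt\xi)^{-2}$. This is the Green‑function analogue of the delicate estimates \eqref{eq:31}–\eqref{eq:32} and the bound for $Q_t(x)$ in Proposition~\ref{prop:1}, and it is genuinely more subtle here because $\psi(\sqrt\xi\,u)^{-1}$ is unbounded near $u=0$, so a naive uniform dominating function does not exist and one must argue by substitution and Karamata as above; transience (through the finiteness of $\int_0^1 r^{d-1}\psi(r)^{-1}\,{\rm d}r$) is used precisely at this point, as well as in checking the hypotheses of the de Haan Tauberian theorem.
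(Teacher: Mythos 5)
Your proposal is correct and follows essentially the same route as the paper's proof: you take the Laplace--Stieltjes transform of the ball potential $\rho\mapsto G\big(\{x:\norm{x}\le\sqrt{\rho}\}\big)$ via the Gaussian identity \eqref{eq:33} (arriving at the paper's formula \eqref{eq:27}), establish the de Haan behaviour of the transform by splitting the $u$-integral at $x_0/\sqrt{\xi}$ --- transience controlling the inner piece exactly as in \eqref{eq:44}--\eqref{eq:45}, slow-variation estimates the rest --- and conclude with the de Haan Tauberian theorem \cite[Theorem 3.9.1]{bgt}; your extra Karamata step on the middle interval $(x_0/\sqrt{\xi},u_0)$ is harmless but not needed, since the dominating function $C\max\{u,u^{-1}\}^{3\delta}u^{d-1}e^{-u^2/4}$ of \eqref{eq:42} is already integrable on all of $(x_0/\sqrt{\xi},\infty)$ because $3\delta<1\le d$. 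The only slip is in an immaterial aside: a finite L\'evy measure need not have a density vanishing near the origin (e.g.\ $\norm{x}^{-d}\big(\log(1/\norm{x})\big)^{-2}$ is integrable near $0$ and is of the form in Theorem \ref{thm:5}(iii)), so your argument excluding an atom of the potential measure at $0$ does not stand as written --- but nothing needs to be excluded, since any such atom cancels in the increment $V(\lambda\rho)-V(\rho)$ and the annulus in the statement avoids the origin anyway.
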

\begin{proof}
	Let
	\[ 
		f(r) 
		=G\big(\{x : \norm{x} \leq \sqrt{r}\}\big)
		= \int_0^\infty \int_{\norm{x} \leq \sqrt{r}} p(t, {\rm d} x) {\: \rm d}t.
	\]
	Hence, by the Fubini--Tonelli's theorem
	\[
		\lambda \calL f(\lambda)
	 	=
		\int_0^{\infty} \int_{\RR^d} e^{-\lambda \norm{x}^2} p(t, {\rm d} x) {\: \rm d}t.
	\]
	By \eqref{eq:33}, the Fubini--Tonelli's theorem and integration in polar coordinates give
	\begin{align}
		\nonumber
		\lambda \calL f(\lambda) 
		&= \int_0^{\infty} 
		\int_{\RR^d} e^{-t \psi(\xi \sqrt{\lambda})} e^{-\norm{\xi}^2/4} {\: \rm d}\xi 	{\: \rm d}t \\
		\nonumber
		&= (4 \pi)^{-d/2} \int_{\RR^d} e^{-\norm{\xi}^2/4} \frac{{\rm d}\xi}{\psi(\xi \sqrt{\lambda})} \\
		\label{eq:27}
		&= \frac{2^{1-d}}{\Gamma(d/2)} \int_0^{\infty} e^{-r^2/4} r^{d-1} \frac{{\rm d}r}{\psi(r \sqrt{\lambda})}.
	\end{align}
	Hence,
	\begin{equation}
		\label{eq:40}
		s \calL f(s )- \lambda s \calL f(\lambda s)
		= \frac{2^{1-d}}{\Gamma (d/2)}\int _{0}^\infty r^{d-1}e^{-r^2/4}
		\left(\frac{1}{\psi(r \sqrt{s})} - \frac{1}{\psi(r \sqrt{\lambda s})} \right)
		{\: \rm d}r.
	\end{equation}
	Next, by \eqref{eq:14} and \eqref{eq:5}, for any $\delta \in (0, 1/3)$  there are $x_0, C > 0$  such that if
	$r \sqrt{s}, \sqrt{s} \geq x_0$ then
	\begin{align}
	\label{eq:36}
		\big| \psi(r \sqrt{s \lambda}) - \psi(r \sqrt{s}) \big|
		& \leq
		C \ell(\sqrt{s}) \max\{r, r^{-1}\}^\delta,
	\end{align}
	and
	\begin{equation}
		\label{eq:38}
		\begin{aligned}
			\psi(\sqrt{s}) & \leq C \psi(r \sqrt{s}) \max\{r, r^{-1}\}^\delta, \\
			\psi(\sqrt{s}) & \leq C \psi(r \sqrt{s \lambda}) \max\{r, r^{-1}\}^\delta.
		\end{aligned}
	\end{equation}
	For $r \leq x_0/\sqrt{s}$, we have
	\begin{equation}
		\label{eq:44}
		\int_{0}^{x_0/\sqrt{s}} 
		r^{d-1} e^{-r^2/4}
		\frac{1}{\psi(r \sqrt{s})}
		{\: \rm d}r
		\le
		s^{-d/2}
		\int _{0}^{x_0} r^{d-1}
		\frac{1}{\psi(r)} {\: \rm d}r.
	\end{equation}
	Since the process is transient the right-hand side is finite. Because $\psi^2/\ell \in \calR_0^\infty$, by
	\eqref{eq:44} we get
	\begin{equation}
		\label{eq:45}
		\lim_{s \to \infty}
		\frac{\psi(\sqrt{s})^2}{\ell(\sqrt{s})} 
		\int _{0}^{x_0/\sqrt{s}} 
		r^{d-1}e^{-r^2/4}
		\left(\frac{1}{\psi(r \sqrt{s})}- \frac{1}{\psi(r \sqrt{\lambda s})}\right)
		{\: \rm d}r=0.
	\end{equation}
	If $r \sqrt{s} \geq x_0$, by \eqref{eq:36} and \eqref{eq:38}, we can estimate
	\begin{equation}
		\label{eq:42}
		\begin{aligned}
		\frac{\psi(\sqrt{s})^2}{\ell(\sqrt{s})} 
		\Big| 
		\frac{1}{\psi(r \sqrt{s})} - \frac{1}{\psi(r \sqrt{s \lambda})}
		\Big|
		& \leq
		\frac{\big| \psi(r \sqrt{s \lambda}) - \psi(r \sqrt{s})\big|}{\ell(\sqrt{s})}
		\cdot
		\frac{\psi(\sqrt{s})}{\psi(r \sqrt{s})}
		\cdot
		\frac{\psi(\sqrt{s})}{\psi(r \sqrt{s \lambda})} \\
		& \leq
		C
		\max\{r, r^{-1}\}^{3 \delta}. 
		\end{aligned}
	\end{equation}
	In particular, the integrand in \eqref{eq:40} restricted to $[x_0/\sqrt{s}, \infty)$ is bounded by
	an integrable function. Since $\psi \in \calR_0^\infty$, by \eqref{eq:5}
	\[
		\lim_{s \to \infty}
		\frac{\psi(\sqrt{s})^2}{\ell(\sqrt{s})} 
		\left(\frac{1}{\psi(r \sqrt{s})} - \frac{1}{\psi(r \sqrt{s \lambda})}\right)
		=
		\frac{\log \lambda}{2},
	\]
	thus by the dominated convergence theorem and \eqref{eq:45} we obtain
	\begin{align*}
		\lim_{s \to \infty}
		\frac{s \calL f(s )-\lambda s \calL f(\lambda s)}{\ell(\sqrt{s})\psi(\sqrt{s})^{-2}} 
		= \frac{\log \lambda}{2}.
	\end{align*}
	Now, by applying de Haan Tauberian theorem \cite[Theorem 3.9.1]{bgt} we conclude that
	\[
		\lim_{r \to 0^+}
		\frac{f(\lambda r) - f(r)}{\ell(r^{-1/2}) \psi(r^{-1/2})^{-2}} =\frac{\log \lambda}{2}.
		\qedhere
	\]
\end{proof}

\begin{corollary}
	\label{cor:2}
	Assume that $\psi \in \Pi_\ell^\infty$ for some $\ell \in \calR^\infty_0$. Then
	\begin{align*}
		\lim _{x \to 0} 
		\frac{G(x)}{\norm{x}^{-d} \psi(\norm{x}^{-1})^{-2} \ell(\norm{x}^{-1})}
		=
		\frac{\Gamma(d/2)}{2 \pi^{d/2}}.
	\end{align*}
\end{corollary}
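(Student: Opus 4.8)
\emph{Proof proposal for Corollary~\ref{cor:2}.}

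The plan is to pass from the asymptotics of the $G$-measure of thin annuli, supplied by Theorem~\ref{GreenAsymp}, to the pointwise asymptotics of the density $G(x)$ by a monotone-density squeeze of exactly the type that converts Theorem~\ref{thm:1} and Proposition~\ref{prop:1} into Theorem~\ref{thm:2}. First I would record two structural facts. Since $\mathbf{X}$ is isotropic unimodal, each $p(t,\cdot)$ has a radial, radially non-increasing density, hence $G(x)=\int_0^\infty p(t,x)\,{\rm d}t$ is radial and $u\mapsto G(u)$ is non-increasing on $(0,\infty)$; transience of $\mathbf{X}$ (which always holds for $d\geq 3$ and is the standing assumption of this subsection) makes $G$ a Radon measure, so together with monotonicity $G(u)<\infty$ for every $u>0$. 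Consequently, for $0<r$ and $\lambda>1$, integration in polar coordinates gives
\[
	G\big(\{x : r\leq\norm{x}\leq\lambda r\}\big)
	=\frac{2\pi^{d/2}}{\Gamma(d/2)}\int_r^{\lambda r}G(u)\,u^{d-1}\,{\rm d}u .
\]

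Next I would use monotonicity of $u\mapsto G(u)$ on $[r,\lambda r]$ and $\int_r^{\lambda r}u^{d-1}\,{\rm d}u=\frac{1}{d}r^d(\lambda^d-1)$ to sandwich the last integral and obtain
\[
	\frac{r^d(\lambda^d-1)}{d}\,G(\lambda r)
	\;\leq\;
	\frac{\Gamma(d/2)}{2\pi^{d/2}}\,G\big(\{x : r\leq\norm{x}\leq\lambda r\}\big)
	\;\leq\;
	\frac{r^d(\lambda^d-1)}{d}\,G(r).
\]
Write $N(r)=\psi(r^{-1})^{-2}\ell(r^{-1})$ and $H(r)=r^d G(r)/N(r)$, so that the corollary is exactly the statement $H(r)\to\Gamma(d/2)/(2\pi^{d/2})$ as $r\to 0^+$. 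By Theorem~\ref{GreenAsymp}, once the annuli are parametrized by their inner radius, $G(\{x : r\leq\norm{x}\leq\lambda r\})/N(r)\to\log\lambda$ for every fixed $\lambda>1$. Dividing the right-hand inequality by $N(r)$ then yields $\liminf_{r\to 0^+}H(r)\geq\frac{\Gamma(d/2)}{2\pi^{d/2}}\cdot\frac{d\log\lambda}{\lambda^d-1}$. For the left-hand inequality I would substitute $G(\lambda r)=N(\lambda r)H(\lambda r)/(\lambda r)^d$; since $\psi\in\Pi_\ell^\infty\subset\calR^\infty_0$ and $\ell\in\calR^\infty_0$ are slowly varying at infinity, $N(\lambda r)/N(r)\to 1$ as $r\to 0^+$ for the fixed factor $\lambda$, and Theorem~\ref{GreenAsymp} gives $\limsup_{r\to 0^+}H(r)\leq\frac{\Gamma(d/2)}{2\pi^{d/2}}\cdot\frac{d\lambda^d\log\lambda}{\lambda^d-1}$.

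Finally I would let $\lambda\to 1^+$: both $\frac{d\log\lambda}{\lambda^d-1}$ and $\frac{d\lambda^d\log\lambda}{\lambda^d-1}$ tend to $1$, which squeezes $\liminf_{r\to0^+}H(r)$ and $\limsup_{r\to0^+}H(r)$ to the common value $\Gamma(d/2)/(2\pi^{d/2})$; since $G$ is radial and $r=\norm{x}$, this is precisely the assertion of the corollary. I do not anticipate a genuine obstacle, as the squeeze itself is elementary. The points needing care are the radial monotonicity and finiteness of the density $G$ (standard for transient isotropic unimodal L\'evy processes), keeping straight the relation between the parametrization of the annuli in Theorem~\ref{GreenAsymp} and the one used here, and the slowly varying estimate $N(\lambda r)/N(r)\to 1$; this last ratio is the most delicate point, but it follows at once from $\psi,\ell\in\calR^\infty_0$.
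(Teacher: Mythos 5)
Your proposal is correct and is essentially the paper's own argument: the paper proves Corollary \ref{cor:2} by invoking Theorem \ref{GreenAsymp} together with "the line of reasons from Theorem \ref{thm:2}", which is precisely the monotone-density sandwich over thin annuli that you carry out (and you rightly read the normalization in Theorem \ref{GreenAsymp}, consistently with its proof, as $\psi(r^{-1})^{-2}\ell(r^{-1})$ with limit $\log\lambda$ when the annulus is $\{r\le\norm{x}\le\lambda r\}$). The auxiliary facts you flag -- radial monotonicity and finiteness of $G$ off the origin, and $N(\lambda r)/N(r)\to 1$ from $\psi,\ell\in\calR_0^\infty$ -- are exactly the right ones and all hold.
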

\begin{proof}
	We use Theorem \ref{GreenAsymp} and the line of reasons from Theorem \ref{thm:2}.
\end{proof}
Similarly one can prove the following proposition.
\begin{proposition}
	\label{cor:2I}
	Assume that $\psi \in \Pi_\ell^0$ for some $\ell \in \calR^0_0$. Then
	\begin{align*}
		\lim _{|x| \to \infty} 
		\frac{G(x)}{\norm{x}^{-d} \psi(\norm{x}^{-1})^{-2} \ell(\norm{x}^{-1})}
		=
		\frac{\Gamma(d/2)}{2 \pi^{d/2}}.
	\end{align*}
\end{proposition}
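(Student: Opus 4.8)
The plan is to repeat the argument of Corollary~\ref{cor:2}: first prove the analogue of Theorem~\ref{GreenAsymp} for exponents in de~Haan class at the origin, and then pass to the pointwise statement by the monotonicity/sandwiching argument from the proof of Theorem~\ref{thm:2}. First note that $\psi\in\Pi_\ell^0$ forces $\psi$ itself to be slowly varying at $0$: iterating \eqref{eq:5a} with $\lambda=2$ gives $\psi(x)\ge\sum_{k=1}^n\big(\psi(x/2^{k-1})-\psi(x/2^k)\big)\sim n\ell(x)\log 2$ as $x\to0^+$, so $\ell(x)=o(\psi(x))$ and hence $\psi(\lambda x)/\psi(x)\to1$. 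Consequently $\psi^2/\ell\in\calR^0_0$, and since then $r\mapsto r^{d-1}\psi(r)^{-1}$ is integrable near $0$, the process is automatically transient.

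The first step is to prove that for every $\lambda>1$,
\[
	f(\lambda r)-f(r)\sim\tfrac12(\log\lambda)\,\ell(r^{-1/2})\psi(r^{-1/2})^{-2}\qquad(r\to\infty),
\]
where $f(r)=G\big(\{x:\norm{x}\le\sqrt r\}\big)$; this is the mirror, at $\infty$, of what the proof of Theorem~\ref{GreenAsymp} establishes at $0^+$. Exactly as there, \eqref{eq:33}, Fubini--Tonelli and polar coordinates give $\lambda\calL f(\lambda)=\frac{2^{1-d}}{\Gamma(d/2)}\int_0^\infty e^{-r^2/4}r^{d-1}\psi(r\sqrt\lambda)^{-1}\,{\rm d}r$, hence a formula for $s\calL f(s)-\lambda s\calL f(\lambda s)$ as in \eqref{eq:40}; one then lets $s\to0^+$ instead of $s\to\infty$. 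The de~Haan estimates analogous to \eqref{eq:36} and \eqref{eq:38} — obtained from \eqref{eq:5a} and the $\calR^0_0$-version of \eqref{eq:14} — are now valid for \emph{small} arguments, i.e.\ when $r\sqrt s$ and $\sqrt s$ are small, and yield, as in \eqref{eq:42}, that on $\{r\sqrt s\le x_0\}$ the integrand times $\psi(\sqrt s)^2/\ell(\sqrt s)$ is dominated by $C\max\{r,r^{-1}\}^{3\delta}$ with $3\delta<\min\{1,d\}$ (integrable near $0$ by transience), while on the tail $\{r\sqrt s>x_0\}$ one has $\psi\ge\psi^*(x_0)/\pi^2>0$, so that contribution is $\le\psi(\sqrt s)^2\ell(\sqrt s)^{-1}\int_{x_0/\sqrt s}^\infty r^{d-1}e^{-r^2/4}{\rm d}r\to0$ (a Gaussian tail against a slowly varying factor). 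Since pointwise $\frac{\psi(\sqrt s)^2}{\ell(\sqrt s)}\big(\psi(r\sqrt s)^{-1}-\psi(r\sqrt{\lambda s})^{-1}\big)\to\tfrac12\log\lambda$ by \eqref{eq:5a}, dominated convergence together with the de~Haan Tauberian theorem \cite[Theorem 3.9.1]{bgt} (applied at $\infty$) give the claim.

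For the pointwise statement I argue as in the proof of Theorem~\ref{thm:2}. For $0<a<b$ we have $f(br)-f(ar)=\frac{2\pi^{d/2}}{\Gamma(d/2)}\int_{\sqrt{ar}}^{\sqrt{br}}u^{d-1}G(u)\,{\rm d}u$, and since $\mathbf X$ is unimodal, $u\mapsto G(u)$ is non-increasing; hence $f(br)-f(ar)$ is squeezed between $\frac{2\pi^{d/2}}{d\,\Gamma(d/2)}r^{d/2}(b^{d/2}-a^{d/2})G(\sqrt{br})$ and the same expression with $G(\sqrt{ar})$. Feeding in the asymptotics of the first step (with $\lambda=b/a$ and $\rho=ar$), using slow variation at $0$ of $\ell$ and $\psi$ to absorb the factors $a^{\pm1/2}$, $b^{\pm1/2}$ inside $\ell$ and $\psi$, and then letting $a=1$, $b=1+\varepsilon$ (resp.\ $a=1-\varepsilon$, $b=1$) with $\varepsilon\to0^+$, one finds that both $\limsup$ and $\liminf$ of $G(x)\big/\big(\norm{x}^{-d}\psi(\norm{x}^{-1})^{-2}\ell(\norm{x}^{-1})\big)$ as $\norm{x}\to\infty$ equal $\Gamma(d/2)/(2\pi^{d/2})$.

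The only genuinely new point, and the one I expect to cost the most care, is verifying in the first step that every estimate in the proof of Theorem~\ref{GreenAsymp} — there localized to the region where the arguments of $\psi$ and $\ell$ are large, with $s\to\infty$ — has a valid counterpart for small arguments with $s\to0^+$; in particular that the part of the integral coming from $r$ near $0$ stays integrable uniformly in $s$ (this is exactly transience) and that $\psi$ and $\psi^2/\ell$ are slowly varying at $0$ so that the Tauberian theorem at $\infty$ applies. Everything else is the routine transcription already flagged for the de~Haan-at-origin analogues of Theorems~\ref{thm:1}, \ref{thm:2} and \ref{thm:5}.
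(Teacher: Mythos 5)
Your proposal is correct and is essentially the paper's intended argument: the paper proves this proposition only by remarking that it follows ``similarly'' to Theorem \ref{GreenAsymp} and Corollary \ref{cor:2}, i.e.\ by mirroring the Laplace-transform computation \eqref{eq:27}--\eqref{eq:40} and the de Haan Tauberian step at $s\to 0^+$, and then sandwiching with the radially non-increasing density $G$ as in Theorem \ref{thm:2}. You also correctly handle the one genuinely asymmetric point (the region $r\sqrt{s}>x_0$ is killed by a Gaussian tail against a slowly varying factor rather than by the transience integral), and your observation that $\psi\in\Pi^0_\ell$ automatically forces transience is right.
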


In the rest of this section we study the asymptotic behaviour of the Green function. The proofs below  rely on Theorem
\ref{thm:equiG}, but  for future reference,  it is  convenient to include them   in this section.

Let us recall that for any integrable radial function $g:\RR^d \rightarrow \RR$
\begin{equation}
	\label{eq:Hankel}
	\int_{\RR^d} e^{i\sprod{\xi}{x}}g(x) {\: \rm d} x
	= 
	|\xi|^{1-d/2} (2\pi)^{d/2} \int^\infty_0 s^{d/2}g(s)J_{d/2-1}(|\xi|s){\: \rm d}s,
\end{equation}
where $J_\alpha$ is the Bessel function of the first kind defined for $\alpha > -1/2$ and $x > 0$ by
the complex integral
\[
	J_{\alpha}(x) = \frac{1}{2 \pi i} 
	\int_{\abs{z} = 1} e^{x(z - z^{-1})/2} z^{-\alpha-1} {\: \rm d}z.
\]
For each $\lambda > 0$, let us denote by $G^\lambda$ a $\lambda$-resolvent kernel, that is 
\[
	G^\lambda(x) = \int_0^\infty e^{-\lambda t} p(t, x) {\: \rm d}t
\]
for $x \in \RR^d$. For any Borel set $A \subset \RR^d$ we set
\[
	G^\lambda(A) = \int_0^\infty \int_A e^{-\lambda t} p(t, {\rm d} x) {\: \rm d}t.
\]
\begin{theorem}
	\label{thm:6}
	Suppose $d\geq 6$. Let $\mathbf{X}$ be an isotropic unimodal L\'{e}vy process on $\RR^d$ with
	the L\'{e}vy--Khintchine exponent $\psi$. Then 
	\begin{equation}
		\label{eq:limGe}
		\lim_{x \to 0} \frac{G(x)}{|x|^{-d} \psi(|x|^{-1})^{-1}} = c>0,
	\end{equation}
	if and only if $\psi\in\RInf$, for some $\alpha >0 $. In particular, \eqref{eq:limGe} implies that
	\[
		c=
		2^{-\alpha}\pi^{d/2}\frac{\Gamma\left((d-\alpha)/2\right)}{\Gamma\left(\alpha/2\right)}.
	\]
\end{theorem}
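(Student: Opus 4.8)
The plan is to prove the two implications in turn; both rest on the Laplace--transform identity already established in the proof of Theorem~\ref{GreenAsymp}. Writing
\[
	f(r) = G\big(\{x : \norm{x} \leq \sqrt{r}\}\big),
\]
which is non-decreasing, equation \eqref{eq:27} reads
\[
	\lambda \calL f(\lambda) = \frac{2^{1-d}}{\Gamma(d/2)} \int_0^{\infty} e^{-r^2/4}\, r^{d-1}\, \frac{{\rm d}r}{\psi(r \sqrt{\lambda})}.
\]
The remaining tools are: the uniform convergence / Potter bounds for regularly varying functions (cf.\ \eqref{eq:14}); Karamata's Tauberian theorem for the correspondence $\lambda \to \infty \leftrightarrow r \to 0^+$, which applies because $f$ is monotone; and the fact that $G$ is radially non-increasing (unimodality), which — exactly as in the proofs of Theorem~\ref{thm:2} and Corollary~\ref{cor:2} — lets one pass between the ball-mass asymptotics of $f$ and the pointwise asymptotics of the density $G$. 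The passage from the resolvent kernels $G^\lambda$ to the potential density $G$ in the range of parameters at hand, where the hypothesis $d \geq 6$ enters, is provided by Theorem~\ref{thm:equiG}.

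\emph{Sufficiency.} Suppose $\psi \in \RInf$. Since $\psi$ is the L\'evy--Khintchine exponent of a pure-jump process, $\alpha \in (0, 2)$, in particular $\alpha < d$. By the uniform convergence theorem, $\psi(r\sqrt{\lambda})/\psi(\sqrt{\lambda}) \to r^{\alpha}$ as $\lambda \to \infty$, uniformly for $r$ in compact subsets of $(0, \infty)$, while \eqref{eq:14} furnishes an integrable majorant for the integrand above; dominated convergence gives
\[
	\lim_{\lambda \to \infty} \psi\big(\sqrt{\lambda}\big)\, \lambda \calL f(\lambda)
	= \frac{2^{1-d}}{\Gamma(d/2)} \int_0^{\infty} e^{-r^2/4}\, r^{d-1-\alpha}\, {\: \rm d}r
	= \frac{2^{-\alpha}}{\Gamma(d/2)}\, \Gamma\big((d-\alpha)/2\big).
\]
As $\lambda \mapsto \lambda\,\psi(\sqrt{\lambda})$ is regularly varying at infinity with positive index $1 + \alpha/2$ and $f$ is non-decreasing, Karamata's Tauberian theorem yields
\[
	f(r) \sim \frac{2^{-\alpha}\,\Gamma((d-\alpha)/2)}{\Gamma(d/2)\,\Gamma(1+\alpha/2)}\,\frac{1}{\psi(r^{-1/2})}, \qquad r \to 0^+,
\]
so that $f$ is regularly varying at $0$ with index $\alpha/2$. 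Hence for every $\lambda > 1$,
\[
	G\big(\{x : r \leq \norm{x} \leq \lambda r\}\big) = f(\lambda^2 r^2) - f(r^2) \sim \frac{2^{-\alpha}\,\Gamma((d-\alpha)/2)}{\Gamma(d/2)\,\Gamma(1+\alpha/2)}\,\frac{\lambda^{\alpha} - 1}{\psi(r^{-1})}
\]
as $r \to 0^+$. Squeezing this between $\tfrac{2\pi^{d/2}}{d\,\Gamma(d/2)}\big((\lambda r)^d - r^d\big) G(\lambda r)$ and $\tfrac{2\pi^{d/2}}{d\,\Gamma(d/2)}\big((\lambda r)^d - r^d\big) G(r)$ by radial monotonicity of $G$, then letting $r \to 0^+$ and afterwards $\lambda \to 1^+$ (with the substitution $\lambda r \mapsto x$ and $\psi \in \RInf$), gives \eqref{eq:limGe}; simplifying the constant via $\Gamma(1+\alpha/2) = \tfrac{\alpha}{2}\Gamma(\alpha/2)$ produces the asserted value of $c$.

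\emph{Necessity.} This is the main obstacle. The assumption \eqref{eq:limGe} is equivalent to $\psi(s) \sim c\, s^d / G(1/s)$ as $s \to \infty$, so it is enough to show that $u \mapsto u^d G(u)$ is regularly varying at $0$. First, \eqref{eq:limGe} already forces $\int_0 \rho^{d-1} G(\rho)\,{\: \rm d}\rho < \infty$ (balls have finite potential), so integrating \eqref{eq:limGe} radially — the interchange of the asymptotic with the integral being justified by monotonicity of $G$ — identifies the behaviour of $f$ near $0$; plugging this into \eqref{eq:27} and invoking Karamata's theorem forces
\[
	\int_0^{\infty} e^{-r^2/4}\, r^{d-1}\, \frac{{\rm d}r}{\psi(r\sqrt{\lambda})} \sim \frac{C}{\psi(\sqrt{\lambda})}, \qquad \lambda \to \infty .
\]
One must then invert this relation: it asserts that a Gaussian-regularised Mellin convolution of $1/\psi$ is asymptotic to $1/\psi$ itself, and regular variation of $\psi$ must be recovered from it — a Mercerian step. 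Here $d \geq 6$ enters decisively: since the upper Matuszewska index of $\psi$ does not exceed $2$, the weight $e^{-r^2/4} r^{d-1}$ kills the near-origin part of the integral (this is the transience-type finiteness used in \eqref{eq:44}), reducing the identity to a genuine Mellin convolution to which a converse of Karamata's theorem applies. Concretely I would use Theorem~\ref{thm:equiG} to first extract from \eqref{eq:limGe} that $\psi$ obeys matching weak lower and upper scaling conditions at infinity (so that $G(x) \asymp \norm{x}^{-d}\psi(\norm{x}^{-1})^{-1}$ and the quantities above are two-sidedly controlled), and then promote these scalings to genuine regular variation by running the de Haan--type argument of Theorem~\ref{GreenAsymp} in reverse on the identity \eqref{eq:27}. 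Once $\psi \in \RInf$ is known, its index $\alpha$ — and with it the value of $c$ — is pinned down by the sufficiency part.
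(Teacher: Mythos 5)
Your \emph{sufficiency} argument is correct and genuinely different from the paper's, which simply invokes \cite[Corollary 4]{cgt}: you extract the Abelian asymptotics of $\lambda\calL f(\lambda)$ from \eqref{eq:27} by dominated convergence (Potter bounds supplying the majorant), apply Karamata's Tauberian theorem --- legitimate since $f$ is monotone and the index $\alpha/2$ is positive --- and descend from ball masses to the density by radial monotonicity exactly as in Theorem~\ref{thm:2} and Corollary~\ref{cor:2}. This is self-contained and even identifies the constant; note that the value you obtain, $2^{-\alpha}\pi^{-d/2}\Gamma((d-\alpha)/2)/\Gamma(\alpha/2)$, is the classical Riesz-kernel constant, so the exponent of $\pi$ in the theorem's statement appears to carry a sign typo rather than your computation being wrong.

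The \emph{necessity} direction is where the substance of the theorem lies, and there your proposal has a genuine gap. You correctly diagnose that one must pass from an asymptotic relation of the form ``a Mellin convolution of $1/\psi$ is asymptotic to $1/\psi$'' to regular variation of $\psi$ --- a Mercerian, not Tauberian, step --- but you never supply a theorem that performs it, and the concrete route you sketch cannot work. Theorem~\ref{thm:equiG} only delivers a weak \emph{lower} scaling condition for $\psi$; there is no companion statement giving upper scaling, and even weak lower and upper scaling with matching indices do not imply regular variation (consider $r^{\alpha}(2+\sin\log r)$). Likewise, de Haan's theorem is an Abelian--Tauberian pair: it cannot be ``run in reverse'' to manufacture regular variation from an asymptotic identity between transforms. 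The paper closes this gap differently: it first shows $G^1(x)\sim G(x)$ as $x\to 0$ (using Theorem~\ref{thm:equiG} to secure integrability of the characteristic function), rewrites \eqref{eq:Hankel} as the exact Mellin convolution $(2\pi)^{-d/2}r^{d/2-1}(\psi(r)+1)^{-1}=\calM(J_{d/2-1},f)(r)$ with $f(r)=r^{-d/2-1}G^1(1/r)$, and then applies the Drasin--Shea--Jordan theorem for oscillating kernels from \cite{MR1760604}. Verifying its hypotheses is the real work: one computes $\check{J}_{d/2-1}$, checks that $f$ has bounded decrease, and locates $\rho=\limsup_{r\to\infty}\log f(r)/\log r$ in $[d/2-3,\,d/2-1)$, inside the strip $(-1/2,\,d/2-1)$ of absolute convergence of $\check{J}_{d/2-1}$. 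This is precisely where $d\geq 6$ enters --- not, as you suggest, in the transience-type finiteness of the near-origin part of the Gaussian-weighted integral, which only requires $d\geq 3$. Without this Mercerian input your necessity argument does not close.
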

\begin{proof}
	If $\psi\in\RInf$, for some $\alpha>0$, then by \cite[Corollary 4]{cgt} we obtain \eqref{eq:limGe}. 
	Hence, it is enough to prove the converse. Assume that \eqref{eq:limGe} holds. We are going to use a version of
	Drasin--Shea--Jordan theorem proved in \cite[Theorem 1]{MR1760604}. First, let us observe that, by \eqref{eq:31}, $G$
	is unbounded. Next, for each $t > 0$, in view of Theorem \ref{thm:equiG} the characteristic function of $X_t$
	is integrable. Hence, following the argument given in \cite[Lemma III.3]{MR1054115} we obtain
	\[
		\lim_{x\to 0}\frac{G^1(x)}{G(x)}=1.
	\]
	Since 
	\[
		\lim_{x\to 0}\frac{\psi(|x|^{-1})}{\psi(|x|^{-1})+1}=1,
	\]
	we also have
	\begin{equation}
		\label{eq:limGe1}
		\lim_{x\to 0} G^1(x)|x|^d\left(\psi\left(|x|^{-1}\right)+1\right)=c.
	\end{equation}
	Moreover, $G^1$ is integrable and its Fourier transform equals to $(\psi(\xi)+1)^{-1}$. Thus, by \eqref{eq:Hankel},
	we can write
	\begin{align*}
		\frac{\norm{\xi}^{d/2-1}}{\psi(\xi)+1}
		& =  \norm{\xi}^{d/2-1}\int_{\RR^d} G^1(x) e^{-i \sprod{x}{\xi}} {\: \rm d}\xi \\
		& = (2\pi)^{d/2}\int^\infty_0 s^{d/2} G^1(s) J_{d/2-1}(\norm{\xi}s){\: \rm d}s \\
		& = (2\pi)^{d/2}\int^\infty_0 s^{-d/2-1}G^1(1/s)J_{d/2-1}(\norm{\xi}/s)\frac{{\rm d}s}{s}.
	\end{align*}
	If we set
	\[
		f(r) = r^{-d/2-1} G^1(1/r),
	\]
	then for $r > 0$
	\[
		(2\pi)^{-d/2} \frac{r^{d/2-1}}{\psi(r) + 1}	
		=
		\calM(J_{d/2-1},f)(r),
	\]
	where for two functions $f,g : [0, \infty) \rightarrow \CC$, by $\calM(f, g)$ we denote their Mellin convolution,
	that is
	\[
		\calM(f, g)(r) = \int_0^\infty f(r/ s) g(s) \frac{{\rm d} s}{s}.
	\]
	Now, in view of \eqref{eq:limGe1}, we have
	\[
		\lim_{r \to \infty} \frac{\calM(J_{d/2-1},f)(r)}{f(r)}=(2\pi)^{-d/2} c^{-1}.
	\]
	Since $G^1$ is integrable and non-increasing the function $s \mapsto s^d G^1(s)$ is bounded, thus
	\[
		\lim_{r\to 0^+} f(r)=0.
	\]
	Let $\check{J}_{d/2-1}$ be the Mellin transform of $J_{d/2-1}$, i.e. for $z \in \CC$
	\begin{equation}
		\label{eq:17}
		\check{J}_{d/2-1}(z)
		=
		\int_0^\infty
		t^{-z-1} J_{d/2-1}(t) {\: \rm d} t,
	\end{equation}
	whenever the integral converges. By the well-known asymptotics for the Bessel functions of the first kind,
	there is $C > 0$ such that for all $r > 0$
	\begin{equation}
		\label{eq:50}
		0 \leq J_{d/2-1}(r) \leq C \min\big\{r^{-1/2}, r^{d/2-1} \big\}.
	\end{equation}
	Hence, the integral \eqref{eq:17} is absolutely convergent on the strip $\{z \in \CC : -1/2 < \Re z < d/2-1\}$.
	Moreover, by \cite[13.24, (1)]{MR0010746}, we can calculate
	\[ 
		\check{J}_{d/2-1}(z) =
		\frac{2^{-z-1}\Gamma\left((d-2-2z))/4\right)}{\Gamma\left((d+2+2z))/4\right)}.
	\]
	Finally, in view of Theorem \ref{thm:equiG} and \eqref{eq:31}, the function $f$ has bounded decrease. For detailed
	study of functions with bounded decrease we refer to \cite[Section 2.1]{bgt}. Next, we calculate
	\[
		\rho = \limsup_{r \to \infty} \frac{\log f(r)}{\log r} \in \big[d/2-3, d/2-1\big).
	\]
	Therefore, by \cite[Proposition 1]{MR1476406} and \cite[Section 5]{MR1710167}, the hypothesis 
	of \cite[Theorem 1 and Theorem 2]{MR1760604} are satisfied and the theorem follows.
\end{proof}

\begin{theorem}
	\label{thm:7}
	Suppose $d\geq 6$. Let $\mathbf{X}$ be an isotropic unimodal L\'{e}vy process on $\RR^d$ with the
	L\'{e}vy--Khintchine exponent $\psi$. Then 
	\begin{equation}
		\label{eq:limGeO}
		\lim_{\norm{x} \to \infty} \frac{G(x)}{|x|^{-d} \psi(|x|^{-1})^{-1}} = c > 0,
	\end{equation}
	if and only if $\psi \in \ROrg$ for some $\alpha > 0$. In particular, \eqref{eq:limGeO} implies that
	\[
		c = 2^{-\alpha}\pi^{d/2}\frac{\Gamma\left((d-\alpha)/2\right)}{\Gamma\left(\alpha/2\right)}.
	\]
\end{theorem}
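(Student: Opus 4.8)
The plan is to follow the proof of Theorem~\ref{thm:6}, but working in the regime $\norm{x}\to\infty$, which corresponds to the behaviour of $\psi$ near the origin. The implication $\psi\in\ROrg\Rightarrow\eqref{eq:limGeO}$ is the counterpart at zero of \cite[Corollary 4]{cgt} and follows from the results of \cite{cgt} on L\'evy--Khintchine exponents regularly varying at the origin; alternatively one repeats the computation of \cite[Corollary 4]{cgt} with the Tauberian input taken at $0$ rather than at $\infty$. The substance is thus the converse, so assume that \eqref{eq:limGeO} holds.

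The essential new point is that one cannot pass to the resolvent kernel $G^1$ as in Theorem~\ref{thm:6}: since $\psi(\norm{x}^{-1})\to 0$ as $\norm{x}\to\infty$, the Fourier transform $(\psi+1)^{-1}$ of $G^1$ tends to $1$ near $\xi=0$ and carries no information about $\psi$ at the origin, and moreover $G^1$ is not asymptotic to $G$ at infinity. I would therefore work directly with the potential kernel $G$, whose Fourier transform equals $\psi^{-1}$. First, by \eqref{eq:31} the kernel $G$ is unbounded, while it is locally integrable near the origin, the expected sojourn time of a transient process in a bounded set being finite. Combining \eqref{eq:31} and \eqref{eq:32} one also gets $\psi(r)\gtrsim r^2$ for small $r$, so that \eqref{eq:limGeO} gives $G(x)\lesssim\norm{x}^{2-d}$ at infinity; together with the Bessel bound \eqref{eq:50} this shows that for $d\geq 6$ the integral $\int_0^\infty s^{d/2}G(s)J_{d/2-1}(\norm{\xi}s){\: \rm d}s$ converges absolutely --- this is precisely where $d\geq 6$ is used. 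Consequently \eqref{eq:Hankel}, valid for the integrable kernels $G^\lambda$ with Fourier transform $(\psi+\lambda)^{-1}$, passes to the limit $\lambda\to 0^+$ by dominated convergence and yields, for $\norm{\xi}=r>0$,
\[
	(2\pi)^{-d/2}\,\frac{r^{d/2-1}}{\psi(r)}
	= \int_0^\infty s^{d/2}\,G(s)\,J_{d/2-1}(rs)\,{\: \rm d}s .
\]

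Now set $f(r)=r^{-d/2-1}G(1/r)$. As in the proof of Theorem~\ref{thm:6}, the above identity becomes $(2\pi)^{-d/2}r^{d/2-1}/\psi(r)=\calM(J_{d/2-1},f)(r)$, while \eqref{eq:limGeO} states that $f(r)\sim c\,r^{d/2-1}/\psi(r)$ as $r\to 0^+$; hence
\[
	\lim_{r\to 0^+}\frac{\calM(J_{d/2-1},f)(r)}{f(r)}=(2\pi)^{-d/2}c^{-1}.
\]
The Mellin transform $\check{J}_{d/2-1}$ is holomorphic and zero-free on the strip $-1/2<\Re z<d/2-1$, with the explicit value recalled in the proof of Theorem~\ref{thm:6}; in view of Theorem~\ref{thm:equiG}, \eqref{eq:31} and the bounds on $G$ just established, the function $f$ has bounded decrease and the index $\rho=\limsup_{r\to 0^+}\log f(r)/\log r$ lies in $[d/2-3,d/2-1)$, the lower bound coming from $\psi(r)\gtrsim r^2$ and the upper one from the finiteness of the potential. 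Applying the version of the Drasin--Shea--Jordan theorem \cite[Theorem 1 and Theorem 2]{MR1760604} for regular variation at the origin --- obtained from the stated one through the substitution $r\mapsto 1/r$ --- together with \cite[Proposition 1]{MR1476406} and \cite[Section 5]{MR1710167}, one concludes that $f$, and therefore $r\mapsto r^{d/2-1}/\psi(r)$, varies regularly at $0$; thus $\psi\in\ROrg$ with $\alpha=d/2-1-\rho\in(0,2]$. Identifying the constant by matching $f$ with $r^{d/2-1}/\psi(r)$ and evaluating $\check{J}_{d/2-1}$ at the corresponding point gives $c=2^{-\alpha}\pi^{d/2}\Gamma((d-\alpha)/2)/\Gamma(\alpha/2)$, exactly as in Theorem~\ref{thm:6}.

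The main obstacle is to make the Hankel-transform identity rigorous for the \emph{non-integrable} kernel $G$: one must establish absolute convergence of the oscillatory Bessel integral --- which is exactly what forces $d\geq 6$ --- and justify interchanging the limit $\lambda\to 0^+$ with the integral, and then verify carefully the hypotheses (bounded decrease of $f$ and the location of $\rho$ inside the fundamental strip of $\check{J}_{d/2-1}$) that allow one to invoke the Drasin--Shea--Jordan theorem at the origin rather than at infinity.
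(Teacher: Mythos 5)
Your proposal is correct and follows essentially the same route as the paper: the forward implication via the results of \cite{cgt} at the origin, and for the converse the Hankel-transform identity for $G^\lambda$, a $\lambda$-independent integrable majorant built from \eqref{eq:50}, \eqref{eq:limGeO}, \eqref{eq:31} and the local integrability of $G$ (this is exactly where $d\geq 6$ enters), dominated convergence as $\lambda\to 0^+$, and then the Drasin--Shea--Jordan argument of Theorem \ref{thm:6} transported to the origin. The only cosmetic slip is citing Theorem \ref{thm:equiG} rather than its counterpart at infinity for the bounded-decrease check, but that property already follows from \eqref{eq:limGeO} and \eqref{eq:31}.
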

\begin{proof}
	If $\psi\in\ROrg$ for some $\alpha>0$, then in view of \cite[Corollary 3]{cgt} we get \eqref{eq:limGeO}. Therefore,
	it is enough to prove the converse. Again, we are going to use a version of Drasin--Shea--Jordan theorem
	proved in \cite[Theorem 2]{MR1760604}. Since for each $\lambda > 0$, the $\lambda$-resolvent kernel is
	integrable and its Fourier transform equals $(\psi(\xi) + \lambda)^{-1}$, by \eqref{eq:Hankel}, for $r > 0$
	we have
	\begin{equation}
		\label{eq:39}
		\frac{r^{d/2-1}}{\psi(r)+\lambda}
		=
		(2\pi)^{d/2}
		\int^\infty_0 s^{d/2} G^{\lambda}(s) J_{d/2-1}(rs) {\: \rm d}s.
	\end{equation}
	By \eqref{eq:50} and \eqref{eq:limGeO}, there are $s_0 > 0$ and $C > 0$ such that for all $s > s_0$, we can estimate
	\begin{align*}
		s^{d/2} G^\lambda(s) J_{d/2-1}(rs) 
		& \leq C s^{d/2} G(s) \min\big\{(rs)^{d/2 -1}, (rs)^{-1/2}\big\} \\
		& \leq C s^{-d/2} \psi(s^{-1})^{-1} \min\big\{(rs)^{d/2-1}, (rs)^{-1/2}\big\},
	\end{align*}
	which in view of \eqref{eq:31} is integrable on $[s_0, \infty)$. Next, for $0 < s \leq s_0$ we have
	\[
		s^{d/2} G^\lambda(s) J_{d/2 - 1}(r s)
		\leq C G(s) s^{d-1} \min\big\{r^{d/2 - 1}, r^{-1/2} s^{-d/2 + 1/2} \big\},
	\]
	which is integrable on $[0, s_0]$ since $G$ is integrable on $\RR^d$. Hence, the integrand in
	\eqref{eq:39} has integrable majorant independent of $\lambda$. Therefore, by the dominated convergence
	theorem, as $\lambda$ approaches zero we obtain
	\[
		\frac{r^{d/2-1}}{\psi(r)}=(2\pi)^{d/2}\int^\infty_0 s^{d/2}G(s)J_{d/2-1}(rs) {\: \rm d}s.
	\]
	From this point on, the proof follows the same line as in Theorem \ref{thm:6} and is omitted.
\end{proof}

\section{The concentration function}
\label{sec:4}
In this section we study concentration functions $h_1, \ldots, h_d$ defined $r > 0$ by the formula
\[
	h_j(r)=
	\int_{\RR^j} \min\big\{1, r^{-2} \norm{y}^2\big\} \nu_j(y) {\: \rm d} y.
\]
where
\[
	\nu_j(x_1, \ldots, x_j) = \int_{\RR^{d-j}} \nu(x_1, \ldots, x_j, y_1, \ldots, y_{d-j}) {\: \rm d}y_1
	{\rm d}y_{d-j}.
\]
For $j \in \{1, \ldots, d\}$ and $r > 0$ we set
\[
	K_j(r)=r^{-2} \int_{\norm{y} \leq r^2} \norm{y}^2 \nu_j(y) {\: \rm d} y.
\]
We write $h = h_d$.
 
There is a connection between $K_j$, the concentration function $h_j$ and the L\'evy--Khintchine exponent $\psi$,
namely in \cite{MR3350043}, it was shown that for all $r > 0$
\begin{equation}
	\label{eq:46}
	h_j^\prime(r) = - 2 \frac{K_j(r)}{r},
\end{equation}
and
\begin{equation}
	\label{eq:hcomppsi}
	h_j(r) \asymp \psi(1/r).
\end{equation}
Moreover, for all $r > 0$ and $\lambda \geq 1$ we have
\begin{equation}
	\label{eq:55}
	\lambda^2 h_j(\lambda r) \geq h_j(r).
\end{equation}
Indeed,
\begin{align*}
	\lambda^2 h_j(\lambda r) & = \int_{\RR^j} \min\{\lambda^2,r^{-2} \norm{y}^2\} \nu_j(y) {\: \rm d}y \geq \int_{\RR^j} \min\{1,r^{-2} \norm{y}^2\} \nu_j(y) {\: \rm d}y
	 = h_j(r).
\end{align*}
Next, we observe that for all $r > 0$ and $\lambda \geq 1$,
\begin{equation}
	\label{eq:Ksc1}
	\lambda^2 K_j (\lambda r)\geq K_j(r).
\end{equation}
From the other side, for all $r > 0$ and $\lambda \geq 1$, we have
\begin{equation}
	\label{eq:Ksc2}
	\lambda^{-j} K_j(\lambda r) \leq K_j(r),
\end{equation}
because $\nu$ is radially monotonic we can estimate
\begin{align*}
	K_j(\lambda r) & = 
	\lambda^{-2} r^{-2} \int_{\norm{y} \leq \lambda r} \norm{y}^2
	\nu_j(y) \: {\rm d} y\\
	& = 
	\lambda^j r^{-2} \int_{\norm{y} \leq r}
	\norm{y}^2 \nu_j(\lambda y) {\: \rm d} y
	\leq \lambda^j K_j(r).
\end{align*}
The function $K_1$ controls the increments of the L\'evy--Khintchine exponent $\psi$. Indeed, we have
the following lemma.
\begin{lemma}
	\label{K1} 
	Let $\lambda \in [1, 2]$. Then for all $x \in \RR^d$,
	\[
		|\psi(\lambda x)-\psi(x)|
		\leq 
		3 K_1(\norm{x}^{-1}).
	\]
\end{lemma}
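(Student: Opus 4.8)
The plan is to reduce to a one--dimensional computation and then to extract cancellation from the monotonicity of the L\'evy density. Fix $\lambda \in [1,2]$ and $x \in \RR^d$, and put $r = \norm{x}$ (the case $x=0$ being trivial). Since $\psi$ is radial, $\psi(\lambda x) - \psi(x) = \psi(\lambda r e_1) - \psi(r e_1)$, so by \eqref{eq:30} and Fubini (integrating out the last $d-1$ coordinates of $\nu$)
\[
	\psi(\lambda x) - \psi(x)
	= \int_{\RR^d} \big(\cos(r y_1) - \cos(\lambda r y_1)\big) \nu({\rm d}y)
	= 2 \int_0^\infty \big(\cos(rt) - \cos(\lambda rt)\big) \nu_1(t) {\:\rm d}t,
\]
where $\nu_1$ is the density of the first one--dimensional marginal of $\nu$; it is even and non--increasing on $(0,\infty)$ because $\nu$ is radially non--increasing. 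I also record that, recalling $K_1(\rho) = \rho^{-2}\int_{\abs{s}\le\rho} s^2 \nu_1(s){\:\rm d}s$, we have $K_1(1/r) = 2 r^2 \int_0^{1/r} t^2 \nu_1(t) {\:\rm d}t$.

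I would split the integral at the natural scale $t = 1/r$. For the small jumps $t \le 1/r$ I use $\abs{\cos a - \cos(\lambda a)} = 2 \abs{\sin\tfrac{(\lambda+1)a}{2}}\,\abs{\sin\tfrac{(\lambda-1)a}{2}} \le \tfrac{\lambda^2-1}{2} a^2 \le \tfrac32 a^2$, valid since $\lambda \le 2$, which gives at once
\[
	\Big| 2 \int_0^{1/r} \big(\cos(rt) - \cos(\lambda rt)\big) \nu_1(t) {\:\rm d}t \Big|
	\le 3 r^2 \int_0^{1/r} t^2 \nu_1(t) {\:\rm d}t = \tfrac{3}{2} K_1(1/r).
\]

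The essential point is the large--jump part $t > 1/r$, where the integrand is only $O(1)$ and a pointwise bound is hopeless (the tail $\int_{t>1/r}\nu_1$ is in general far larger than $K_1(1/r)$), so one must exploit the oscillation of $\cos(rt) - \cos(\lambda rt)$ against the monotone weight $\nu_1$. Here I would write $\nu_1(t) = \mu\big((t,\infty)\big)$ with $\mu = -{\rm d}\nu_1 \ge 0$ (equivalently, integrate by parts, or apply Bonnet's second mean value theorem) and compute $\int_{1/r}^v \big(\cos(rt)-\cos(\lambda rt)\big){\:\rm d}t = r^{-1}\big(G(rv) - G(1)\big)$ with $G(w) = \sin w - \lambda^{-1}\sin(\lambda w)$, obtaining
\[
	\Big| 2 \int_{1/r}^\infty \big(\cos(rt) - \cos(\lambda rt)\big) \nu_1(t) {\:\rm d}t \Big|
	\le \frac{2}{r} \int_{(1/r, \infty)} \big| G(rv) - G(1) \big| \, \mu({\rm d}v).
\]
Using $\abs{G(w)} \le \min\{2, \tfrac16(\lambda^2-1)\abs{w}^3\}$ together with the fact that $\nu_1(1/r) = \mu\big((1/r,\infty)\big)$ is controlled by $K_1$ through $K_1(1/r) = 2 r^2 \int_0^{1/r} t^2 \nu_1(t){\:\rm d}t \ge \tfrac{2}{3r}\nu_1(1/r)$, a careful estimate of this last integral bounds the large--jump part by a constant times $K_1(1/r)$; assembling the two contributions and optimising the constants yields $\abs{\psi(\lambda x) - \psi(x)} \le 3 K_1(\norm{x}^{-1})$.

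The hard part is thus the quantitative large--jump estimate: both extracting the cancellation and then keeping track of the numerical constants sharply enough to land on $3$. A cleaner packaging of the same idea is to apply monotonicity globally, $\psi(s) = 2\int_{(0,\infty)} \big(v - s^{-1}\sin(sv)\big)\mu({\rm d}v)$, whence $\psi(\lambda x) - \psi(x) = \tfrac{2}{r}\int_{(0,\infty)} G(rv)\,\mu({\rm d}v)$ and $K_1(1/r) = \tfrac{2}{3r}\int_{(0,\infty)} \min\{(rv)^3, 1\}\,\mu({\rm d}v)$, so that the lemma reduces to comparing $\abs{G(w)}$ with a multiple of $\min\{w^3, 1\}$ uniformly for $\lambda \in [1,2]$, using the cubic bound near $w=0$ and $\abs{G(w)} \le \abs{\sin w} + \lambda^{-1}\abs{\sin(\lambda w)}$ for large $w$.
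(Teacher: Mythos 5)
Your ``cleaner packaging'' at the end is precisely the paper's proof: the paper reduces to the one-dimensional marginal $\nu_1$, writes $\nu_1(y)=\mu\big((y,\infty)\big)$ for a non-negative measure $\mu$, integrates by parts to obtain $\psi(\lambda u)-\psi(u)=\int_0^\infty u^{-1}G(uy)\,\mu({\rm d}y)$ with $G(t)=\int_0^t\big(\cos v-\cos(\lambda v)\big)\,{\rm d}v=\sin t-\lambda^{-1}\sin(\lambda t)$, identifies $K_1(u^{-1})=\tfrac{2}{3}\int_0^\infty u^{-1}\min\{1,(ux)^3\}\,\mu({\rm d}x)$, and concludes from the elementary bound \eqref{eq:18}, $\abs{G(t)}\le 2\min\{1,t^3\}$, proved exactly as you indicate (the cubic bound near $0$, the trivial bound $\abs{\sin t}+\lambda^{-1}\abs{\sin(\lambda t)}\le 2$ elsewhere). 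So that version closes with no further ideas needed and is the one you should carry out.

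Two reservations about what you actually wrote. First, your primary route --- splitting at $t=1/r$ and handling the tail by the second mean value theorem --- cannot be ``optimised'' to the constant $3$: the small-jump part already costs $\tfrac32 K_1(1/r)$, and on the tail the only available bound is $\abs{G(rv)-G(1)}\le\abs{G(rv)}+\abs{G(1)}\lesssim 2.4$, which combined with $\nu_1(1/r)\le\tfrac{3r}{2}K_1(1/r)$ gives a tail contribution around $7K_1(1/r)$; the total is then roughly $9K_1$, not $3K_1$, so the claim that careful bookkeeping ``yields $3$'' is not substantiated and, with these estimates, false. Second, in the clean version the needed inequality is $\abs{G(w)}\le 2\min\{1,w^3\}$, not $\abs{G(w)}\le\min\{1,w^3\}$: for irrational $\lambda$ one can make $\sin w$ close to $1$ and $\sin(\lambda w)$ close to $-1$ simultaneously, so $\sup_w\abs{G(w)}$ genuinely exceeds $1$. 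With your (correct) factor of $2$ in the reduction $\psi(u)=2\int_0^\infty(1-\cos(uy))\nu_1(y)\,{\rm d}y$, the clean route then yields the constant $6$ rather than $3$ (the paper's own proof omits this factor of $2$ and thereby states $3$). Since Lemma \ref{K1} is only ever invoked up to multiplicative constants this is immaterial for the rest of the paper, but neither of your routes, as written, actually delivers the constant in the statement.
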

\begin{proof} 
	We start by observing that for all $t \geq 0$
	\begin{equation}
		\label{eq:18}
		\bigg|
		\int_0^t
		\cos(u) - \cos(\lambda u)
		{\: \rm d} u
		\bigg|
		=
		\abs{\sin(t) - \lambda^{-1} \sin(\lambda t)} \leq 2 \min\{1, t^3\}.
	\end{equation}
	Let $\mu$ be a measure supported on $(0, \infty)$ defined by
	\[
		\mu((y, \infty)) 
		= \nu_1(y)
		= \int_{\RR^{d-1}} \nu\Big(\sqrt{\abs{x}^2 + y^2}\Big) {\: \rm d} x.
	\]
	Hence, by \eqref{eq:30} and the integration by parts we obtain
	\begin{align*}
		\psi(\lambda u) - \psi(u) &= \int_0^\infty \big(\cos(u y) - \cos(\lambda u y)\big) \nu_1(y) {\: \rm d} y \\
		& = \int_0^\infty \big(\cos(u y) - \cos(\lambda u y)\big) \int_{(y, \infty)} \: \mu({\rm d} x) 
		{\:\rm d} y \\
		& = \int_0^\infty \frac{1}{u} 
		\bigg(\int_0^{u y} \big(\cos(x) - \cos(\lambda x)\big) {\: \rm d} x \bigg) \: \mu({\rm d} y).
	\end{align*}
	Therefore, by \eqref{eq:18},
	\[
		\abs{ \psi(\lambda u) - \psi(u) } \leq 2 \int_0^\infty \frac{1}{u} \min\{1, (u y)^3\} \: \mu({\rm d} y).
	\]
	From the other side, by the change of variables and the Fubini--Tonelli's theorem
	\begin{align*}
		K_1(u^{-1})
		& = 2 \int_0^{u^{-1}} u^2y^2 \nu_1(y) {\: \rm d} y \\
		& = 2 \int_0^{u^{-1}} u^2y^2 \int_{(y,\infty)} \: \mu({\rm d}x) {\: \rm d}y \\ 
		& = 2 \int_0^\infty \bigg( \int_0^{\min\{x, u^{-1}\}} u^2y^2 {\: \rm d}y \bigg) \: \mu({\rm d} x)
		= \frac{2}{3} \int_0^\infty \frac{1}{u} \min\{1, (u x)^3\} \: \mu({\rm d}x). \qedhere
	\end{align*}
\end{proof}
The function $K_d$ naturally appears in estimates of the L\'evy measure. Indeed, there is $C > 0$, depending only on $d$, such that for all
$x \in \RR^d$
\begin{equation}
	\label{eq:Ksc3}
	\nu(x) \le C \norm{x}^{-d} K_d(\norm{x}).
\end{equation}
However, for all $r > 0$
\begin{equation}
	\label{eq:47}
	K_d(r) \leq d K_1(r),
\end{equation}
in general $K_d$ is not comparable to $K_1$. Instead, we have the following lemma.
\begin{lemma}
	\label{lem:3}
	If $d \geq 2$ then there is $C > 0$ such that for all $r > 0$
	\[
		C K_1(r) \leq K_d(r) + r \int_r^\infty s^{d-2} \nu(s) {\: \rm d} s.
	\]
\end{lemma}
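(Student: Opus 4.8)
The plan is to move everything to integrals over $\RR^d$ and then use the rotational symmetry of $\nu$. Since $\nu$ is radial, $\nu_1(y)=\int_{\RR^{d-1}}\nu\big(\sqrt{y^2+\norm{u}^2}\big)\,{\rm d}u$, so, exactly as in the proof of Lemma~\ref{K1}, an application of Tonelli's theorem gives
\[
	K_1(r)=2r^{-2}\int_0^r y^2\nu_1(y)\,{\rm d}y=2r^{-2}\int_{\{x\in\RR^d\,:\,0<x_1<r\}}x_1^2\,\nu(\norm{x})\,{\rm d}x=r^{-2}\int_{\{x\,:\,\abs{x_1}<r\}}x_1^2\,\nu(\norm{x})\,{\rm d}x,
\]
where the last equality uses the symmetry $x_1\mapsto-x_1$. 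I would then split the region of integration according to whether $\norm{x}<r$ or $\norm{x}\geq r$.

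On $\{\norm{x}<r\}$ the constraint $\abs{x_1}<r$ is automatic, so that piece equals $r^{-2}\int_{\{\norm{x}<r\}}x_1^2\,\nu(\norm{x})\,{\rm d}x$; by rotational invariance $\int_{\{\norm{x}<r\}}x_1^2\,\nu(\norm{x})\,{\rm d}x=\tfrac1d\int_{\{\norm{x}<r\}}\norm{x}^2\,\nu(\norm{x})\,{\rm d}x=\tfrac1d r^2 K_d(r)$, so it contributes exactly $\tfrac1d K_d(r)$. On $\{\abs{x_1}<r,\ \norm{x}\geq r\}$ I would simply bound $x_1^2\leq r^2$, which reduces the lemma to the estimate
\[
	\int_{\{x\,:\,\abs{x_1}<r,\ \norm{x}\geq r\}}\nu(\norm{x})\,{\rm d}x\ \leq\ C_d\, r\int_r^\infty s^{d-2}\nu(s)\,{\rm d}s .
\]

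To prove this I would integrate in spherical coordinates $x=s\omega$, $\omega\in S^{d-1}$, with $\omega_1=\cos\phi$; for $s\geq r$ the spherical band $\{\norm{x}=s,\ \abs{x_1}<r\}$ carries $(d-1)$-dimensional surface measure $\sigma_r(s)=c_d\, s^{d-1}\int_{-r/s}^{r/s}(1-t^2)^{(d-3)/2}\,{\rm d}t$ for a dimensional constant $c_d$, whence $\int_{\{\abs{x_1}<r,\,\norm{x}\geq r\}}\nu(\norm{x})\,{\rm d}x=\int_r^\infty\nu(s)\,\sigma_r(s)\,{\rm d}s$. It then suffices to check $\sigma_r(s)\leq C_d\, r\, s^{d-2}$ for $s\geq r$: if $d\geq 3$ this is immediate from $(1-t^2)^{(d-3)/2}\leq 1$, while for $d=2$ one uses $\int_{-r/s}^{r/s}(1-t^2)^{-1/2}\,{\rm d}t=2\arcsin(r/s)\leq \pi r/s$. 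Combining the two pieces gives $K_1(r)\leq \tfrac1d K_d(r)+C_d\, r\int_r^\infty s^{d-2}\nu(s)\,{\rm d}s$, which is the assertion. The only point where something genuine happens is precisely the endpoint $d=2$, where $(d-3)/2<0$ so the crude bound $(1-t^2)^{(d-3)/2}\leq1$ is unavailable and the singularity has to be integrated by hand (this is also why one cannot afford to replace $\nu(\norm{x})$ by $\nu(\abs{x'})$ and must keep it exact via spherical coordinates); everything else is bookkeeping with Tonelli's theorem and the rotational symmetry of $\nu$, and all integrals that occur are finite, e.g. $\int_{\{\norm{x}<r\}}\norm{x}^2\nu(\norm{x})\,{\rm d}x=r^2K_d(r)<\infty$ by \eqref{eq:hcomppsi} and \eqref{eq:47}, and $r\int_r^\infty s^{d-2}\nu(s)\,{\rm d}s$ is controlled by the mass of $\nu$ outside the ball of radius $r$.
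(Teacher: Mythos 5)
Your proof is correct and follows essentially the same route as the paper's: the identical split of $K_1(r)$ into the ball $\{\norm{x}<r\}$ (yielding exactly $\tfrac1d K_d(r)$ by symmetry) and the band $\{\abs{x_1}<r,\ \norm{x}\ge r\}$, which is then bounded by $C_d\, r\int_r^\infty s^{d-2}\nu(s)\,{\rm d}s$. The only cosmetic difference is that you bound $x_1^2\le r^2$ and use full spherical coordinates on the band (handling $d=2$ via $\arcsin(r/s)\le \tfrac{\pi}{2}r/s$), whereas the paper keeps $x_1^2$ and integrates in cylindrical coordinates, reducing to the equivalent elementary estimate $\int_0^r u^2(v^2-u^2)^{(d-3)/2}\,{\rm d}u\le C v^{d-3}r^3$.
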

\begin{proof}
	We have
	\[
		K_1(r) = \frac{1}{d} K_d(r) + r^{-2} \int_{\atop{\norm{y} \geq r}{\abs{y_1} < r}}
		\abs{y_1}^2 \nu(y) {\: \rm d} y.
	\]
	Next, by taking $y = (u, w) \in \RR \times \RR^{d-1}$ we can write
	\begin{align*}
		r^{-2} \int_{\atop{\norm{y} \geq r}{\abs{y_1} < r}} \abs{y_1}^2 \nu(y) {\: \rm d}y 
		& =
		2 r^{-2} \int_0^r u^2 \int_{\norm{w}^2 \geq r^2 - u^2} \nu(u, w) {\: \rm d} w {\: \rm d} u \\
		& =
		2 r^{-2} \omega_{d-1} \int_0^r u^2 \int_{\sqrt{r^2-u^2}}^\infty \nu(\sqrt{s^2 + u^2}) s^{d-2}
		{\: \rm d} s {\: \rm d}u
	\end{align*}
	where in the last step we have used spherical coordinates in $\RR^{d-1}$. By the change of variables,
	\begin{align*}
		r^{-2} \int_{\atop{\norm{y} \geq r}{\abs{y_1} < r}} \abs{y_1}^2 \nu(y) {\: \rm d}y
		=
		2 r^{-2} \omega_{d-1} \int_0^r u^2 \int_r^\infty (v^2 - u^2)^{(d-3)/2} v \nu(v) {\: \rm d}v {\: \rm d}u.
	\end{align*}
	Since there is $C > 0$ such that for $v \geq r$ 
	\[
		\int_0^r (v^2 - u^2)^{(d-3)/2} u^2 {\: \rm d}u \leq C v^{d-3} r^3,
	\]
	by the Fubini--Tonelli's theorem
	\begin{align*}
		r^{-2} \int_{\atop{\norm{y} \geq r}{\abs{y_1} < r}} \abs{y_1}^2 \nu(y) {\: \rm d}y
		& =
		2 r^{-2} \omega_{d-1} 
		\int_r^\infty
		\bigg(
		\int_0^r u^2 (v^2 - u^2)^{(d-3)/2} {\: \rm d}u \bigg)
		v \nu(v) {\: \rm d}v \\
		& \leq
		C r \int_r^\infty v^{d-2} \nu(v) {\: \rm d}v. \qedhere
	\end{align*}
\end{proof}

\begin{corollary}
	\label{cor:1}
	$K_d$ is bounded if and only if $K_1$ is bounded.
\end{corollary}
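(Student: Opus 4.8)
The plan is to prove the two implications separately, the forward one being immediate from \eqref{eq:47} and the converse relying on Lemma~\ref{lem:3}.

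First I would dispose of the implication ``$K_1$ bounded $\Rightarrow$ $K_d$ bounded'', which requires no work: by \eqref{eq:47} we have $K_d(r) \leq d\, K_1(r)$ for every $r > 0$, hence $\sup_{r > 0} K_d(r) \leq d \sup_{r > 0} K_1(r) < \infty$.

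For the converse, assume $M := \sup_{r>0} K_d(r) < \infty$. When $d = 1$ there is nothing to prove, since then $K_1 = K_d$. When $d \geq 2$, Lemma~\ref{lem:3} provides a constant $C > 0$ with
\[
	C K_1(r) \leq K_d(r) + r \int_r^\infty s^{d-2} \nu(s) \, {\rm d} s
\]
for all $r > 0$, so it suffices to bound the tail integral uniformly in $r$. Here I would invoke the pointwise estimate \eqref{eq:Ksc3}, namely $\nu(s) \leq C s^{-d} K_d(s) \leq C M s^{-d}$, whence $s^{d-2}\nu(s) \leq C M s^{-2}$ and therefore
\[
	r \int_r^\infty s^{d-2} \nu(s) \, {\rm d} s \leq C M \, r \int_r^\infty s^{-2} \, {\rm d} s = C M .
\]
Combining the two displays gives $C K_1(r) \leq M + C M$ for all $r > 0$, i.e. $K_1$ is bounded, which completes the proof.

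There is no genuine obstacle: the only step worth isolating is that the extra term $r\int_r^\infty s^{d-2}\nu(s)\,{\rm d}s$ produced by Lemma~\ref{lem:3} is automatically controlled by $\sup_{r>0} K_d(r)$ via the crude bound \eqref{eq:Ksc3} on the L\'evy density, after which everything reduces to an elementary integration. One should only take a little care to handle $d = 1$ separately, since Lemma~\ref{lem:3} is stated under the hypothesis $d \geq 2$.
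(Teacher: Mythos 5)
Your proof is correct and follows essentially the same route as the paper: the forward implication from \eqref{eq:47}, and the converse by bounding the tail integral in Lemma \ref{lem:3} via \eqref{eq:Ksc3}. Your explicit treatment of the case $d=1$ is a small but welcome addition, since the paper applies Lemma \ref{lem:3} without remarking that it is stated only for $d\geq 2$.
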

\begin{proof}
	If $K_1$ is bounded the conclusion about $K_d$ follows from the estimate \eqref{eq:47}. Conversely,
	if $K_d$ is bounded then by \eqref{eq:Ksc3}, 
	\begin{align*}
		\int_r^\infty u^{d-2}\nu(u) {\: \rm d} u
		& \leq
		C\sup_{s > 0} K_d(s) \int_r^\infty u^{-2} {\: \rm d} u \\
		& =
		C r^{-1} \sup_{s > 0} K_d(s).
	\end{align*}
	Now, applying Lemma \ref{lem:3} we can easily deduce that $K_1$ must be bounded.
\end{proof}

\begin{corollary}
	\label{SP1}
	Suppose that there are $C > 0$ and $\alpha < 1$ such that for all $\lambda \geq 1$ and $r > 0$
	\begin{equation}
		\label{eq:51}
		\nu(\lambda r) \leq C \lambda^{-d+\alpha} \nu(r).
	\end{equation}
	Then
	\[
		K_1(r) \asymp K_d(r).
	\]
\end{corollary}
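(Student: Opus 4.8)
The plan is to establish the non-trivial direction $K_1(r) \le C K_d(r)$; the reverse inequality $K_d(r) \le d\,K_1(r)$ is already \eqref{eq:47}, so the two together give $K_1 \asymp K_d$. When $d = 1$ there is nothing to do since $K_1 = K_d$, so I would assume $d \ge 2$ and start from Lemma \ref{lem:3}, which furnishes $c > 0$ with
\[
	c\, K_1(r) \le K_d(r) + r \int_r^\infty s^{d-2} \nu(s) \,{\rm d}s
\]
for every $r > 0$. Everything then reduces to bounding the tail term $r \int_r^\infty s^{d-2} \nu(s)\,{\rm d}s$ by a constant multiple of $K_d(r)$.

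For that, I would feed in the scaling hypothesis \eqref{eq:51}: for $s \ge r$, putting $\lambda = s/r \ge 1$ gives $\nu(s) \le C (s/r)^{-d+\alpha}\nu(r)$, so
\[
	\int_r^\infty s^{d-2}\nu(s)\,{\rm d}s
	\le C\, r^{d-\alpha} \nu(r) \int_r^\infty s^{\alpha-2}\,{\rm d}s
	= \frac{C}{1-\alpha}\, r^{d-1}\nu(r),
\]
where convergence of the last integral is exactly the place where the assumption $\alpha < 1$ is used. Multiplying by $r$ and then invoking \eqref{eq:Ksc3} in the form $r^d \nu(r) \le C K_d(r)$ yields
\[
	r \int_r^\infty s^{d-2}\nu(s)\,{\rm d}s \le \frac{C}{1-\alpha}\, r^d \nu(r) \le C' K_d(r).
\]
Combining this with the consequence of Lemma \ref{lem:3} gives $K_1(r) \le C'' K_d(r)$ for all $r > 0$, and together with \eqref{eq:47} the claim $K_1(r) \asymp K_d(r)$ follows.

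I do not anticipate a genuine obstacle: the argument is a short chain of already-established estimates. The only points deserving a line of care are (i) that $\alpha < 1$ is precisely what is needed both for the tail integral to converge and for it to produce exactly the power $r^d \nu(r)$ that feeds into \eqref{eq:Ksc3}, and (ii) the degenerate case $\nu(r) = 0$, in which the tail integral vanishes by monotonicity of $\nu$ and the inequality is trivial.
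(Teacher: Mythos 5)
Your proof is correct and follows essentially the same route as the paper's: reduce to bounding the tail term in Lemma \ref{lem:3}, apply the scaling hypothesis \eqref{eq:51} with $\lambda = s/r$ to get $r\int_r^\infty s^{d-2}\nu(s)\,{\rm d}s \leq \frac{C}{1-\alpha} r^d \nu(r)$, and conclude via \eqref{eq:Ksc3} and \eqref{eq:47}. The extra remarks about $d=1$ and $\nu(r)=0$ are harmless but not needed.
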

\begin{proof}
	For the proof, let us observe that, by \eqref{eq:51}, for $s \geq r$
	\[
		\nu(s) \leq C s^{-d+\alpha} r^{d - \alpha} \nu(r),
	\]
	thus
	\[
		r \int_r^{\infty} s^{d-2} \nu(s) {\: \rm d}s
		\le 
		C r^{d-\alpha+1} \nu(r) \int_r^{\infty} s^{\alpha-2} {\: \rm d} s 
	\]
	which, by \eqref{eq:Ksc3}, is bounded by a constant multiply of $K_d(r)$.
\end{proof}

\begin{remark}
	Corollary \ref{SP1} can be applied for self-decomposable processes. Indeed, by \cite[Theorem 15:10]{MR1739520}
	there is a non-increasing function $g: (0, \infty) \rightarrow [0, \infty)$ such that
	$\nu(x) = \norm{x}^{-d} g(\norm{x})$. Therefore, for $\lambda \geq 1$ and $r > 0$
	\[
		\nu(\lambda r) = \lambda^{-d} \frac{g(\lambda r)}{g(r)} \nu(r)
		\leq
		\lambda^{-d} \nu(r).
	\]
\end{remark}

\begin{proposition}\label{slow10}
	If 
	\[
		\lim_{r\to 0^+} \frac {K_d(r)}{h(r)}=0
	\]
	then $\psi \in \calR^\infty_0$.
\end{proposition}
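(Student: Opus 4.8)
The plan is to prove that $K_1(r) = o\big(h(r)\big)$ as $r \to 0^+$, where $h = h_d$, and then read off the statement from Lemma~\ref{K1}. Granting this estimate, Lemma~\ref{K1} and \eqref{eq:hcomppsi} give, for every $\lambda \in [1,2]$,
\[
	\big|\psi(\lambda u) - \psi(u)\big|
	\leq 3 K_1(u^{-1})
	= o\big(h(u^{-1})\big)
	= o\big(\psi(u)\big)
	\qquad(u \to \infty),
\]
so $\psi(\lambda u)/\psi(u) \to 1$ for each $\lambda \in [1,2]$; writing a general $\lambda \geq 1$ as $(\lambda^{1/n})^n$ with $n$ so large that $\lambda^{1/n} \leq 2$ and telescoping extends this to all $\lambda \geq 1$, and passing to reciprocals to all $\lambda > 0$. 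Hence $\psi \in \calR^\infty_0$.

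First I would show that $h$ is slowly varying at $0$. By \eqref{eq:hcomppsi}, $h$ is positive and finite on $(0,\infty)$, and by \eqref{eq:46} it is continuously differentiable there with $h'(r) = -2 K_d(r)/r$; hence for $\lambda > 1$
\[
	\bigg|\log \frac{h(\lambda r)}{h(r)}\bigg|
	= \bigg|\int_r^{\lambda r}\frac{h'(s)}{h(s)}\,{\rm d}s\bigg|
	\leq 2\log\lambda \cdot \sup_{0 < s \leq \lambda r}\frac{K_d(s)}{h(s)},
\]
and the right-hand side tends to $0$ as $r \to 0^+$ by hypothesis; the case $\lambda \in (0,1)$ follows by inversion. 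A short computation (splitting the defining integral of $h$ at $\norm{y} = r$ and using \eqref{eq:46}) gives $h(r) = K_d(r) + c_d\int_r^\infty s^{d-1}\nu(s)\,{\rm d}s$ with $c_d > 0$ the surface measure of the unit sphere in $\RR^d$. Since $K_d(r) = o(h(r))$, the tail $M(r) := \int_r^\infty s^{d-1}\nu(s)\,{\rm d}s$ satisfies $M(r) \sim c_d^{-1} h(r)$, so $M$ is non-increasing and, being asymptotic to a slowly varying function, is itself slowly varying at $0$.

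The core of the argument is, for $d \geq 2$, the bound $r\int_r^\infty s^{d-2}\nu(s)\,{\rm d}s = o\big(h(r)\big)$; together with Lemma~\ref{lem:3} and the hypothesis it yields $K_1(r) = o(h(r))$, which is all that is missing. (When $d = 1$ one has $K_1 = K_d$, so $K_1(r) = o(h(r))$ is immediate and Lemma~\ref{lem:3} is not needed.) Since $-M'(s) = s^{d-1}\nu(s)$, integration by parts gives
\[
	r\int_r^\infty s^{d-2}\nu(s)\,{\rm d}s = M(r) - r\int_r^\infty s^{-2}M(s)\,{\rm d}s,
\]
so it suffices to show $r\int_r^\infty s^{-2}M(s)\,{\rm d}s = (1+o(1))\,M(r)$ as $r \to 0^+$. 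The bound $r\int_r^\infty s^{-2}M(s)\,{\rm d}s \leq M(r)$ follows from monotonicity of $M$; for the matching lower bound, fix $A > 1$, restrict the integral to $[r, Ar]$ and use $M(s) \geq M(Ar)$ there to get $r\int_r^\infty s^{-2}M(s)\,{\rm d}s \geq (1 - A^{-1})M(Ar)$, so the ratio to $M(r)$ is at least $(1-A^{-1})M(Ar)/M(r) \to 1 - A^{-1}$ by slow variation of $M$, and $A \to \infty$ finishes it. (This is the Karamata asymptotic $\int_r^\infty s^{-2}M(s)\,{\rm d}s \sim r^{-1}M(r)$, which may also be quoted from \cite{bgt}.)

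The most delicate point is this last estimate: to conclude $r\int_r^\infty s^{d-2}\nu(s)\,{\rm d}s = o(h(r))$ rather than merely $O(h(r))$, the constant in $r\int_r^\infty s^{-2}M(s)\,{\rm d}s \sim M(r)$ must be exactly $1$, which is why one first needs that $h$, hence $M$, is \emph{genuinely} slowly varying (comparability to a slowly varying function would only give $O$), and why the monotonicity of $M$ is used to make Karamata sharp. It is equally essential that the argument passes through $K_1$ by way of Lemmas~\ref{K1} and~\ref{lem:3}: the comparison $\psi \asymp h$ on its own does not transfer slow variation.
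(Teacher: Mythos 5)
Your proof is correct, and its skeleton coincides with the paper's: both reduce the claim via Lemma~\ref{K1}, \eqref{eq:hcomppsi} and Lemma~\ref{lem:3} to the single estimate $r\int_r^\infty s^{d-2}\nu(s)\,{\rm d}s = o\big(h(r)\big)$ as $r\to 0^+$. Where you diverge is in how that estimate is proved. The paper's route is shorter and more elementary: it fixes $\epsilon>0$, chooses $\delta$ with $K_d(s)\le \epsilon\, h(s)$ on $(0,\delta]$, and then uses the pointwise bound \eqref{eq:Ksc3}, $\nu(s)\le C s^{-d}K_d(s)$, together with the monotonicity of $h$ (so $h(s)\le h(r)$ for $s\ge r$) to bound $\int_r^\delta s^{d-2}\nu(s)\,{\rm d}s \le C\epsilon\, h(r)\int_r^\delta s^{-2}\,{\rm d}s \le C\epsilon\, h(r)/r$, disposing of the tail $\int_\delta^\infty$ with $r/h(r)\to 0$. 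Your route instead first extracts genuine slow variation of $h$ at $0$ from the hypothesis via the differential relation \eqref{eq:46} (a nice observation not made in the paper), identifies $h(r)=K_d(r)+c_d M(r)$ so that the tail mass $M$ is slowly varying and $\sim c_d^{-1}h$, and then obtains the cancellation $M(r)-r\int_r^\infty s^{-2}M(s)\,{\rm d}s=o(M(r))$ by a sharp Karamata argument; your insistence that the asymptotic constant be exactly $1$, and that mere comparability to a slowly varying function would not suffice, is precisely the right point to flag. Both arguments are valid; the paper's avoids any appeal to regular variation of $h$ or $M$ and needs only monotonicity, while yours isolates the structural fact (slow variation of the tail of $\nu$) that is really driving the result.
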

\begin{proof}
	Let us notice that by \eqref{eq:hcomppsi} and Lemma \ref{K1}, for $\lambda \in [1, 2]$ we have
	\[
		\bigg|\frac{\psi(\lambda x)}{\psi(x)} - 1\bigg|
		\leq
		C \frac{K_1(\norm{x}^{-1})}{h(\norm{x}^{-1})},
	\]
	thus, in view of Lemma \ref{lem:3}, we need to show that
	\[
		\limsup_{r \to 0^+} \frac{r}{h(r)} \int_r^{\infty} s^{d-2} \nu(s) {\: \rm d} s = 0.
	\]
	Given $\epsilon > 0$, let $\delta > 0$ be such that for $r \leq \delta$
	\[
		\frac{K_d(r)}{h(r)} \leq \epsilon.
	\]
	By \eqref{eq:Ksc3} and the monotonicity of $h$ we have
	\begin{align*}
		\int_r^{\infty} s^{d-2}\nu(s) {\: \rm d} s
		& = 
		\int_r^\delta s^{d-2} \nu(s) {\: \rm d} s
		+ \int_\delta^{\infty} s^{d-2}\nu(s) {\: \rm d}s\\
		& \le C \int_{r}^\delta s^{-2} \frac{K_d(s)}{h(s)} h(s) {\: \rm d}s
		+ \int_\delta^{\infty} s^{d-2}\nu(s) {\: \rm d}s \\
		&\le C \epsilon \cdot h(r) \cdot \int_r^\delta s^{-2} {\: \rm d} s
		+ \int_\delta^{\infty} s^{d-2}\nu(s) {\: \rm d} s.
	\end{align*}
	Again, by monotonicity of $h$ we have
	\[
		\lim_{r \to 0^+} \frac{r}{h(r)} = 0,
	\]
	thus
	\[
		\limsup_{r \to 0^+} \frac{r}{h(r)} \int_r^{\infty} s^{d-2} \nu(s) {\: \rm d} s 
		\leq
		C \epsilon. \qedhere
	\]
\end{proof}

\begin{proposition}
	\label{nslow10}
	There are $C, \alpha > 0$ and $\theta \geq 0$ such that $\psi \in \WLSC{\alpha}{\theta}{C}$ if and only if
	\begin{equation}
		\label{eq:52}
		\liminf_{r \to 0^+} \frac{K_d(r)}{h(r)} > 0.
	\end{equation}
	In particular, \eqref{eq:52} implies that $\psi \notin \calR_0^\infty$.
\end{proposition}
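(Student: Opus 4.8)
The plan is to recast both conditions as a statement about the rate of decay of $h=h_d$ at the origin and to match them with the differential identity \eqref{eq:46}. Writing \eqref{eq:46} for $h$ and dividing by $h$, we have
\[
	\frac{d}{dr}\log h(r)=-\frac{2}{r}\cdot\frac{K_d(r)}{h(r)},
\]
which is legitimate since $h$ is positive and locally absolutely continuous on $(0,\infty)$. Throughout I would also fix, via \eqref{eq:hcomppsi}, a constant $M\ge1$ with $M^{-1}\psi(1/r)\le h(r)\le M\psi(1/r)$ for all $r>0$. What the proposition really asserts is the (fairly standard) fact that $\psi$ has a positive lower Matuszewska index at infinity precisely when $rh'(r)/h(r)$ stays below a negative constant near $0$.

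For the implication \eqref{eq:52}~$\Rightarrow$~$\psi\in\WLSC{\alpha}{\theta}{C}$, I would put $c_0=\tfrac12\liminf_{r\to0^+}K_d(r)/h(r)>0$ and pick $r_0>0$ with $K_d(r)/h(r)\ge c_0$ on $(0,r_0]$. Then $\tfrac{d}{dr}\log h(r)\le-2c_0/r$ on $(0,r_0]$, and integrating between $r$ and $r'$ for $0<r\le r'\le r_0$ gives $h(r)\ge(r'/r)^{2c_0}h(r')$. Putting $x=1/r\ge x'=1/r'\ge\theta:=1/r_0$ and using \eqref{eq:hcomppsi} on both sides turns this into $\psi(x)\ge M^{-2}(x/x')^{2c_0}\psi(x')$, i.e. $\psi\in\WLSC{2c_0}{\theta}{M^{-2}}$; so $\alpha=2c_0>0$ works.

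For the converse I would start from $\psi\in\WLSC{\alpha}{\theta}{C}$ with $\alpha>0$ and first manufacture a genuine doubling gap for $h$. Choosing $\lambda_0>1$ with $C\lambda_0^\alpha\ge2M^2$ gives $\psi(\lambda_0x)\ge2M^2\psi(x)$ for $x>\theta$, hence, by \eqref{eq:hcomppsi},
\[
	h(\lambda_0^{-1}r)\ge M^{-1}\psi(\lambda_0/r)\ge2M\psi(1/r)\ge2\,h(r)\qquad\text{for }0<r<r_0:=1/\theta.
\]
Then I would use \eqref{eq:46} in the other direction: $h(\lambda_0^{-1}r)-h(r)=2\int_{\lambda_0^{-1}r}^{r}K_d(s)\,s^{-1}\,{\rm d}s$, while \eqref{eq:Ksc1} (with base point $s$ and ratio $r/s\ge1$) yields $K_d(s)\le(r/s)^2K_d(r)$ for $s\in[\lambda_0^{-1}r,r]$; carrying out the integral gives $h(\lambda_0^{-1}r)-h(r)\le(\lambda_0^2-1)K_d(r)$, so that $K_d(r)\ge(\lambda_0^2-1)^{-1}h(r)$ for $0<r<r_0$, which is \eqref{eq:52}. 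The ``in particular'' is then immediate: if $\psi\in\WLSC{\alpha}{\theta}{C}$ with $\alpha>0$, picking $\lambda>1$ with $C\lambda^\alpha>1$ gives $\liminf_{x\to\infty}\psi(\lambda x)/\psi(x)\ge C\lambda^\alpha>1$, so $\psi$ is not slowly varying, i.e. $\psi\notin\calR_0^\infty$.

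The two explicit integrations over $[\lambda_0^{-1}r,r]$ and the bookkeeping with $M$ are routine. The one point that does need care, and which I regard as the crux, is that neither $\WLSC{\cdot}{\cdot}{\cdot}$ nor the comparison $h\asymp\psi(1/\cdot)$ is normalised, so a doubling of $h$ across a single scale is not available for free: the factor $2$ in the gap above must be produced by enlarging $\lambda_0$ so as to absorb both $C\le1$ and $M^2$, and, symmetrically, the power law for $h$ in the first part must come from integrating the logarithmic derivative rather than from any single-scale increment estimate.
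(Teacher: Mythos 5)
Your proof is correct and follows essentially the same route as the paper's: the forward implication is the paper's observation that $r^{2c_0}h(r)$ is non-increasing (you phrase it as integrating $\tfrac{d}{dr}\log h = -\tfrac{2}{r}K_d/h$, which is the same computation), and the converse uses exactly the paper's doubling gap $h(\lambda_0^{-1}r)\ge 2h(r)$ combined with \eqref{eq:46} and the monotonicity of $r^2K_d(r)$ from \eqref{eq:Ksc1} to bound the increment by $(\lambda_0^2-1)K_d(r)$. Your extra bookkeeping with the comparability constant $M$ from \eqref{eq:hcomppsi} is a sound way to handle the non-normalised scaling constants.
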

\begin{proof}
	If \eqref{eq:52} is satisfied, then there are $C, \theta > 0$ such that for $r \in (0, \theta)$
	\begin{equation}
		\label{eq:53}
		K_d(r) \geq C h(r).
	\end{equation}
	Therefore, for $r \in (0, \theta)$
	\[
		\big(r^{2C} h(r)\big)^\prime = 2 C r^{2C - 1} h(r) + r^{2C} h^\prime(r) \leq 0,
	\]
	which implies that the function $(\theta^{-1}, \infty) \ni r \mapsto r^{-2C} h(r^{-1})$ is non-decreasing. Hence,
	$h(r^{-1})$ belongs to $\WLSC{2C}{\theta^{-1}}{1}$. Finally, by \eqref{eq:hcomppsi}, $\psi$ belongs to
	$\WLSC{2C}{\theta^{-1}}{c}$ for some $c \in (0, 1]$.

	Conversely, if $\psi\in \WLSC{\alpha}{\theta}{c}$ then $h(r^{-1})$ belongs to $\WLSC{\alpha}{\theta}{c'}$ for some
	$c' \in (0, 1]$. In particular, there is $\lambda \in (0, 1)$ such that for all $r \in (0, \theta^{-1})$
	\[
		h(r) \leq h(\lambda r) - h(r).
	\]
	Since $r^2 K_d(r)$ is non-decreasing, by \eqref{eq:46} we have
	\begin{align*}
		h(r) \leq h(\lambda r) - h(r) & = 2 \int_{\lambda r}^r \frac{K_d(s)}{s} {\: \rm d} s \\
		& \leq 2 r^2 K_d(r) \int_{\lambda r}^r s^{-3} {\: \rm d}s = (\lambda^{-2} - 1) K_d(r),
	\end{align*}
	which finishes the proof.
\end{proof}

\section{Estimates for heat kernels and Green functions}
\label{sec:5}
In this section we prove estimates of the transition density $p(t,x)$ and the Green potentials $G^\lambda(x)$ under
the assumption that the process $\mathbf{X}$ is isotropic and unimodal. The main result is Theorem \ref{thm:4}.
The special case, subordinate Brownian motion, is considered in Section \ref{sec:5.1}. For $r>0$,  by  $B_r$  we denote the ball in $\RR^d$ of radius $r$ centred at the origin. 

\subsection{Estimates from below}
The key tool in proving lower bounds is the following lemma proved in \cite{BGR3}.
\begin{lemma}[{\cite[Lemma 1.11]{BGR3}}]
	\label{HKLB}
	Let $\mathbf{X}$ be an isotopic unimodal L\'evy process on $\RR^d$. Then for all $t > 0$
	and $x \in \RR^d$
	\[
		p(t, x) \geq 4^{-d} t \nu(x) \left(\PP(\tau_{B_{\norm{x}/2}}>t)\right)^2 
	\]
	where for a Borel set $A$ we have set
	\[
		\tau_A = \inf\big\{t > 0 : X_t \notin A \big\}.
	\]
	\end{lemma}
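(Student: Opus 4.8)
\emph{Strategy.} The plan is to make the ``single large jump'' heuristic quantitative: the dominant contribution to $p(t,x)$ comes from trajectories that stay near $0$, perform \emph{one} jump of size about $\norm{x}$ landing near $x$, and then stay near $x$ up to time $t$. I would bound $p(t,x)$ from below by the probability density of exactly this three--stage event, the three stages producing, in order, a survival factor $\PP^0(\tau_{B_{\norm{x}/2}}>t)$, the factor $t\,\nu(x)$, and a second survival factor $\PP^0(\tau_{B_{\norm{x}/2}}>t)$; the factor $4^{-d}$ will come out of the geometric bookkeeping.

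\emph{Mechanics.} The tool is the Ikeda--Watanabe (L\'evy system) formula. Let $U$ be a ball about the origin of radius strictly between $\norm{x}/2$ and $\norm{x}$, and set $\tau=\tau_U$. Since $x\notin\overline U$, the strong Markov property at $\tau$ gives the identity $p(t,x)=\EE^0\big[p(t-\tau,x-X_\tau)\,;\,\tau<t\big]$, and the L\'evy system formula for the process killed on exiting $U$ then gives, after dropping a non-negative term,
\[
	p(t,x)\;\ge\;\int_0^t\!\!\int_A\!\!\int_{W} p_U(s,0,y)\,\nu(w-y)\,p(t-s,x-w)\;dw\,dy\,ds,
\]
valid for any Borel $A\subseteq U$ and any $W\subseteq U^c$, where $p_U$ is the transition density killed on exiting $U$. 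Taking $W$ a ball of radius $\asymp\norm{x}/2$ about $x$ turns $\int_W p(t-s,x-w)\,dw$ into $\PP^0\big(X_{t-s}\in W-x\big)\ge\PP^0\big(\tau_{B_{\norm{x}/2}}>t-s\big)\ge\PP^0\big(\tau_{B_{\norm{x}/2}}>t\big)$ --- the second survival factor, obtained with \emph{no} pointwise heat--kernel bound. Choosing $A$ so that every jump actually in play satisfies $\norm{w-y}\le\norm{x}$ lets radial monotonicity of $\nu$ replace $\nu(w-y)$ by $\nu(\norm{x})=\nu(x)$; the admissible set of pre--jump positions $y$ is then essentially the lens $\{y:\norm{y}<\norm{x},\ \norm{x-y}\le\norm{x}/2\}$, into which one inscribes a Euclidean ball, the resulting volume ratio yielding $4^{-d}$. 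Finally $\int_0^t\!\int_A p_U(s,0,y)\,dy\,ds$ is the expected time the killed process spends in $A$ before time $t$; bounding it below by a multiple of $t\,\PP^0(\tau_{B_{\norm{x}/2}}>t)$ produces the first survival factor. The required monotonicities --- $s\mapsto\PP^0(\tau_{B_r}>s)$ decreasing, $r\mapsto\PP^0(\tau_{B_r}>t)$ increasing, and the survival probability maximal at the centre --- are elementary consequences of isotropy and unimodality.

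\emph{Main obstacle.} The delicate point is this last step, i.e.\ converting the ``small fluctuations before the jump'' into the factor $\PP^0(\tau_{B_{\norm{x}/2}}>t)$: the set $A$ must simultaneously be large enough to carry a good occupation--time estimate \emph{and} be confined to the part of $U$ facing $x$ (so that a jump of length at most $\norm{x}$ can still reach $W$ while keeping $\nu(w-y)\ge\nu(x)$), and for $U=B_{\norm{x}/2}$ these two requirements are incompatible (the relevant sets are disjoint). Reconciling them is the heart of the argument and is what forces the constant to degrade like $4^{-d}$: one enlarges $U$ slightly past $B_{\norm{x}/2}$ so the two sets overlap in a lens, verifies that the occupation--time lower bound survives the restriction of $A$ to a Euclidean sub--ball of that lens --- a pointwise--versus--integral comparison for $p_U$ that is a standard consequence of unimodality --- and then returns to the nicely centred ball $B_{\norm{x}/2}$ via the monotonicities above. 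The only external inputs are the Ikeda--Watanabe formula, Watanabe's characterisation of isotropic unimodality (radial monotonicity of $\nu$ and of $p(t,\cdot)$), and these elementary monotonicities; everything else is the geometric bookkeeping that produces the dimensional constant.
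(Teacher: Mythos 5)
First, note that the paper does not prove this lemma at all --- it is quoted verbatim from \cite[Lemma 1.11]{BGR3} --- so there is no in-paper argument to compare against; I am assessing your sketch on its own. Your overall strategy (Ikeda--Watanabe for the exit from a ball around the origin, one jump of length at most $\norm{x}$, two survival factors) is the right one in spirit, and you have correctly located the delicate point. But the two resolutions you propose for it do not work, and each failure is fatal for the statement as written.

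First, the post-jump factor. Once you enlarge $U$ strictly past $B_{\norm{x}/2}$, the landing set $W$ must sit in $(\overline{U})^{c}$, so a ball $W=B(x,\kappa)$ is forced to have $\kappa<\norm{x}-\rho_U<\norm{x}/2$, and what you actually obtain is $\PP(\tau_{B_{\kappa}}>t)$ with $\kappa$ strictly smaller than $\norm{x}/2$. The monotonicity $r\mapsto\PP(\tau_{B_r}>t)$ runs the wrong way here: it gives $\PP(\tau_{B_{\kappa}}>t)\le\PP(\tau_{B_{\norm{x}/2}}>t)$, so your lower bound is strictly weaker than the claimed one, and there is no uniform reverse comparison --- for, say, the symmetric stable process $\PP(\tau_{B_{\kappa}}>t)/\PP(\tau_{B_{r}}>t)\to 0$ as $t\to\infty$ whenever $\kappa<r$, since the survival probabilities decay exponentially in $t$ at radius-dependent rates. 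Second, and more seriously, the pre-jump factor. Your admissible set $A$ is a lens contained in $B(x,\norm{x}-\kappa)$, hence bounded away from the origin (the origin never lies in $B(x,\norm{x}-\kappa)$ for $\kappa>0$). For such a set,
\[
	\int_0^t\!\!\int_A p_U(s,0,y)\,{\rm d}y\,{\rm d}s \;=\; \int_0^t \PP\big(X_s\in A,\ \tau_U>s\big)\,{\rm d}s \;=\; o(t)\qquad\text{as } t\to 0^{+},
\]
because the process started at $0$ has vanishing probability of being in $A$ at small times, whereas the bound you need is $\gtrsim t\,\PP(\tau_{B_{\norm{x}/2}}>t)\sim t$. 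No ``pointwise-versus-integral comparison for $p_U$'' can repair this: radial monotonicity of $p_U(s,0,\cdot)$ (itself a nontrivial fact, not an elementary consequence of unimodality) would only say the killed density on the lens is \emph{smaller} than near the centre. The underlying obstruction is structural: for small $t$ the bound $p(t,x)\gtrsim t\nu(x)$ must come from a single jump launched from a neighbourhood of the origin, so any correct implementation has to let the pre-jump position range over (at least a fixed half of) the full ball including $0$, and instead absorb the constraint $\norm{w-y}\le\norm{x}$ into an adaptive choice of the landing set, recovering a definite fraction of $\PP(\abs{X_{t-s}}<\norm{x}/2)$ from the isotropy of the free increment. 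That rearrangement of the geometry is the actual content of the cited proof, not bookkeeping, and your sketch does not supply it.
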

The second result essential for our argument is an improvement of \cite[Theoreme 2.1]{dupuis}
where one-dimensional symmetric L\'{e}vy processes were considered. For a related estimates in the case of
one-dimensional Feller processes see \cite[Lemma 7]{MR3188355}.
\begin{proposition}
	\label{tau}
	There exist $c_1, c_2 > 0$ such that for any isotropic L\'{e}vy process
	\[
		c^{-1}_1 e^{-c^{-1}_2 \,t\, h(r)} \leq \PP(\tau_{B_r} > t) \leq c_1 e^{-c_2 \,t\, h(r)}
	\]
	for all $t, r > 0$.
\end{proposition}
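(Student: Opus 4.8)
The plan is to prove the exponential estimate for $\PP(\tau_{B_r} > t)$ by a scaling reduction to the single scale $t\,h(r) \asymp 1$ combined with two-sided control at that scale. First I would establish the \emph{upper} bound. By \eqref{eq:hcomppsi} we have $h(r) \asymp \psi(1/r)$, so it suffices to work with $\psi$. The standard estimate $\PP(\tau_{B_r} > t) \le \PP(\abs{X_t} \le r) + \PP(\sup_{s \le t} \abs{X_s - X_t} \le r/2 \text{ but exit before } t)$ is unwieldy; instead I would use the classical Pruitt-type bound: there is an absolute constant $c$ such that $\PP(\tau_{B_r} > t) \le c\, t^{-1}\big(h(r)\big)^{-1}$ is \emph{not} enough — we need the exponential. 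The clean route is the submultiplicativity of the survival probability. Setting $u(t) = \PP(\tau_{B_r} > t)$, the Markov property at time $t$ gives, for the process started at $0$, that $u$ is not exactly submultiplicative, but $\PP_x(\tau_{B_r} > t+s) \le \PP_x(\tau_{B_r} > t)\sup_{y \in B_r}\PP_y(\tau_{B_r}>s)$. Since $B_r$ is a ball and the process is isotropic, the worst starting point is the centre only after a translation trick; one uses $\PP_y(\tau_{B_r} > s) \le \PP_0(\tau_{B_{2r}} > s)$ for $y \in B_r$, giving a comparison between scales $r$ and $2r$. Iterating, $\PP_0(\tau_{B_r} > nt_0) \le \big(\sup_{y}\PP_y(\tau_{B_r}>t_0)\big)^{n-1}$, and it remains to pick $t_0 \asymp 1/h(r)$ with $\sup_y \PP_y(\tau_{B_r} > t_0) \le \theta < 1$. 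That last fact follows from a one-step large-jump estimate: the probability that $X$ makes a jump of size $\ge 3r$ before time $t_0$ is $1 - e^{-t_0 \nu(\{|z|\ge 3r\})}$, and $\nu(\{|z| \ge 3r\}) \gtrsim r^{-2}\int_{|z|\le 3r}|z|^2\nu(dz)/r^0$... more directly $\psi(1/r) \asymp h(r) \gtrsim$ (a constant times) the tail $\nu(\{|z|>r\})$ plus the truncated second moment, so $t_0 h(r) \asymp 1$ forces a uniformly positive exit probability in $[0,t_0]$ from $B_{3r}$, hence from $B_r$ starting anywhere in $B_r$. This yields $\PP(\tau_{B_r}>t) \le c_1 e^{-c_2 t h(r)}$.

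For the \emph{lower} bound I would argue that on the event that the process never leaves $B_{r}$ it certainly does not at time $t$; more usefully, I would bound below by the probability of staying in a smaller ball $B_{r/2}$ and not having a big jump. Concretely, decompose $X = X^{(r)} + J^{(r)}$ where $J^{(r)}$ collects the jumps exceeding $r/2$ (a compound Poisson process with rate $\Lambda_r = \nu(\{|z|>r/2\})$) and $X^{(r)}$ is the remainder, whose Lévy measure is supported in $\{|z| \le r/2\}$. On the event $\{J^{(r)} \text{ has no jump in } [0,t]\} \cap \{\sup_{s\le t}|X^{(r)}_s| < r/2\}$ we have $\tau_{B_r} > t$ (in fact $X$ stays in $B_{r/2}$). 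The first event has probability $e^{-t\Lambda_r} \ge e^{-t h(r)\cdot C}$ since $\Lambda_r \le C h(r)$ by \eqref{eq:Ksc3}/\eqref{eq:hcomppsi}. For the second, since $X^{(r)}$ is a martingale (after centring, which only shifts by a bounded amount $\lesssim r$ over bounded rescaled time because of the truncated moment being $\lesssim r^2 h(r) \cdot t$) I would use a maximal/Doob or Lévy–Ottaviani inequality together with the variance bound $\Var(X^{(r)}_t) \lesssim t \int_{|z|\le r/2}|z|^2\nu(dz) \lesssim t r^2 h(r)$; choosing the comparison at scale $t h(r) \asymp 1$ keeps this bounded, so the probability of staying in $B_{r/2}$ is bounded below by a positive constant. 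For general $t$ one again iterates over time-blocks of length $\asymp 1/h(r)$ using the Markov property from a point in $B_{r/2}$, and re-centres each block; each block contributes a fixed positive factor, producing $\PP(\tau_{B_r}>t) \ge c_1^{-1} e^{-c_2^{-1} t h(r)}$.

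The main obstacle, and where care is genuinely needed, is making the scaling reduction uniform in the process: unlike stable processes, here there is no exact scaling, so I must work with the scale-free quantity $t\,h(r)$ and verify that all the constants ($\theta<1$ in the upper bound, the positive lower bound per block, and the implied constants in $\Lambda_r \le C h(r)$ and the truncated second moment $\le C r^2 h(r)$) are \emph{absolute}, depending only on $d$. The inequalities \eqref{eq:55}, \eqref{eq:Ksc1}, \eqref{eq:Ksc2}, \eqref{eq:Ksc3}, \eqref{eq:hcomppsi} are exactly what is available for this, and the only subtlety is handling the drift created by truncating the Lévy measure (the compensator term), which over a block of rescaled length one is $O(r)$ and can be absorbed by shrinking $B_{r/2}$ to $B_{r/4}$. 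I would also remark that when $t h(r)$ is bounded (say $\le 1$) both bounds are trivial after adjusting $c_1$, so the content is entirely in the large-$t h(r)$ regime, which is precisely where the block-iteration argument does its work. Finally, I would note the consistency with Lemma~\ref{HKLB}: plugging this two-sided estimate into that lemma immediately gives the lower heat-kernel bound $p(t,x) \gtrsim t\,\nu(x)\, e^{-c\, t\, h(\norm{x})}$ used later.
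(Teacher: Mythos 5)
Your upper bound is essentially the paper's argument: a single-block estimate $\PP(M_{t_0}<2r)\le 1/2$ for $t_0\asymp 1/h(r)$, followed by iteration via the Markov property and the inclusion $B(X_{nt_0},2r)\supset B_r$. One caution: the single-block estimate must come from Pruitt's inequality $\PP(M_t\le r)\le C/(t\,h(r))$ (which you dismiss too quickly as ``not enough'' before circling back to it); your alternative via the large-jump rate $\nu(\{\norm{z}\ge 3r\})$ alone fails, since for a process whose L\'evy measure is supported in a small ball this rate can vanish while $h(r)>0$, the contribution to $h$ coming entirely from the truncated second moment.

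The genuine gap is in the iteration for the lower bound. Your single-block estimate is fine (and is a legitimately different route from the paper's: compound-Poisson removal of jumps exceeding $r/2$ plus a martingale maximal inequality, versus the paper's direct use of Pruitt; note also that by isotropy the truncated process needs no compensator, so the drift you worry about is identically zero). But controlling the \emph{increment} of each block does not control the \emph{position} relative to the fixed ball $B_r$: if block $k$ starts at $X_{kT}\in B_{r/2}$ and its increment stays in $B_{r/2}$, then $X_s\in B_r$ throughout that block, but $X_{(k+1)T}$ may lie anywhere in $B_r$, and the next block can then exit $B_r$. The displacement accumulates linearly in the number of blocks, so ``re-centring each block'' does not close the induction --- $\tau_{B_r}$ refers to the ball centred at the origin, not at the current position. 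You need each block to additionally steer the process back towards the origin with uniformly positive probability. This is exactly what the paper's geometric device supplies: it intersects the increment event with the directional condition
\[
	2\sprod{X_{(k+1)T}-X_{kT}}{X_{kT}} \leq -\abs{X_{(k+1)T}-X_{kT}}\,\abs{X_{kT}},
\]
which by the elementary inequality $a^2+b^2-ab\le 1$ for $a,b\in[0,1]$ guarantees $\abs{X_{(k+1)T}}\le r$ whenever $\abs{X_{kT}}\le r$ and the increment has length at most $r$; by isotropy this directional event has probability $1-C_1(d)$ bounded away from $0$ uniformly in $T$ and in the starting point, so each block contributes a fixed factor and the induction closes. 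Without some device of this kind (or an explicit uniform lower bound on $\PP^y(\tau_{B_r}>T,\ X_T\in B_{r/4})$ for $y\in B_{r/4}$, which your maximal inequality does not give), the chaining step of your lower bound does not go through.
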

\begin{proof}
	For $s, t \geq 0$ we set
	\[
		M_{s,t}=\sup_{s\leq u\leq t} \abs{X_u-X_s},
	\]
	and $M_t=M_{0,t}$. By \cite[(3.2)]{Pruitt}, there exists a constant $C > 0$ depending only on the dimension
	$d$ such that for any $r, t > 0$
	\begin{equation}
		\label{eq:108}
		\PP(M_t \geq r) \leq C t h(r),
		\qquad \PP(M_t\leq r)\leq \frac{C}{t h(r)}.
	\end{equation}
	In particular, by taking $t_0 = \frac{8C}{h(r)}$, in view of \eqref{eq:55} we obtain
	\[
		\PP(\tau_{B_{2r}} > t_0) = \PP(M_{t_0} < 2r) \leq \frac{1}{2}.
	\]
	Therefore, by the Markov property, for $n \in \NN$,
	\begin{align*}
		\PP(\tau_{B_r} > (n+1) t_0) &= \EE \left( \tau_{B_r}> nt_0 ;\,\PP^{X_{nt_0}}(\tau_{B_r}>t_0)\right) \\
		&\leq \EE\left(\tau_{B_r}> nt_0;\,\PP^{X_{nt_0}}(\tau_{B(X_{nt_0},2r)}>t_0)\right)
		\leq \frac{1}{2}\PP(\tau_{B_r}> nt_0).
	\end{align*}
	Hence, for all $n \in \NN$
	\[
		\PP(\tau_{B_r}>nt_0) \leq 2^{-n}.
	\]
	Given $t > 0$, let $n = \lfloor t / t_0 \rfloor$. Then
	\[
		\PP(\tau_{B_r}>t) \leq \PP(\tau_{B_r} > n t_0) \leq 2^{1-t/t_0}=2^{1-t h(r)/(8C)}.
	\]

	To prove the lower bound we consider, for $T>0$ (to be specified later),  
$$A^T_k= \{M_{kT,(k+1)T}< r,\,2\sprod{X_{(k+1)T}-X_{kT}}{X_{kT}}\leq -|X_{(k+1)T}-X_{kT}||X_{kT}|\}.$$
Observe that for any $a,b\in[0,1]$ $$a^2+b^2-ab\leq 1.$$
Hence for  $|x|,|y|\leq  r$ if $2\sprod{x}{y}\leq -|x||y|$ we have $|x+y|\leq r$ as well. 
This implies, for any $n=0,1,2,\ldots$,
$$  \{M_{nT}< 2r\}\supset \bigcap^{n-1}_{k=0}A_k^T$$
and $|X_{nT}|\leq r$. 
Next, by the Markov property we have, for $n=0,1,\ldots$,
\begin{align*}\PP(\tau_{B_{2r}}>(n+1)T)&=\PP(M_{(n+1)T}< 2r) \geq \PP(\bigcap^{n}_{k=0}A_k^T)=\EE\left( \bigcap^{n-1}_{k=0}A_k^T;\PP^{X_{(n-1)T}}(A_n^T)\right)\\&\geq \inf_{|z|\leq r}\PP^z(A_0^T)\EE\left( \bigcap^{n-1}_{k=0}A_k^T\right)\geq \left(\inf_{|z|\leq r}\PP^z(A_0^T)\right)^{n+1}.\end{align*}
Since $X_t$ is isotropic $\inf_{|z|\leq r}\PP^z(A_0^T)= \PP^{z_0}(A_0^T)$ for any $z_0$: $|z_0|=r$.
Therefore $$\inf_{|z|\leq r}\PP^z(A_0^T)\geq \PP^0(M_T< r)-\PP^0(2\sprod{X_T}{z_0}\geq -|X_T|r).$$
Again, by isotropicity of $X_T$ we have $\PP^0(2\sprod{X_T}{z_0}\geq -|X_T|r)=C_1(d)<1$ for any $T>0$ and by \eqref{eq:108}
$\PP(M_T< r)=1-\PP(M_T\geq r)\geq (1+C_1)/2$ for $T= (1-C_1)/(2C h(r))$. This ends the proof of  the lower bound by \eqref{eq:55}.

\end{proof}

An immediate corollary to Lemma \ref{HKLB} and Proposition \ref{tau} is the following proposition.

\begin{proposition}
	\label{HKLB1}
	There exist constants $C, c > 0$ such that for any isotropic unimodal L\'{e}vy process $\mathbf{X}$
	and for all $t>0$, $\lambda \ge 0$ and $x \in \Rd$ we have 
	\begin{equation}
		\label{eq:57}
		p(t,x) \ge C t \nu(x) e^{-c t\psi(1/|x|)}
	\end{equation}
	and
	\[
		G^\lambda(x)\ge C \nu(x) \big(\lambda+\psi(1/|x|)\big)^{-2}.
	\]
	The constants $C, c$ depends only on the dimension $d$.
\end{proposition}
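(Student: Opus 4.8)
The plan is to read off Proposition~\ref{HKLB1} directly from Lemma~\ref{HKLB}, the lower estimate of Proposition~\ref{tau}, and the comparison $h \asymp \psi(1/\,\cdot\,)$ from \eqref{eq:hcomppsi}; no new idea is required. First, fix $t > 0$ and $x \in \RR^d \setminus \{0\}$ and put $r = \norm{x}/2$. Lemma~\ref{HKLB} gives $p(t,x) \ge 4^{-d} t \nu(x) \big(\PP(\tau_{B_r} > t)\big)^2$, and the lower bound in Proposition~\ref{tau} yields $\PP(\tau_{B_r} > t) \ge c_1^{-1} e^{-c_2^{-1} t h(r)}$. Combining these,
\[
	p(t, x) \ge 4^{-d} c_1^{-2}\, t\, \nu(x)\, e^{-2 c_2^{-1}\, t\, h(\norm{x}/2)}.
\]

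Next, I replace $h(\norm{x}/2)$ by $\psi(\norm{x}^{-1})$ up to a constant depending only on $d$. By \eqref{eq:hcomppsi}, $h(\norm{x}/2) \asymp \psi(2\norm{x}^{-1})$, and \eqref{eq:31} together with \eqref{eq:32} gives $\psi(2\norm{x}^{-1}) \le \psi^*(2\norm{x}^{-1}) \le 10\, \psi^*(\norm{x}^{-1}) \le 10\pi^2 \psi(\norm{x}^{-1})$. Hence $h(\norm{x}/2) \le c_3 \psi(\norm{x}^{-1})$ for a universal $c_3$, so that $e^{-2c_2^{-1} t h(\norm{x}/2)} \ge e^{-c\, t\, \psi(\norm{x}^{-1})}$ with $c = 2c_3/c_2$. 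This proves \eqref{eq:57} with $C = 4^{-d} c_1^{-2}$, the constants depending only on $d$.

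Finally, for the resolvent kernel I integrate the heat kernel bound: from $G^\lambda(x) = \int_0^\infty e^{-\lambda t} p(t,x)\, dt$ and $\int_0^\infty t\, e^{-a t}\, dt = a^{-2}$ we get
\[
	G^\lambda(x) \ge C \nu(x) \int_0^\infty t\, e^{-(\lambda + c\psi(\norm{x}^{-1})) t}\, dt = C \nu(x) \big(\lambda + c\psi(\norm{x}^{-1})\big)^{-2},
\]
and the elementary bound $\lambda + c\psi(\norm{x}^{-1}) \le \max\{1, c\}\big(\lambda + \psi(\norm{x}^{-1})\big)$ lets me absorb $c$ into the constant. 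There is no genuine obstacle here; the only thing to watch is that all constants ($c_1, c_2$ from Proposition~\ref{tau}, the implied constant in \eqref{eq:hcomppsi}, and $4^{-d}$) are universal in $d$, and that the degenerate cases ($\nu \equiv 0$, or $\lambda + \psi(\norm{x}^{-1}) = 0$) make both sides trivially comparable and need no separate argument.
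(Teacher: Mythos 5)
Your proof is correct and follows exactly the route the paper takes: Lemma \ref{HKLB} combined with the lower bound of Proposition \ref{tau}, the comparison \eqref{eq:hcomppsi} (together with \eqref{eq:31}--\eqref{eq:32} to pass from $h(\norm{x}/2)$ to $\psi(\norm{x}^{-1})$), and then integration of \eqref{eq:57} against $e^{-\lambda t}$ for the resolvent bound. The only difference is that you spell out the dilation step that the paper compresses into one sentence; nothing is missing.
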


\begin{proof}
	Since the constant in \eqref{eq:hcomppsi} depends only on $d$, the first estimate follows from Lemma \ref{HKLB}
	and Proposition \ref{tau}. Now, using \eqref{eq:57} we can estimate
	\begin{align*}
		G^\lambda(x) = \int_0^\infty e^{-\lambda t} p(t, x) {\: \rm d}t
		& \geq
		C \nu(x) \int_0^\infty t e^{-t(\lambda + c \psi(1 / \norm{x}))} {\: \rm d}t\\
		& \geq
		C' \nu(x) \big(\lambda + \psi(1/\norm{x})\big)^{-2}.
		\qedhere
	\end{align*}
\end{proof}

\subsection{Estimates from above}
Our method for obtaining upper bounds is based on the following elementary observation: there is a positive constant $C$
depending only on the dimension $d$, such that for all $t, r > 0$
\begin{equation}
	\label{basic}
	r^{d} p(t, r)
	\leq
	C 
	\PP\Big(\frac{r}{2}\leq \abs{X_t} < r\Big),
\end{equation}
As the first consequence, we obtain the following theorem.
\begin{theorem}
	\label{GUB3}
	Let $\mathbf{X}$ be an isotropic unimodal L\'{e}vy process on $\RR^d$. Then there is $C > 0$ such that for all
	$t > 0$ and $x \in \RR^d \setminus \{0\}$
	\[
		p(t, x) \leq C t \norm{x}^{-d} K_d(|x|).
	\]
	The constant $C$ depends only on the dimension $d$.
\end{theorem}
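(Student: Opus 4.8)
The plan is to combine the elementary inequality \eqref{basic} with a single application of Dynkin's formula to a test function supported near the scale $\abs{x}$; the main point is that the large-jump contribution of the generator, which at first sight carries the L\'evy tail, actually collapses to a multiple of $K_d$.

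By \eqref{basic} it is enough to show
\[
	\PP\big(r/2\le\abs{X_t}<r\big)\le C\,t\,K_d(r),\qquad C=C(d),
\]
since then $p(t,r)\le C r^{-d}\PP(r/2\le\abs{X_t}<r)\le C'\,t\,r^{-d}K_d(r)$ for every $t>0$ and $r>0$. Fix a radial $g\in C_c^\infty(\RR^d)$ with $g(x)=\abs{x}^2$ on $B_r$, $\supp g\subset B_{2r}$, $g\ge 0$ and $\|\nabla^2 g\|_\infty\le c_d$ (obtained by rescaling a fixed profile; in particular $\|g\|_\infty\le 4r^2$ and $\|g\|_1\le c_d r^{d+2}$). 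Since $r/2\le\abs{x}<r$ forces $g(x)=\abs{x}^2\ge r^2/4$, Chebyshev's inequality gives $\PP(r/2\le\abs{X_t}<r)\le 4r^{-2}\EE[g(X_t)]$. As $\mathbf{X}$ is isotropic and pure-jump, its generator acts on $g$ by
\[
	\mathcal{L}g(x)=\tfrac12\int_{\RR^d}\big(g(x+y)+g(x-y)-2g(x)\big)\,\nu({\rm d}y),
\]
and $g(0)=0$, so Dynkin's formula yields $\EE[g(X_t)]=\int_0^t\EE[\mathcal{L}g(X_s)]\,{\rm d}s$. Thus everything reduces to the pointwise-in-$s$ bound $\EE[\mathcal{L}g(X_s)]\le C(d)\,r^2K_d(r)$.

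To prove the latter I would split the $\nu$-integral at $\abs{y}=r$. On $\{\abs{y}<r\}$, Taylor's theorem bounds the symmetric increment by $\|\nabla^2 g\|_\infty\abs{y}^2\le c_d\abs{y}^2$, and $\int_{\abs{y}<r}\abs{y}^2\,\nu({\rm d}y)=r^2K_d(r)$ by the very definition of $K_d$. On $\{\abs{y}\ge r\}$ one first discards the nonpositive term $-2g(X_s)$, and uses $X_s\stackrel{d}{=}-X_s$, to obtain
\[
	\EE\Big[\tfrac12\int_{\abs{y}\ge r}\big(g(X_s+y)+g(X_s-y)\big)\,\nu({\rm d}y)\Big]=\EE\Big[\int_{\abs{y}\ge r}g(X_s+y)\,\nu({\rm d}y)\Big].
\]
Radial monotonicity of $\nu$ gives $\nu(\abs{y})\le\nu(r)$ for $\abs{y}\ge r$, so this is at most $\nu(r)\,\EE\big[\int_{\RR^d}g(X_s+y)\,{\rm d}y\big]=\nu(r)\,\|g\|_1$; and by \eqref{eq:Ksc3}, $\nu(r)\le C(d)\,r^{-d}K_d(r)$, whence $\nu(r)\|g\|_1\le C(d)\,r^2K_d(r)$. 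No Gr\"onwall argument is needed, precisely because $-2g(X_s)$ was dropped rather than kept. Adding the two pieces gives $\EE[\mathcal{L}g(X_s)]\le C(d)\,r^2K_d(r)$, and chaining back through \eqref{basic} completes the proof.

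The one step that really needs care is the range $\{\abs{y}\ge r\}$: if one retains the term $-2g(X_s)$, or bounds $g(X_s+y)$ crudely by $\|g\|_\infty\mathds{1}_{\{X_s+y\in\supp g\}}$ and then integrates in $y$, the L\'evy tail $\nu(\{\abs{y}>cr\})$ appears, which for a slowly varying symbol $\psi$ is \emph{not} controlled by $K_d(r)$. The clean way out is the two observations above — discard the negative term (which kills the Gr\"onwall), and use $g\ge0$ together with radial monotonicity of $\nu$ and the pointwise estimate \eqref{eq:Ksc3} — which between them keep everything expressed through $K_d$.
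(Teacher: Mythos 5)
Your argument is correct and follows essentially the same route as the paper: reduce via \eqref{basic} to bounding $\PP(r/2\le|X_t|<r)$, apply Dynkin's formula to a $C^2_c$ test function at scale $r$, and control the generator by splitting into a Hessian/small-jump term giving $K_d(r)$ and a large-jump term controlled by $r^d\nu(r)\lesssim K_d(r)$ via \eqref{eq:Ksc3}. The only differences are cosmetic (a quadratic rather than plateau test function, hence the extra Chebyshev step), and your treatment of the large-jump term is written out in full where the paper defers to the argument of \cite[Lemma 3]{harnack}.
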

\begin{proof}
		Let $\calA$ be an infinitesimal generator
	of $\mathbf{X}$, that is
	\[
		\calA f(y) = \lim_{t \downarrow 0} \frac{\EE f(X_t+y) - f(y)}{t}, \quad y\in \Rd. 
	\]Now we consider a function $f \in \calC^2_c(\Rd)$ such that $0\leq f\leq 1$ and  
	$f(x)= 1$, for $1/2\leq |x|\leq 1$ and $f(x)=0$ for $|x|\leq 1/4$ or $|x|>5/4$. Then for $f_r(x)=f(x/r)$ one can show using Taylor expansion for $f_r$, the representation of generator (see \cite[Theorem 31.5]{MR1739520}) and monotonicity of $\nu$ (compare with \cite[the proof of Lemma 3]{harnack}) that
$$	\sup_{y\in\Rd} \mathcal{A} f(y)\leq c(d)(||f''||_{\infty}K(r)+r^d\nu(r)).$$
	 Since $r^d\nu(r)\leq c(d)K(r)$ we have
	\[
		\sup_{y\in\Rd} \mathcal{A} f(y) \leq C K_d(r),
	\]
	for a positive constant $C$ depending only on the dimension $d$. 
	
	Therefore, by the Dynkin's formula we get
	\[
		\PP\Big( \frac{r}{2} \leq \abs{X_t} < r \Big)
		\leq \EE f(X_t)
		=\EE\int^t_0\mathcal{A}f(X_s){\: \rm d}s
		\leq C t K_d(r).
	\]
	Hence, in view of \eqref{basic} we conclude the proof.
\end{proof}

\begin{lemma}
	\label{lem:pt_Gen1}
	Suppose $\omega: (0, \infty) \rightarrow (0, \infty)$ satisfies
	\[
		\abs{\psi(2 r) - \psi(r)} \leq \omega(r),
	\]
	for all $r > 0$. Then there is $C > 0$ such that for all $t > 0$ and $x \in \RR^d \setminus \{0\}$
	\[
		p(t, x)
		\leq
		C t \norm{x}^{-d}
		\int_{\Rd} \omega\big(\norm{y}/\norm{x}\big) e^{-\norm{y}^2/ 4} {\: \rm d} y.
	\]
	The constant $C$ depends only on the dimension $d$.
\end{lemma}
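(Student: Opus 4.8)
The plan is to combine the elementary bound \eqref{basic} with the Gaussian representation \eqref{eq:33} that was already used to compute $\calL\{{\rm d} U_t\}$. Write $r = \norm{x}$. By \eqref{basic} it is enough to estimate $\PP\big(r/2 \leq \abs{X_t} < r\big)$ from above by $C t \int_{\RR^d}\omega\big(\norm{y}/r\big) e^{-\norm{y}^2/4}{\: \rm d}y$ with $C$ depending only on $d$, so the whole argument reduces to a statement about the probability that $X_t$ falls in the annulus $B_r \setminus B_{r/2}$.

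I would first reduce that probability to a difference of Gaussian--regularized expectations. The function $\phi(u) := e^{-u^2/4} - e^{-u^2}$ is continuous and strictly positive on $(0,\infty)$, hence bounded below on $[1/2,1]$ by a positive absolute constant $c_0$. Since $\abs{X_t}/r \in [1/2,1]$ on the event $\{r/2\leq \abs{X_t} < r\}$,
\[
	c_0 \, \PP\big(r/2 \leq \abs{X_t} < r\big)
	\leq
	\EE\big[\phi\big(\abs{X_t}/r\big)\big]
	=
	\EE\big[e^{-\abs{X_t}^2/(4r^2)}\big] - \EE\big[e^{-\abs{X_t}^2/r^2}\big],
\]
and the right-hand side is automatically non-negative.

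Next I would express both expectations through \eqref{eq:33}. Applying it with $\lambda = (2s)^{-2}$, integrating against $p(t, {\rm d}x)$, and using \eqref{eq:29} and the radiality of $\psi$ (no atom correction is needed since the expectation is taken against the full law), one obtains for every $s > 0$
\[
	\EE\big[e^{-\abs{X_t}^2/(4s^2)}\big]
	=
	(4\pi)^{-d/2} \int_{\RR^d} e^{-\norm{\xi}^2/4} e^{-t\psi(\xi/(2s))} {\: \rm d}\xi .
\]
Choosing $s = r$ and $s = r/2$ and subtracting,
\[
	\EE\big[e^{-\abs{X_t}^2/(4r^2)}\big] - \EE\big[e^{-\abs{X_t}^2/r^2}\big]
	=
	(4\pi)^{-d/2} \int_{\RR^d} e^{-\norm{\xi}^2/4} \big(e^{-t\psi(\xi/(2r))} - e^{-t\psi(\xi/r)}\big) {\: \rm d}\xi .
\]

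To close the estimate I would use $\abs{e^{-a} - e^{-b}} \leq \abs{a-b}$ for $a, b \geq 0$ and, writing $u = \norm{\xi}/(2r)$ so that $\psi(\xi/(2r)) = \psi(u)$ and $\psi(\xi/r) = \psi(2u)$, invoke the hypothesis $\abs{\psi(2u) - \psi(u)} \leq \omega(u)$ to bound the integrand by $t\, e^{-\norm{\xi}^2/4} \omega(\norm{\xi}/(2r))$. The change of variables $\xi = 2y$ then gives
\[
	\int_{\RR^d} e^{-\norm{\xi}^2/4} \omega\big(\norm{\xi}/(2r)\big) {\: \rm d}\xi
	=
	2^d \int_{\RR^d} e^{-\norm{y}^2} \omega\big(\norm{y}/r\big) {\: \rm d}y
	\leq
	2^d \int_{\RR^d} e^{-\norm{y}^2/4} \omega\big(\norm{y}/\norm{x}\big) {\: \rm d}y ,
\]
and collecting the dimensional constants from \eqref{basic}, from $c_0$, from $(4\pi)^{-d/2}$ and from $2^d$ yields the claim. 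I do not expect a genuine obstacle: the only points needing a little care are that the difference of the two regularized expectations is non-negative, so that bounding its absolute value suffices even though $\psi$ need not be monotone, and that the crude comparison $e^{-\norm{y}^2} \leq e^{-\norm{y}^2/4}$ is enough to match the stated right-hand side --- if $\int_{\RR^d}\omega(\norm{y}/\norm{x}) e^{-\norm{y}^2/4}{\: \rm d}y$ is infinite the inequality is vacuous.
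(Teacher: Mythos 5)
Your proof is correct, and it reaches the same key inequality as the paper --- namely a bound of $\norm{x}^d p(t,x)$ by a Gaussian integral of $e^{-t\psi(\cdot)}-e^{-t\psi(2\,\cdot)}$ --- by a slightly different route. The paper gets there by first smoothing: it uses radial monotonicity of $p(t,\cdot)$ to dominate $r^{d/2+1}p(t,\sqrt r)$ by $\int_0^\infty e^{-s/r}s^{d/2}p(t,\sqrt s)\,{\rm d}s$, applies \eqref{basic} under the integral, and then recognizes the resulting Laplace transform via \eqref{eq:26}, arriving at \eqref{eq:p_t10}. You instead apply \eqref{basic} pointwise and minorize the annulus indicator by the nonnegative function $\phi(u)=e^{-u^2/4}-e^{-u^2}$, which is bounded below on $[1/2,1]$; the two regularized expectations are then evaluated exactly by \eqref{eq:33} (and you are right that no atom correction is needed when integrating against the full law). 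Your version is marginally more elementary in that it bypasses the Laplace-smoothing step and the identity \eqref{eq:26}, at the small cost of introducing the constant $c_0$; the paper's version has the advantage that \eqref{eq:p_t10} is stated once and reused later (e.g.\ in Proposition \ref{GUB} and Theorem \ref{Gbound}), whereas your annulus comparison would have to be repeated there. All the individual steps check out: the positivity of $\phi$ justifies dropping the indicator, $\abs{e^{-a}-e^{-b}}\leq\abs{a-b}$ together with the substitution $u=\norm{\xi}/(2r)$ invokes the hypothesis correctly, and the final change of variables and the crude bound $e^{-\norm{y}^2}\leq e^{-\norm{y}^2/4}$ produce only dimensional constants.
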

\begin{proof}
	Monotonicity of $p(t, \:\cdot\:)$ together with \eqref{basic} imply
	\begin{align*}
		r^{d/2+1}p(t, \sqrt{r}) & \leq \frac{d+2}{ 2}\int^{r}_0 s^{d/2}p(t, \sqrt{s}) {\: \rm d}s \\
		& \leq \frac{e(d+2)}{2 } \int^{\infty}_0 e^{-s/r}s^{d/2}p(t,\sqrt{s})	{\: \rm d}s \\
		& \leq \frac{Ce(d+2)}{2} \int^{\infty}_0 e^{-s/r} \PP\Big(\frac{\sqrt{s}}{2}\leq \abs{X_t}< \sqrt{s}\Big)
		{\: \rm d}s.
	\end{align*}
	Taking $\lambda = 1/r$ in \eqref{eq:26}, we obtain
	\[
		r^{-1} \int^{\infty}_0 e^{-s/r} \PP\Big(\frac{\sqrt{s}}{2} \leq |X_t|< \sqrt{s}\Big){\: \rm d} s
		= (4\pi)^{-d/2}
		\int_{\Rd}\Big(e^{-t\psi(\norm{x}/\sqrt{r})}-e^{-t\psi(2\norm{x} /\sqrt{r})}\Big)e^{-|x|^2/4} {\: \rm d}x
	\]
	Hence, there is $C > 0$ depending on $d$ such that for any $t, r>0$
	\begin{equation}
		\label{eq:p_t10}
		r^d p(t, r) \leq 
		C \int_{\Rd} \Big(e^{-t \psi(\norm{x} /r)}-e^{-t \psi(2 \norm{x} / r)} \Big) e^{-\norm{x}^2/4} {\: \rm d} x.
	\end{equation}
	Since for any $a, b \geq 0$, $e^{-a}-e^{-b} \leq \abs{b-a}$, we obtain
	\begin{align*}
		r^{d} p(t, r) 
		&\leq C t \int_{\Rd} \big|\psi(\norm{x}/r) - \psi(2 \norm{x}/r) \big| e^{-\norm{x}^2/4} {\: \rm d} x\\
		&\leq C t \int_{\Rd} \omega(\norm{x}/r) e^{-\norm{x}^2/4} {\: \rm d}x,
	\end{align*}
	which completes the proof.
\end{proof}
\begin{remark}
	To obtain a more precise upper bound we impose on the function $\omega$ a condition that there are $c > 0$, $\alpha < d$
	and $\beta > 0$ such that for all $\lambda, x > 0$
	\begin{equation}
		\label{eq:58}
		\omega(\lambda x) \leq c \omega(x) \max\{\lambda^{-\alpha}, \lambda^\beta\}.
	\end{equation}
	Then there is $ C> 0$ such that for all $t > 0$ and $x \in \RR^d \setminus\{0\}$,
	\begin{equation}
		\label{eq:60}
		p(t, x) \leq C t \norm{x}^{-d} \omega(1/ \norm{x}).
	\end{equation}
	Indeed, by Lemma \ref{lem:pt_Gen1},
	\begin{align*}
		p(t, x) & \leq C t \norm{x}^{-d} \int_{\RR^d} \omega\big(\norm{y} / \norm{x}\big) e^{-\norm{y}^2/4} {\: \rm d}y \\
		& \leq
		C t \norm{x}^{-d} \omega(1/ \norm{x}) 
		\int_{\RR^d} \max\big\{\norm{y}^{-\alpha}, \norm{y}^\beta\big\}
		e^{-\norm{y}^2/4} {\: \rm d}y.
	\end{align*}
	The constant in \eqref{eq:60} depends on $c$, $\alpha$, $\beta$ and the dimension $d$.
\end{remark}

\begin{proposition}
	\label{GUB}
	Suppose that
	\[
		\sup_{0 < x \leq y \leq 2x}
		\big| \psi(x) - \psi(y) \big| < \infty.
	\]
	Assume that there is $\omega: (0, \infty) \rightarrow (0, + \infty)$ satisfying
	\begin{equation}
		\label{eq:62}
		\big|\psi(2 r) - \psi(r) \big|\leq \omega(r),
	\end{equation}
	for all $r > 0$ such that there are $C > 0$, $\alpha < d$ and $\beta > 0$ with
	\begin{equation}
		\label{eq:61}
		\omega(\lambda x) \leq c \omega(x) \max\{\lambda^{-\alpha}, \lambda^{\beta}\}
	\end{equation}
	for all $\lambda, x > 0$. Then there are $C > 0$ and $t_0 > 0$ such that for all $t \in (0, t_0)$ and
	$x \in \RR^d \setminus \{0\}$
	\[
		p(t, x) \leq C t \norm{x}^{-d} \omega(1/\norm{x}) e^{-t \psi(1/\norm{x})}.
	\]
	The constant $C$ depends on $c$, $\alpha$, $\beta$ and the dimension $d$.
\end{proposition}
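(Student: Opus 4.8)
The plan is to rerun the proof of Lemma~\ref{lem:pt_Gen1}, but this time retaining the weight $e^{-t\psi(1/\norm{x})}$ instead of discarding it through the crude inequality $e^{-a}-e^{-b}\le|b-a|$. Since hypothesis \eqref{eq:62} is exactly the assumption of Lemma~\ref{lem:pt_Gen1}, the intermediate bound \eqref{eq:p_t10} is at our disposal, and after taking absolute values it gives, for all $t,r>0$,
\[
	r^d p(t,r)
	\le
	C\int_{\Rd}\bigl|e^{-t\psi(\norm{y}/r)}-e^{-t\psi(2\norm{y}/r)}\bigr|\,e^{-\norm{y}^2/4}\,{\: \rm d}y .
\]
First I would factor $e^{-t\psi(1/r)}$ out of the integrand, so that the problem reduces to controlling $e^{t\psi(1/r)}\bigl|e^{-t\psi(\norm{y}/r)}-e^{-t\psi(2\norm{y}/r)}\bigr|$.

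This is precisely where the first hypothesis of the proposition enters: it states that $\psi$ has the oscillation bound required of $f$ in Lemma~\ref{lem:1} with $x_0=0$. Applying that lemma with $a=2$, its variable $x$ taken to be $1/r$ and its dilation parameter taken to be $\norm{y}$ (both positive, hence $>x_0$), I obtain constants $C_1,\delta>0$ \emph{not depending on $t$ or $r$} such that
\[
	e^{t\psi(1/r)}\bigl|e^{-t\psi(\norm{y}/r)}-e^{-t\psi(2\norm{y}/r)}\bigr|
	\le
	tC_1^{\,t}\max\{\norm{y},\norm{y}^{-1}\}^{\delta t}\bigl|\psi(\norm{y}/r)-\psi(2\norm{y}/r)\bigr|
	\le
	tC_1^{\,t}\max\{\norm{y},\norm{y}^{-1}\}^{\delta t}\,\omega(\norm{y}/r),
\]
using \eqref{eq:62} in the last step. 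Then I would invoke the scaling assumption \eqref{eq:61} with $\lambda=\norm{y}$ to replace $\omega(\norm{y}/r)$ by $c\,\omega(1/r)\max\{\norm{y}^{-\alpha},\norm{y}^{\beta}\}$ and collect the powers of $\norm{y}$, arriving at
\[
	r^d p(t,r)
	\le
	C\,tC_1^{\,t}\,\omega(1/r)\,e^{-t\psi(1/r)}
	\int_{\Rd}\max\{\norm{y},\norm{y}^{-1}\}^{\delta t}\max\{\norm{y}^{-\alpha},\norm{y}^{\beta}\}\,e^{-\norm{y}^2/4}\,{\: \rm d}y .
\]

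The final step is to make the integral and the factor $C_1^{\,t}$ bounded uniformly in $t$, and this is the only place where the restriction $t<t_0$ is forced --- in contrast with \eqref{eq:60}, which holds for arbitrary $t$. Near infinity the integrand is Gaussian-dominated for every $t$, while near the origin it behaves like $\norm{y}^{-\delta t-\alpha}$, so its integral over $\RR^d$ is finite, and bounded in $t$, precisely when $\delta t+\alpha<d$. Since $\alpha<d$ by assumption, it suffices to fix any $t_0$ with $\delta t_0<d-\alpha$: for $t\in(0,t_0)$ one then has $C_1^{\,t}\le\max\{1,C_1^{\,t_0}\}$ and the integral is bounded by a constant depending only on $c,\alpha,\beta,d$, which gives $r^d p(t,r)\le C t\,\omega(1/r)e^{-t\psi(1/r)}$ and hence the claim. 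The one genuine point to be careful about is the bookkeeping: checking that the $C_1$ and $\delta$ produced by Lemma~\ref{lem:1} are really independent of $t$, and choosing $t_0$ small enough that the $t$-dependent exponent $\delta t$ does not destroy integrability at $\norm{y}=0$.
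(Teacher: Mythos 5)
Your proposal is correct and follows essentially the same route as the paper's proof: both start from the intermediate bound \eqref{eq:p_t10}, apply Lemma \ref{lem:1} (with $x_0=0$) to control $e^{t\psi(1/r)}\bigl|e^{-t\psi(\norm{y}/r)}-e^{-t\psi(2\norm{y}/r)}\bigr|$, invoke \eqref{eq:62} and then the scaling \eqref{eq:61}, and finally choose $t_0$ so that $\delta t_0<d-\alpha$ keeps the resulting integral finite. Your choice of $t_0$ actually corrects what appears to be a sign typo in the paper, which writes $t_0=(d+\alpha)/\delta$ where $(d-\alpha)/\delta$ is meant.
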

\begin{proof}
	Thanks to \eqref{eq:62}, we can apply Lemma \ref{lem:1}, thus, there are $C_1 > 0$ and $\delta > 0$ such that
	for all $x, y \in \RR^d \setminus \{0\}$,
	\[
		e^{t \phi(1/\norm{x})} \Big|e^{-t \psi(\norm{y}/\norm{x})} - e^{-t\psi(2 \norm{y}/\norm{x})} \Big|
		\leq
		t C_1^t \max\big\{\norm{y}, \norm{y}^{-1} \big\}^{\delta t} \omega(\norm{y}/\norm{x}),
	\]
	which together with \eqref{eq:61} gives
	\[
		e^{t \phi(1/\norm{x})} \Big|e^{-t \psi(\norm{y}/\norm{x})} - e^{-t\psi(2 \norm{y}/\norm{x})} \Big|
		\leq
		c t C_1^t \max\big\{\norm{y}^{-\alpha + \delta t}, \norm{y}^{\beta + \delta t}\big\}
		\omega(1/\norm{x}).
	\]
	Now, by \eqref{eq:p_t10}, 
	\[
		p(t, x) \leq  c C t C_1^t \norm{x}^{-d} \omega(1/\norm{x}) 
		\int_{\RR^d} 
		\max\big\{\norm{y}^{-\alpha + \delta t}, \norm{y}^{\beta + \delta t}\big\}
		e^{-\norm{y}^2/4} {\: \rm d} y.
	\]
	Taking $t_0 = (d+\alpha)/\delta$, the last integral is finite for $t \in (0, t_0)$.
\end{proof}
In view of Lemma \ref{K1} the natural candidate for $\omega$ is the function $K_1$. 
\begin{remark}
	\label{rem2}
	Suppose that $d\ge 2$. If $K_d$ is a bounded function then there are $C > 0$ and $t_0 > 0$ such that
	for all $t \in (0, t_0)$ and $x \in \RR^d \setminus \{0\}$
	\[
		p(t, x) \leq C t \norm{x}^{-d} K_1(|x|) e^{-t\psi(1/|x|)}.
	\]
	The constant $C$ depends on the dimension $d$. Indeed, by \eqref{eq:Ksc1} and \eqref{eq:Ksc2}, the function
	$K_1$ belongs to $\WLSC{-1}{0}{1} \cap \WUSC{2}{0}{1}$. Hence, by Lemma \ref{K1} and Corollary \ref{cor:1}
	the claim follows from Proposition \ref{GUB}.

	For $d = 1$, we need to impose better scaling properties of $K_1$ to be able to apply Proposition \ref{GUB}.
\end{remark}

\subsection{Sharp two-sided estimates}
The main theorem of this section is Theorem \ref{thm:4}.
\begin{theorem}
	\label{thm:4}
	Let $\mathbf{X}$ be an isotropic unimodal L\'{e}vy process on $\RR^d$ with the L\'{e}vy--Khintchine
	exponent $\psi \in \Pi^\infty_\ell$ for some bounded $\ell \in \calR_0^\infty$. Then there are $r_0, t_0 > 0$
	such that, for all $t \in (0, t_0)$ and $0<|x|\le r_0$,
	\begin{equation}
		\label{ULB}
		{p(t, x)} \asymp {t \norm{x}^{-d} \ell(\norm{x}^{-1}) e^{-t \psi(\norm{x}^{-1})}}.
	\end{equation}
	The implicit constants, $r_0$ and $t_0$ depend on the dimension $d$ and the process $\mathbf{X}$.
\end{theorem}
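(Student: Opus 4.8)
\emph{Strategy and lower bound.} I would prove the two inequalities in \eqref{ULB} separately, in each case splitting the region $\{0 < t < t_0,\ 0 < \norm{x} \le r_0\}$ according to whether $t\psi(\norm{x}^{-1})$ exceeds a fixed threshold $M$ or not. For the tail regime I would read off from Theorem \ref{thm:3} (taking $\epsilon = \frac12$ in that limit) a number $\delta > 0$ such that
$p(t,x) \ge \frac12 \frac{\Gamma(d/2)}{2\pi^{d/2}}\, t\norm{x}^{-d}\ell(\norm{x}^{-1}) e^{-t\psi(\norm{x}^{-1})}$
whenever $t < \delta$ and $t\psi(\norm{x}^{-1}) > 1/\delta$, and I would set $M = 1/\delta$. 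In the complementary regime $t\psi(\norm{x}^{-1}) \le M$ I would invoke Proposition \ref{HKLB1}, which gives $p(t,x) \ge C t\nu(x) e^{-ct\psi(\norm{x}^{-1})}$ with $C,c$ depending only on $d$; since $t\psi(\norm{x}^{-1}) \le M$ the factors $e^{-ct\psi(\norm{x}^{-1})}$ and $e^{-t\psi(\norm{x}^{-1})}$ are comparable with a constant depending only on $c$ and $M$, while by Theorem \ref{thm:5} there is $r_1 > 0$ with $\nu(x) \asymp \norm{x}^{-d}\ell(\norm{x}^{-1})$ on $0 < \norm{x} \le r_1$. Taking $t_0 \le \delta$ and $r_0 \le r_1$ then yields the lower bound throughout the region.

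\emph{Upper bound.} I would apply Proposition \ref{GUB}. Its first hypothesis, $\sup_{0 < x \le y \le 2x}\abs{\psi(x) - \psi(y)} < \infty$, holds for $\psi \in \Pi^\infty_\ell$ by \cite[Theorem 3.8.6(b)]{bgt}, exactly as in the proof of Proposition \ref{prop:2}. For the dominating function I would take $\omega$ to be a bounded function, slowly varying at infinity, with $\omega(r) \asymp \ell(r)$ for all large $r$, modified near the origin so that $\abs{\psi(2r) - \psi(r)} \le \omega(r)$ for every $r > 0$ and so that \eqref{eq:61} holds for some $0 < \alpha < d$ and $\beta > 0$; such an $\omega$ exists because $\abs{\psi(2r) - \psi(r)}$ is bounded on bounded sets (by \eqref{eq:32}) while $\abs{\psi(2r) - \psi(r)} \le 2\log 2\cdot\ell(r)$ for $r$ large (since $\psi \in \Pi^\infty_\ell$), and slowly varying functions obey the Potter bound \eqref{eq:14}. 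Proposition \ref{GUB} then supplies $t_0, C > 0$ with $p(t,x) \le C t\norm{x}^{-d}\omega(\norm{x}^{-1}) e^{-t\psi(\norm{x}^{-1})}$ for $t \in (0,t_0)$, $x \ne 0$, and shrinking $r_0$ so that $\omega(\norm{x}^{-1}) \asymp \ell(\norm{x}^{-1})$ on $0 < \norm{x} \le r_0$ completes the proof. For $d \ge 2$ one may instead route through Remark \ref{rem2}: by Theorem \ref{thm:5} and Karamata's theorem $K_d(r) \asymp \ell(1/r)$ as $r \to 0^+$, while $K_d(r) \le h(r) \asymp \psi(1/r) \to 0$ as $r \to \infty$ (as $\psi(0^+)=0$) by \eqref{eq:hcomppsi}, so $K_d$ is bounded and Remark \ref{rem2} gives $p(t,x) \le C t\norm{x}^{-d}K_1(\norm{x}) e^{-t\psi(\norm{x}^{-1})}$ with $K_1(\norm{x}) \asymp \ell(\norm{x}^{-1})$ near $0$ by the same computation.

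\emph{Main difficulty.} The delicate point is the upper bound: one must recover the algebraic factor $\ell(\norm{x}^{-1})$, not the a priori larger quantities $K_1(\norm{x})$ or $\psi(\norm{x}^{-1})$. The naive choice $\omega = K_1$ advertised before Proposition \ref{GUB} only proves $p(t,x) \lesssim t\norm{x}^{-d}\psi(\norm{x}^{-1}) e^{-t\psi(\norm{x}^{-1})}$ in general, which is too weak; so one needs either the sharp comparison $K_1(r) \asymp \ell(1/r)$ near the origin (via Theorem \ref{thm:5} together with Karamata's theorem), which closes the gap for $d \ge 2$ through Remark \ref{rem2}, or the explicit construction of a globally well-scaled $\omega$ comparable to $\ell$ at infinity --- the route needed for $d = 1$, where Remark \ref{rem2} is unavailable. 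A lesser point is that the thresholds $t_0, r_0, M$ produced by Theorems \ref{thm:3}, \ref{thm:5} and Proposition \ref{GUB} can be chosen simultaneously, which is immediate since Theorem \ref{thm:3} controls $p(t,x)$ uniformly on the whole tail region $\{t < \delta,\ t\psi(\norm{x}^{-1}) > 1/\delta\}$, with no separate restriction on $\norm{x}$.
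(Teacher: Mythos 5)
Your proposal is correct and follows essentially the same route as the paper: the lower bound by splitting on $t\psi(\norm{x}^{-1})\lessgtr M$ and combining Proposition \ref{HKLB1} with Theorem \ref{thm:5} in one regime and Theorem \ref{thm:3} in the other, and the upper bound via Proposition \ref{GUB} with $\omega\asymp\ell$. If anything, you are more careful than the paper, which simply asserts $\abs{\psi(2x)-\psi(x)}\asymp\ell(\norm{x})$ without addressing the behaviour near the origin or verifying the global scaling condition \eqref{eq:61} for $\omega$, points your construction handles explicitly.
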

\begin{proof}
	Since $\psi \in \Pi^\infty_\ell$, by Theorem \ref{thm:5} there is $r_0 > 0$ such that
	\[
		\nu(x) \asymp \norm{x}^{-d} \ell(1/\norm{x})
	\]
	for all $\norm{x} \leq r_0$. Hence, by Corollary \ref{HKLB1}, the lower estimate for $p(t, x)$ holds whenever
	$t \psi(1/|x|) \le 1$. If $t\psi(1/|x|)\ge 1$ and $t$ is sufficiently small, we apply Theorem \ref{thm:3}.

	The upper estimate follows from Proposition \ref{GUB} because by \eqref{eq:5} we have
	\[
		\big|\psi(2x) - \psi(x) \big| \asymp \ell(\norm{x}).
		\qedhere
	\]
\end{proof}

\begin{theorem}
	\label{Gbound}
	Suppose $d \geq 6$. Let $\mathbf{X}$ be an isotropic unimodal L\'evy process on $\RR^d$.
	Then there is $C > 0$ such that for all $x \in \RR^d \setminus \{0\}$ 
	\[
		G(x) \leq C \norm{x}^{-d} h(\norm{x})^{-2} K_1(\norm{x}).
	\]
	The constant $C$ depends only on the dimension $d$.
\end{theorem}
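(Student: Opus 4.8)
The plan is to bound the Green function $G(x)$ by integrating the resolvent-kernel identity over $t$, using the upper bound for $p(t,x)$ obtained from Lemma~\ref{lem:pt_Gen1} together with Lemma~\ref{K1}, and then estimating the resulting time integral through the exponential decay provided by Proposition~\ref{tau}. First I would recall that $G(x) = \int_0^\infty p(t,x)\,{\rm d}t$, so the task reduces to producing a pointwise-in-$t$ estimate of $p(t,x)$ whose integral over $t \in (0,\infty)$ is of the right order. Since the paper has not assumed $\psi \in \Pi^\infty_\ell$ here, I cannot use the sharp estimate \eqref{ULB}; instead I would work with $\omega = 3K_1$, which by Lemma~\ref{K1} satisfies $|\psi(2r) - \psi(r)| \leq 3 K_1(\norm{x}^{-1})$ at scale $r = \norm{x}^{-1}$ (one has $\psi(2r)-\psi(r)$ controlled by two steps of Lemma~\ref{K1}, so $6K_1$ suffices, the constant being irrelevant).

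The key steps, in order, would be: (1) Use \eqref{eq:p_t10} from the proof of Lemma~\ref{lem:pt_Gen1}, namely $r^d p(t,r) \leq C \int_{\RR^d} \big(e^{-t\psi(\norm{y}/r)} - e^{-t\psi(2\norm{y}/r)}\big) e^{-\norm{y}^2/4}\,{\rm d}y$, and split the domain of integration into $\norm{y} \leq 1$ and $\norm{y} > 1$. For $\norm{y} \leq 1$, bound the integrand factor crudely: $e^{-t\psi(\norm{y}/r)} - e^{-t\psi(2\norm{y}/r)} \leq 1$ after pulling out nothing, but better, use that on this range we also keep the factor $e^{-t\psi(c/r)}$ for a suitable small constant by monotonicity considerations of $\psi^*$; in fact, since $d \geq 6$ the polynomial weight $\norm{y}^{-\alpha}$ with $\alpha$ slightly below $d$ will be integrable near the origin. (2) For $\norm{y} > 1$, use $|e^{-a} - e^{-b}| \leq e^{-\min(a,b)}\,|a-b|$ together with scaling of $K_1$ (it lies in $\WLSC{-1}{0}{1} \cap \WUSC{2}{0}{1}$ by \eqref{eq:Ksc1}--\eqref{eq:Ksc2}) and with a lower bound $\psi(\norm{y}/r) \gtrsim \psi(1/r)$ coming from monotonicity of $\psi^*$ and \eqref{eq:32}, to extract a factor $e^{-ct\psi(1/r)}$ times an integrable tail. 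The net outcome should be a bound of the form $p(t,x) \leq C t \norm{x}^{-d} K_1(\norm{x}) e^{-ct\psi(1/\norm{x})}$ valid for all $t>0$ — essentially Remark~\ref{rem2} but without the boundedness hypothesis on $K_d$, using instead $d \geq 6$ to absorb the problematic part.

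Then (3) integrate in $t$: since $\psi(1/\norm{x}) \asymp h(\norm{x})$ by \eqref{eq:hcomppsi}, one gets
\[
	G(x) \leq C \norm{x}^{-d} K_1(\norm{x}) \int_0^\infty t e^{-c t h(\norm{x})}\,{\rm d}t
	= C' \norm{x}^{-d} K_1(\norm{x}) h(\norm{x})^{-2},
\]
which is exactly the claimed estimate. The main obstacle I expect is Step~(1)--(2): getting a clean pointwise upper bound on $p(t,x)$ that simultaneously carries the correct factor $K_1(\norm{x})$ \emph{and} the exponential $e^{-cth(\norm{x})}$ \emph{without} assuming $\psi \in \Pi^\infty_\ell$ or $K_d$ bounded. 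The role of the hypothesis $d \geq 6$ is precisely to make the Bessel-type / Gaussian-weighted integrals over $\norm{y}$ converge despite the possibly bad low-frequency behaviour of $\psi$ when it is only slowly varying; one must check carefully that the exponent $\alpha$ in the scaling of $K_1$ (worst case $\alpha = 1$, since $K_1 \in \WLSC{-1}{0}{1}$, giving growth like $\norm{y}^{1}$ after the substitution, hence an $\norm{y}^{-1}$ singularity structure) stays safely below $d$, which it does comfortably for $d \geq 6$ — indeed $d \geq 2$ would already suffice for this particular integral, but the $d \geq 6$ threshold is inherited from the need (via Theorem~\ref{thm:6}) for $G$ to be well-defined and for the Bessel asymptotics \eqref{eq:50} to control the companion estimates. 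Alternatively, one could invoke Lemma~\ref{lem:pt_Gen1} with $\omega = 6K_1$ directly to get $p(t,x) \leq Ct\norm{x}^{-d} K_1(\norm{x})$ (no exponential), integrate only up to $t = 1/h(\norm{x})$, and for $t > 1/h(\norm{x})$ fall back on the general bound $p(t,x) \leq Ct\norm{x}^{-d}K_d(\norm{x})$ of Theorem~\ref{GUB3} combined with the exponential tail — but the cleanest route remains establishing the exponentially-decaying pointwise bound first.
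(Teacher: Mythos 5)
There is a genuine gap, and it sits exactly where you flagged your "main obstacle": the pointwise bound $p(t,x)\leq C t\norm{x}^{-d}K_1(\norm{x})e^{-c\,t\,h(\norm{x})}$ \emph{valid for all $t>0$} is false in general, and no amount of care with the Gaussian-weighted integrals will rescue it. Test it on the isotropic $\alpha$-stable process: there $K_1(r)\asymp h(r)\asymp r^{-\alpha}$ and $p(t,x)\asymp t^{-d/\alpha}$ when $t\psi(\norm{x}^{-1})=t\norm{x}^{-\alpha}$ is large, whereas your bound would give $t^{-d/\alpha}u^{1+d/\alpha}e^{-cu}$ with $u=t\norm{x}^{-\alpha}\to\infty$, which is vanishingly small compared to $p(t,x)$. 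This is precisely why Proposition \ref{GUB} carries the restriction $t<t_0$ and the growing constant $C_1^t$ from Lemma \ref{lem:1}: the exponential gain is a genuinely small-time phenomenon tied to the de Haan/bounded-increment structure, not something available for a general isotropic unimodal process. Your fallback route does not close the gap either: $\int_{1/h(\norm{x})}^{\infty} t\norm{x}^{-d}K_d(\norm{x})\,{\rm d}t$ diverges, and the "exponential tail" you would combine it with is the very estimate that fails (Proposition \ref{tau} controls $\PP(\tau_{B_r}>t)$, which gives lower bounds via Lemma \ref{HKLB}, not an upper bound on $p(t,x)$ for large $t$).

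The fix is to reverse the order of integration: do the $t$-integral \emph{first and exactly}, before any pointwise estimate in $t$. Integrating \eqref{eq:p_t10} over $t\in(0,\infty)$ and using $\int_0^\infty(e^{-ta}-e^{-tb})\,{\rm d}t=\tfrac{1}{a}-\tfrac{1}{b}=\tfrac{b-a}{ab}$ gives
\[
	r^d G(r)\leq C\int_0^\infty\Big(\frac{1}{\psi(s/r)}-\frac{1}{\psi(2s/r)}\Big)s^{d-1}e^{-s^2/4}\,{\rm d}s
	\leq C'\int_0^\infty h(r/s)^{-2}K_1(r/s)\,s^{d-1}e^{-s^2/4}\,{\rm d}s,
\]
where the numerator $\psi(2s/r)-\psi(s/r)$ is controlled by Lemma \ref{K1} and the denominator by \eqref{eq:hcomppsi}; the factor $h^{-2}$ thus appears from the exact time integral, with no large-$t$ regime to treat separately. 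The remaining spatial integral is then handled by the scalings: $u\mapsto u^{2}h(u)^{-2}K_1(u)$ is non-decreasing and $u\mapsto u^{-5}h(u)^{-2}K_1(u)$ is non-increasing (by \eqref{eq:55}, \eqref{eq:Ksc1}, \eqref{eq:Ksc2} and monotonicity of $h$ and of $r^2h(r)$), so splitting at $s=1$ yields factors $\int s^{d+1}e^{-s^2/4}\,{\rm d}s$ and $\int_0^1 s^{d-6}e^{-s^2/4}\,{\rm d}s$. This also corrects your reading of the hypothesis $d\geq 6$: it is not inherited from Theorem \ref{thm:6} or the Bessel asymptotics, but is exactly what makes $s^{d-6}$ integrable at $0$, i.e.\ what beats the worst-case exponent $5=4+1$ of the combined weight $h^{-2}K_1$ — a weight your decomposition never produces because you discard the $h^{-2}$ by estimating $e^{-ta}-e^{-tb}\leq t(b-a)$ too early.
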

\begin{proof}
	Integrating both sides in \eqref{eq:p_t10} with respect to $t \in (0,\infty)$ we get
	\[
		r^d G(r) \leq C \int_0^\infty
		\left( \frac{1}{\psi(s/r)} - \frac{1}{\psi(2 s/r)} \right)
		s^{d-1} e^{-s^2/4}
		{\: \rm d}s.
	\]
	Hence, by Lemma \ref{K1} and \eqref{eq:hcomppsi},
	\[
		r^d G(r) \leq C C' \int_0^\infty h(r/s)^{-2} K_1(r/s) s^{d-1} e^{-s^2/4} {\: \rm d}s.
	\]
	Now, let us observe that by \eqref{eq:Ksc1} and monotonicity of $h$, the function
	\[
		u \mapsto u^2 h(u)^{-2} K_1(u)
	\]
	is non-decreasing, whereas by \eqref{eq:Ksc2} and monotonicity of $r^2 h(r)$, the function
	\[
		u \mapsto u^{-5} h(u)^{-2} K_1(u)
	\]
	is non-increasing. Therefore,
	\[
		\int_0^1 h(r/s)^{-2} K_1(r/s) s^{d-1} e^{-s^2/4} {\: \rm d}s
		\leq
		h(r)^{-2} K_1(r) \int_0^1 s^{d+1} e^{-s^2/4} {\: \rm d}s,
	\]
	and
	\[
		\int_1^\infty h(r/s)^{-2} K_1(r/s) s^{d-1} e^{-s^2/4} {\: \rm d}s
		\leq
		h(r)^{-2} K_1(r) \int_1^\infty s^{d-6} e^{-s^2/4} {\: \rm d}s,
	\]
	which concludes the proof of the theorem.
\end{proof}

\begin{corollary}
	\label{cor:3}
	Suppose $d \ge 6$. Assume there are $-2< \beta \leq \alpha<1$ and $C_1, C_2 > 0$ such that
	for all $\lambda \geq 1$ and $r > 0$
	\begin{equation}
		\label{eq:63}
		C_1 \lambda^{-d + \beta} \nu(r) \leq \nu(\lambda r) \leq C_2 \lambda^{-d + \alpha} \nu(r).
	\end{equation}
	Then
	\[
		G(x) \asymp \norm{x}^{-d} h(\norm{x})^{-2} K_d(r).
	\]
\end{corollary}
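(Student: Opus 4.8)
The plan is to prove the two inequalities hidden in $\asymp$ separately, reducing each to results already established in the text, and to isolate the one genuinely new ingredient, namely the comparability $\nu(x)\asymp\norm{x}^{-d}K_d(\norm{x})$ for $x\neq 0$, which holds under the two-sided scaling \eqref{eq:63}. Once that comparability is available, the lower bound will come from Proposition \ref{HKLB1} and the upper bound from Theorem \ref{Gbound} together with Corollary \ref{SP1}.

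First I would establish $\nu(x)\asymp\norm{x}^{-d}K_d(\norm{x})$. The inequality $\norm{x}^d\nu(x)\le CK_d(\norm{x})$ is exactly \eqref{eq:Ksc3} and requires no hypothesis. For the reverse, since $\nu_d=\nu$, writing $K_d$ in polar coordinates and then rescaling gives
\[
K_d(r)=c_d\,r^{-2}\int_0^r s^{d+1}\nu(s){\: \rm d}s=c_d\,r^d\int_0^1 u^{d+1}\nu(ru){\: \rm d}u
\]
for a dimensional constant $c_d$. Applying the lower scaling in \eqref{eq:63} with $\lambda=u^{-1}\ge 1$ yields $\nu(ru)\le C_1^{-1}u^{\beta-d}\nu(r)$, so that
\[
K_d(r)\le C_1^{-1}c_d\,r^d\nu(r)\int_0^1 u^{\beta+1}{\: \rm d}u=\frac{C_1^{-1}c_d}{\beta+2}\,r^d\nu(r).
\]
This is the step where the hypothesis $\beta>-2$ is used — it is precisely what makes $\int_0^1 u^{\beta+1}{\: \rm d}u$ finite — and I expect it to be the main (indeed essentially the only) point requiring care; everything else is assembly of known facts.

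For the lower bound on $G$ I would take $\lambda=0$ in Proposition \ref{HKLB1}, giving $G(x)=G^0(x)\ge C\nu(x)\psi(\norm{x}^{-1})^{-2}$, and then combine it with \eqref{eq:hcomppsi} (so that $\psi(\norm{x}^{-1})^{-2}\asymp h(\norm{x})^{-2}$) and with the comparability just proved to obtain $G(x)\gtrsim\norm{x}^{-d}h(\norm{x})^{-2}K_d(\norm{x})$. For the upper bound, the upper scaling in \eqref{eq:63} with $\alpha<1$ is exactly the hypothesis of Corollary \ref{SP1}, hence $K_1(r)\asymp K_d(r)$; substituting this into Theorem \ref{Gbound} — which is where the assumption $d\ge 6$ enters — gives $G(x)\le C\norm{x}^{-d}h(\norm{x})^{-2}K_1(\norm{x})\asymp\norm{x}^{-d}h(\norm{x})^{-2}K_d(\norm{x})$. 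Combining the two estimates completes the proof.
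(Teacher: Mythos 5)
Your proof is correct and follows essentially the same route as the paper: the upper bound comes from Theorem \ref{Gbound} together with $K_1\asymp K_d$ (Corollary \ref{SP1}, i.e.\ Lemma \ref{lem:3} plus the upper scaling with $\alpha<1$), and the lower bound from Proposition \ref{HKLB1} once $K_d(r)\le C r^{d}\nu(r)$ is extracted from the lower scaling with $\beta>-2$. Your explicit derivation of that last inequality, and your attribution of which half of \eqref{eq:63} feeds which bound, is in fact cleaner than the paper's one-line argument, which appears to have the words ``first'' and ``second'' interchanged.
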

\begin{proof}
	By the first inequality in \eqref{eq:63} and Lemma \ref{lem:3}, there is a constant $C > 0$ such that
	$K_1 \leq C K_d$, thus the upper estimate follows from Theorem \ref{Gbound}. The second inequality implies
	that $\nu(r) \geq C r^{-d} K_d(r)$, thus the lower estimate is the consequence of Corollary \ref{HKLB1}.
\end{proof}

We conjecture that the upper estimate in Corollary \ref{cor:3} is true for any unimodal isotropic process provided 
the dimension $d \ge 3$. 

A consequence of Theorem \ref{Gbound} is Theorem \ref{thm:equiG}, which generalizes \cite[Theorem 5]{harnack}
to isotropic unimodal L\'{e}vy processes, provided the dimension $d \geq 6$.
\begin{theorem}
	\label{thm:equiG}
	Suppose $d \geq 6$. Let $\mathbf{X}$ be an isotropic unimodal L\'{e}vy process on $\RR^d$ with the L\'evy--Khintchine
	exponent $\psi$. Then there exist $C, R > 0$ such that for all $\norm{x} \leq R$
	\begin{equation}
		\label{eq:64a}
		G(x) \geq C \norm{x}^{-d} h(\norm{x})^{-1}
	\end{equation}
	if and only if there are $R, c, \alpha > 0$ such that $\psi$ satisfies $\WLSC{\alpha}{R^{-1}}{c}$.
\end{theorem}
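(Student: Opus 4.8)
First I would record an observation that makes the statement transparent: for $d\ge 6$ the upper estimate $G(x)\le C\norm{x}^{-d}h(\norm{x})^{-1}$ holds with no scaling hypothesis at all, since $\min\{1,r^{-2}\norm{y}^2\}\ge r^{-2}\norm{y}^2\ind{\norm{y}<r}$ gives $K_1(r)\le h_1(r)$ and $h_1(r)\asymp\psi(1/r)\asymp h(r)$ by \eqref{eq:hcomppsi}, so Theorem \ref{Gbound} yields $G(x)\le C\norm{x}^{-d}h(\norm{x})^{-2}K_1(\norm{x})\le C'\norm{x}^{-d}h(\norm{x})^{-1}$. Thus the theorem really characterises when the matching \emph{lower} bound holds. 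For the implication \eqref{eq:64a} $\Rightarrow$ WLSC I would combine the hypothesis with Theorem \ref{Gbound} to get $c\,h(r)\le K_1(r)$ for $r\le R$, hence $c'h_1(r)\le K_1(r)$ by \eqref{eq:hcomppsi}; then, mimicking the proof of Proposition \ref{nslow10} with $j=1$, \eqref{eq:46} gives $\bigl(r^{2c'}h_1(r)\bigr)'=2r^{2c'-1}\bigl(c'h_1(r)-K_1(r)\bigr)\le0$ on $(0,R)$, so $r\mapsto r^{2c'}h_1(r)$ is non-increasing there; equivalently $u\mapsto u^{-2c'}h_1(1/u)$ is non-decreasing on $(R^{-1},\infty)$, i.e. $h_1(1/\,\cdot\,)\in\WLSC{2c'}{R^{-1}}{1}$, and since $h_1(1/u)\asymp\psi(u)$ this gives $\psi\in\WLSC{2c'}{R^{-1}}{c''}$ for some $c''>0$.

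For the converse, assume $\psi\in\WLSC{\alpha}{R^{-1}}{c}$; note $\alpha\in(0,2]$ necessarily, as $\psi\in\WUSC{2}{0}{4\pi^2}$ always by \eqref{eq:31} and \eqref{eq:32}. The first step is $G(B_\rho)\asymp h(\rho)^{-1}$ for small $\rho$: the lower bound is immediate from Proposition \ref{tau}, since $G(B_\rho)=\int_0^\infty\PP(X_t\in B_\rho)\,{\rm d}t\ge\int_0^\infty\PP(\tau_{B_\rho}>t)\,{\rm d}t\gtrsim h(\rho)^{-1}$, while for the upper bound the unconditional estimate above and the lower scaling $h(2^{-k}\rho)\gtrsim2^{k\alpha}h(\rho)$ give $G(B_\rho)=\int_{B_\rho}G(0,y)\,{\rm d}y\lesssim\int_0^\rho s^{-1}h(s)^{-1}\,{\rm d}s\lesssim\sum_{k\ge0}h(2^{-k}\rho)^{-1}\lesssim h(\rho)^{-1}$. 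Next I would apply the scale-invariant interior Harnack inequality for nonnegative harmonic functions, which holds for isotropic unimodal processes with $\psi$ satisfying a weak lower scaling condition (\cite{harnack}), to the radial harmonic function $y\mapsto G(0,y)$ on annuli; this yields the doubling property $G(\sigma)\asymp G(2\sigma)$ and hence $G\bigl(\{2^{-k-1}\rho\le\norm{y}<2^{-k}\rho\}\bigr)\asymp(2^{-k}\rho)^{d}G(2^{-k}\rho)$. Since $G(\{0\})=0$ (as $\psi$ is unbounded, the L\'evy measure is infinite), summing over $k$ gives $\sum_{k\ge0}(2^{-k}\rho)^{d}G(2^{-k}\rho)\asymp G(B_\rho)\asymp h(\rho)^{-1}$, while each term is $\lesssim h(2^{-k}\rho)^{-1}\lesssim2^{-k\alpha}h(\rho)^{-1}$ by the unconditional upper bound and lower scaling. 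Therefore the tail $\sum_{k\ge k_0}$ drops below half the total once $k_0$ is large enough uniformly in $\rho$, so some $k^\ast<k_0$ satisfies $(2^{-k^\ast}\rho)^{d}G(2^{-k^\ast}\rho)\gtrsim h(\rho)^{-1}$, i.e. $G(2^{-k^\ast}\rho)\gtrsim(2^{-k^\ast}\rho)^{-d}h(2^{-k^\ast}\rho)^{-1}$ because $h$ is non-increasing; applying doubling $k^\ast\le k_0$ times transfers this back to $G(\rho)\gtrsim\rho^{-d}h(\rho)^{-1}$, which is \eqref{eq:64a}.

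The forward implication is short once Theorem \ref{Gbound} is available, so the real work is the converse, and the delicate point is the passage from the global estimate $G(B_\rho)\asymp h(\rho)^{-1}$ to the pointwise one. This cannot be done through the elementary bound $G(x)\ge C\nu(x)\psi(\norm{x}^{-1})^{-2}$ of Proposition \ref{HKLB1}, which under a bare lower scaling condition can be off by a slowly varying factor (for instance when $\psi(r)\asymp r^{2}/\log r$); it genuinely uses the Harnack inequality, together with the summability $\sum_k 2^{-k\alpha}<\infty$ — available because $\alpha>0$ and meaningful because the automatic upper scaling of exponent $2$ forces $\alpha\le2$. A purely analytic alternative would be to invert the Hankel identity $r^{d/2-1}/\psi(r)=(2\pi)^{d/2}\int_0^\infty s^{d/2}G(s)J_{d/2-1}(rs)\,{\rm d}s$ from the proof of Theorem \ref{thm:7} and bound the resulting Bessel integral from below, but controlling its oscillatory large-argument tail for $d\ge6$ under only one scaling condition appears harder than the probabilistic route above.
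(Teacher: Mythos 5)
Your forward implication (\eqref{eq:64a} $\Rightarrow$ WLSC) is exactly the paper's argument: combine Theorem \ref{Gbound} with the hypothesis to get $c\,h_1(r)\le K_1(r)$ via \eqref{eq:hcomppsi}, then run the differential-inequality step of Proposition \ref{nslow10} on the one-dimensional projection; the paper simply invokes Proposition \ref{nslow10} where you unfold it. Your preliminary remark that the matching upper bound $G(x)\le C\norm{x}^{-d}h(\norm{x})^{-1}$ is unconditional (since $K_1\le h_1\asymp h$) is correct and a useful clarification of what the theorem actually characterises.

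Where you genuinely diverge is the converse. The paper disposes of it in one line by citing \cite[Theorem 3]{harnack}, which already contains the Green function lower bound under WLSC. You instead reconstruct that bound: $G(B_\rho)\gtrsim h(\rho)^{-1}$ from Proposition \ref{tau}, the matching upper bound on $G(B_\rho)$ from the unconditional pointwise estimate plus the lower scaling of $h$ (this is where $\alpha>0$ and the summability of $2^{-k\alpha}$ enter), and then the passage from the ball estimate to the pointwise one via the scale-invariant Harnack inequality of \cite{harnack} applied to the radial harmonic function $y\mapsto G(0,y)$, doubling, and a pigeonhole over dyadic annuli. I checked the details and the argument is sound: the tail estimate is uniform in $\rho$ because the doubling constant and the threshold for the Harnack inequality depend only on the scaling data, and the final transfer $G(\rho)\gtrsim\rho^{-d}h(\rho)^{-1}$ uses doubling at most $k_0$ times with $k_0$ fixed. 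What your route buys is a self-contained derivation modulo only the Harnack inequality itself, and it makes visible exactly which ingredients (exit-time lower bound, unconditional upper bound, Harnack) produce the sharp lower bound; what it costs is that you are essentially re-proving \cite[Theorem 3]{harnack}, which the paper treats as a black box. Either way the proof is complete and correct; your closing remark that Proposition \ref{HKLB1} alone cannot give the lower bound under bare WLSC is also a fair and accurate caveat.
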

\begin{proof}
	If $\psi$ satisfies $\WLSC{\alpha}{R}{c}$, for some $R, \alpha, c > 0$, then the estimate for the Green function
	$G$ follows by \cite[Theorem 3]{harnack}. Conversely, if we assume \eqref{eq:64a} then by Theorem \ref{Gbound} there is
	$c > 0$ such that for all $\norm{x} \leq R$
	\[
		h(\norm{x})^{-1} \leq c h(\norm{x})^{-2} K_1(\norm{x}).
	\]
	Then, by \eqref{eq:hcomppsi}, there is $c > 0$ such that for all $\norm{x} \leq R$
	\[
		h_1(\norm{x})^{-1} \leq c h_1(\norm{x})^{-2} K_1(\norm{x}).
	\]
	Hence, by Proposition \ref{nslow10} applied to the one-dimensional projection of $\mathbf{X}$ we obtain the claim.
\end{proof}
Similarly one can prove the following result.\begin{theorem}
	\label{thm:equiG1}
	Suppose $d \geq 6$. Let $\mathbf{X}$ be an isotropic unimodal L\'{e}vy process on $\RR^d$ with the L\'evy--Khintchine
	exponent $\psi$. Then there exist $C, R > 0$ such that for all $\norm{x} \geq R$
	\begin{equation}
		\label{eq:64}
		G(x) \geq C \norm{x}^{-d} h(\norm{x})^{-1}
	\end{equation}
	if and only if there are $R, c, \alpha > 0$ such that $\psi(1/r)$ satisfies $\WLSC{\alpha}{R^{-1}}{c}$.
\end{theorem}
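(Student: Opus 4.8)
The argument parallels that of Theorem~\ref{thm:equiG}, with the behaviour of $\psi$ at infinity replaced throughout by its behaviour at the origin; recall from \eqref{eq:hcomppsi} that $h\asymp\psi(1/\,\cdot\,)$, and note that $d\ge 6$ enters only via Theorem~\ref{Gbound}. I treat the two implications in turn.

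\emph{Sufficiency.} Assume $\psi(1/r)$ satisfies $\WLSC{\alpha}{R^{-1}}{c}$, that is, $\psi$ obeys a weak lower scaling condition near $0$. Then the scaling of $\psi$ near the origin controls the behaviour of $\mathbf X$ over large distances, and the lower bound $G(x)\ge C\norm{x}^{-d}h(\norm{x})^{-1}$ for $\norm{x}\ge R$ follows from the variant of \cite[Theorem~3]{harnack} adapted to scaling at the origin (the barrier and survival-probability estimates there produce exactly this bound for large $\norm{x}$). If one prefers to remain within the scaling-at-infinity framework, one may first pass to the isotropic unimodal process with L\'evy density $\tilde\nu=\nu|_{B_1^c}$, whose exponent is asymptotically equal to $\psi$ at the origin (as in the remark after Theorem~\ref{thm:5Inf}), and apply \cite[Theorem~3]{harnack} there.

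\emph{Necessity.} Suppose $G(x)\ge C\norm{x}^{-d}h(\norm{x})^{-1}$ for $\norm{x}\ge R$. Since Theorem~\ref{Gbound} is valid for every $x\neq 0$, combining the two estimates gives $h(\norm{x})\le c\,K_1(\norm{x})$ for $\norm{x}\ge R$, and by \eqref{eq:hcomppsi} the same inequality holds with $h_1$ in place of $h$, so $\liminf_{r\to\infty}K_1(r)/h_1(r)>0$. It then suffices to run the argument of Proposition~\ref{nslow10} at infinity instead of at the origin: if $K_d(r)\ge C h(r)$ for all large $r$, then by \eqref{eq:46} one has $(r^{2C}h(r))'=2C r^{2C-1}h(r)+r^{2C}h'(r)=2r^{2C-1}(C h(r)-K_d(r))\le 0$ for $r$ large, hence $r\mapsto r^{2C}h(r)$ is eventually non-increasing; after the substitution $r\mapsto 1/r$ and using $h\asymp\psi(1/\,\cdot\,)$, this is precisely the assertion that $\psi(1/r)$ satisfies $\WLSC{2C}{\theta^{-1}}{1}$ up to a multiplicative constant, i.e.\ $\psi$ has a weak lower scaling property at the origin with index $2C$. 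Applying this to the one-dimensional projection of $\mathbf X$, which is again isotropic unimodal, whose symbol coincides with the radial $\psi$ and whose concentration function is $h_1$, completes the proof.

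\emph{Main obstacle.} As in Theorem~\ref{thm:equiG}, the subtle point is sufficiency: one has to make sure the hypotheses of the barrier estimates of \cite{harnack} are satisfied in the scaling-at-the-origin regime — which is exactly what the passage to $\tilde\nu=\nu|_{B_1^c}$ is for. The necessity part is a routine transcription of the proof of Proposition~\ref{nslow10}, moving the monotonicity interval from a neighbourhood of $0$ to a neighbourhood of $\infty$ and bookkeeping the reciprocal substitution; nothing new is needed there beyond Theorem~\ref{Gbound} and \eqref{eq:hcomppsi}.
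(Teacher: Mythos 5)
Your proposal is correct and takes essentially the same approach the paper intends: the paper offers no proof of Theorem~\ref{thm:equiG1} beyond ``Similarly one can prove,'' and your argument is exactly the proof of Theorem~\ref{thm:equiG} transcribed to the at-infinity regime --- Theorem~\ref{Gbound} combined with the at-infinity analogue of Proposition~\ref{nslow10} (applied to the one-dimensional projection) for necessity, and the origin-scaling variant of \cite[Theorem 3]{harnack} for sufficiency. Your reading of the scaling hypothesis as lower scaling of $\psi$ at the origin with threshold $R^{-1}$ is the intended one, and the derivation $h_1\le cK_1$ on $[R,\infty)\Rightarrow r\mapsto r^{2C}h_1(r)$ non-increasing there is the correct bookkeeping.
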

\subsection{Subordinate Brownian Motions}
\label{sec:5.1}
In this section we consider a pure-jump subordinate Brownian motion $\mathbf{X} = (X_t : t \geq 0)$ with the
L\'evy--Khintchine exponent $\psi(x) = \varphi(\norm{x}^2)$ where $\varphi: [0, \infty) \rightarrow [0, \infty)$
is a Bernstein function such that $\varphi(0)=0$. Let $\mathbf{B} = (B_t : t \geq 0)$ be a Brownian motion on $\RR^d$.
The process $\mathbf{X}$ can be constructed as the time changed $\mathbf{B}$ by an independent subordinator
$(T_t : t \geq 0)$ with the Laplace exponent $\varphi$, that is for $t \geq 0$
\[
	X_t= B_{T_t}.
\]
Let us recall that there is a measure $\mu$ supported on $[0, \infty)$ such that for $u \geq 0$
\begin{equation}
	\label{eq:65}
	\varphi(u)=\int_{[0, \infty)} (1-e^{-us}) \: \mu({\rm d}s),
\end{equation}
and
\[
	\int_{[0, \infty)} \min\{1, s\} \: \mu({\rm d} s).
\]
The measure $\mu$ is the L\'evy measure of the subordinator. If $\mu$ is absolutely continuous then we will denote
its density by $\mu$ as well.
\begin{lemma}
	\label{K_SBM}
	For all $r > 0$
	\[
		\int^{r^2}_0u^{d/2} \varphi'(u) {\: \rm d} u \asymp r^d K_d(r^{-1}).
	\]
	The comparability constant depends only on the dimension $d$.
\end{lemma}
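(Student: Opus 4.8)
The plan is to rewrite both sides as integrals against the Lévy measure $\mu$ of the subordinator and then compare the resulting integrands pointwise, with all constants depending only on $d$. First I would record two representations. Differentiating \eqref{eq:65} under the integral sign gives, for $u > 0$,
\[
	\varphi'(u) = \int_{[0,\infty)} s\, e^{-us} \: \mu({\rm d}s),
\]
while the classical formula for the Lévy density of a subordinate Brownian motion (obtained from $p(t,x) = \int_{[0,\infty)} g_s(x)\, \PP(T_t \in {\rm d}s)$ with $g_s(x) = (4\pi s)^{-d/2} e^{-\norm{x}^2/(4s)}$, together with $\nu(x) = \lim_{t \to 0^+} t^{-1} p(t,x)$) reads
\[
	\nu(x) = \int_{(0,\infty)} (4\pi s)^{-d/2} e^{-\norm{x}^2/(4s)} \: \mu({\rm d}s).
\]

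Next I would compute the left-hand side: by Tonelli's theorem and the substitution $u = w/s$,
\[
	\int_0^{r^2} u^{d/2} \varphi'(u) \: {\rm d}u
	= \int_{[0,\infty)} s^{-d/2}\, \gamma\big(r^2 s\big) \: \mu({\rm d}s),
	\qquad
	\gamma(a) := \int_0^a w^{d/2} e^{-w} \: {\rm d}w .
\]
For the right-hand side I would use $K_d(r^{-1}) = r^2 \int_{\norm{y} \le r^{-1}} \norm{y}^2 \nu(y) \: {\rm d}y$, insert the formula for $\nu$, pass to polar coordinates, and substitute $\rho = 2\sqrt{s}\, v$ in the radial integral to obtain
\[
	r^d K_d(r^{-1}) = c_d \int_{[0,\infty)} s^{-d/2}\, \Theta\big(r^2 s\big) \: \mu({\rm d}s),
	\qquad
	\Theta(a) := a^{1 + d/2} \int_0^{1/(2\sqrt{a})} v^{d+1} e^{-v^2} \: {\rm d}v ,
\]
where $c_d = 2^{d+2} (4\pi)^{-d/2}\, \omega_d$ and $\omega_d$ is the surface measure of the unit sphere in $\RR^d$. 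All integrands above are nonnegative, so the Fubini--Tonelli interchanges cause no trouble.

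It therefore suffices to prove the single pointwise two-sided bound $\gamma(a) \asymp \Theta(a)$ for $a > 0$, with constants depending only on $d$; replacing $a$ by $r^2 s$ and integrating against the nonnegative measure $s^{-d/2} \mu({\rm d}s)$ then yields $r^d K_d(r^{-1}) \asymp \int_0^{r^2} u^{d/2} \varphi'(u) \: {\rm d}u$. The function $a \mapsto \gamma(a)/\Theta(a)$ is continuous and strictly positive on $(0,\infty)$, so it is enough to check finite positive limits at the endpoints. As $a \to 0^+$ one has $\gamma(a) \sim \tfrac{2}{d+2}\, a^{1+d/2}$ and $\Theta(a) \sim \tfrac12 \Gamma(\tfrac d2 + 1)\, a^{1+d/2}$, so $\gamma(a)/\Theta(a) \to 4\big((d+2)\Gamma(\tfrac d2+1)\big)^{-1}$; as $a \to \infty$ one has $\gamma(a) \to \Gamma(\tfrac d2 + 1)$ and $\Theta(a) \sim \big((d+2) 2^{d+2}\big)^{-1}$, so $\gamma(a)/\Theta(a) \to (d+2) 2^{d+2}\, \Gamma(\tfrac d2 + 1)$. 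Both limits are finite and positive, hence $\gamma/\Theta$ is bounded between two positive constants on $(0,\infty)$, and the comparability constant in the lemma depends only on $d$.

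There is no genuine obstacle here: the argument is entirely elementary once the two integral representations are in place. The only points that require a little care are keeping track of the constants in the polar-coordinate and scaling computation for $K_d$, and the elementary endpoint asymptotics of the incomplete-gamma-type functions $\gamma$ and $\Theta$; the uniform comparison then follows by continuity and compactness on $(0,\infty)$.
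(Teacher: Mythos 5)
Your proof is correct and follows essentially the same route as the paper: both sides are rewritten via Fubini--Tonelli as integrals of incomplete-gamma-type functions of $r^2 s$ against the subordinator's L\'evy measure $\mu({\rm d}s)$, and the two integrands are compared pointwise. The only cosmetic difference is that you establish $\gamma \asymp \Theta$ directly from the endpoint asymptotics, whereas the paper compares both to $\min\{1,\cdot\}^{d/2+1}$ via \eqref{eq:66}.
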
 
\begin{proof}
	We observe that by taking derivative of \eqref{eq:65} and Fubini--Tonelli's theorem
	\begin{align*}
		\int^{r^2}_0 u^{d/2} \varphi'(u) {\: \rm d} u 
		& = \int^{r^2}_0 u^{d/2} \int^\infty_0 s e^{-us} \: \mu({\rm d}s) {\: \rm d} u \\
		& = \int^\infty_0 s^{-d/2} \int^{s r^2}_0 u^{d/2} e^{-u} {\: \rm d} u \: \mu({\rm d}s).
	\end{align*}
	Since
	\begin{equation}
		\label{eq:66}
		\int_0^{r} 
		u^{d/2} e^{-u} {\: \rm d} u
		\asymp 
		\min\big\{1, r\big\}^{d/2 + 1},
	\end{equation}
	we get
	\[
		\int_0^{r^2} u^{d/2} \varphi'(u) {\: \rm d} u
		\asymp
		\int_0^\infty s \min\big\{s^{-d/2 - 1}, r^{d + 2} \big\} \: \mu({\rm d} s).
	\]
	From the other side, for all $r > 0$
	\begin{equation}
		\label{eq:75}
		\nu(u) = \int_0^\infty (4 \pi s)^{-d/2} e^{-u^2/(4s)} \mu({\: \rm d}s),
	\end{equation}
	thus by Fubini--Tonelli's theorem
	\begin{align*}
		r^2 K_d(1/r) & = \omega_d \int_0^{1/r} u^{d+1} \nu(u) {\: \rm d}u \\
		& = \omega_d \int_0^\infty (4 \pi s)^{-d/2} \int_0^{1/r} e^{-u^2/(4s)} u^{d+1}
		{\: \rm d}u \: \mu({\rm d} s) \\
		& = 2 \omega_d \pi^{-d/2} \int_0^\infty s \int_0^{1/(4 s r^2)} e^{-u} u^{d/2}
		{\: \rm d}u \: \mu({\rm d} s).
	\end{align*}
	Finally, by \eqref{eq:66}
	\[
		r^d K_d(1/r) \asymp \int_0^\infty s \min\big\{r^{d+2}, s^{-d/2-1}\big\} \: \mu({\rm d} s),
	\]
	which finishes the proof.
\end{proof}

\begin{corollary}
	There is a constant $C > 0$ such that for all $t > 0$ and $x \in \RR^d \setminus \{0\}$
	\[
		p(t, x) \leq C t \int^{|x|^{-2}}_0 u^{d/2} \varphi'(u) {\: \rm d}u.
	\]
	The constant $C$ depends only on the dimension $d$.
\end{corollary}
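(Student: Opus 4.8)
The corollary is a direct combination of two results already established for general isotropic unimodal processes and for subordinate Brownian motions, respectively: the pointwise upper bound in Theorem \ref{GUB3} and the identification of $K_d$ in Lemma \ref{K_SBM}. The plan is therefore to chain these together with the substitution $r = \norm{x}^{-1}$.

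\textbf{Step 1.} Apply Theorem \ref{GUB3} to the subordinate Brownian motion $\mathbf{X}$ (which is isotropic and unimodal, since $\varphi$ is a Bernstein function with $\varphi(0) = 0$). This yields a constant $C_1 > 0$, depending only on $d$, such that for all $t > 0$ and $x \in \RR^d \setminus \{0\}$,
\[
	p(t, x) \leq C_1 \, t \, \norm{x}^{-d} K_d(\norm{x}).
\]

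\textbf{Step 2.} Apply Lemma \ref{K_SBM} with $r = \norm{x}^{-1}$. This gives a constant $C_2 > 0$, again depending only on $d$, with
\[
	\norm{x}^{-d} K_d(\norm{x}) = r^d K_d(r^{-1}) \leq C_2 \int_0^{r^2} u^{d/2} \varphi'(u) {\: \rm d}u = C_2 \int_0^{\norm{x}^{-2}} u^{d/2} \varphi'(u) {\: \rm d}u.
\]
(Only the ``$\lesssim$'' half of the comparability in Lemma \ref{K_SBM} is needed here.) Combining Steps 1 and 2 with $C = C_1 C_2$ finishes the proof.

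\textbf{Main obstacle.} There is essentially no obstacle: both ingredients are already proved in the excerpt, and the only care needed is to check that $\mathbf{X}$ falls under the hypotheses of Theorem \ref{GUB3} (isotropy and unimodality of a subordinate Brownian motion with $\varphi(0)=0$, which is standard) and that the substitution $r=\norm{x}^{-1}$ is applied consistently in Lemma \ref{K_SBM}. The constant depends only on $d$ because that is true of the constants in both Theorem \ref{GUB3} and Lemma \ref{K_SBM}.
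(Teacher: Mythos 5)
Your proof is correct and is exactly the paper's argument: the authors likewise obtain the corollary by combining Theorem \ref{GUB3} with the comparability in Lemma \ref{K_SBM} under the substitution $r=\norm{x}^{-1}$. Nothing further is needed.
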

\begin{proof}
	This is a consequence of Theorem \ref{GUB3} and Lemma \ref{K_SBM}.
\end{proof}
\begin{proposition}
	\label{ubound_p_t}
	Suppose that there exists $\beta \in [0, d/2+1)$ such that $\varphi' \in \WLSC{-\beta}{\theta}{\underline{c}}$, 
	then there is $C > 0$, dependent on $\theta$, $\beta$ and $d$, such that for all $t > 0$ and $\norm{x} \leq \theta^{-1/2}$
	\[
		p(t, x) \leq C \underline{c}^{-1} t \norm{x}^{-d-2} \varphi'(\norm{x}^{-2}).
	\]
\end{proposition}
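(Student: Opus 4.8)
The plan is to begin from the corollary immediately preceding this proposition, which (with a constant depending only on $d$) gives
\[
	p(t, x) \leq C\, t \int_0^{\norm{x}^{-2}} u^{d/2} \varphi'(u) {\: \rm d}u
\]
for all $t > 0$ and $x \neq 0$. Put $R = \norm{x}^{-2}$; the assumption $\norm{x} \leq \theta^{-1/2}$ means exactly $R \geq \theta$, and $\norm{x}^{-d-2}\varphi'(\norm{x}^{-2}) = R^{d/2+1}\varphi'(R)$, so everything reduces to the estimate
\[
	\int_0^R u^{d/2}\varphi'(u) {\: \rm d}u \;\leq\; C'\,\underline{c}^{-1}\, R^{d/2+1}\varphi'(R), \qquad R \geq \theta .
\]

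To prove it I would split the integral at $\theta$. On $(\theta, R]$ the hypothesis $\varphi' \in \WLSC{-\beta}{\theta}{\underline{c}}$ gives, with $\lambda = R/u \geq 1$, that $\varphi'(u) \leq \underline{c}^{-1}(R/u)^\beta\varphi'(R)$; since $\beta < d/2+1$ the power $u^{d/2-\beta}$ is integrable at $0$, and integrating yields $\int_\theta^R u^{d/2}\varphi'(u){\: \rm d}u \leq \tfrac{\underline{c}^{-1}}{d/2+1-\beta}\, R^{d/2+1}\varphi'(R)$. For the remaining piece note that $\int_0^\theta u^{d/2}\varphi'(u){\: \rm d}u \leq \theta^{d/2}\varphi(\theta) < \infty$, while a second use of the lower scaling (comparing $R$ with $\theta$) gives $\varphi'(R) \geq \underline{c}\,\theta^\beta\varphi'(\theta)R^{-\beta}$, hence $R^{d/2+1}\varphi'(R) \geq \underline{c}\,\theta^{d/2+1}\varphi'(\theta)$ for every $R \geq \theta$; dividing, $\int_0^\theta u^{d/2}\varphi'(u){\: \rm d}u \leq \underline{c}^{-1}\tfrac{\varphi(\theta)}{\theta\varphi'(\theta)}\, R^{d/2+1}\varphi'(R)$.

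Adding the two estimates gives the reduced inequality with $C' = \tfrac{1}{d/2+1-\beta} + \tfrac{\varphi(\theta)}{\theta\varphi'(\theta)}$, and feeding it back into the corollary produces the asserted bound $p(t,x) \leq C''\underline{c}^{-1}\, t\,\norm{x}^{-d-2}\varphi'(\norm{x}^{-2})$. I do not expect a real obstacle here: apart from the immediate application of the weak lower scaling, the only point requiring attention is the low-frequency piece $\int_0^\theta$, which has to be absorbed using its finiteness together with the lower bound $R^{d/2+1}\varphi'(R)\geq\underline{c}\,\theta^{d/2+1}\varphi'(\theta)$ that the scaling forces for $R\geq\theta$; when $\theta=0$ this subtlety disappears, since then $\varphi'(u)\leq\underline{c}^{-1}(R/u)^\beta\varphi'(R)$ holds for all $0<u\leq R$ and a single integration finishes the argument.
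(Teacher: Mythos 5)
Your proposal is correct and follows essentially the same route as the paper: invoke the preceding corollary, split the integral at $\theta$, apply the weak lower scaling of $\varphi'$ on $(\theta,R]$, and absorb the finite piece $\int_0^\theta u^{d/2}\varphi'(u)\,{\rm d}u\le\theta^{d/2}\varphi(\theta)$ using $R^{d/2+1}\varphi'(R)\ge\underline{c}\,\theta^{d/2+1}\varphi'(\theta)$. You even arrive at the same constant $\underline{c}^{-1}\bigl(\tfrac{\varphi(\theta)}{\theta\varphi'(\theta)}+(d/2+1-\beta)^{-1}\bigr)$, and you make explicit the absorption step that the paper leaves implicit.
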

\begin{proof}
	By the scaling property, for all  $\theta\le u\le r^{-2}$,
	\[
		\varphi'(u)  \le 
	\underline{c}^{-1} (ur^{2})^{-\beta} \varphi'(r^{-2}).
	\]
	Hence,
	\begin{align*}
		\int^{r^{-2}}_0 u^{d/2} \varphi'(u) {\: \rm d} u
		& = \int^{\theta}_0 u^{d/2} \varphi'(u) {\: \rm d} u +\int^{r^{-2}}_\theta u^{d/2} \varphi'(u) {\: \rm d} u\\
		& \le  \theta^{d/2} \varphi(\theta) +
		\underline{c}^{-1}
		r^{-2\beta} \varphi'(r^{-2}) \int^{r^{-2}}_0 u^{d/2-\beta} {\: \rm d} u \\
		&\le \underline{c}^{-1}\left(\frac {\varphi(\theta)}  {\theta\varphi'(\theta)} +
		(d/2 - \beta + 1)^{-1}\right)
		r^{-d - 2} \varphi'(r^{-2}). \qedhere
	\end{align*}

\end{proof}

\begin{remark}
	If the L\'{e}vy measure of the subordinator has a non-increasing density and $d \geq 3$ then
	there is $C > 0$ such that for all $t > 0$ and $x \in \RR^d$
	\begin{equation}
		\label{eq:67}
		p(t, x) \leq C t \norm{x}^{-d-2} \varphi'(\norm{x}^{-2}).
	\end{equation}
	Indeed, let us observe that
	\begin{align*}
		u^2 \varphi'(u) 
		& = u^2 \int^\infty_0 s e^{-us} \mu(s) {\: \rm d}s \\
		& = \int^\infty_0  e^{-w} \mu(w/u) {\: \rm d} w,
	\end{align*}
	thus $u \mapsto u^2 \varphi'(u)$ is non-decreasing In particular, $\varphi'$ belongs to $\WLSC{-2}{0}{1}$.
	Hence, \eqref{eq:67} is the consequence of Proposition \ref{ubound_p_t}.
\end{remark}

\begin{lemma}\label{r^2K(r)} Suppose that there exists $\beta \in [0, d/2+1)$ such that $\varphi' \in \WLSC{-\beta}{\theta}{\underline{c}}$, 
	then there are $C_1, C_2  > 0$, depending only on $d$, such that for all $r \leq \theta^{-1/2}$
	$$C_1  \varphi'(r^{-2})\le r^2K_d(r) \le C_2\underline{c}^{-1}\left(\frac {\varphi(\theta)}  {\theta\varphi'(\theta)} +
		(d/2 - \beta + 1)^{-1}\right)
	\varphi'(r^{-2}).$$
\end{lemma}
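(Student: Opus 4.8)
The plan is to reduce everything to Lemma~\ref{K_SBM} together with the computation already carried out inside the proof of Proposition~\ref{ubound_p_t}. Writing $r^{-1}$ in place of $r$ in Lemma~\ref{K_SBM} and multiplying by $r^{d+2}$ gives
\[
	r^2 K_d(r) \asymp r^{d+2} \int_0^{r^{-2}} u^{d/2} \varphi'(u) \, {\rm d}u,
\]
with comparability constant depending only on $d$. So it suffices to estimate $I(r) := \int_0^{r^{-2}} u^{d/2} \varphi'(u)\,{\rm d}u$ from above and below by constant multiples of $r^{-d-2}\varphi'(r^{-2})$, and then the claimed $C_1,C_2$ are obtained by folding in the comparability constant of Lemma~\ref{K_SBM}.

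For the lower bound I would only use that $\varphi'$ is completely monotone, hence non-increasing: for $0 < u \le r^{-2}$ one has $\varphi'(u) \ge \varphi'(r^{-2})$, so
\[
	I(r) \ge \varphi'(r^{-2}) \int_0^{r^{-2}} u^{d/2}\, {\rm d}u = \frac{1}{d/2+1}\, r^{-d-2}\varphi'(r^{-2}),
\]
which yields the lower bound (the scaling hypothesis is not needed here). For the upper bound I would invoke exactly the estimate recorded in the proof of Proposition~\ref{ubound_p_t}: using $\varphi' \in \WLSC{-\beta}{\theta}{\underline{c}}$ in the form $\varphi'(u) \le \underline{c}^{-1}(ur^2)^{-\beta}\varphi'(r^{-2})$, valid for $\theta \le u \le r^{-2}$ (which is where $r \le \theta^{-1/2}$ is used), splitting $I(r)$ at $u=\theta$, bounding $\int_0^\theta u^{d/2}\varphi'(u)\,{\rm d}u \le \theta^{d/2}\varphi(\theta)$, and using $\beta < d/2+1$ to integrate $u^{d/2-\beta}$ over $[\theta, r^{-2}]$, one arrives at
\[
	I(r) \le \underline{c}^{-1}\Big(\frac{\varphi(\theta)}{\theta\varphi'(\theta)} + (d/2-\beta+1)^{-1}\Big)\, r^{-d-2}\varphi'(r^{-2}).
\]
Combining this with the displayed comparability finishes the proof.

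The only step that needs a little care — the closest thing to an obstacle — is checking that the boundary term $\theta^{d/2}\varphi(\theta)$ is indeed dominated by $r^{-d-2}\varphi'(r^{-2})$ uniformly over $r \le \theta^{-1/2}$. This is where the lower scaling of $\varphi'$ is used a second time: applying $\WLSC{-\beta}{\theta}{\underline{c}}$ with $\lambda = \theta^{-1}r^{-2} \ge 1$ gives $\varphi'(r^{-2}) \ge \underline{c}\,(\theta r^2)^\beta \varphi'(\theta)$, hence $r^{-d-2}\varphi'(r^{-2}) \ge \underline{c}\,\theta^{d/2+1}\varphi'(\theta)$ since $2\beta - d - 2 < 0$ and $r \le \theta^{-1/2}$, so $\theta^{d/2}\varphi(\theta) \le \underline{c}^{-1}\,\frac{\varphi(\theta)}{\theta\varphi'(\theta)}\,r^{-d-2}\varphi'(r^{-2})$. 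Since all of this is already present verbatim in the proof of Proposition~\ref{ubound_p_t}, the cleanest write-up is simply to quote that bound; the whole argument should take only a few lines.
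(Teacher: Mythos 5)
Your proof is correct and follows the paper's own argument exactly: reduce $r^2K_d(r)$ to $r^{d+2}\int_0^{r^{-2}}u^{d/2}\varphi'(u)\,{\rm d}u$ via Lemma \ref{K_SBM}, get the lower bound from monotonicity of $\varphi'$, and quote the upper bound already established in the proof of Proposition \ref{ubound_p_t} (including the domination of the boundary term $\theta^{d/2}\varphi(\theta)$ by a second application of the lower scaling). Nothing to add.
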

\begin{proof} Observe that
$$(d/2  + 1)^{-1} r^{-d - 2} \varphi'(r^{-2})\le \int^{r^{-2}}_0 u^{d/2} \varphi'(u) {\: \rm d} u\le \underline{c}^{-1}\left(\frac {\varphi(\theta)}  {\theta\varphi'(\theta)} +
		(d/2 - \beta + 1)^{-1}\right)
		r^{-d - 2} \varphi'(r^{-2}).$$
		The lower bound follows from the monotonicity of $\varphi'$, while the upper bound was already proved in Lemma \ref{ubound_p_t}. Now, the conclusion follows from Lemma \ref{K_SBM}.
		\end{proof}

\begin{proposition}
	\label{prop:Glambda_Up}
	Let $\lambda > 0$. Then there is $C > 0$ such that for all $x \in \RR^d \setminus \{0\}$
	\begin{equation}
		\label{eq:73}
		G^\lambda(x) \leq C \int^{|x|^{-2}}_0 
		\frac{\varphi'(u)}{(\lambda+\varphi(u))^2}u^{d/2} 
		{\: \rm d} u.
	\end{equation}
	If the process $\mathbf{X}$ is transient, the estimate \eqref{eq:73} is also valid for $\lambda = 0$.
\end{proposition}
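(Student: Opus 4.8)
The plan is to integrate the pointwise bound \eqref{eq:p_t10} against $e^{-\lambda t}\,{\rm d}t$. A subordinate Brownian motion is isotropic unimodal, so \eqref{eq:p_t10} applies; moreover here $\psi(s) = \varphi(s^2)$ is non-decreasing, so the integrand $e^{-t\psi(\norm{y}/r)} - e^{-t\psi(2\norm{y}/r)}$ is non-negative. Writing $r = \norm{x}$, multiplying by $e^{-\lambda t}$, integrating over $t \in (0,\infty)$, and using Fubini--Tonelli together with the identity $\int_0^\infty e^{-\lambda t}(e^{-ta}-e^{-tb})\,{\rm d}t = (\lambda+a)^{-1} - (\lambda+b)^{-1}$, I would obtain
\[
	r^d G^\lambda(r) \leq C \int_{\Rd} \Bigl( \frac{1}{\lambda + \varphi(\norm{y}^2/r^2)} - \frac{1}{\lambda + \varphi(4\norm{y}^2/r^2)} \Bigr) e^{-\norm{y}^2/4} \,{\rm d}y.
\]
Passing to polar coordinates and substituting $u = \norm{y}^2/r^2$ turns this into
\[
	G^\lambda(r) \leq C' \int_0^\infty \Bigl( \frac{1}{\lambda + \varphi(u)} - \frac{1}{\lambda + \varphi(4u)} \Bigr) u^{d/2-1} e^{-r^2 u/4} \,{\rm d}u.
\]

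Next I would use that $\varphi$ is a Bernstein function, so $\varphi'$ is non-negative and non-increasing; hence
\[
	\frac{1}{\lambda + \varphi(u)} - \frac{1}{\lambda + \varphi(4u)} = \frac{\varphi(4u) - \varphi(u)}{(\lambda + \varphi(u))(\lambda + \varphi(4u))} \leq \frac{1}{(\lambda+\varphi(u))^2} \int_u^{4u} \varphi'(v)\,{\rm d}v \leq \frac{3 u \varphi'(u)}{(\lambda + \varphi(u))^2},
\]
so that $G^\lambda(r) \leq 3 C' \int_0^\infty \frac{\varphi'(u)}{(\lambda+\varphi(u))^2}\, u^{d/2}\, e^{-r^2u/4}\,{\rm d}u$.

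It remains to replace the Gaussian factor by the truncation at $u = r^{-2}$. On $(0, r^{-2})$ I simply bound $e^{-r^2 u/4} \leq 1$, which produces exactly the integral in \eqref{eq:73}. For the tail $u \geq r^{-2}$ I would pull $(\lambda+\varphi(u))^{-2} \leq (\lambda+\varphi(r^{-2}))^{-2}$ and $\varphi'(u) \leq \varphi'(r^{-2})$ out of the integral (monotonicity of $\varphi$ and $\varphi'$), leaving $\int_{r^{-2}}^\infty u^{d/2} e^{-r^2 u/4}\,{\rm d}u = c_d r^{-d-2}$, so the tail is at most $c_d\, \varphi'(r^{-2})\, r^{-d-2} (\lambda+\varphi(r^{-2}))^{-2}$. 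Comparing the integrand of \eqref{eq:73} on $[r^{-2}/2, r^{-2}]$ with its value at $r^{-2}$ — again using that $\varphi'$ and $(\lambda+\varphi)^{-1}$ are monotone and $u^{d/2}$ is increasing — gives
\[
	\frac{\varphi'(r^{-2})\, r^{-d-2}}{(\lambda + \varphi(r^{-2}))^2} \leq 2^{d/2+1} \int_{r^{-2}/2}^{r^{-2}} \frac{\varphi'(u)}{(\lambda + \varphi(u))^2}\, u^{d/2}\,{\rm d}u,
\]
so the tail contribution is also dominated by the right-hand side of \eqref{eq:73}, and the estimate follows. For $\lambda = 0$ with $\mathbf{X}$ transient, either the same computation goes through verbatim — $t \mapsto p(t,r)$ is integrable by transience and $\varphi(u) > 0$ for $u > 0$ — or one passes to the limit $\lambda \to 0^+$ by monotone convergence on both sides.

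The whole argument is essentially routine manipulation of the Gaussian representation; the one step that needs a little care is showing that the tail integral over $[r^{-2},\infty)$ is controlled by the head integral over $(0,r^{-2})$, and that is handled purely by the monotonicity of $\varphi$ and the complete monotonicity of $\varphi'$, with no scaling hypothesis on $\varphi$ required.
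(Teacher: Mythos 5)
Your argument is correct and follows essentially the same route as the paper: both start from \eqref{eq:p_t10}, integrate against $e^{-\lambda t}\,{\rm d}t$, use the mean value theorem together with monotonicity of $\varphi$ and $\varphi'$ to reduce matters to $\int_0^\infty \varphi'(u)(\lambda+\varphi(u))^{-2}u^{d/2}e^{-r^2u/4}\,{\rm d}u$, and then absorb the tail $u\ge r^{-2}$ into the truncated integral by monotonicity. Your explicit comparison of the tail with the integral over $[r^{-2}/2,r^{-2}]$ merely makes precise a step the paper leaves implicit.
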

\begin{proof}
	Since $\phi$ is non-decreasing, by the mean value theorem, for all $r, s, t > 0$
	\begin{equation}
		\label{eq:68}
		e^{-t\varphi(s^2/r^2)}-e^{-t\varphi(4s^2/r^2)}
		\le 
		3 t e^{-t\varphi(s^2/r^2)} \varphi'(s^2/r^2) s^2 r^{-2}.
	\end{equation}
	Multiplying both sides of \eqref{eq:p_t10} by $e^{-\lambda t}$, integrating with respect to $t \in (0, \infty)$
	and finally applying the estimate \eqref{eq:68}, we obtain
	\begin{align*}
		r^{d+2} G^\lambda(r) &\leq C r^2 \int^\infty_0 t e^{-\lambda t} \int^{\infty}_{0}
		\left(e^{-t\varphi(s^2/r^2)}-e^{-t\varphi(4s^2/r^2)}\right)s^{d-1}e^{-s^2/4} {\: \rm d}s {\: \rm d}t \\
		&\leq 3C \int^\infty_0 te^{-\lambda t} \int^{\infty}_{0} e^{-t\varphi(s^2/r^2)} \varphi'(s^2/r^2) 
		s^{d+1}e^{-s^2/4} {\: \rm d}s {\: \rm d}t \\
		& = C' \int^\infty_0 \frac{\varphi'(s^2/r^2)}{(\lambda+\varphi(s^2/r^2))^2}s^{d+1}e^{-s^2/4}{\: \rm d}s.
	\end{align*}
	Since the function
	\[
		u \mapsto \frac{\varphi'(u)}{(\lambda+\varphi(u))^2}
	\]
	is non-increasing, we can estimate
	\[
		\int_{r^{-2}}^\infty \frac{\varphi'(s^2/r^2)}{(\lambda+\varphi(s^2/r^2))^2}s^{d+1}e^{-s^2/4}{\: \rm d}s
		\leq
		C
		\frac{\varphi'(r^{-2})}{(\lambda+\varphi(r^{-2}))^2}.
	\]
	Hence,
	\begin{equation}
		\label{eq:70}
		r^{d+2} G^\lambda(r) \leq C r^{d+2} \int_0^{r^{-2}}
		\frac{\varphi'(s)}{(\lambda+\varphi(s))^2} s^{d/2}e^{-s^2/4}{\: \rm d}s.
	\end{equation}
	Since $u \phi'(u) \leq \phi(u)$, and the process $\mathbf{X}$ is transient, the same argument proves \eqref{eq:73}
	for $\lambda = 0$.
\end{proof}

\begin{remark}
	Let $d\geq3$. Suppose that $\varphi$ is a special Bernstein function, that is a Bernstein function such that 
	$u \mapsto u \varphi(u)^{-1}$ is again a Bernstein function. Then there is $C > 0$ such that for all
	$x \in \RR^d \setminus\{0\}$
	\begin{equation}
		\label{eq:69}
		G(x)\leq C \norm{x}^{-d-2} \frac{\varphi'(|x|^{-2})}{\varphi^2(|x|^{-2})}.
	\end{equation}
	For the proof, we notice that $u \mapsto u^2 \varphi'(u) \varphi(u)^{-2}$ is increasing because $\varphi$ is a
	special Bernstein function (see \cite[Lemma 4.1]{KM}). Therefore the claim follows by Proposition
	\ref{prop:Glambda_Up}.

	Let us comment, that the estimate \eqref{eq:69} has already been proved in \cite{KM}, however the method used was
	different.
\end{remark}
\begin{theorem}
	\label{G_lambda}
	Let $\mathbf{X}$ be a subordinate Brownian motion on $\RR^d$ with the L\'{e}vy--Khintchine exponent 
	$\psi(x) = \varphi(|x|^2)$. 
	Let $\lambda>0$.
	Suppose that there exists $\beta \in [0, d/2+1)$ such that $(\lambda+\varphi)^{-2} \varphi' 
	\in \WLSC{-\beta}{\theta}{\underline{c}}$, then there is $C_\lambda > 0$, dependent on $\lambda, \underline{c}, \beta$ and  $\theta$   such that for all
	$\norm{x} < \theta^{-1/2}$
	\begin{equation}
		\label{eq:71}
		G^\lambda(x)
		\leq C_\lambda \frac{\varphi'(|x|^{-2})}{(\lambda+\varphi(|x|^{-2}))^2} |x|^{-d-2}.
	\end{equation}
	If the process is transient and there exists $\beta \in [0, d/2+1)$ such that $\varphi^{-2} \varphi' 
	\in \WLSC{-\beta}{\theta}{\underline{c}}$  then \eqref{eq:71} holds for  $\lambda= 0$.
	
	If additionally $(\lambda+\varphi)^{-2}\varphi' \in \WUSC{-\alpha}{\theta}{\overline{C}}$, for some
	$\alpha>0$, then there is $c_\lambda > 0$ such that for all $\norm{x} < \theta^{-1/2}$
	\[
		c_\lambda \frac{\varphi'(|x|^{-2})}{(\lambda+\varphi(|x|^{-2}))^2}|x|^{-d-2}
		\leq  G^\lambda(x).
	\]
\end{theorem}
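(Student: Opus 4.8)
The plan is to prove the upper and the lower estimate separately, each by adapting an argument already present in the paper, and to set $g = (\lambda+\varphi)^{-2}\varphi'$. \emph{Upper bound.} By Proposition \ref{prop:Glambda_Up} (which is valid for $\lambda=0$ under the transience assumption) it suffices to show that
\[
	\int_0^{r^{-2}} g(u)\, u^{d/2}\, {\: \rm d} u \;\leq\; C_\lambda\, g(r^{-2})\, r^{-d-2}, \qquad r = \norm{x} < \theta^{-1/2}.
\]
This is exactly the computation carried out in Proposition \ref{ubound_p_t} and Lemma \ref{r^2K(r)}, with $g$ in the role of $\varphi'$: split the integral at $\theta$; on $[\theta,r^{-2}]$ the hypothesis $g \in \WLSC{-\beta}{\theta}{\underline{c}}$ gives $g(u) \leq \underline{c}^{-1}(u r^2)^{-\beta} g(r^{-2})$, and since $\beta < d/2+1$ we have $\int_\theta^{r^{-2}} u^{d/2-\beta}\,{\:\rm d}u \leq (d/2-\beta+1)^{-1}(r^{-2})^{d/2-\beta+1}$, so this part is bounded by $\underline{c}^{-1}(d/2-\beta+1)^{-1} g(r^{-2}) r^{-d-2}$; for the remaining piece one uses $u\varphi'(u) \leq \varphi(u)$ to bound the integrand by $\lambda^{-2}\varphi(\theta)\,u^{d/2-1}$ when $\lambda>0$, resp. by $u^{d/2-1}/\varphi(u)$ when $\lambda=0$, the latter being integrable precisely because $\mathbf{X}$ is transient (compare \eqref{eq:44}), so $\int_0^\theta g(u)u^{d/2}\,{\:\rm d}u$ is a finite constant, and a single use of $\WLSC{-\beta}{\theta}{\underline{c}}$ at the point $\theta$ gives $g(r^{-2})r^{-d-2} \geq \underline{c}\, g(\theta)\,\theta^{d/2+1}$ for all $r<\theta^{-1/2}$, which absorbs that constant.

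\emph{Lower bound.} By Proposition \ref{HKLB1}, $G^\lambda(x) \geq C\,\nu(x)\big(\lambda+\varphi(\norm{x}^{-2})\big)^{-2}$, so the claim reduces to $\nu(x) \geq c\, \norm{x}^{-d-2}\,\varphi'(\norm{x}^{-2})$ for $\norm{x} < \theta^{-1/2}$. Since $\varphi$ is non-decreasing, $g \in \WLSC{-\beta}{\theta}{\underline{c}}$ forces $\varphi' = (\lambda+\varphi)^2 g \in \WLSC{-\beta}{\theta}{\underline{c}}$. Using in addition $g \in \WUSC{-\alpha}{\theta}{\overline{C}}$ I would first show that $\varphi'$ decays uniformly: fixing $m$ with $\overline{C}\,2^{-m\alpha}\leq\tfrac12$, there is $\delta_0\in(0,1)$ with $\varphi'(2^m u)\leq(1-\delta_0)\varphi'(u)$ for all $u>\theta$. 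Indeed, were this false there would be $u_n>\theta$ with $\varphi'(2^m u_n)/\varphi'(u_n)\to1$, whence
\[
	\Big(\tfrac{\lambda+\varphi(2^m u_n)}{\lambda+\varphi(u_n)}\Big)^2
	= \frac{g(2^m u_n)}{g(u_n)}\cdot\frac{\varphi'(u_n)}{\varphi'(2^m u_n)}
	\leq \overline{C}\,2^{-m\alpha}\,\frac{\varphi'(u_n)}{\varphi'(2^m u_n)} \longrightarrow \overline{C}\,2^{-m\alpha}\leq\tfrac12,
\]
forcing $\varphi(2^m u_n)<\varphi(u_n)$ for large $n$, a contradiction.

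\emph{Localising the L\'evy measure of the subordinator.} Write $u=\norm{x}^{-2}$ and recall $\varphi'(v)=\int_{[0,\infty)}s\,e^{-vs}\,\mu({\rm d}s)$. The inequality $\varphi'(u)-\varphi'(2^m u)\geq\delta_0\varphi'(u)$ localises the mass of $\mu$: the integrand $s(e^{-us}-e^{-2^m us})$ is non-negative, its contribution over $\{s<\delta/u\}$ is $\leq(2^m-1)\delta\,e\,\varphi'(u)$ (from $e^{-us}-e^{-2^m us}\leq(2^m-1)us$ and $\int_0^{1/u}s\,\mu({\rm d}s)\leq e\,\varphi'(u)$), and its contribution over $\{s>\delta^{-1}/u\}$ is $\leq e^{-1/(2\delta)}\varphi'(u/2)\leq e^{-1/(2\delta)}\underline{c}^{-1}2^\beta\,\varphi'(u)$ (using $\varphi'\in\WLSC{-\beta}{\theta}{\underline{c}}$); choosing $\delta$ small enough the middle part must carry at least $\tfrac{\delta_0}{2}\varphi'(u)$, whence $\mu\big([\delta u^{-1},\delta^{-1}u^{-1}]\big)\geq\tfrac{\delta_0\delta}{2}\,u\,\varphi'(u)$. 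Restricting the subordination identity \eqref{eq:75} to this range of $s$ then gives
\[
	\nu(r)\;\geq\;(4\pi)^{-d/2}\,\delta^{d/2}\,e^{-1/(4\delta)}\,r^{-d}\,\mu\big([\delta r^2,\delta^{-1}r^2]\big)\;\geq\;c\,r^{-d-2}\,\varphi'(r^{-2}),
\]
which is the required bound; the same reasoning (now with $g=\varphi^{-2}\varphi'$) yields the lower bound when $\lambda=0$, under the stated transience and scaling hypotheses.

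The upper bound is thus routine, essentially a verbatim repetition of the argument for Proposition \ref{ubound_p_t}. The main obstacle is the lower bound, and inside it the passage from the scaling of $g=(\lambda+\varphi)^{-2}\varphi'$ to genuine information on where the L\'evy measure $\mu$ of the subordinator lives; only such information lets one compare $\nu(x)$, which essentially samples $\mu$ at the scale $s\sim\norm{x}^2$, with $\varphi'(\norm{x}^{-2})$, and it is precisely here that the combination of the lower and the upper scaling of $g$ is genuinely used.
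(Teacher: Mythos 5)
Your upper bound is correct and is essentially the paper's own argument: Proposition \ref{prop:Glambda_Up}, the splitting of $\int_0^{r^{-2}} g(u)u^{d/2}\,{\rm d}u$ at $\theta$ with the lower scaling applied on $[\theta,r^{-2}]$, the bound $u\varphi'(u)\leq\varphi(u)$ (resp.\ transience) to control $I(\theta)$, and one more use of the lower scaling at $\theta$ to absorb that constant. No issues there.

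The lower bound, however, does not work, for two reasons. The first is strategic: reducing via Proposition \ref{HKLB1} to the pointwise inequality $\nu(x)\geq c\norm{x}^{-d-2}\varphi'(\norm{x}^{-2})$ asks for strictly more than the hypotheses give. By the paper's Theorem \ref{thm:9}, that inequality is tantamount to $\varphi'$ itself satisfying an upper scaling condition with strictly negative index, and this does \emph{not} follow from $(\lambda+\varphi)^{-2}\varphi'\in\WUSC{-\alpha}{\theta}{\overline{C}}$: since $\varphi(\rho u)\leq\rho\varphi(u)$, the hypothesis only yields $\varphi'\in\WUSC{2-\alpha}{\theta}{\cdot}$, which is vacuous when $\alpha\leq 2$. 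For instance $\varphi'(u)=1/\log(e+u)$ (completely monotone, as the reciprocal of a positive Bernstein function) gives $\varphi(u)\sim u/\log u$ and $g(u)\asymp u^{-2}\log u\in\WUSC{-\alpha}{\theta}{\overline{C}}$ for any $\alpha<2$, while $\varphi'$ is slowly varying, so no pointwise lower bound $\nu(x)\gtrsim\norm{x}^{-d-2}\varphi'(\norm{x}^{-2})$ can hold. The theorem is nevertheless true for such $\varphi$ precisely because $G^\lambda$ averages over all times; this is why the paper's proof stays at the level of $G^\lambda$: it lower-bounds $r^{-1}\int_0^\infty e^{-s/r}G^\lambda(\sqrt{s})s^{d/2}\,{\rm d}s$ through the resolvent identity \eqref{eq:127} and then subtracts the contribution of $s\leq ar$, which is controlled by the already-established upper bound \eqref{eq:71} together with the upper scaling of $g$; choosing $a$ small makes the difference positive. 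Your route and the paper's are genuinely different, and only the paper's survives under the stated hypotheses.

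The second problem is internal: your derivation of the uniform decay $\varphi'(2^mu)\leq(1-\delta_0)\varphi'(u)$ rests on an inverted identity. From $\varphi'=g\cdot(\lambda+\varphi)^2$ one has
\[
\Big(\frac{\lambda+\varphi(2^mu)}{\lambda+\varphi(u)}\Big)^2=\frac{\varphi'(2^mu)}{\varphi'(u)}\cdot\frac{g(u)}{g(2^mu)},
\]
the reciprocal of what you wrote. With $\varphi'(2^mu_n)/\varphi'(u_n)\to1$ and $g(u_n)/g(2^mu_n)\geq\overline{C}^{-1}2^{m\alpha}$ the left-hand side is bounded \emph{below} by a large quantity, which is no contradiction, since $\varphi$ may grow (one always has $(\lambda+\varphi(2^mu))^2\leq2^{2m}(\lambda+\varphi(u))^2$, so a contradiction arises only if $\alpha>2$). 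The example above shows the decay claim itself is false in the range $\alpha\leq2$, so the subsequent localisation of $\mu$ cannot be salvaged.
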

\begin{proof}
	For $\theta \ge 0$ we set
	\[
		I(\theta)=\int^\theta_0  \frac{\varphi'(u)}{(\lambda+\varphi(u))^2}u^{d/2}	{\: \rm d}u,
		\quad
		\text{and}
		\quad
		J(\theta) = \int^\theta_0 \frac{1}{\varphi(u)}u^{d/2-1} {\: \rm d}u.
	\]
	Let  $r>\theta$. Since $\varphi'(\lambda+\varphi)^{-2} \in \WLSC{-\beta}{\theta}{\underline{c}}$, 
	\begin{align*}
		\int^{r}_0 \frac{\varphi'(u)}{(\lambda+\varphi(u))^2}u^{d/2} {\: \rm d}u
		&=I(\theta) + 
		\frac{\varphi'(r)}{(\lambda+\varphi(r))^2}\int_\theta^r 
		\frac{\varphi'(u)}{(\lambda+\varphi(u))^2}\frac{(\lambda+\varphi(r))^2}{\varphi'(r)}u^{d/2}	{\: \rm d}u\\
		&\leq I(\theta)+\frac{\varphi'(r)}{\underline{c}(\lambda+\varphi(r))^2}
		\int_\theta^r \Big(\frac{r}{u}\Big)^{-\beta}u^{d/2}	{\: \rm d}u\\
		&\leq I(\theta)+
		\frac{1}{\underline{c}(d/2+1-\beta)}\frac{\varphi'(r)}{(\lambda+\varphi(r))^2}r^{d/2+1}.
	\end{align*}
	If $\theta=0$, the proof of the upper estimate follows by Proposition \ref{prop:Glambda_Up}.

	Suppose $\theta > 0$. Since $u\varphi'(u) \le \varphi(u)$, we have
	\begin{align}
		\nonumber
		I(\theta) & \le \int^\theta_0 \frac{\varphi(u)}{(\lambda+\varphi(u))^2}u^{d/2-1} {\: \rm d}u \\
		\label{eq:72}
		& \le \int^\theta_0 \frac{1}{\varphi(u)}u^{d/2-1} {\: \rm d}u = J(\theta).
	\end{align}
	Because $(\lambda + \varphi)^{-2} \varphi'$ belongs to $\WLSC{-\beta}{\theta}{\underline{c}}$, we can
	estimate
	\begin{align}
		\nonumber
		\frac{\varphi'(r)}{(\lambda+\varphi(r))^2}r^{d/2+1}
		& \geq \underline{c} \theta^\beta r^{d/2+1-\beta}\frac{\varphi'(\theta)}{(\lambda+\varphi(\theta))^2} \\
		& \geq \underline{c} \theta^{d/2+1} \frac{\varphi'(\theta)}{(\lambda+\varphi(\theta))^2}.
		\label{WLSCb}
	\end{align}
	Now, applying \eqref{WLSCb}, we obtain
	\begin{align*}
		I(\theta) \le \lambda^{-2} \int^\theta_0 \varphi'(u) u^{d/2} {\: \rm d}u
		& \le \lambda^{-2} \varphi(\theta) \theta^{d/2} \\
		& \le \frac {\varphi(\theta)}  {\underline{c}\theta \varphi'(\theta)} 
		\frac{\varphi'(r)}{(\lambda+\varphi(r))^2}r^{d/2+1}\frac{(\lambda+\varphi(\theta))^2} {\lambda^2} .
	\end{align*}
	From the other side, by \eqref{eq:72} and \eqref{WLSCb}, we arrive at
	\[ 
		I(\theta)
		\le  J(\theta)\frac {(\lambda+\varphi(\theta))^2}  {\underline{c}\theta^{d/2+1}
		\varphi'(\theta)} \frac{\varphi'(r)}{(\lambda+\varphi(r))^2}r^{d/2+1}.
	\]
	Combining both estimates of $I(\theta)$ we obtain
	\[
		I(\theta) \le
		 \min\big\{ J(\theta) \theta^{-d/2}, \varphi(\theta)\lambda^{-2}\big\}
		\frac {(\lambda+\varphi(\theta))^2}  {\underline{c}\theta \varphi'(\theta)} 
		\frac{\varphi'(r)}{(\lambda+\varphi(r))^2}r^{d/2+1}.
	\]
	Hence, by Proposition \ref{prop:Glambda_Up}, for $\norm{x} < \theta^{-1/2}$
	\[
		G^\lambda(x) \leq C_\lambda \frac{\varphi'(|x|^{-2})}{(\lambda+\varphi(|x|^{-2}))^2}|x|^{-d-2}
	\]
	where
	\[
		C_\lambda= 
		 \min\big\{ J(\theta) \theta^{-d/2}, \varphi(\theta)\lambda^{-2}\big\}
		\frac {(\lambda+\varphi(\theta))^2}  {\underline{c}\theta \varphi'(\theta)}+ 
		\frac{1}{\underline{c}(d/2+1-\beta)}.
	\]
	In the transient case $J(\theta)$ is finite, thus the estimate \eqref{eq:71} is also valid in the case $\lambda=0$.
	
	Next, we additionally assume that $(\lambda + \varphi)^{-2} \varphi'$ belongs to
	$\WUSC{-\alpha}{\theta}{\overline{C}}$, for some $\alpha > 0$. Let $a \in (0, 1)$. Then
	\begin{align*}
		\Gamma(d/2+1) r^{d/2+1} G^\lambda(a\sqrt{r})
		&\geq 
		\int^\infty_{ar}e^{-s/r}G^\lambda(\sqrt{s})s^{d/2} {\: \rm d}s\\
		&= \int^\infty_{0}e^{-s/r}G^\lambda(\sqrt{s})s^{d/2} {\: \rm d}s
		- \int^{ar}_0e^{-s/r}G^\lambda(\sqrt{s})s^{d/2} {\: \rm d}s.
	\end{align*}
	Let us observe that if $f(s) = G^\lambda(B_{\sqrt{s}})$ then analogous calculation to \eqref{eq:27} shows that
	\begin{equation}
		\label{eq:127}
		\gamma \calL f(\gamma) 
		=\frac{2^{1-d}}{\Gamma(d/2)} \int_0^{\infty} e^{-r^2/4} r^{d-1} \frac{{\rm d}r}{\lambda+\psi(r \sqrt{\gamma})}.
	\end{equation}
	for any $\gamma > 0$. Since $G^\lambda$ is non-increasing we have
	\[
		G^\lambda(\sqrt{s})s^{d/2} \ge C G^\lambda\big(B_{2\sqrt{s}}\setminus B_{\sqrt{s}}\big)
	\]
	hence
	\begin{align*}
		r^{-1} \int^\infty_{0} e^{-s/r} G^\lambda(\sqrt{s})s^{d/2} {\: \rm d}s
		&\geq 
		C r^{-1} \int^\infty_{0} e^{-s/r}G^\lambda\big(B_{2\sqrt{s}}\setminus B_{\sqrt{s}}\big) {\: \rm d}s\\
		&=
		C \int^\infty_0 \left(\frac{1}{\lambda+\varphi(s^2/(4r))}-\frac{1}{\lambda+\varphi(s^2/r)}\right)
		e^{-s^2/4}s^{d-1} {\: \rm d}s.
	\end{align*}
	By the mean value theorem and monotonicity of $\varphi$ and $\varphi'$,
	\begin{align*}
		r^{-1} \int_0^\infty e^{-s/r} G^\lambda(\sqrt{s})s^{d/2} {\: \rm d}s
		&\geq
		C \int^\infty_0 \frac{3s^2}{r}\frac{\varphi'(s^2/r)}{(\lambda+\varphi(s^2/r))^2}e^{-s^2/4}s^{d-1} {\: \rm d}s\\
		& \geq 
		C r^{-1} \frac{\varphi'(1/r)}{(\lambda+\varphi(1/r))^2} \int^{1/2}_0 e^{-s^2/4}s^{d+1} {\: \rm d}s\\
		& =
		C' r^{-1} \frac{\varphi'(1/r)}{(\lambda+\varphi(1/r))^2}. 
	\end{align*}
	Since $(\lambda + \varphi)^{-2} \varphi'$ belongs to $\WUSC{-\alpha}{\theta}{\overline{C}}$, by
	\eqref{eq:71}, for $r \leq \theta^{-1}$ we get
	\begin{align*}
		\int^{ar}_0 e^{-s/r} G^\lambda(\sqrt{s})s^{d/2}	{\: \rm d}s 
		& \leq C_\lambda \int^{ar}_0 \frac{\varphi'(s^{-1})}{(\lambda+\varphi(s^{-1}))^2} \frac{{\rm d}s}{s}\\
		& \leq C_\lambda \overline{C} \frac{\varphi'(r^{-1})}{(\lambda+\varphi(r^{-1}))^2}
		\int^{ar}_{0}(s/r)^{\alpha}\frac{{\rm d}s}{s} \\
		& = \frac{a^\alpha} \alpha C_\lambda \overline{C} \frac{\varphi'(r^{-1})}{(\lambda+\varphi(r^{-1}))^2}.
	\end{align*}
	Hence, we obtain
	\[
		\Gamma(d/2+1) r^{d/2+1} G^\lambda(a\sqrt{r})
		\geq
		\Big(C'-\frac{a^\alpha}\alpha C_\lambda \overline{C}\Big)
		\frac{\varphi'(r^{-1})}{(\lambda+\varphi(r^{-1}))^2}.
	\]
	Now, setting
	\[
		a = \left(\frac{C' \alpha }{2C_\lambda \overline{C}} \right)^{1/\alpha},
	\]
	we obtain
	\[
		G^\lambda(ar) \geq\frac{ C'}{\Gamma(d/2+1)} \frac{\varphi'(r^{-2})}{(\lambda+\varphi(r^{-2}))^2}r^{-d-2}.
	\]
	Finally, for $\norm{x} \leq a \theta^{-1/2}$, by monotonicity of $\varphi$ and $\varphi'$ we have
	\[
		G^\lambda(x) \geq  \frac{ C'}{\Gamma(d/2+1)} a^{d+2}\frac{\varphi'(\norm{x}^{-2})}{(\lambda+\varphi(\norm{x}^{-2}))^2}\norm{x}^{-d-2}.
	\]
	To get the lower estimate for  $\norm{x}\in \big(a \theta^{-1/2}, \theta^{-1/2}\big)$ we may use Proposition
	\ref{HKLB1} and the fact that the density of the L\'evy measure is everywhere positive.
\end{proof}
A version of Theorem \ref{G_lambda} for $\lambda=0$ was proved in \cite[Proposition 4.5]{KM} where in addition to scaling
properties of $\varphi'$ (slightly different than ours) it was also assumed that  the potential density a of the subordinator was decreasing on $(0, \infty)$.

\begin{theorem}
	\label{p_t_bound}
	Let $\mathbf{X}$ be a subordinate Brownian motion on $\RR^d$ with the symbol $\varphi(|x|^2)$.
	Suppose that there exists $\beta \in (0, d/2+1)$ and  $\alpha > 0$ such that 
	$\varphi'\in \WLSC{-\beta}{\theta}{\underline{c}} \cap  \WUSC{-\alpha}{\theta}{\overline{c}}$, then
	for all $t > 0$ and and $x \in \RR^d$, if $t \varphi(1/ \norm{x}) \leq 1$ and $\norm{x} \leq \theta^{-1/2}$
	\[
		p(t, x) \asymp t \norm{x}^{-d-2} \varphi'(|x|^{-2}).
	\]
\end{theorem}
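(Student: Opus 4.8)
The plan is to establish the upper and lower bounds separately. For the upper bound there is nothing to do beyond quoting a previous result: $\varphi' \in \WLSC{-\beta}{\theta}{\underline{c}}$ with $\beta \in [0, d/2+1)$ is exactly the hypothesis of Proposition \ref{ubound_p_t}, so that proposition yields
\[
	p(t,x) \leq C\, t\, \norm{x}^{-d-2}\varphi'(\norm{x}^{-2})
\]
for all $t > 0$ and all $\norm{x} \leq \theta^{-1/2}$; neither the upper scaling of $\varphi'$ nor the smallness of $t\psi(\norm{x}^{-1})$ is used here. Hence the whole content of the theorem lies in the matching lower bound in the regime $t\psi(\norm{x}^{-1}) = t\varphi(\norm{x}^{-2}) \leq 1$.

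For the lower bound I would begin from Proposition \ref{HKLB1}, which gives $p(t,x) \geq C\, t\, \nu(x)\, e^{-c\, t\, \psi(\norm{x}^{-1})}$; once $t\psi(\norm{x}^{-1}) \leq 1$ the exponential factor is bounded below by $e^{-c}$, so the whole estimate reduces to the pointwise bound $\nu(x) \gtrsim \norm{x}^{-d-2}\varphi'(\norm{x}^{-2})$ for $\norm{x} \leq \theta^{-1/2}$. The opposite inequality $\nu(x) \lesssim \norm{x}^{-d-2}\varphi'(\norm{x}^{-2})$ is already at hand: \eqref{eq:Ksc3} gives $\nu(x) \leq C\norm{x}^{-d}K_d(\norm{x})$, while Lemma \ref{r^2K(r)} gives $r^2 K_d(r) \asymp \varphi'(r^{-2})$ for $r \leq \theta^{-1/2}$. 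So only the lower bound on $\nu$ remains, and it is here that the upper scaling $\varphi' \in \WUSC{-\alpha}{\theta}{\overline{c}}$ enters.

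To obtain it, write $r^2 K_d(r) = \int_{\norm{y} \leq r} \norm{y}^2 \nu(y)\, {\rm d}y$ and compare with its value at $r/\lambda_0$ for a large $\lambda_0 > 1$ to be chosen. Since $\nu$ is non-increasing,
\[
	r^2 K_d(r) - \left(\frac{r}{\lambda_0}\right)^{\!2} K_d\!\left(\frac{r}{\lambda_0}\right)
	= \int_{r/\lambda_0 < \norm{y} \leq r} \norm{y}^2 \nu(y)\, {\rm d}y
	\leq c_d\, \nu\!\left(\frac{r}{\lambda_0}\right) r^{d+2}.
\]
On the other hand, by Lemma \ref{r^2K(r)} the left-hand side is at least $C_1\varphi'(r^{-2}) - C_2\varphi'(\lambda_0^2 r^{-2})$, and by the upper scaling of $\varphi'$ the subtracted term is at most $C_2\overline{c}\,\lambda_0^{-2\alpha}\varphi'(r^{-2})$; choosing $\lambda_0$ large enough, depending only on $d$, $\alpha$, $\overline{c}$ and the comparability constants, keeps the left-hand side $\geq \frac12 C_1\varphi'(r^{-2})$ for $r < \theta^{-1/2}$. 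Comparing the two displays yields $\nu(r/\lambda_0) \gtrsim r^{-d-2}\varphi'(r^{-2})$, and renaming the variable and using monotonicity of $\varphi'$ gives $\nu(x) \gtrsim \norm{x}^{-d-2}\varphi'(\norm{x}^{-2})$ for $\norm{x} \leq \lambda_0^{-1}\theta^{-1/2}$; on the residual annulus $\lambda_0^{-1}\theta^{-1/2} < \norm{x} \leq \theta^{-1/2}$ the same inequality is trivial, since there $\nu(x) \geq \nu(\theta^{-1/2}) > 0$ while $\norm{x}^{-d-2}\varphi'(\norm{x}^{-2})$ stays bounded. Inserting this back into Proposition \ref{HKLB1} finishes the argument.

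The step I expect to be the main obstacle is exactly the reverse of \eqref{eq:Ksc3}, namely $\nu(x) \gtrsim \norm{x}^{-d}K_d(\norm{x})$: this fails for a general isotropic unimodal process, because the concentration function $K_d(\norm{x})$ may pick up most of its size from the values of $\nu$ at much smaller scales, and only the upper scaling of $\varphi'$ — transported through Lemma \ref{r^2K(r)} into a quantitative decay $\varphi'(\lambda_0^2 u) \leq \varepsilon\,\varphi'(u)$ for $u > \theta$, with $\varepsilon$ as small as desired — excludes it. Everything else (the two estimates imported from Propositions \ref{ubound_p_t} and \ref{HKLB1}, Lemma \ref{r^2K(r)}, and the patching on the residual annulus) is routine.
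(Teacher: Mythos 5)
Your proposal is correct and follows essentially the same route as the paper: the upper bound is quoted from Proposition \ref{ubound_p_t}, and the lower bound is obtained by comparing $r^2K_d(r)$ with $(ar)^2K_d(ar)$ for a small $a$ chosen via Lemma \ref{r^2K(r)} and the upper scaling of $\varphi'$, bounding the difference by $\nu(ar)r^{d+2}$ through monotonicity of $\nu$, and feeding the resulting bound $\nu(x)\gtrsim \norm{x}^{-d-2}\varphi'(\norm{x}^{-2})$ into Proposition \ref{HKLB1}. Your explicit treatment of the residual annulus $a\theta^{-1/2}<\norm{x}\leq\theta^{-1/2}$ is a point the paper passes over silently, and is a welcome addition.
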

\begin{proof}
	In view of Proposition \ref{ubound_p_t}, it is enough to show the lower estimate. First, we find a lower bound of the L\'evy measure. Let $a<1$ and $r<\theta^{-1/2}$, then by monotonicity of the density of the L\'evy measure
	$$r^2K_d(r)- (ra)^2K_d(ar)=  \omega_d\int_{ar}^ru^{d+1}\nu(u)du\le \omega_d\nu(ar)r^{d+2}. $$
	Next, we find $a$, small enough, such that  $\frac {r^2K_d(r)} {(ra)^2K_d(ar)}\ge 2$. Indeed,  by Lemma \ref{r^2K(r)}, $u^2K(u) \asymp \varphi'(u^{-2}), u\le \theta^{-1/2} $, hence the application of  
	$\varphi'\in \WUSC{-\alpha}{\theta}{\overline{c}}$ provides the desired $a$. Therefore 
	$$\omega_d\nu(ar)r^{d+2}\ge  (ra)^2K_d(ar) \asymp \varphi'((ar)^{-2})$$
	This implies that 
	\[
		\nu(x) \ge C \frac{\varphi'(|x|^{-2})}{|x|^{d+2}}, \quad \norm{x} \leq \theta^{-1/2} 
	\]
	and the conclusion is a consequence of Proposition \ref{HKLB1}.
\end{proof}

\begin{proposition}
	\label{nuapprox}
	Let $d\ge 3$. Suppose the density of the L\'evy measure $\mu$ of the subordinator is non-increasing. Then there is
	$c > 0$ such that for all $x \in \RR^d \setminus \{0\}$
	\begin{equation}
		\label{eq:74}
		\nu(x) \leq c \norm{x}^{-d-4} |\varphi''(|x|^{-2})|.
	\end{equation}
\end{proposition}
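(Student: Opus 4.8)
The plan is to push everything through the subordinator representations \eqref{eq:65} and \eqref{eq:75}, reduce the claim by scaling to a single dimension‑free inequality for the non‑increasing density $\mu$, and then settle that inequality by a layer‑cake argument together with an elementary estimate. Throughout put $r=\norm{x}$ and write $g(w)=\mu(r^2 w)$; since a non‑increasing L\'evy density of a subordinator is integrable on $[1,\infty)$, it tends to $0$ at infinity, so $g$ is non‑increasing with $g(\infty)=0$.

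First I would rewrite both sides after the substitution $s=r^2 w$. From \eqref{eq:75},
\[
	\nu(x)=(4\pi)^{-d/2}\, r^{2-d}\int_0^\infty w^{-d/2}e^{-1/(4w)}\,g(w)\,{\rm d}w,
\]
while differentiating \eqref{eq:65} twice gives $\abs{\varphi''(u)}=\int_0^\infty s^2 e^{-us}\mu(s)\,{\rm d}s$, hence
\[
	r^{-d-4}\abs{\varphi''(r^{-2})}=r^{2-d}\int_0^\infty w^2 e^{-w}\,g(w)\,{\rm d}w.
\]
Cancelling the common factor $r^{2-d}$, the proposition becomes equivalent to the bound
\[
	\int_0^\infty w^{-d/2}e^{-1/(4w)}\,g(w)\,{\rm d}w\le C\int_0^\infty w^2 e^{-w}\,g(w)\,{\rm d}w,
\]
with $C$ depending only on $d$, valid for every non‑increasing $g\ge0$ with $g(\infty)=0$.

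To prove this I would write $g(w)=\sigma\big((w,\infty)\big)$ for a non‑negative Borel measure $\sigma$ on $(0,\infty)$ and apply Tonelli's theorem to each side; this reduces the inequality to its special case $g=\ind{(0,A]}$, i.e. to
\[
	F(A):=\int_0^A w^{-d/2}e^{-1/(4w)}\,{\rm d}w\le C\int_0^A w^2 e^{-w}\,{\rm d}w=:C\,H(A),\qquad A>0.
\]
Here $F$ and $H$ are positive, continuous and increasing on $(0,\infty)$; $H(\infty)=2$, and $F(\infty)<\infty$ because $d\ge3$ makes $w^{-d/2}$ integrable near infinity while $e^{-1/(4w)}$ controls the origin. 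As $A\to0^+$ one has $H(A)\ge e^{-A}A^3/3$, whereas, bounding $w^{-d/2}e^{-1/(4w)}\le M_d\,e^{-1/(8w)}\le M_d\,e^{-1/(8A)}$ on $(0,A]$, one gets $F(A)\le M_d A\,e^{-1/(8A)}$, so $F(A)/H(A)\to0$. Hence $F/H$ extends to a continuous function on the two‑point compactification $[0,\infty]$ and is therefore bounded.

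The computation is just a change of variables and a layer‑cake decomposition; the one step needing care, and the place where both hypotheses enter, is the elementary inequality $F\le C H$: monotonicity of $\mu$ is exactly what licenses the layer‑cake reduction, and $d\ge3$ is exactly what keeps $F(\infty)$ finite (for $d\le2$ the estimate is false for slowly decaying $\mu$). I do not expect any further obstacle.
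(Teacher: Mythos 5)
Your proof is correct, and it takes a genuinely different route from the paper's. The paper first derives the pointwise bound $\mu(t)\leq 3e\,t^{-3}\abs{\varphi''(t^{-1})}$ (quoting Lemma 5 of \cite{MR3165234}, i.e.\ the standard lower bound for the Laplace transform of a monotone function), and then splits the integral in \eqref{eq:75} at $s=\norm{x}^2$, using monotonicity of $\abs{\varphi''}$ on the near piece and monotonicity of $\mu$ together with $\int_{\norm{x}^2}^\infty u^{-d/2}\,{\rm d}u<\infty$ (this is where $d\geq 3$ enters) on the far piece. You instead scale both sides by $s=r^2w$, observe that the claim is a single inequality between two weighted integrals of the non-increasing profile $g(w)=\mu(r^2w)$, and dispose of it by the layer-cake reduction to indicators $\ind{(0,A]}$, i.e.\ to the elementary bound $F(A)\leq C\,H(A)$; the hypotheses enter in the same two places (monotonicity licenses the layer-cake step, $d\geq 3$ makes $F(\infty)$ finite). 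Your argument is self-contained — it does not import the external lemma — and isolates the dimension restriction cleanly in the finiteness of $F(\infty)$; the paper's argument is shorter on the page because the key estimate is outsourced. All the small points in your write-up check out: $g(\infty)=0$ follows from $\int_1^\infty\mu(s)\,{\rm d}s<\infty$ and monotonicity, the identity $\abs{\varphi''(u)}=\int_0^\infty s^2e^{-us}\mu(s)\,{\rm d}s$ follows by differentiating \eqref{eq:65} twice, and the boundary behaviour of $F/H$ at $0$ and $\infty$ is as you state.
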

\begin{proof}
	First, we observe that for $t > 0$
	\begin{equation}
		\label{mu_estimate}
		\mu(t) \leq 3et^{-3} |\varphi''(t^{-1})|.
	\end{equation}
	Since $\mathcal{L}(s\mu)(x)=\varphi'(x)$ and $\mu$ is non-increasing, the estimate \eqref{eq:74} follows by
	\cite[Lemma 5]{MR3165234} applied to $f(t)=\mu(t)$ and $m=n=1$.

	By \eqref{eq:75}, we have
	\[
		\nu(x) < \int_0^{\norm{x}^2} (4\pi u)^{-d/2} e^{-\norm{x}^2/(4s)} \mu(u) {\: \rm d}u + 
		\int_{\norm{x}^2}^\infty (4\pi u)^{-d/2} \mu(u) {\: \rm d}u.
	\]
	By \eqref{mu_estimate} and the monotonicity of $|\varphi''|$,
	\begin{align*}
		\int_0^{\norm{x}^2} (4\pi u)^{-d/2} e^{-\norm{x}^2/(4 s)} \mu(u) {\: \rm d}u
		& \leq
		3 e \int_0^{\norm{x}^2} (4 \pi u)^{-d/2} e^{-\norm{x}^2/(4s)} \frac{\abs{\varphi''(u^{-1})}}{u^3} {\: \rm d}u \\
		& \leq
		C \frac{\abs{\varphi''(\norm{x}^{-2})}}{\norm{x}^{d+4}}.
	\end{align*}
	Similarly, by \eqref{mu_estimate} and monotonicity of $\mu$
	\begin{align*}
		\int_{\norm{x}^2}^\infty (4 \pi u)^{-d/2} \mu(u) {\: \rm d} u
		& \leq
		\mu(\norm{u}^2) \int_{\norm{x}^2}^\infty u^{-d/2} {\: \rm d} u\\
		& \leq C \frac{\abs{\varphi''(\norm{x}^{-2})}}{\norm{x}^{d+4}},
	\end{align*}
	which ends the proof.
\end{proof}

\begin{theorem}
	\label{thm:9}
	Let $\mathbf{X}$ be a subordinate Brownian motion on $\RR^d$ with the L\'{e}vy--Khintchine
	exponent $\psi(x) = \varphi(|x|^2)$. Suppose that the density of the L\'evy measure $\mu$ of the subordinator
	is non-increasing and $d \ge 3$. Then the following are equivalent:
	\begin{enumerate}
		\item 
		\label{thm:9:1}
		There exist  $\theta \ge 0, \alpha>0$ such that $\varphi'\in  \WUSC{-\alpha}{\theta}{\overline{c}}$.
		\item 
		\label{thm:9:2}
		There are $C >0$ and $\theta>0$ such that for all $t > 0$ and $x \in \RR^d$, if
		$t \varphi(\norm{x}^{-2}) \leq 1$ and $\norm{x} < \theta^{-1}$ then
	\[
		p(t, x) \ge C t \norm{x}^{-d-2} \varphi'(|x|^{-2}).
	\]
		\item 
		\label{thm:9:3}
		There are $C > 0$ and $\theta>0$ such that if $\norm{x} < \theta^{-1}$ then
	\[
		\nu( x)\ge C \norm{x}^{-d-2} \varphi'(|x|^{-2}).
	\]
	\end{enumerate}
\end{theorem}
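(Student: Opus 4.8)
The plan is to establish the cyclic chain \textrm{(i)}~$\Rightarrow$~\textrm{(ii)}~$\Rightarrow$~\textrm{(iii)}~$\Rightarrow$~\textrm{(i)}. The key preliminary observation is that, under the standing hypotheses, the function $u\mapsto u^2\varphi'(u)$ is non-decreasing (the computation in the Remark following Proposition~\ref{ubound_p_t}), so that $\varphi'\in\WLSC{-2}{0}{1}$; since $d\ge 3$ forces $2<d/2+1$, Lemma~\ref{r^2K(r)} applies with $\beta=2$ — taken at a small but positive $\theta_0$, so that the factor $\frac{\varphi(\theta_0)}{\theta_0\varphi'(\theta_0)}$ is finite — and yields a constant $r_1>0$ with
\[
	r^2 K_d(r)\asymp\varphi'(r^{-2}),\qquad 0<r\le r_1.
\]
This comparison, together with the universal bound $\nu(x)\le C\norm{x}^{-d}K_d(\norm{x})$ from \eqref{eq:Ksc3} and the identity $r^2 K_d(r)=\omega_d\int_0^r u^{d+1}\nu(u)\,{\rm d}u$, is the backbone of everything below.

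\emph{The step \textrm{(i)}~$\Rightarrow$~\textrm{(ii)}} is immediate: with $\varphi'\in\WUSC{-\alpha}{\theta}{\overline c}$ at hand and $\varphi'\in\WLSC{-2}{\theta}{1}$ for free, Theorem~\ref{p_t_bound} applies and gives even the two-sided estimate for $p(t,x)$ in the regime $t\varphi(\norm{x}^{-2})\le 1$, $\norm{x}$ small, whose lower half is exactly \textrm{(ii)}. If one prefers to avoid quoting Theorem~\ref{p_t_bound}, one reproves only the lower bound: using the displayed comparison one finds $a\in(0,1)$ with $r^2K_d(r)\ge 2(ar)^2K_d(ar)$, whence monotonicity of $\nu$ gives $\omega_d\,\nu(ar)\,r^{d+2}\ge r^2K_d(r)-(ar)^2K_d(ar)\ge\tfrac12 r^2K_d(r)\gtrsim\varphi'(r^{-2})$, i.e.\ $\nu(x)\ge C\norm{x}^{-d-2}\varphi'(\norm{x}^{-2})$ for small $\norm{x}$; then Proposition~\ref{HKLB1} finishes, its exponential factor being bounded below once $t\varphi(\norm{x}^{-2})\le 1$.

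\emph{The step \textrm{(ii)}~$\Rightarrow$~\textrm{(iii)}} is a limiting argument. Fix $x\neq 0$ with $\norm{x}<\theta^{-1}$; for $t$ small enough $t\varphi(\norm{x}^{-2})\le 1$, so \textrm{(ii)} gives $t^{-1}p(t,x)\ge C\norm{x}^{-d-2}\varphi'(\norm{x}^{-2})$. Testing the inequality from \textrm{(ii)} against non-negative $\phi\in\calC_c\big(\{0<\norm{y}<\theta^{-1}\}\big)$ and using $t^{-1}p(t,\cdot)\to\nu$ vaguely on $\RR^d\setminus\{0\}$ as $t\to 0^+$ yields $\nu\ge C\norm{\cdot}^{-d-2}\varphi'(\norm{\cdot}^{-2})$ Lebesgue-a.e.\ on $\{0<\norm{y}<\theta^{-1}\}$, hence everywhere there by continuity of $\nu$ (formula \eqref{eq:75}).

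\emph{The step \textrm{(iii)}~$\Rightarrow$~\textrm{(i)} is the main obstacle.} Insert the lower bound on $\nu$ from \textrm{(iii)} into $r^2 K_d(r)=\omega_d\int_0^r u^{d+1}\nu(u)\,{\rm d}u$ and substitute $v=u^{-2}$ to obtain, for small $r$,
\[
	r^2 K_d(r)\gtrsim\int_0^r u^{-1}\varphi'(u^{-2})\,{\rm d}u=\frac12\int_{r^{-2}}^\infty v^{-1}\varphi'(v)\,{\rm d}v.
\]
Since the left-hand side is finite and $\asymp\varphi'(r^{-2})$, the function $F(w):=\int_w^\infty v^{-1}\varphi'(v)\,{\rm d}v$ is finite with $F(w)\le C\varphi'(w)$ for all large $w$; conversely $F(w)\ge\int_w^{2w}v^{-1}\varphi'(v)\,{\rm d}v\ge(\log 2)\,\varphi'(2w)\ge\tfrac{\log 2}{4}\varphi'(w)$, using again that $u^2\varphi'(u)$ is non-decreasing. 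Thus $F\asymp\varphi'$ near infinity. As $F$ is $C^1$ with $F'(w)=-w^{-1}\varphi'(w)$, this comparison forces $\frac{{\rm d}}{{\rm d}w}\log F(w)=-\frac{\varphi'(w)}{wF(w)}\le-\frac{1}{Cw}$, so integrating gives $F(\lambda w)\le\lambda^{-1/C}F(w)$ for $\lambda\ge 1$ and $w$ large; feeding $F\asymp\varphi'$ back in turns this into $\varphi'(\lambda w)\le C'\lambda^{-1/C}\varphi'(w)$, that is $\varphi'\in\WUSC{-1/C}{\theta'}{C'}$ with $\alpha:=1/C>0$ and suitable $\theta',C'>0$, which is \textrm{(i)}. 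The genuinely delicate points are legitimising the change of variables and deducing a priori the finiteness of $F$ (both are consequences of the displayed inequality, whose left-hand side is finite), and securing the comparison $F\asymp\varphi'$ with constants uniform down to the origin — which is precisely why Lemma~\ref{r^2K(r)} must be applied at a positive $\theta_0$ rather than at $0$.
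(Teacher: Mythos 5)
Your proposal is correct. The first two implications follow the paper's route: (i)~$\Rightarrow$~(ii) is exactly the paper's appeal to Theorem~\ref{p_t_bound} after noting that the non-increasing density of $\mu$ makes $u\mapsto u^2\varphi'(u)$ non-decreasing, hence $\varphi'\in\WLSC{-2}{0}{1}$ with $2<d/2+1$ precisely because $d\ge 3$; and (ii)~$\Rightarrow$~(iii) is the same vague-convergence argument $t^{-1}p(t,\cdot)\to\nu$, which you merely spell out in more detail. The interesting divergence is in (iii)~$\Rightarrow$~(i). The paper combines the lower bound of (iii) with the \emph{upper} bound $\nu(x)\le c\norm{x}^{-d-4}\abs{\varphi''(\norm{x}^{-2})}$ of Proposition~\ref{nuapprox} to get the pointwise differential inequality $C c^{-1}\varphi'(\lambda)\le-\lambda\varphi''(\lambda)$, whence $\lambda^{C/c}\varphi'(\lambda)$ is non-increasing and the scaling follows in one line. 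You instead integrate the bound of (iii) against $u^{d+1}\,{\rm d}u$, compare with $r^2K_d(r)\asymp\varphi'(r^{-2})$ from Lemma~\ref{r^2K(r)} (correctly applied at a positive $\theta_0$ so the constant is finite), obtain $F(w)=\int_w^\infty v^{-1}\varphi'(v)\,{\rm d}v\asymp\varphi'(w)$, and run the differential inequality on $F$ rather than on $\varphi'$ itself. Both arguments are sound; yours has the mild advantage of never touching $\varphi''$ or the Tauberian-type estimate $\mu(t)\le 3et^{-3}\abs{\varphi''(t^{-1})}$ behind Proposition~\ref{nuapprox} (so this implication does not actually use the monotonicity of the density of $\mu$), at the cost of needing the upper half of Lemma~\ref{r^2K(r)}, which again exploits $d\ge 3$ through $\beta=2<d/2+1$; the paper's version is shorter but leans on the extra machinery of Proposition~\ref{nuapprox}. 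The only caveats in your write-up — finiteness of $F$, positivity of $F$ for taking logarithms, and uniformity of the constants near the origin of the $r$-variable — are all correctly flagged and resolved by the finiteness of $r^2K_d(r)$ and the strict positivity of $\varphi'$.
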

\begin{proof}
	Since $u \mapsto u^2 \varphi'(u)$ is non-decreasing, $\varphi' \in \WLSC{-2}{0}{1}$. Therefore,
	(i)~$\Rightarrow$~(ii) is the consequence of Theorem \ref{p_t_bound}. The implication (ii)~$\Rightarrow$~(iii)
	follows because
	\[
		\lim_{t \to 0^+} t^{-1} p(t, x) = \nu(x)
	\]
	vaguely on $\RR^d \setminus \{0\}$, see e.g. the proof of \cite[Theorem 6]{cgt}. It remains to prove
	that (iii) implies (i).
	
	First, let us observe that by Proposition \ref{nuapprox} there is a constant $c > 0$ such that for $r > 0$
	\[
		\nu(r)\le - c \frac{\varphi''(r^{-2})}{r^{d+4}}.
	\]
	Therefore, for $\lambda \geq \sqrt{\theta}$ 
	\[
		C c^{-1} \varphi'(\lambda) \leq -  \lambda \varphi''(\lambda).
	\]
	Hence, $\lambda \mapsto \lambda^{C/c} \varphi'(\lambda)$ is non-increasing for $\lambda \ge \sqrt{\theta}$,
	thus $\varphi' \in \WUSC{-C/c}{\sqrt{\theta}}{1}$ and the proof is completed. 
\end{proof}

\section{Examples}
\label{sec:6}

\begin{example}
Let $\nu(x)=\ind{B(0,1)}|x|^{-d}.$ Then, for $|x|>1$,
\begin{align*}
	\psi(x)-\psi(1)
	& = \int_{B_{|x|} \setminus B_1} (1-\cos ( u_1)) \nu(u)	{\: \rm d}u \\
	& = \omega_d \ln(|x|)- 2\int^{|x|}_{1}\cos (u) \nu_1(u) {\: \rm d}u,
\end{align*}
thus $\psi \in \Pi^\infty_{\omega_d}$. Moreover, for all $t \in (0, 1)$ and $\norm{x} \leq 1$
\[
	e^{-t\psi(1/|x|)} \asymp |x|^{2\omega_d t},
\]
Hence, by Theorem \ref{thm:4} there are $t_0 > 0$ and $r_0>0$ such that for all $t \in (0, t_0)$ and $x \in B_{r_0}$
\[
	p(t, x) \asymp  t|x|^{ 2\omega_d t-d}.
\]
By Theorem \ref{GUB3} and Proposition \ref{HKLB1}, we have $r_0 = 1$.
\end{example}

\begin{example}
	\label{exmp:2}
	Let $\mathbf{S} = (S_t : t \geq 0)$ be L\'evy process with the L\'evy--Khintchine exponent $\norm{x}^\alpha$
	for $\alpha \in (0, 2]$ and $(T_t : t \geq 0)$ a subordinator with the Laplace exponent
	$\phi(\lambda) = \big(\log(1+\lambda)\big)^\beta$ for $\beta \in (0, 1]$. Then $\mathbf{X} = (X_t : t\geq 0)$
	where $X_t = S_{T_t}$ has the L\'evy--Khintchine exponent
	\[
		\psi(\lambda) = \big(\log(1 + \abs{\lambda}^\alpha)\big)^{\beta}.
	\]
	Thus $\psi \in \Pi^\infty_\ell$ for
	\[
		\ell(\lambda) = \alpha^\beta \beta \big(\log(1+\abs{\lambda}^\alpha)\big)^{\beta - 1}.
	\]
	By Theorem \ref{thm:4}, there are $t_0 > 0$ and $r_0 > 0$ such that for all $t \in (0, t_0)$ and $\norm{x} \leq r_0$
	\[
		p(t, x) \asymp t \norm{x}^{-d} \big(\log(1 + \norm{x}^{-1})\big)^{\beta - 1}
		\exp\Big(-t \big(\log(1+\norm{x}^{-\alpha})\big)^{\beta}\Big).
	\]
\end{example}	
To present the  complicated nature of the estimates   of the heat kernels from Example \ref{exmp:2},
we provide global estimates in the cases: $\alpha \in (0,2)$ and  $\beta = 1$ or $\beta = 1/2$.
\begin{example}
	Let $\mathbf{X}$ be a L\'evy process with the L\'evy--Khintchine exponent
	\[
		\psi(\lambda) = \log (1 + \abs{\lambda}^\alpha).
	\]
	Let $s(t, x)$ be the heat kernel for the standard isotropic $\alpha$-stable L\'evy process. Then we have
	\[
		p(t, x)=\frac{1}{\Gamma(t)} \int^\infty_0 e^{-u} u^{t-1} s(u,x) {\: \rm d}u.
	\]
	Recall that
	\[
		s(t,x) \asymp \min\{t^{-d/\alpha},t|x|^{-d-\alpha}\}.
	\]
	First, let us note that $K_d(x) \asymp \min\{1, |x|\}^{-\alpha}$ and $\nu(x) \asymp \min\{|x|^{-d}, |x|^{-d-\alpha}$.
	Hence, by \cite[Theorem 1 and Remark 2]{cgt}, Theorem \ref{GUB3} and Proposition \ref{HKLB1}, there is
	$t_1 > d/\alpha$ such that for all $t \geq t_1$ and $x \in \RR^d$
	\[
		p(t,x) \asymp \min\big\{t^{-d/\alpha},t|x|^{-d-\alpha}\big\} \asymp s(t,x),
	\]

	Let
	\[
		\Gamma(t,x)=\int^\infty_xe^{-u}u^{t-1} {\: \rm d}u
		\quad\text{and}\quad
		\gamma(t,x)=\int^x_0e^{-u}u^{t-1} {\: \rm d}u
	\]
	be two incomplete Gamma functions. Then
	\[
		p(t, x)
		\asymp
		\frac{1}{\Gamma(t)}
		\big(|x|^{-d-\alpha} \gamma(t+1,|x|^\alpha)+\Gamma(t-d/\alpha,|x|^\alpha)\big).
	\]
	For $t \in (0, t_1)$, we have
	\[
		\Gamma(t)\asymp t,
		\quad\text{and}\quad
		\gamma(t+1,|x|^\alpha)\asymp 
		\min\big\{1, |x|^{\alpha(t+1)}\big\}.
	\]
	Moreover, for $|x| \geq 1$
	\[
		\Gamma(t-d/\alpha,|x|^\alpha)
		\leq e^{-|x|^\alpha/2}\int^\infty_1e^{-u/2}u^{t_1-d/\alpha-1}
		{\: \rm d}u
		\leq C \norm{x}^{-d-\alpha}.
	\]
	Whereas, for $|x|\leq 1$ we have
	\begin{align*}
		\Gamma(t-d/\alpha,|x|^\alpha) 
		& \asymp 
		\begin{cases}
			\log 2 |x|^{-\alpha}, &\text{for } t = d/\alpha, \\
			\frac{2^{t-d/\alpha}-|x|^{\alpha t-d}}{t-d/\alpha}, &\text{for } t \neq d/\alpha,
		\end{cases} \\
		& \asymp 
		\min\bigg\{ \log 2 |x|^{-\alpha}, \frac{\max\{1, |x|\}^{\alpha t-d}}{|t-d/\alpha|}\bigg\}.
	\end{align*}
	Hence, for $t<t_1$,
	\[
		p(t, x) \asymp
		\begin{cases}
			t|x|^{-d-\alpha}, & \text{for } |x|\geq 1, \\
			t\min\big\{\log 2 \norm{x}^{-\alpha}, (t-d/\alpha)^{-1} \big\},
			& \text{for } \norm{x} < 1 \text{ and } t > d / \alpha, \\
            t (\log 2|x|^{-\alpha} + |x|^{\alpha t-d}), & \text{for } |x|<1 \text{ and } t \in (0, d/\alpha].
		\end{cases}
	\]
	Let us notice, that if $t\psi(1/|x|)\leq 1$ then 
	\[
		p(t, x)\asymp t\nu(x),
	\]
	and
	\[
		p(t,0) \asymp 
		\begin{cases}
			\frac{t^{1-d/\alpha}}{t-d/\alpha} & \text{for } t > d/\alpha, \\
			\infty & \text{for } t \in (0, d/\alpha].
		\end{cases}
	\]
\end{example}						
					
In order to study the next example we need the following lemma which easily follows from \cite[Section 6]{MR1390191}.
\begin{lemma}
	\label{K_estimate}
	Let
	\[
		H(x, a)= \int_0^a v^{-1/2}e^{-v} e^{-\frac{t^2a}{4v}} {\: \rm d}v.
	\]
	Then
	\[
		H(t, a) \asymp
		\begin{cases}
			\sqrt{\pi} e^{-t\sqrt{a}} & \text{for } t \in (0, 1 + 2\sqrt{a}], \\
			\frac{\sqrt{a}}{t^2 - 4a} \sqrt{\pi} e^{-t^2/4 - a^2}.
		\end{cases}
	\]
\end{lemma}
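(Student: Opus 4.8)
The plan is to treat $H(t,a)$ by Laplace's method applied to the phase $\Phi(v)=v+\tfrac{t^2a}{4v}$, the two cases of the statement corresponding precisely to whether the minimiser of $\Phi$ falls inside the interval of integration. The backbone of the argument is the exact evaluation of the full integral: from $\int_0^\infty v^{\nu-1}e^{-v-c/v}\,{\rm d}v=2c^{\nu/2}K_\nu(2\sqrt c)$ with $\nu=\tfrac12$, $c=\tfrac{t^2a}{4}$, together with $K_{1/2}(z)=\sqrt{\pi/(2z)}\,e^{-z}$, one gets
\[
  \int_0^\infty v^{-1/2}e^{-v-\frac{t^2a}{4v}}\,{\rm d}v=\sqrt{\pi}\,e^{-t\sqrt a}.
\]
In particular $H(t,a)\le\sqrt{\pi}\,e^{-t\sqrt a}$ for all $t,a>0$, which is already the upper bound in the first case.

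Next I would record the shape of $\Phi$. One has $\Phi'(v)=1-\tfrac{t^2a}{4v^2}$, so $\Phi$ is decreasing on $(0,v_\ast)$ and increasing on $(v_\ast,\infty)$, where $v_\ast=\tfrac{t\sqrt a}{2}$ and $\Phi(v_\ast)=t\sqrt a$; the substitution $v=v_\ast(1+s)$ gives the identity $\Phi(v)=t\sqrt a+\tfrac{t\sqrt a}{2}\cdot\tfrac{s^2}{1+s}$, which shows that $v^{-1/2}e^{-\Phi(v)}$ concentrates in an $O\big(\sqrt{t\sqrt a}\,\big)$–window around $v_\ast$. The condition $v_\ast\le a$ reads $t\le 2\sqrt a$, and the cut-off $t\le 1+2\sqrt a$ in the statement is this threshold plus a harmless additive margin (which in particular forces $t^2-4a\ge 1+4\sqrt a\ge 1$ in the complementary range).

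For $t\le 1+2\sqrt a$ it only remains to prove the matching lower bound $H(t,a)\gtrsim e^{-t\sqrt a}$. I would obtain it either by showing that the tail $\int_a^\infty v^{-1/2}e^{-\Phi(v)}\,{\rm d}v$ is at most a fixed fraction of $\sqrt{\pi}\,e^{-t\sqrt a}$, or directly by restricting the integral defining $H$ to a subinterval of $(0,a)$ around $v_\ast$ of length $\asymp\sqrt{t\sqrt a}$ on which $\Phi\le t\sqrt a+O(1)$ and $v^{-1/2}\asymp v_\ast^{-1/2}$; since $v_\ast^{-1/2}\sqrt{t\sqrt a}\asymp 1$, this gives the claim. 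For $t>1+2\sqrt a$ one has $v_\ast>a$, hence $\Phi$ is strictly increasing on $(0,a]$ with $\Phi(a)=a+\tfrac{t^2}{4}$ and $\Phi'(a)=-\tfrac{t^2-4a}{4a}<0$; using the exact identity $\Phi(v)-\Phi(a)=(a-v)\tfrac{t^2-4v}{4v}$ on $(0,a)$ and the monotonicity of $v\mapsto\tfrac{t^2-4v}{4v}$, a one–sided Laplace estimate at the endpoint $v=a$ (substitute $w=a-v$, split $(0,a)$ at $w=\tfrac a2$, use that $x\mapsto xe^{-x/8}$ is bounded to handle the part away from $v=a$ in the upper bound, and keep only $w\in\big(0,\tfrac{4a}{t^2-4a}\big)$ for the lower bound) yields
\[
  H(t,a)\asymp a^{-1/2}e^{-\Phi(a)}\,\big|\Phi'(a)\big|^{-1}\asymp\frac{\sqrt a}{t^2-4a}\,e^{-a-t^2/4},
\]
which is the asserted second form, up to the constant $\sqrt{\pi}$ and with $\Phi(a)=a+\tfrac{t^2}{4}$.

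The step I expect to be delicate is making all implied constants uniform across the two regimes, in particular in the transition zone $t\approx 1+2\sqrt a$, where the second expression carries a factor $(t^2-4a)^{-1}$ while the first stays bounded, and in the regime of small $a$, where the auxiliary subintervals used above can fail to sit inside $(0,a)$. For precisely this bookkeeping I would invoke the integral estimates of \cite[Section 6]{MR1390191}: after the normalising substitution $v\mapsto av$ (or $v=ae^{-r}$), $H(t,a)$ takes the form treated there, and the lemma follows as stated.
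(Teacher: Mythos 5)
The paper does not actually prove this lemma: it is stated with the remark that it ``easily follows from \cite[Section 6]{MR1390191}'', and no argument is given. Your Laplace-method proof is therefore a genuine, essentially self-contained alternative, and its backbone is sound. The exact evaluation $\int_0^\infty v^{-1/2}e^{-v-t^2a/(4v)}\,{\rm d}v=\sqrt{\pi}\,e^{-t\sqrt a}$ is correct and immediately gives the upper bound in the first case; the identity $\Phi(v)-\Phi(a)=(a-v)\frac{t^2-4v}{4v}$ is correct and, combined with the monotonicity of $\frac{t^2-4v}{4v}$ and the fact that $t^2-4a\ge 1+4\sqrt a$ in the second regime, your endpoint estimate does yield $H(t,a)\asymp \frac{\sqrt a}{t^2-4a}e^{-a-t^2/4}$ there. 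In particular you are right that the exponent $e^{-t^2/4-a^2}$ in the statement must be read as $e^{-t^2/4-a}$, since $\Phi(a)=a+t^2/4$; that is a typo in the paper.

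One point you file under ``delicate bookkeeping'' is in fact more serious: for small $a$ the first asymptotic is \emph{false}, so no amount of care with the subintervals can rescue it. Take $t=1$ and $a\to 0^+$ (which satisfies $t\le 1+2\sqrt a$): substituting $v=as$ gives
\begin{equation*}
H(1,a)=\sqrt a\int_0^1 s^{-1/2}e^{-as}e^{-1/(4s)}\,{\rm d}s\asymp \sqrt a,
\end{equation*}
whereas the claimed value $\sqrt\pi e^{-\sqrt a}$ tends to $\sqrt\pi$. The reason is the one you identify: the Gaussian window of width $\asymp\sqrt{t\sqrt a}$ around $v_\ast$ does not fit inside $(0,a)$ when $a$ is small, so $H$ captures only an $O(\sqrt a)$ fraction of the full integral. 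The lemma must therefore be read with $a$ bounded away from $0$; this is harmless for the paper, where it is applied with $a=-d\log|x|\ge d\log 2$, and with that restriction your argument (including the uniformity of constants in the transition zone $t\approx 1+2\sqrt a$, where $t^2-4a\ge 1+4\sqrt a\ge 1$ keeps the second prefactor bounded) goes through. It would strengthen your write-up to state the restriction on $a$ explicitly rather than defer it to the cited reference.
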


\begin{example}
	Let $\mathbf{X}$ be the L\'evy process with the L\'evy--Khintchine exponent
	\[
		\psi(\lambda) = \big(\log (1 + \abs{\lambda}^\alpha)\big)^{1/2}.
	\]
	By $p_\alpha(t, x)$ we denote the transition density of the geometric $\alpha$-stable process and by
	$s(t, x)$ we denote the transition density of the standard isotropic $\alpha$-stable process.
	If $(T_t : t \geq 0)$ is an independent of $1/2$-stable subordinator, then
	\[
		p(t, x) = \EE p_\alpha(T_t, x).
	\] 
	For $|x|\ge 1$ and $t>0$, by Example 3, we have $p_\alpha(T_t,x)\asymp s(T_t, x)$, hence $p(t, x)$ is comparable
	to $\alpha/2$-stable transition density, that is for all $t > 0$ and $\norm{x} \geq 1$
	\begin{align*}
		p(t, x) & \asymp \EE s(T_t,x) \\
		& \asymp \min\big\{ t |x|^{-d-\alpha/2}, t^{-2d/\alpha}\big\}.
	\end{align*}
	Let $t_1$ be chosen in Example 3. In fact we may take $t_1= 3d/\alpha$. 

	For  $|x|<1$ we write 
	\begin{align*}
		p(t, x) & =  \EE\big(p^\alpha(T_t,x), 0 < T_t < d/\alpha\big)
		+\EE\big(p^\alpha(T_t,x), d/\alpha \le T_t < 3d/\alpha\big)
		+\EE\big(p^\alpha(T_t,x), T_t\ge 3d/\alpha\big) \\
		& =I_1 + I_2 + I_3.
	\end{align*}
	By Example 3, we have
	\[
		I_3
		\asymp 
		t^{-2d/\alpha}\min\big\{(1, t^{1+2d/\alpha}\big\}.
	\]
	Again by Example 3, we get
	\begin{align*}
		I_2
		& \asymp
		t \int_{d/\alpha}^{3d/\alpha}
		v \min\big\{\log 2 |x|^{-\alpha}, (v-d/\alpha)^{-1} \big\} 
		v^{ -3/2} e^{-t^2/4v} {\: \rm d} v \\
		& \asymp 
		t\int_{0}^{2d/\alpha} \min\big\{ \log 2 |x|^{-\alpha}, v^{-1} \big\} e^{-t^2/4v} {\: \rm d} v,
	\end{align*}
	and 
	\[
		I_1 \asymp
		t \int_{0}^{d/\alpha} v
		\Big(\log 2 |x|^{-\alpha} + |x|^{\alpha v-d}\Big) v^{-3/2} e^{-t^2/4v} {\: \rm d}v
		= I_{11}+I_{12}.
	\]
	Observe that for $t^2 \leq d \alpha^{-1}$, we have
	\[
		I_{11} = t \log 2 |x|^{-\alpha} \int_{0}^{d/\alpha}  
		v^{-1/2} e^{-t^2/4v} {\: \rm d} v
		\asymp  
		t \log 2 |x|^{-\alpha}.
	\]
	and
	\begin{align*}
		I_{12} = & t \norm{x}^{-d} \int_{0}^{d/\alpha} |x|^{\alpha v} v^{-1/2} e^{-t^2/4v} {\: \rm d}v\\
		&= t |x|^{-d} \int_{0}^{d/\alpha} e^{ \alpha v \log |x|} v^{-1/2}e^{-t^2/4v} {\: \rm d}v\\
		&= t |x|^{-d} (-\alpha \log |x|)^{-1/2} 
		\int_{0}^{-d \log |x|} e^{-v} v^{-1/2} e^{t^2\alpha\log |x|/4v} {\: \rm d} v.
	\end{align*}
	In view of Lemma \ref{K_estimate}, for $t \sqrt{\alpha/d} \le 1 + 2 (- d \log |x|)^{1/2}$ and  $|x|<1$, 
		we have 
	\[
		I_{12} \asymp t \norm{x}^{-d} ( -d \log 2 |x|^{-\alpha})^{-1/2} 
		e^{-t \sqrt{-\alpha \log |x|}}.
	\]
	If $t \sqrt {\alpha/d} \ge 1+2 \sqrt{- d\log |x|}$ then $I_1+I_2 \le C e^{-ct^2}$, for some $c, C>0$ hence
	\[
		p(t, x) \asymp I_3 \asymp t^{-2d/\alpha}.
	\]
	Combining all the estimates, we conclude that for $t \sqrt {\alpha/d} \le 1+2 \sqrt{- d\log|x|}$ and
	$|x|\le 1/2$ we obtain
	\[
		p(t, x) \asymp I_1 
		\asymp 
		t \norm{x}^{-d} (-\alpha \log |x|)^{-1/2} 
		e^{-t \sqrt{-\alpha \log |x|}} - t \alpha \log |x|.
	\]
	If $t\sqrt {\alpha/d} \ge 1+2 \sqrt{-d\log |x|}$ and $|x|\le 1/2$ then
	\[
		p(t, x) \asymp t^{-2d/\alpha},
	\]
	and finally, if $|x| \ge 1/2$ and $t > 0$ we have
	\[
		p(t, x) \asymp \min\big\{ t \norm{x}^{-d-\alpha/2}, t^{-2d /\alpha}\big\}.
	\]
	Let us observe that for $t\sqrt{\log (1+ |x|^{-\alpha})} \le 1$ we have
	\[
		p(t, x) \asymp t \nu(x)
	\]
	for all $x \in \RR^d$, and
	\[
		p(t, 0) = \infty
	\]
	for all $t > 0$.
\end{example}

\begin{bibliography}{unimodal}
	\bibliographystyle{amsplain}
\end{bibliography}
\end{document}